%% file: main.tex
\documentclass{amsart}
\usepackage[utf8]{inputenc}
\usepackage[T1]{fontenc}
\usepackage{amssymb,amsthm,amsmath,graphicx, longtable, wrapfig, rotating, microtype, xparse, enumitem, mathtools, caption, subcaption, chngcntr, xcolor, bookmark}
\usepackage{hyperref}

\theoremstyle{plain} 
\theoremstyle{plain}
\newtheorem{theorem}{Theorem}
\newtheorem{lemma}[theorem]{Lemma}
\newtheorem{corollary}[theorem]{Corollary}
\newtheorem{remark}[theorem]{Remark}
\newtheorem{example}[theorem]{Example}
\newtheorem{proposition}[theorem]{Proposition}
\newtheorem*{definition}{Definition}

\newtheorem{claim}[theorem]{Claim}

\input{mycommands}

\title[Goodstein at the Second Threshold]{\texorpdfstring{Goodstein at the Second Threshold:\\ An independence Result for $\sf ID_2$}{Goodstein at the Second Threshold: An Independence Result for ID2}}
\author[O. Gjetaj]{Oriola Gjetaj}
\author[A. Weiermann]{Andreas Weiermann}

\begin{document}

\begin{abstract}
The classical Goodstein process, defined via hereditary base-$k$ exponential normal form, is a well-known example of a principle unprovable in Peano Arithmetic. 
In this paper, we generalize this framework by constructing a new Goodstein process based on the Hardy hierarchy. We develop an ordinal notation system utilizing a two-step collapsing procedure, which yields a proof-theoretic ordinal of $\psi_0\psi_1(\varepsilon_{\Om_2+1})$. By defining $k$-normal forms for natural numbers within this system, we introduce a Goodstein-type process and demonstrate that the theory of non-iterated positive inductive definitions for two operators ($\ID_2$) cannot prove its termination. This result establishes a new independence result at the second proof-theoretic threshold, further extending the reach of Goodstein-type principles beyond the Bachmann–Howard level.
\end{abstract}
\maketitle


\section{Introduction}\label{sec: Intro}
Goodstein’s theorem \cite{Goodstein1944} is a natural instance of Gödel’s incompleteness phenomenon: its formalization in first–order arithmetic is true but unprovable in $\PA$. The usual proof proceeds by transfinite induction up to $\varepsilon_0$, which is itself not provable in $\PA$. Kirby and Paris later showed that the Goodstein principle is independent of $\PA$ \cite{Kirby.Paris1982}.
Goodstein’s original process is defined on natural numbers written in hereditary base–$k$ exponential normal form and concerns the termination of certain fast-growing sequences of natural numbers.

Say that $m$ is in {\em nested (exponential) base-$k$ normal form} if it is written in standard exponential base $k$, with each exponent written in turn in base $k$.
Thus for example, $20 $ would become $2^{2^2}+2^2$ in nested base-$2$ normal form.
Then, define a sequence $(g_k(0))_{m\in\mathbb N}$ by setting $g_0(m) = m$ and defining $g_{k+1}(m)$ recursively by writing $ g_k(m)$ in nested base-$(k+2)$ normal form, replacing every occurrence of $k+2$ by $k+3$, then subtracting one (unless $g_k(m)=0$, in which case $g_{k+1}(m) =0$).
In the case that $m=20$, we obtain
\begin{align*}
g_0(20) & =20 = 2^{2^2}+2^2\\
g_1(20) &  = 3^{3^3}+3^3 -1 = 3^{3^3}+3^2\cdot 2 + 3\cdot 2 + 2\\
g_2(20) & = 4^{4^4}+4^2\cdot 2 + 4\cdot 2 +2 -1 = 4^{4^4}+4^2\cdot 2 + 4\cdot 2 +1,
\end{align*} 
and so forth.
At first glance, these numbers seem to grow superexponentially.
It should thus be a surprise that, as Goodstein showed, for every $m$ there is $k^*$ for which $g_{k^*}(m)=0$. 
By coding finite Goodstein sequences as natural numbers in a standard way, Goodstei{n'}s principle can be formalized in the language of arithmetic, but this formalized statement is unprovable in $\PA$. Independence can be shown by proving that the Goodstein process takes at least as long as stepping down the {\em fundamental sequences} below $\varepsilon_0$; these are canonical sequences $( \xi [n])_{n<\omega}$ such that $ \xi [n]< \xi$ for all $\xi$ and for limit $\xi$, $ \xi [n] \to \xi$ as $n\to \infty$.
For standard fundamental sequences below $\varepsilon_0$, $\PA$ does not prove that the sequence $\xi >  \xi [1] > \xi[1][2] > \xi[1][2][3] \ldots $ is finite. 
\smallskip

Cichon later gave a shorter proof by relating Goodstein process to the fast‑growing hierarchy $F_\al$ and the ordinal‑recursion characterization of the provably total functions of $\PA$.\cite{Cichon1983}
These developments are part of a broader interaction between Goodstein‑type processes, fast‑growing hierarchies and proof‑theoretic ordinal analysis \cite{Buchholz.Wainer1987, Weiermann95, Cichon.Wainer1983}.
Buchholz, Cichon, and Weiermann \cite{BuchholzCW1994} gave a uniform treatment of fundamental sequences and Hardy fast‑growing hierarchy, which supports many later independence results. 
Ketonen and Solovay’s analysis of the Paris–Harrington theorem linked finite combinatorics to the same hierarchy, yielding another natural
$\PA$‑independent statement. \cite{ketonen1981rapidly}

Goodstein processes have since been extended well beyond the $\varepsilon_0$‑level.
Arai, Wainer, and Weiermann \cite{AraiWainerWeiermann} introduced  Goodstein sequences using parametrized Ackermann-P\'eter functions. They showed that each of these sequences terminates and is independent of a range of theories: $\PRA, \PA, \sf \Sigma_1^1-DC_0, \ATR_0$ up to  $\ID_1$. Crucially, they employed tree ordinals to obtain their results. Arai, Fern\'andez-Duque, Wainer, and Weiermann \cite{Arai.etal2020} developed  Ackermannian Goodstein principles independent of $\ATR_0$, using an elaborate ‘sandwiching’ procedure first introduced
in \cite{Weiermann2017}. Later, Fern\'andez-Duque et al. \cite{Fernandez-Duque.etal2023} consider simpler, and arguably more intuitive, normal forms, also based on the Ackermann function, giving rise to  three Goodstein-like processes, independent of $\ACA_0, \ACA_0^{'}$ and $\ACA_0^+$, respectively.

More recently, Fern\'andez‑Duque and Weiermann introduced a new variant of the classic Goodstein process, {\em Fast Goodstein Walks} \cite{FernandezDuqueWeiermann2022} based on a family $(\mathcal{A}_k)_{k<\omega}$ of  fast-growing functions defined by transfinite recursion below $\varepsilon_0$. They demonstrate that these walks always terminate and obtained independence results at the Bachmann-Howard ordinal for $\ID_1$ and related systems such as Kripke-Platek set theory.
In further work \cite{FernandezDuqueWeiermann2024WalkGoodstein} they developed the notion of {\em base–change maximality} and show that by varying the initial base of the Goodstein process, one readily obtains independence results for each of the fragments $\sf I\Sigma^0_n$ of Peano arithmetic.
\smallskip

We extend this line of work from the Bachmann–Howard level to a new independence result for $\ID_2$.
We construct an ordinal notation system for $|\ID_2|$
following Buchholz–Sch\"utte\cite{BuchholzSchutte} and design a Goodstein process using a Bachmann–Howard Hardy hierarchy.
We show the termination of this Goodstein principle and obtain a new natural statement independent of $\ID_2$.
In our setting, the collapse is a two step procedure, 
where $\psi_0$ ordinals collapse to Hardy functions and $\psi_1$ ordinals to $\psi_0$ ordinals, avoiding the simultaneous definition of \cite{BuchholzSchutte}. 
This gives a more detailed framework for analyzing Goodstein process, while working entirely with countable ordinals and avoiding a Hardy hierarchy at $\varepsilon_{\Omega_1+1}$. The underlying system of fundamental sequences enjoys the Bachmann property (due to Buchholz \cite{Buchholz2017SurveyOrdinalNotations}, in the formulation of Weiermann \cite{Weiermann95}), which is crucial for transferring proof–theoretic facts to our Goodstein process.\smallskip

The proof proceeds in four steps.
First, an ordinal notation system is constructed for the theory $|\ID2|$ via a two-step collapsing procedure, providing a framework for ordinals below the proof-theoretic ordinal of the theory.
Second, define a fast-growing hierarchy to represent natural numbers as $k$-normal forms, embedding these numbers into our ordinal system. In our case, this is the Hardy hierarchy. 
Third, define a monotone base-change operation on these representations, which controls the dynamics of our Goodstein process. 
Finally, assign an ordinal to each natural number such that every step of our Goodstein process corresponds to a strict descent in our notation system. By showing that this process grows at least as quickly as the fast-growing hierarchy $F_{|\ID2|}$, it is shown that its termination cannot be proven in $\ID_2$.\smallskip
 
The paper is organized as follows.
In Section~\ref{sec: PrelimID2}, following Buchholz and Sch\"utte \cite{BuchholzSchutte}, we construct the relevant ordinal notation system and recall the proof-theoretic facts needed later. 
Section~\ref{sec:PropID2} develops structural properties of this system for $|\ID2|$, providing the groundwork for our Goodstein process. 
In Section~\ref{sec:GoodsteinID2} we introduce a Hardy-like hierarchy to express natural numbers and define the associated Goodstein process. Then we obtain termination via an ordinal assignment into $|\ID2|$. 
Finally, in Section~\ref{sec: Indep} we show that $\ID_2$ does not prove this termination, yielding an independence result at the second proof-theoretic threshold.

\subsection{Notation and Conventions}

We collect here the standard notation used throughout the paper. We denote the set of natural numbers by $\Nat = \{0, 1, 2, \dots\}$. 
For a set of ordinals $A$ and an ordinal $\al$, we write $A < \al$ if $\forall\be \in A(\be < \al)$ and $\al < A$ if $\exists\be \in A(\al < \be)$.

\section{Construction of ordinal notation system for \texorpdfstring{$\mathsf{ID_2}$}{ID2}}
\label{sec: PrelimID2}
In this section we introduce the notation system we'll be using for $|\ID_2|$. 
For our purposes there are two natural ways to construct ordinal notation systems: the simultaneous Buchholz–Sch\"utte system \cite{BuchholzSchutte} and a step by step variant. We adopt the latter approach for a fine–grained analysis of Goodstein–type processes at the level of $\ID2$. This makes the interaction between the collapsing functions and the Hardy hierarchy more transparent.

\subsection{Function \texorpdfstring{$\psi$}{psi}} \label{subsec:FunctionPsi}
We construct an ordinal notation system for $\ID_2$ via a two-step collapse. 
First, we construct inductively the set $C_1(\al)$ and the collapsing function $\psi_1(\al)$ which is defined as the smallest ordinal not belonging to the set $C_1(\al)$. The value of $\psi_1$ will collapse $C_1$ below $\Omega_2$.
Second, we construct $C_0$ and $\psi_0$ by utilizing the previously defined $\psi_1$. 
In the simultaneous approach of Buchholz–Schütte \cite{BuchholzSchutte}, the proof theoretic ordinal of $\ID_2$ is $\psi_0(\varepsilon_{\Omega_2+1})$, whereas our construction results in the proof-theoretic ordinal $\psi_0\psi_1(\varepsilon_{\Omega_2+1})$.
\\
Define $\psi_2(\beta) \coloneqq \omega^{\Omega_2+\beta}$, and for brevity, we often omit parentheses, writing $\psi_0\beta$ for $\psi_0(\beta)$.
Abbreviate $\psi_2(\be) := \om^{\Om_2+\be}$.

\begin{definition} 
	Let $C_1(\al)$ be the least set of ordinals such that:
	\begin{enumerate}
		\item $ \Om \subset C_1(\al)$.
		\item If $\be \in C_1(\al)$ then $\psi_2(\be) \in C_1(\al)$.
		\item If $\be_0,\ldots , \be_n \in C_1(\al) $  with $n \geq 1$ are additive principal ordinals then $\be_1+  \ldots  + \be_n \in C_1(\al)$.
		\item If $\be\in C_1(\al)\cap \al \cap C_1(\be)$ then $\psi_1 \be\in C_1(\al)$.
		\item 	 $\psi_1\al:=\min\{\be:\be\not\in C_1(\al)\}$.
	\end{enumerate}
\end{definition}
The sets $C_1(\alpha)$ are defined by recursion on $\alpha$. The set is closed under addition and the $\psi_2$ function, collecting the countable ordinals and their images under $\psi_2$. Iterating the construction of $C_1(\alpha)$ generates the function $\psi_1(\alpha)$. Its primary properties are summarized as follows:
\begin{lemma} Properties of function $\psi_1$:
	\begin{enumerate}
		\item $\psi_1\al<\Om_2$. 
		\item $\psi_10=\Om$.
		\item If $\al\in C_1(\al)$ then $\psi_1(\al+1)=\psi_1(\al)\cdot \om$.
		\item If $\al\in C_1(\al) \cap \Om_2$ then $\psi_1(\al)=\om^{\Om+\al}$.
		\item $\psi_1(\Om_2) = \varepsilon_{\Om+1}$.
		\item $\psi_1(\al)=C_1(\al)\cap \Om_2$.
	\end{enumerate}
\end{lemma}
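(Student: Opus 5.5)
I will establish the six properties in a slightly different order, beginning with a structural analysis of $C_1(\al)$. The basic tool is a cardinality argument together with the observation that $C_1(\al)\cap\Om_2$ is an initial segment. The latter is (6), which I read as $\psi_1\al = C_1(\al)\cap\Om_2$, with ordinals identified with their sets of predecessors.

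\textbf{Step 1: item (1).} $C_1(\al)$ is generated from $\Om$ by countably many finitary operations: $\psi_2$, finite sums, and the restricted $\psi_1$. Hence $|C_1(\al)|=\aleph_1<\aleph_2$, so some ordinal below $\Om_2$ is missing, and $\psi_1\al<\Om_2$. This requires $\Om_2$ to be regular (uncountable cardinal $\aleph_2$). It also implicitly requires $\psi_1\be$ to be defined for all $\be<\al$ before $C_1(\al)$ is formed, which holds by recursion on $\al$.

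\textbf{Step 2: item (6), the main obstacle.} I must show that if $\ga\in C_1(\al)$ and $\de<\ga$ with $\de<\Om_2$, then $\de\in C_1(\al)$. I would first prove, by induction on the generation of $C_1(\al)$, that every element $\ga\ge\Om$ of $C_1(\al)\cap\Om_2$ has a normal form built from $\Om$, sums of additive principals, and terms $\psi_1\be$ with $\be\in C_1(\al)\cap\al\cap C_1(\be)$. Elements of the form $\psi_2(\be)$ are $\ge\Om_2$, so they play no role below $\Om_2$ except through the $\psi_1$ arguments.

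The closure below a sum follows from closure below its summands by Cantor normal form. The delicate case is $\de<\psi_1\be$ with $\be<\al$. By the induction hypothesis for $\be$ (on $\al$), $\de\in C_1(\be)$, i.e.\ $\psi_1\be=C_1(\be)\cap\Om_2$. I then need monotonicity $C_1(\be)\subseteq C_1(\al)$ for $\be\le\al$. This holds by induction on generation, since clause (4) for $\be$ uses the condition $\ga\in\be\subseteq\al$. Hence $\de\in C_1(\al)$.

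So the whole argument is a simultaneous induction on $\al$ proving monotonicity and initial-segment-ness. This is the step I expect to require the most care, in particular checking that no element arising from the closure lands in a gap.

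\textbf{Step 3: items (2)--(5) as computations.} For (2), $C_1(0)$ has no $\psi_1$ clause, and sums and $\psi_2$ of countable ordinals stay countable or jump to $\ge\Om_2$. So $C_1(0)\cap\Om_2=\Om$, and $\psi_1 0=\Om$.

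For (4), take $\al\in C_1(\al)\cap\Om_2$ and argue by induction on $\al$. The ordinals $\psi_1\be$ for $\be<\al$ with $\be\in C_1(\be)$ equal $\om^{\Om+\be}$. Closing under sums therefore yields exactly the ordinals $<\om^{\Om+\al}$. Conversely, $\om^{\Om+\al}$ itself is not generated, since producing it would require $\psi_1\al$ with $\al<\al$. This gives (4).

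Item (3) is then immediate from (4) in the countable-argument range: $\om^{\Om+\al+1}=\om^{\Om+\al}\cdot\om$. Beyond $\Om_2$, $\psi_1(\al+1)$ adds exactly $\psi_1\al$ as a new additive principal, and its finite multiples exhaust the set up to $\psi_1\al\cdot\om$. The element $\psi_1(\al+1)$ is not in $C_1(\al+1)$, because the only new generator is $\psi_1\al$.

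For (5), every $\be<\Om_2$ lies in $C_1(\Om_2)$ whenever $\be\in C_1(\be)$, so (4) gives closure under $\xi\mapsto\om^{\Om+\xi}$ starting from $\Om$. Iterating reaches every ordinal below $\varepsilon_{\Om+1}$, while $\varepsilon_{\Om+1}$ itself is a fixed point that cannot be generated. Hence $\psi_1\Om_2=\varepsilon_{\Om+1}$.
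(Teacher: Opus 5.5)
The paper states this lemma without proof (it is background from Buchholz--Sch\"utte), so your sketch has to stand on its own. Items (1), (2), (6) and your direct argument for (3) are essentially fine. In fact the step you flag as the main obstacle in (6) is immediate: $\de<\psi_1\be$ already means $\de\in C_1(\be)$, by the definition of $\psi_1\be$ as the least non-element, and monotonicity $C_1(\be)\subseteq C_1(\al)$ finishes it.

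The genuine gap is in (4). Your argument never uses the hypothesis $\al\in C_1(\al)$, and without it the statement is false. Take $\al=\varepsilon_{\Om+1}+1<\Om_2$. By (5), (6) and monotonicity, $\varepsilon_{\Om+1}\notin C_1(\varepsilon_{\Om+1})$, so the $\psi_1$-clause yields nothing new at the argument $\varepsilon_{\Om+1}$. Hence $\psi_1\al=\varepsilon_{\Om+1}$, whereas $\om^{\Om+\al}=\varepsilon_{\Om+1}\cdot\om$. The step that breaks is ``closing under sums therefore yields exactly the ordinals $<\om^{\Om+\al}$''. Your induction hypothesis only supplies $\psi_1\be=\om^{\Om+\be}$ for those $\be<\al$ with $\be\in C_1(\be)$. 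Sums cannot create new additive principals, so these values exhaust the additive principals in $[\Om,\om^{\Om+\al})$ only if \emph{every} $\be<\al$ satisfies $\be\in C_1(\be)$. That is precisely what must be extracted from $\al\in C_1(\al)$.

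The missing lemma is that $\{\be<\Om_2:\be\in C_1(\be)\}$ is downward closed. Suppose $\be<\Om_2$ and $\be\notin C_1(\be)$, so $\psi_1\be\le\be$ by (6). By induction on $\ga\in[\be,\Om_2)$ one gets $C_1(\ga)=C_1(\be)$, because $C_1(\be)$ is closed under the clauses defining $C_1(\ga)$:
an argument $\de<\be$ with $\de\in C_1(\de)$ already fires in $C_1(\be)$;
an argument $\de\in[\be,\ga)$ has $\de\notin C_1(\de)$ by the induction hypothesis.
Consequently $\ga\notin C_1(\ga)$, since $\be\le\ga<\Om_2$ and $\psi_1\be\le\be$. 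Contrapositively, $\al\in C_1(\al)$ forces $\be\in C_1(\be)\subseteq C_1(\al)$ for every $\be<\al$. Then $\om^{\Om+\be}=\psi_1\be\in C_1(\al)$ for all $\be<\al$, and only now does sum-closure give all of $\om^{\Om+\al}$.

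Your upper bound (nothing generated reaches $\om^{\Om+\al}$) is correct, and it holds for every $\al<\Om_2$. With (4) repaired, (5) goes through as you describe. You only apply (4) to the iterates $\Om,\Om^2,\om^{\Om^2},\ldots$, each of which lies in its own $C_1$, and the upper-bound half of (4) covers $\varepsilon_{\Om+1}\notin C_1(\Om_2)$.

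Two small slips:
in (1) you need only $\aleph_1<\aleph_2$, not regularity of $\Om_2$;
in (3) what must be shown absent from $C_1(\al+1)$ is $\psi_1\al\cdot\om$, not $\psi_1(\al+1)$. Your claim that $\psi_1\al$ is ``the only new generator'' needs the observation that $\psi_1\be\in C_1(\al)$ already for $\be<\al$ with $\be\in C_1(\be)$.
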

Now define the sets $C_0(\alpha)$ and the collapsing function $\psi_0$ by recursion on $\alpha$.

\begin{definition} Let $C_0(\al)$ be the least set of ordinals such that:
	\begin{enumerate}
		\item $ 0,1 \in C_0(\al)$.
		\item If $\be \in C_0(\al)$ then $\psi_2(\be) \in C_0(\al)$.
		\item If $\be_0,\ldots , \be_n \in C_0(\al) $  with $n \geq 1$ are additive  principal ordinals then $\be_1+  \ldots  + \be_n \in C_0(\al)$.
		\item  If $\be\in C_0(\al)\cap C_1(\be)$  then $\psi_1 \be\in C_0(\al)$. 
		\item If $\be\in C_0(\al)\cap \al \cap C_0(\be)$ then $\psi_0 \be\in C_0(\al)$.
		\item 	$\psi_0\al:=\min\{\be:\be\not\in C_0(\al)\}$.
	\end{enumerate}
\end{definition}

The construction of $C_0(\alpha)$ includes the natural numbers(due to the closure of addition function), $\Om_2$, and the images of $\psi_1$. The smallest ordinal it does not contain is $\om$, hence $\psi_00=\om$.\medskip
\\
Similarly, $C_0(1)$ contains all the natural numbers, and this time also $\om$. Since $0 \in C_0(1)\cap C_1(0)$, then $C_0(1$) also contains $\psi_10=\Om$. By closure under addition, the set $C_0(1)$ contains elements such as $\om+1,\om+\om,\ldots, \Om+1, \Om+2,\ldots$. The first smallest ordinal that it does not contain is $\om^2$. Hence, $\psi_01=\om^2$.
Therefore we have $\psi_0\al=\om^{1+\al}$ for all $\al<\varepsilon_0$, the smallest fixed point of the function $\al\rightarrow\om^{1+\al}$.\\
Some of the properties of the $\psi_0$ function are stated in the following Lemma.

\begin{lemma} Properties of $\psi_0$:
	\begin{enumerate}
		\item $\psi_0\al<\Om$. 
		\item $\psi_00=\om$.
		\item  If $\al\in C_0(\al)$ then $\psi_0(\al+1)=\psi_0(\al)\cdot \om$.
		\item  If $\al \in C_0(\al)\cap \Om$ then $\psi_0(\al) = \om^{1+\al}$.
		\item $\psi_0(\Om)=\varepsilon_0$
		\item $\psi_0(\al)=C_0(\al)\cap \Om$ for $\al<\Om_2$.
	\end{enumerate}
\end{lemma}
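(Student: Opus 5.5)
The plan is to run the argument already used for the corresponding lemma on $\psi_1$, one level lower. The key structural fact is a cardinality bound. $C_0(\al)$ is generated from $\{0,1\}$ by countably many finitary operations: $\psi_2$, finite sums of additive principals, $\psi_1$ and $\psi_0$ restricted to arguments in the set. So by induction on the generation stages $C_0(\al)$ is countable. Hence $\{\be:\be\notin C_0(\al)\}\cap\Om\neq\emptyset$, and so $\psi_0\al<\Om$, which gives (1).

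For (6) I will show that $C_0(\al)\cap\Om$ is an initial segment of $\Om$, for $\al<\Om_2$. Take $\ga\in C_0(\al)\cap\Om$ and argue by induction on the build-up of $\ga$ that every $\de<\ga$ lies in $C_0(\al)$.

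The cases are as follows.
\begin{itemize}
\item Sums reduce to the summands, since any $\de<\be_1+\dots+\be_n$ is a smaller sum of ordinals below principal summands.
\item $\psi_2(\be)\ge\Om_2$ never lies below $\Om$.
\item $\psi_1\be\ge\Om$ by the $\psi_1$ lemma (2) together with monotonicity, so it also never lies below $\Om$.
\item The only genuine case is $\ga=\psi_0\be$ with $\be<\al$ and $\be\in C_0(\be)$. By the induction hypothesis on $\be$ (an outer induction on $\al$), $\psi_0\be=C_0(\be)\cap\Om$. Since $C_0(\be)\subseteq C_0(\al)$ for $\be\le\al$ (monotonicity, proved by induction on generation), every $\de<\psi_0\be$ lies in $C_0(\al)$.
\end{itemize}

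Combined with minimality, this yields $\psi_0\al=C_0(\al)\cap\Om$. I expect this step to be the main obstacle. The danger is elements of $C_0(\al)$ that are $\ge\Om$: their subterms (e.g.\ $\psi_1$ of something) must not smuggle in new countable ordinals through the sum clause. I handle this by noting that sums of additive principals lying below $\Om$ have all summands below $\Om$.

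The remaining items are computations from (6).
\begin{itemize}
\item[(2)] $C_0(0)$ lacks clause (5), and its countable part is closed only under finite sums of $1$'s; so $C_0(0)\cap\Om=\om$.
\item[(3)] If $\al\in C_0(\al)$, then $\psi_0\al\in C_0(\al+1)$. Closure under sums gives all $\psi_0\al\cdot n$. Conversely, an induction shows every countable element of $C_0(\al+1)$ is below $\psi_0\al\cdot\om$, because the only new generator is $\psi_0\al$ and $\om^{\psi_0\al\cdot\om}$-type terms are not producible: countable ordinals arise only through sums and $\psi_0$.
\item[(4)] By induction on $\al<\Om$: for limit $\al$, $\psi_0\al=\sup_{\be<\al}\psi_0\be$. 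This uses the remark that $\om^{1+\al}$ is continuous and that $\al\in C_0(\al)$ forces $\al$ below the first fixed point, together with (3) at successors.
\item[(5)] This follows from (4) and continuity. For $\al<\varepsilon_0$ the values $\om^{1+\al}$ are cofinal in $\varepsilon_0$. Moreover, $\varepsilon_0\notin C_0(\Om)$, since any $\psi_0\be$ with $\be<\Om$, $\be\in C_0(\be)$, has $\be<\varepsilon_0$, and sums of smaller terms stay below $\varepsilon_0$.
\end{itemize}
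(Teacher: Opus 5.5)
Your proposal is correct. The paper states this lemma without proof; it offers only the informal computations preceding it ($\psi_00=\om$, $\psi_01=\om^2$, and $\psi_0\al=\om^{1+\al}$ for $\al<\varepsilon_0$). Your route is the standard Buchholz--Sch\"utte style argument, and it agrees with those computations:
\begin{itemize}
\item countability of $C_0(\al)$ gives (1);
\item for (6), $C_0(\al)\cap\Om$ is an initial segment, by an outer induction on $\al$ and an inner induction on generation, where the only non-trivial case $\psi_0\be$ is handled by $C_0(\be)\subseteq C_0(\al)$;
\item (2)--(5) are then read off from (6).
\end{itemize}

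Two routine points in (4) and (5) should be made explicit.

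First, at a successor $\al=\al_0+1$ you must know $\al_0\in C_0(\al_0)$ before invoking (3). This holds: otherwise the $\psi_0$-clause of $C_0(\al_0+1)$ produces nothing new, so $C_0(\al_0+1)=C_0(\al_0)$. Then $\al_0+1\in C_0(\al_0)$, and the initial-segment property would give $\al_0\in C_0(\al_0)$, a contradiction.

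Second, at limit stages of (4), and for the lower bound in (5), you apply (4) to ordinals $\be<\varepsilon_0$. This needs $\be\in C_0(\be)$, which your sketch never establishes. The cleanest fix is to prove, by induction on $\be<\varepsilon_0$, that $\psi_0\be=\om^{1+\be}$:
\begin{itemize}
\item the countable part of $C_0(\be)$ is the sum-closure of $\{1\}\cup\{\om^{1+\ga}:\ga<\be\}$, which is exactly $\om^{1+\be}$;
\item then $\be<\om^{1+\be}=\psi_0\be$ gives $\be\in C_0(\be)$ by (6).
\end{itemize}
Together with your observation that $\al\in C_0(\al)\cap\Om$ forces $\al<\varepsilon_0$, this yields (4) directly. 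It also supplies the cofinal family $\psi_0\be\in C_0(\Om)$, $\be<\varepsilon_0$, needed for (5).
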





\subsection{Notation system \texorpdfstring{$\OT$}{OT}}\label{subsec:NotationSys}

We develop the ordinal notation system following the construction by Buchholz, starting with a syntactical definition of the non wellfounded superset $\T$. Then we will construct the wellfounded part $\OT \subseteq \T$.

\begin{definition}
    An ordinal $\al$ is an {\em additive principal ordinal} if $\al\neq 0$ and  $\xi, \eta <\al$ also implies $\xi+\eta<\al$. We denote the class of principal ordinals by AP.
\end{definition}
Thus an additive principal ordinal cannot be decomposed into the sum of smaller ordinals. The first additive principal number is 1, and the other additive principal numbers are limit ordinals, $\psi_0\al, \psi_1\be, \psi_2\be$. One can prove that $\al \rightarrow \om^\al$ is the enumerating function of AP.

\begin{definition}[The term system $(\T,<)$] Inductive definition of a set $\T$ of terms and binary relation $<$ on $\T$.:    \begin{enumerate} 
        \item $0,1 \in T$ and $1$ is a principal term.
        \item If $\be \in T$  then $\psi_2\be \in T$, $\psi_1\be \in T$, $\psi_0\be \in T$ are principal terms.
        \item If $\al_0,\ldots ,\al_n \in T$  with $ n \geq 1 $ are principal terms with $\al_0 \geq \ldots   \geq \al_n $ then $\al_0 +\ldots + \al_n \in T$.
        \item If $0 \neq \al$ then $0<\al$.
        \item $1<\psi_0\al$.
        \item If $\al<\be$ then $\psi_0\al<\psi_0\be$ and  $\psi_1\al<\psi_1\be$ and $\psi_2\al < \psi_2\be$.
        \item If $i<j$ then $\psi_i\al<\psi_j\be$
        \item If $\al= \al_0+\ldots +\al_n$ and $\be=\be_0+\ldots+\be_m$ with $n+m\geq 1 $ 
        and $\al_0<\be_0 \lor ( \al_0 =\be_0 $ and $ \al_1 +\ldots +\al_n < \be_1 + \ldots + \be_m )$ then $\al<\be$.
    \end{enumerate}
\end{definition}
\begin{definition}
$NF(\al,\be)$ denotes that $ \al$ is a principal term and $\be = \be_0+\ldots+\be_{m-1}\in T$ such that $m\geq 0$ and $(0<m \Rightarrow \be_0\leq\al)$.
\end{definition}

\begin{lemma}
	$<$ is a linear order on $\T$.
\end{lemma}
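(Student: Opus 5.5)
We prove the claim by induction on the syntactic build-up of terms, using the length (number of symbols) of $\al$ plus that of $\be$ as the measure. For $\al,\be\in\T$ we show three things: exactly one of $\al<\be$, $\al=\be$, $\be<\al$ holds (trichotomy together with irreflexivity), and $<$ is transitive. It helps first to read the inductive definition as a recursive comparison procedure, sorted by the shape of the outermost constructor. A term is one of the following: $0$; $1$; a principal term $\psi_i\ga$ with $i\in\{0,1,2\}$; or a sum $\al_0+\ldots+\al_n$ with $n\ge1$ and principal summands in non-increasing order.

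\emph{Comparability.} Take $\al,\be$ and split into cases by shape.
\begin{itemize}
\item If one of them is $0$, clause (4) decides the comparison.
\item For $1$ against $\psi_0\ga$, clause (5) applies. For $1$ against $\psi_1\ga$ or $\psi_2\ga$, one needs $1<\psi_j\ga$ for $j\ge1$. This follows by combining (5) with (7), once transitivity is available, or is read off via $1<\psi_0\ga<\psi_j\ga$ (taking $\ga=0$).
\item For $\psi_i\ga$ against $\psi_j\de$ with $i\ne j$, clause (7) decides. If $i=j$, apply the induction hypothesis to $\ga,\de$ and then clause (6).
\item If at least one side is a sum, treat a principal term as a one-element sum and use clause (8). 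Compare the first summands $\al_0,\be_0$ by the induction hypothesis. If they are equal, compare the tails by the induction hypothesis, since the tails are shorter.
\end{itemize}
This case analysis shows that some relation always holds. It also shows that each comparison is decided by exactly one rule. One small point is the empty tail: when one side is a single principal term equal to the other side's head, the side with the longer sum is larger. This follows from $0<\ga$ for nonzero $\ga$, via clause (8) with $n+m\ge1$.

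\emph{Irreflexivity and asymmetry.} We show by the same induction that $\al<\be$ and $\be\le\al$ never both hold. Since $<$ is the least relation closed under clauses (4)–(8), every instance $\al<\be$ has a last clause deriving it. Inspecting the outermost shapes, only one clause can apply to a given pair. So if both $\al<\be$ and $\be<\al$ held, the same clause would give the reversed comparison for strictly smaller components, and that contradicts the induction hypothesis. For example, clause (6) would require $\ga<\de$ and $\de<\ga$. Clause (7) cannot be used in both directions, because it needs $i<j$ and $j<i$. The one delicate case is a single principal term $\psi_i\ga$ viewed as a one-element sum under (8) against a sum $\be_0+\ldots$. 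To handle it, we stipulate (or check) that (8) with $\al_0=\be_0$ and an empty tail on the left only yields $\al<\be$.

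\emph{Transitivity.} Suppose $\al<\be<\ga$. We use induction on the combined length of the three terms and split into cases by which clauses produced the two inequalities.
\begin{itemize}
\item Two uses of (6) reduce to transitivity for the arguments.
\item Mixing (6) and (7), or two uses of (7), yields (7) directly, because the indices are ordered.
\item If a $0$ or $1$ is involved, use (4)/(5) together with the fact established above that $1$ lies below every $\psi$-term.
\item If (8) is involved, compare the heads. Either we get $\al_0<\be_0\le\ga_0$, which gives $\al_0<\ga_0$ by the induction hypothesis, or the heads coincide and we apply the induction hypothesis to the tails.
\end{itemize}

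\emph{Main obstacle.} The difficulty is bookkeeping. Because $<$ is defined inductively together with $\T$ (clause 3 already uses $\ge$), we must make sure that the comparison is well-defined before normal-form sums are formed. The remedy is to define $<$ by recursion on term length as an explicit decision procedure. Then we verify that it coincides with the inductively generated relation. Once that is done, all three properties follow routinely by the case splits above.
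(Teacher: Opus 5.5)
\textbf{Gap: the comparison of $1$ with $\psi_1\ga$ and $\psi_2\ga$.} The paper states this lemma without proof, so your argument has to stand on its own. Its overall architecture is the standard one and is fine:
\begin{itemize}
\item induction on term length;
\item the observation that each pair of outermost shapes is governed by exactly one of clauses (4)--(8);
\item inversion on that clause for irreflexivity and asymmetry;
\item lexicographic comparison of sums, with principal terms read as one-element sums.
\end{itemize}
The problem is how you handle $1$ against $\psi_1\ga$ and $\psi_2\ga$, and the argument there is circular. You obtain $1<\psi_j\ga$ ``once transitivity is available'', via $1<\psi_0 0<\psi_j\ga$. Then, in the transitivity proof, you appeal to ``the fact established above that $1$ lies below every $\psi$-term.''

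Moreover, the step cannot be rescued from the clauses as written. Since $<$ is the least relation closed under (4)--(8), and transitivity is not a closure condition, $1<\psi_j\ga$ could only come from a clause with a conclusion of that form. There is none:
\begin{itemize}
\item (5) mentions only $\psi_0$;
\item (7) needs $\psi$-terms on both sides;
\item (8) needs one side to be a proper sum.
\end{itemize}
So in the literally generated relation we have $1<\psi_0 0<\psi_1 0$, yet $1$ and $\psi_1 0$ are incomparable. Likewise $1$ and $\psi_1 0+1$ are incomparable, since (8) would need $1$ and $\psi_1 0$ to be comparable. The relation as defined is therefore neither total nor transitive.

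\textbf{Repair.} State explicitly that clause (5) must be read as $1<\psi_i\al$ for every $i\in\{0,1,2\}$. This is evidently what is intended, since semantically $1<\om\leq\psi_i\al$. With that amendment:
\begin{itemize}
\item the $1$-versus-$\psi_j$ case is settled by a single clause;
\item every pair of outermost shapes is still handled by exactly one clause, so your inversion argument for asymmetry is unaffected;
\item your transitivity induction no longer relies on anything unproved.
\end{itemize}
Closing $<$ under transitivity would be the wrong repair. A derivation of $\al<\be$ could then end in a transitivity step, and your last-clause inversion would no longer apply directly.

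\textbf{Minor point.} Your ``delicate case'' of $\psi_i\ga$ against a longer sum with head $\psi_i\ga$ needs no stipulation. The reverse inequality would require the tail of the longer sum to lie below $0$, and no clause has a conclusion of that form.
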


{\em Abbreviation.} $\om=\psi_00$ and $\Om_s :=\psi_s0$.

\begin{definition}[Operations on $\T$]
We extend the structure of $\T$ with the following operations:
\begin{enumerate}
\item \textbf{Addition}: 
If $\alpha = \sum \alpha_i$ and $\beta = \sum \beta_j$ are principal, $\alpha + \beta = \alpha_0 + \dots + \alpha_{i-1} + \beta$, where $i = \max{j \le n : \alpha_j \ge \beta_0}$.

\item \textbf{Multiplication}: For a principal term $\alpha \in \T$ and $n \in \mathbb{N}$, $\alpha \cdot 0 = 0$, and $\alpha \cdot (n+1) = (\alpha \cdot n) + \alpha$.
\end{enumerate}
\end{definition}

\begin{definition} Let the norm function $N(\al)$ for $\al \in \T$ be defined recursively as follows:
\begin{itemize}
    \item $N(0) := 0$ and $N(1):=1$.
    \item If $\al_0, \ldots , \al_n \in T$ with $n\geq 1$ and $\al=\al_0 +\ldots +\al_n$ then $N\al = N\al_0 +\ldots + N\al_n$.
    \item If $\be \in T$ then $N\psi_i(\be) = 1 +N(\be)$ for $i \in\{0,1,2\}$.
\end{itemize}
\end{definition}
\medskip

The elements of $\OT$ are ordinal terms in normal form. The sets $G_0$ and $G_1$  are defined to characterize the subterms of $\T$ necessary for defining $\OT$:
\begin{definition}[Set $G_1(\al)$]
For $\alpha \in \T$:
\begin{enumerate}
    \item If $\al<\Om$ then $G_1\al:=\emptyset$.
    \item If $\be \in T$ and $\xi = \om^{\Om_2 +\be }$ then $G_1\xi = G_1\be$.
    \item If $\al_0,\ldots,\al_n \in T$ and $\xi=\al_0+\ldots+\al_n$ then $G_1\xi = G_1\al_0 \cup  \ldots \cup G_1\al_n$.
    \item If $\be\in T$ then $G_1 \psi_1 \be=\{\be\}\cup G_1\be$ and  $G_1 \psi_0 \be=\emptyset$.
\end{enumerate}
\end{definition}

\begin{definition}[Set $G_0(\al)$]
For $\alpha \in \T$:
\begin{enumerate}
    \item $G_00:=\emptyset$.
    \item If $\be \in T$ and $\xi = \om^{\Om_2 +\be }$ then $G_0\xi = G_0\be$.
    \item If $\al_0,\ldots,\al_n \in T$ and $\xi=\al_0+\ldots+\al_n$ then $G_0\xi = G_0\al_0 \cup  \ldots \cup G_0\al_n$.
    \item If $\be\in T$  then $G_0 \psi_1\be=G_0\be$ and $G_0 \psi_0 \be=\{\be\}\cup G_0\be$.
\end{enumerate}
\end{definition}
\smallskip


With this definition, we can now define the well-founded system $\OT$.
\begin{definition}[The Ordinal Notation System $(\OT, <\restriction \OT)$]  Inductive definition of a subset $\OT$ of $\T$. The binary relation $<$ is restricted to $\OT$.
\begin{enumerate}
    \item $0,1\in \OT$.
    \item $\be \in \OT$ then $\psi_2\be \in \OT$.
    \item If $\al= \al_0+\ldots+\al_n \in T$ and $\al_0,\ldots,\al_n\in \OT$, with $n\geq 1$ then $\al \in \OT$ .
    \item If $\be \in \OT$ and  $G_1\be <\be$  then $\psi_1\be \in \OT$. 
    \item If $\be \in \OT$ and $G_0\be <\be$ and $\be < \Om_2$ then $\psi_0\be \in \OT$.
\end{enumerate}
\end{definition}

\begin{lemma}
Assume $\al,\be \in \OT$.
\begin{enumerate}
    \item 	$\al \in C_0(\be)$ if and only if $G_0\al <\be$.
    \item 	$\al \in C_1(\be)$ if and only if $G_1\al <\be$.
\end{enumerate}
\end{lemma}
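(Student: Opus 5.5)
We argue by induction on the term complexity of $\al$ (equivalently on the norm $N\al$), proving (1) and (2) simultaneously. Throughout, terms of $\OT$ are identified with the ordinals they denote, and we use the order-theoretic facts on $\psi_0,\psi_1$ from the two lemmas above. Two auxiliary facts are needed.

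\textbf{Monotonicity:} $C_i(\ga)\subseteq C_i(\de)$ for $\ga\le\de$, and $C_1(\de)\subseteq C_0$-style closure as needed. This is immediate from the inductive definitions.

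\textbf{Decomposition:} $C_i(\be)$ is closed under taking summands of normal forms and under the inverse of $\psi_2$. That is, if $\al_0+\ldots+\al_n\in C_i(\be)$ is a normal form, then each $\al_j\in C_i(\be)$; and if $\psi_2\ga\in C_i(\be)$, then $\ga\in C_i(\be)$. To prove this, show that the set of ordinals all of whose normal-form components lie in $C_i(\be)$ satisfies the closure clauses. This uses the fact that values $\psi_0\ga,\psi_1\ga$ are additively principal and are not of the form $\psi_2(\cdot)$, since $\psi_1\ga<\Om_2\le\psi_2\ga$ and $\psi_0\ga<\Om$.

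With these in hand, the cases for $0$, $1$, sums and $\psi_2$ terms follow directly from the induction hypothesis and the matching clauses of $G_i$. For example, $\al=\psi_2\ga\in C_0(\be)$ iff $\ga\in C_0(\be)$ iff $G_0\ga<\be$ iff $G_0\al<\be$. For (2), every $\al<\Om$ lies in $C_1(\be)$ by clause (1), matching $G_1\al=\emptyset$. We must also check that every $\psi_0$-term in $\OT$ denotes an ordinal below $\Om$, which is the first property of $\psi_0$.

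The main case is $\al=\psi_1\ga$ (and, symmetrically, $\al=\psi_0\ga$ for (1)).

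For (2): if $G_1\al=\{\ga\}\cup G_1\ga<\be$, then $\ga<\be$, and $G_1\ga<\ga$ because $\al\in\OT$. By the induction hypothesis $\ga\in C_1(\ga)$, and also $\ga\in C_1(\be)$, so clause (4) gives $\psi_1\ga\in C_1(\be)$.

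Conversely, suppose $\psi_1\ga\in C_1(\be)$. We show that either $\psi_1\ga$ was generated by clause (4) from $\ga$ itself, or $\psi_1\ga<\psi_1\be$ lies in $\Om\cup$ (a part already covered by clause (1)). The delicate point is that $\psi_1\ga$ might a priori arise as $\psi_1\de$ with $\de\ne\ga$ and $\de<\be$. This is handled by injectivity of $\psi_1$ on arguments $\de$ with $\de\in C_1(\de)$: for $\de<\de'$ with $\de\in C_1(\de')$ we get $\psi_1\de\in C_1(\de')$, hence $\psi_1\de<\psi_1\de'$. Because $\OT$-terms satisfy $G_1\ga<\ga$, the induction hypothesis gives $\ga\in C_1(\ga)$, so $\ga$ is such an argument. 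Hence $\ga=\de$, so $\ga<\be$, $\ga\in C_1(\be)$, and $G_1\ga<\be$ by the induction hypothesis. Therefore $G_1\al<\be$.

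To make the "generated by" argument rigorous, one proves by induction on the generation of $C_1(\be)$ that every element is $<\Om$ or is a sum, a $\psi_2$-value, or $\psi_1\de$ with $\de\in C_1(\be)\cap\be\cap C_1(\de)$.

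The $\psi_0$ case of (1) is identical, using clause (5) and the condition $\ga<\Om_2$, together with $G_0\psi_1\ga=G_0\ga$ for clause (4). In the $\psi_1$ case of (1), $\psi_1\ga\in C_0(\be)$ iff $\ga\in C_0(\be)$; here $\ga\in C_1(\ga)$ holds automatically for $\OT$-terms, by the already-established part (2) at the smaller term $\ga$. This is exactly why (1) and (2) are proved jointly.

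I expect the main obstacle to be the converse direction in the $\psi$ cases. It requires the injectivity and strict monotonicity of $\psi_i$ on "good" arguments, and the inversion (generation) lemma for $C_i(\be)$. I would establish these first, as separate sublemmas, by induction on $\be$.
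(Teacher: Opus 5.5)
Your proposal is correct and is essentially the standard Buchholz--Sch\"utte argument that the paper only cites. It proves (1) and (2) jointly by induction on the term $\alpha$, using three facts: monotonicity of $C_i$, the decomposition/inversion property of $C_i(\beta)$, and injectivity of $\psi_i$ on arguments $\gamma$ with $\gamma\in C_i(\gamma)$, which the induction hypothesis supplies from $G_i\gamma<\gamma$.

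Two small remarks. First, $\gamma<\beta$ can be read off directly: if $\gamma\ge\beta$, then $\psi_1\gamma\in C_1(\beta)\subseteq C_1(\gamma)$, contradicting the definition of $\psi_1\gamma$. Second, like the statement itself, you tacitly use that the syntactic order on $\OT$ agrees with the order of the denoted ordinals. This is needed in the sum case, so that the summands of a term are exactly its Cantor normal form components.
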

\begin{proof}
Proofs can be found in \cite{BuchholzSchutte}.
\end{proof}

\begin{lemma}
If	$\be\in \OT$ then $G_0\be\subseteq \OT$ and $G_1\be\subseteq \OT$.
\end{lemma}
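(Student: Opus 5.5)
The plan is a simultaneous induction on the inductive generation of $\OT$, equivalently on the term complexity $N(\be)$, with both inclusions $G_0\be\subseteq\OT$ and $G_1\be\subseteq\OT$ proved together. For each clause of the definition of $\OT$ that could have put $\be$ into $\OT$, I will compute $G_0\be$ and $G_1\be$ from the defining clauses of $G_0$ and $G_1$. I will then check that every element is either an immediate subterm of $\be$ that already lies in $\OT$, or an element of $G_i$ of such a subterm, which lies in $\OT$ by the induction hypothesis.

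The cases run as follows.

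\begin{itemize}
\item \textbf{Case $\be\in\{0,1\}$.} Both sets are empty. For $G_1$ this is clause (1), since $0,1<\Om$. For $G_0$ it is clause (1) for $0$. For $1$, I read $1$ as the sum with a single summand, or as a degenerate principal term with $G_01=\emptyset$; this should be made explicit in the proof.
\item \textbf{Case $\be=\psi_2\ga$ with $\ga\in\OT$.} Since $\psi_2\ga=\om^{\Om_2+\ga}$, we get $G_i\be=G_i\ga\subseteq\OT$ by the induction hypothesis.
\item \textbf{Case $\be=\al_0+\dots+\al_n$ with all $\al_j\in\OT$.} Here $G_i\be=\bigcup_j G_i\al_j$, and each summand has smaller norm, so the induction hypothesis applies.
\item \textbf{Case $\be=\psi_1\ga$ with $\ga\in\OT$ and $G_1\ga<\ga$.} Then $G_1\be=\{\ga\}\cup G_1\ga$; here $\ga\in\OT$ by hypothesis and $G_1\ga\subseteq\OT$ by induction. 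Also $G_0\be=G_0\ga\subseteq\OT$ by induction.
\item \textbf{Case $\be=\psi_0\ga$ with $\ga\in\OT$, $G_0\ga<\ga$ and $\ga<\Om_2$.} Then $G_0\be=\{\ga\}\cup G_0\ga\subseteq\OT$ in the same way. If $\be<\Om$ then $G_1\be=\emptyset$ by clause (1); otherwise clause (4) still gives $G_1\psi_0\ga=\emptyset$.
\end{itemize}

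The only real obstacle is that $G_0$ and $G_1$ are defined by clauses on terms, not on the generation of $\OT$, and the clauses overlap. For instance, clause (1) of $G_1$ ("$\al<\Om$") can overlap clauses (2)--(4). The fix is to observe that whenever clause (1) applies the set is empty, so the inclusion is trivial. Otherwise the term's outermost constructor determines a unique clause, and it matches the $\OT$-clause used.

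A second subtlety is that a sum $\al_0+\dots+\al_n$ in $\OT$ need not have been generated by clause (3) alone. I handle this by inverting the generation: every term in $\OT$ arises from a clause matching its outermost syntactic form, since the forms $0$, $1$, $\psi_i\ga$ and a sum with $n\ge1$ are syntactically distinct. Because every immediate subterm has strictly smaller norm $N$, induction on $N(\be)$ is legitimate and the argument closes.
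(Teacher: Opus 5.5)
Your argument is correct. The paper states this lemma without proof, deferring (like the neighbouring results) to Buchholz--Sch\"utte, and the structural induction over the generation of $\OT$ (equivalently on $N(\be)$) that you give is the standard proof: each clause of $G_0,G_1$ yields only immediate subterms already in $\OT$ or $G_i$-sets of such subterms.

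One small correction concerns the term $1$. Reading $1$ as a one-summand sum does not work, because clause (3) of $G_0$ would then give the circular $G_01=G_01$. You must instead adopt the convention $G_01:=\emptyset$, as in your second reading.
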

We say that {\em $\be$ is in normal form,} denoted $\be =_{NF} \psi_0(\al)$ if and only if $G_0(\al)<\al$.
Similarly, $\be =_{NF} \psi_1(\al)$ if and only if $G_1(\al)<\al$.  Therefore sets $G_0$ and $G_1$ are used to restrict the terms in $\T$ to terms in normal form in $\OT$. This way we ensure there is no descending sequence $\Om>\psi_0\Om>\psi_0(\psi_0\Om)>...$ since no terms such as $\psi_0(\psi_0(\psi_0\Om))$ are allowed in $\OT$.

\begin{theorem}[Buchholz and Sch\"utte \cite{BuchholzSchutte}]
$\;$
\begin{enumerate} 
    \item $\OT$ is well–ordered by $<$.
     \item  $|\ID_2| = \psi_0\psi_1\varepsilon_{\Omega_2+1}$.
\end{enumerate}
\end{theorem}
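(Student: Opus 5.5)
Part (1) will be proved through the semantic interpretation of the notations; part (2) combines a well-ordering proof in $\ID_2$ with an embedding of $\ID_2$-derivations.

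\textbf{Part (1).} I would interpret every term $\al\in\OT$ as an actual ordinal, reading $\psi_0,\psi_1,\psi_2$ as the set-theoretic functions of Subsection~\ref{subsec:FunctionPsi} and $+$ as ordinal addition.

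The main claim, proved by induction on the norm $N(\al)+N(\be)$, is that for $\al,\be\in\OT$ the syntactic relation $\al<\be$ of the term system holds exactly when the corresponding ordinals satisfy $\al<\be$. Well-ordering is then inherited from the ordinals.

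The clauses of the syntactic order split into easy and hard cases.
\begin{itemize}
\item The additive clause and the clause $\psi_i\al<\psi_j\be$ for $i<j$ are routine: they follow from the ranges $\psi_0\al<\Om$, $\psi_1\al<\Om_2$ and $\psi_2\be\ge\Om_2$ given in the property lemmas for $\psi_1$ and $\psi_0$.
\item The real content is that $\al<\be$ implies $\psi_i\al<\psi_i\be$ on normal forms. For this I will use the lemma characterising $C_i(\be)$ by $G_i\al<\be$.
\end{itemize}
Take $\al<\be$ with $G_i\al<\al$. Then $G_i\al<\be$, so $\al\in C_i(\be)\cap\be$. Together with $\al\in C_i(\al)$, clause (4) or (5) of the definition of $C_i$ gives $\psi_i\al\in C_i(\be)$. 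Since $C_i(\be)\cap\Om_{i+1}=\psi_i\be$, this yields $\psi_i\al<\psi_i\be$.

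The main obstacle sits in the induction itself. The lemma $G_i\al\subseteq\OT$ is needed so that the induction hypothesis covers the subterms appearing in the $G$-sets. In addition, the equivalence $\al\in C_i(\be)\iff G_i\al<\be$ must be established simultaneously with order-preservation, since each is used in proving the other.

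\textbf{Part (2), upper bound.} Here I would follow Buchholz--Sch\"utte and show that $\ID_2$ proves transfinite induction along every initial segment of $\OT$ below $\psi_0\psi_1\varepsilon_{\Om_2+1}$. The steps are:
\begin{itemize}
\item Define the accessible part $W_0$ of the ordering restricted to $\Om$, using the first inductive definition.
\item Define an accessibility predicate $W_1$ relative to $W_0$, via the second, nested inductive definition, to handle the terms below $\Om_2$.
\item Show that $W_0$ and $W_1$ are closed under $+$, $\psi_2$ and the collapses.
\item Use the standard Gentzen jump argument to lift induction up to each $\varepsilon_{\Om_2+1}[n]$.
\end{itemize}
This gives $|\ID_2|\ge\psi_0\psi_1\varepsilon_{\Om_2+1}$.

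\textbf{Part (2), lower bound.} I would embed $\ID_2$-derivations into an infinitary system with $\Om_1$- and $\Om_2$-rules. Then I would apply cut-elimination and collapse twice: first $\psi_1$, bringing derivation lengths below $\Om_2$, and then $\psi_0$, bringing them below $\Om$. This bounds provable well-orderings by $\psi_0\psi_1(\al)$ for some $\al<\varepsilon_{\Om_2+1}$.

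Because the paper cites \cite{BuchholzSchutte} for this, I would mainly verify that the two-step collapse coincides with their simultaneous $\psi_0(\varepsilon_{\Om_2+1})$. The identification would go through the value $\psi_1\varepsilon_{\Om_2+1}$ landing at the corresponding point of their system.
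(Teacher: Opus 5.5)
Your part (1) is the standard Buchholz--Sch\"utte argument: you embed $\OT$ into the set-theoretic ordinals, proving $\al\in C_i(\be)\Leftrightarrow G_i\al<\be$ simultaneously with order preservation. The paper proves nothing here beyond the citation, so that part is fine.

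The gap is in part (2), at exactly the step you reduce everything to. The citation gives $|\ID_2|=\psi_0(\varepsilon_{\Om_2+1})$ for the \emph{simultaneous} $\psi_0$. Matching up $\psi_1\varepsilon_{\Om_2+1}$ across the two systems transfers nothing, because even if that ordinal is the same in both, the two $\psi_0$'s differ:
\begin{itemize}
\item In the simultaneous system, $C_0(\al)$ admits $\psi_1\xi$ only for $\xi<\al$. So for $\al<\Om_2$ one gets $C_0(\al)\cap\Om_2\subseteq\varepsilon_{\Om+1}$, and there $\psi_0\psi_1\varepsilon_{\Om_2+1}=\psi_0(\varepsilon_{\Om+1})$ is only the Bachmann--Howard ordinal.
\item In the paper's $C_0$, clause (4) admits $\psi_1\be$ for every $\be\in C_0(\al)\cap C_1(\be)$, with no bound $\be<\al$.
\end{itemize}
That unbounded closure is the only reason the two-step value can reach $|\ID_2|$, and your proposal never uses it.

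What has to be proved is a comparison of the $\psi_0$-argument spaces. You need an order-isomorphism between normal-form arguments of the two systems with these properties:
\begin{itemize}
\item it is the identity below $\varepsilon_{\Om+1}$;
\item it carries the simultaneous arguments in $[\Om_2,\varepsilon_{\Om_2+1})$ onto the two-step arguments in $[\psi_1\Om_2,\psi_1\varepsilon_{\Om_2+1})$;
\item it is compatible with both $G_0$-conditions and with $\psi_0$.
\end{itemize}
This yields that the paper's $C_0(\psi_1\varepsilon_{\Om_2+1})\cap\Om$ equals the simultaneous $C_0(\varepsilon_{\Om_2+1})\cap\Om$.

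The map is not $\xi\mapsto\psi_1\xi$. The simultaneous $\psi_0(\Om_2+1)$ equals the paper's $\psi_0(\psi_1(\Om_2)+1)$; both are the Bachmann--Howard ordinal times $\om$. By contrast, the paper's $\psi_0(\psi_1(\Om_2+1))=\psi_0(\varepsilon_{\Om+1}\cdot\om)$ is far larger. Only at points such as $\Om_2\cdot 2\mapsto\psi_1(\Om_2\cdot 2)$ does the map look like $\psi_1$.

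If instead you run your two ordinal-analytic halves directly on $\OT$, the same issue reappears inside the embedding. The operator-controlled collapse at level $0$ produces $\psi_0$ of an argument $\hat\ga+\om^{\Om+\be}$, with $\hat\ga\geq\Om_2$ inherited from the level-$1$ collapse. That has no counterpart in $\OT$, where $\psi_0$ is applied only below $\Om_2$. So the controlling operators must be rebuilt on the paper's $C_0$, with that parameter coded by $\psi_1$-values; ``collapse twice'' skips exactly this step.

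Two minor points. Your ``upper'' and ``lower'' bound labels are swapped. Also $W_1$, which lives below $\Om_2$, cannot be closed under $\psi_2$; the part above $\Om_2$ is what the jump handles.
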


\subsection{Fundamental sequences for \texorpdfstring{$|\ID_2|$}{ID2} }\label{subsec:FundSeqBach}
Fundamental sequences provide canonical increasing approximations to limit ordinals
by sequences of smaller ordinals. They are essential in proof theory, in particular for defining fast-growing hierarchies of functions, ordinal notation systems and in ordinal analysis. 

\begin{definition}
Let $\Lambda$ be an ordinal. A {\em system of fundamental sequences on $\Lambda$} is a function $\cdot [\cdot] \colon \Lambda \times \mathbb N \to \Lambda$ such that $  \alpha [n] \leq \alpha$ with equality holding if and only if $\alpha = 0$, and $\alpha[n] \leq \alpha[m]$ whenever $n\leq m$.
The system of fundamental sequences is {\em convergent} if $\lambda  = \lim_{n\to \infty} \lambda[n]$ whenever $\lambda $ is a limit.
\end{definition}

Another key property of a system of fundamental sequences is the {\em Bachmann property}. The proof-theoretic ordinals for the theories we are interested in come equipped with natural system of fundamental sequences that satisfy this property. 
\begin{proposition}[Bachmann property]
    If $\al,\be <\Lambda$ and $k\in \Nat$ satisfy  $\al[k] < \be < \al$, then $\al[k] \leq \be[1]$.
\end{proposition}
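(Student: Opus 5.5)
Since the fundamental sequences on $\OT$ are only fixed in the construction that follows, I describe the proof in terms of the intended Buchholz-style assignment. On additive terms, $(\al_0+\cdots+\al_n)[k]=\al_0+\cdots+\al_{n-1}+\al_n[k]$, and $1[k]=0$. On principal terms $\psi_i\be$, the value is determined by the \emph{cofinality type} of $\be$, which is one of: $0$, successor, $\om$, $\Om_1$ or $\Om_2$. For uncountable cofinality I take the usual device of a second, ordinal-indexed sequence $\al[\xi]$ with $\xi<\Om_1$ or $\xi<\Om_2$. For example, $\psi_0\be[k]$ with $\mathrm{cof}(\be)=\Om_1$ is obtained by iterating $\psi_0(\be[\cdot])$ $k$ times, starting from $\psi_0(\be[0])$ or from $0$.

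I will prove a strengthened statement, for all $\al$ and all admissible indices $\zeta$ (natural numbers or ordinals below the relevant $\Om_s$). The strengthened claim is: if $\al[\zeta]<\be<\al$, then $\al[\zeta]\le\be[\eta]$ for every admissible index $\eta$ of $\be$ with $\eta\ge 1$ (respectively $\eta\geq$ the least index). The stated property is the special case $\zeta=k$, $\eta=1$. The uncountable-index version is needed because the countable cases unfold into it.

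The proof is by induction on $N(\al)+N(\be)$, with a case split on the shape of $\al$.

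\textbf{Additive case.} For $\al=\al_0+\cdots+\al_n$ with $n\ge1$, any $\be$ strictly between $\al[\zeta]$ and $\al$ must share the prefix $\al_0+\cdots+\al_{n-1}$. After stripping this prefix, the claim reduces to the induction hypothesis for $\al_n$.

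\textbf{$\psi_2$ case.} For $\al=\psi_2\gamma$, the argument is the familiar one for $\om^{\cdot}$. Any $\be$ in the interval has leading summand $\psi_2\delta$ with $\gamma[\zeta]\le\delta<\gamma$, so I apply the induction hypothesis to $\gamma$ and $\delta$, with a subcase for $\gamma$ a successor.

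\textbf{$\psi_1$ and $\psi_0$ cases.} For $\al=\psi_i\gamma$ with $i\in\{0,1\}$, I first show that $\be$ can be assumed principal: otherwise its leading summand already lies in the interval, and $\be[\eta]$ is at least that summand. Next I observe that $\be$ cannot be of the form $\psi_j\delta$ with $j>i$, since that would make $\be>\al$. If $\be$ is of the form $\psi_j\delta$ with $j<i$, it is caught by the closure lemma $\psi_i\gamma=C_i(\gamma)\cap\Om_{i+1}$ from the lemmas on $\psi_1$ and $\psi_0$. If $\be=\psi_i\delta$ in normal form, monotonicity forces $\gamma[\zeta']\le\delta<\gamma$ for the appropriate inner index $\zeta'$.

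This last subcase is the main obstacle: $\al=\psi_0\gamma$ with $\mathrm{cof}(\gamma)=\Om_1$, and likewise $\psi_1\gamma$ with $\mathrm{cof}(\gamma)=\Om_2$. Here $\al[k]$ is an iterate $\psi_0\gamma[\xi_k]$ with $\xi_{k}=\psi_0\gamma[\xi_{k-1}]$. The key step is to show that if $\psi_0\gamma[\xi_k]<\psi_0\delta<\psi_0\gamma$, then $\delta\ge\gamma[\xi_k]$ and also $G_0\delta$ contains an element $\ge \xi_k$. For this I intend to use the lemma characterising $C_0(\be)$ by $G_0\al<\be$, in the form that $\psi_0\gamma[\xi_k]\le\psi_0\delta$ forces $\xi_k\in C_0(\delta)$ up to one unfolding. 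The induction hypothesis applied to $\gamma$ with the ordinal index $\xi_k$ then gives $\gamma[\xi_k]\le\delta[\eta]$ for the relevant index of $\delta$. From this I get $\psi_0\delta[1]\ge\psi_0\gamma[\xi_k]$, because the first step of the iteration defining $\psi_0\delta[1]$ already reaches $\psi_0(\delta[\cdot])$ with an index at least $\xi_k$.

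I expect to need an auxiliary lemma that $\gamma[\xi]$ stays in normal form, meaning $G_0(\gamma[\xi])<\gamma[\xi]$, whenever $\xi$ lies in $C_0(\gamma)$. I also expect to need $G_0(\gamma[\xi])\subseteq G_0\gamma\cup G_0\xi\cup\{\xi\}$. I plan to prove both by a separate induction beforehand.
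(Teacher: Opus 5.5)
\textbf{There is a genuine gap.} Note first that the paper does not prove this statement. For an abstract $\Lambda$ it is really a hypothesis on the chosen fundamental sequences, and the instance for $\OT_0$ (Theorem~\ref{thm:bachmannProp}) is only attributed to an adaptation of \cite{Weiermann95}. So your argument has to stand on its own, and as set up it does not: your strengthened induction hypothesis is false.

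Take $\al=\Om=\psi_1 0$, so $tp(\al)=\Om$ and $\al[\zeta]=\zeta$. Take the admissible index $\zeta=5$ and $\be=\om$. Then $\Om[5]=5<\om<\Om$, but $\om[1]=1<5$. Similarly $\Om_2[\Om\cdot 5]=\Om\cdot 5<\psi_1 1<\Om_2$, while $(\psi_1 1)[1]=\Om$. In general, when $tp(\al)\in\{\Om,\Om_2\}$, a $\be$ in the interval may carry, in the slot where $\al[\zeta]$ has $\zeta$, any larger parameter whose first fundamental-sequence element drops below $\zeta$. So no unconditional Bachmann property holds there. (This also means the property can only be asserted for $\al$ of type at most $\om$; read literally for all of $\OT_0$, Theorem~\ref{thm:bachmannProp} already fails at $\al=\Om$.)

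Consequently the following steps all rest on a false statement:
\begin{itemize}
\item your $\psi_2$ case;
\item your subcase $j<i$, which for $i=1$ is exactly $\al=\Om$;
\item above all, the step ``apply the induction hypothesis to $\ga$ with the ordinal index $\xi_k$''.
\end{itemize}
Your key step is false as well. For $\al=\psi_0\Om$ you have $\xi_1=\om$ and $\al[1]=\psi_0\om<\psi_0(\om+1)<\psi_0\Om$ with $\psi_0(\om+1)\in\OT$, yet $G_0(\om+1)=\{0\}$ contains nothing $\ge\om$.

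What is missing is that the uncountable-type comparison must be \emph{conditioned} on the normal form of $\be$. It should then be closed off by the Bachmann property of $\al$ itself at a smaller index, not by a property of $\ga$. Here is how that works in the critical case $\al=\psi_0\ga$ with $tp(\ga)=\Om$, and $\be=\psi_0\de$ with $\ga[z_k]<\de<\ga$, where $z_k=\al[k-1]$ is your $\xi_k$ and $k\ge 1$ ($k=0$ is trivial).
\begin{itemize}
\item Write $\de=\la\scbr{\rho}$ and $\ga=\la^-\scbr{\Om}$ for a $\psi_0$-nesting free context $\la$ and some $z_k\le\rho<\Om$ (cf.\ Lemma~\ref{lm:Context1}).
\item If $\rho=z_k$, then $\la$ has material after the slot. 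So $\de[1]$ (or $\de[0]$, as appropriate) is already $\ge\la^-\scbr{z_k}=\ga[z_k]$.
\item If $\rho>z_k$, let $\psi_0\sigma$ be the leading summand of $\rho$. Since the slot is not under a $\psi_0$, $\sigma\in G_0\de<\de<\ga$, hence $z_k\le\psi_0\sigma<\al$.
\item Either $\psi_0\sigma=z_k$ and $\rho$ is a proper sum, or $\al[k-1]<\psi_0\sigma<\al$. In the latter case the Bachmann property for the same $\al$ at index $k-1$ applies, by induction on $N(\be)$ since $N(\psi_0\sigma)<N(\be)$, and gives $(\psi_0\sigma)[1]\ge z_k$. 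In both cases $\rho[1]\ge z_k$.
\item Transporting this through the context yields $\de[1]\ge\ga[z_k]$, or $\de[0]\ge\ga[z_k]$ when $\de$ is a successor or of type $\Om$. Hence $\be[1]\ge\al[k]$.
\end{itemize}
The same pattern, using $G_1$, handles $\psi_1\ga$ with $tp(\ga)=\Om_2$ when it occurs inside $\psi_0$-arguments. So the induction should run over terms of type at most $\om$ with natural-number indices. The ordinal-indexed steps must be derived in this conditional way, not carried as an unconditional induction hypothesis.
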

The 
To illustrate these notions better, consider the natural system of fundamental sequences for $\Lambda=\varepsilon_0$, the first fixed point of the function $\xi \to \om^{\xi}$. 

\begin{definition}
    For $\al<\varepsilon_0$ and $n\in \Nat$, define $\al[n]$ recursively by:
    \begin{itemize}
        \item $0[n]:=1[n]:=0$,
        \item $(\om^\al+\be)[n]:=\om^\al+\be[n]$ if $\om^\al+\be$  is in Cantor normal form and $\be>0$,
        \item $\om^{\al+1}[n] = \om^\al n$,
        \item $\om^\la[n] = \om^{\la[n]} \text{ if } \la \in Lim.$
        \end{itemize}
\end{definition}
For $\al\neq 0$, we have$\al[n] <\al$  for all $n$, and if  $\al<\varepsilon_0$ is a limit ordinal, then $(\al[n])_{n<\om}$  is an increasing sequence converging to $\al$. This system of fundamental sequences also satisfies the Bachmann property, see \cite{Schmidt1977,Weiermann2006} for details.\medskip
\\
For example, $\om^\om[4] = \om^4$ and $\om^4 < \om^6 < \om^\om$, while $\om^4 < \om^5 = \om^6[1]$.
Intuitively, if $\be \in (\al[k], \al)$ and we repeatedly apply fundamental sequences to $\be$, we remain in the interval $(\al[k], \al)$, unless we pass through $\al[k]$. Note that this may fail if we replace $1$ by $0$ in the condition of Bachmann property as $\om^6[0] = 0 < \om^\om[4]$.\medskip\\


Now define a system of fundamental sequences for the ordinal $|\ID_2|$ which satisfies the Bachmann property.
For each term in $\T$ its fundamental sequence $\al[x]$ is defined for $x<tp(\al)$. 
Since fundamental sequences are defined on all of $\T$, the terms $\alpha[x]$ need not lie in the well–founded part $\OT \subseteq \T$. As usual, the fundamental sequence of a successor $\alpha+1$ is constantly $\alpha$.

\begin{definition} \label{def:FundamentalSeq} Definition of $tp(\al)$ and $\al[x]$ for $\al\in T$ and $x\in T$ with $x<tp(\al)$.
    \begin{itemize}
        \item $tp(0)=0$ and $tp(1)=1$ and $1[0]=0$.
        \item If $tp(\Om_i)=\Om_i$  for $i\geq 0$, and $\Om_i[x]=x$.
        
        \item If $\al =\al_0+\ldots +\al_n$ with $n\geq 1$ then $tp(\al):=tp(\al_n)$ and $(\al_0+\ldots +\al_n)[x] = \al_0+\ldots +\al_n[x]$.
        
        \item  If $tp(\al) < \Om_{i+1}$, then  $tp(\psi_i\al)= tp(\al)$ and $\psi_i(\al)[x]=\psi_i(\al[x])$.
        
        \item If $tp(\al) = \Om$ then  $tp(\psi_0\al)=\om$ and $\psi_0(\al)[x]=\psi_0(\al[{z_x}])$
        with $z_0:=0$ and $z_{x+1}:=\psi_0(\al[z_x])$.
        
        \item If $tp(\al) = \Om_2$ then  $tp(\psi_1\al)=\om$ and $\psi_1(\al)[x]=\psi_1(\al[{z_x}])$
        with $z_0:=0$ and $z_{x+1}:=\psi_1(\al[z_x])$.
        
        \item If $tp(\al) =1$ then $tp(\psi_i(\al))=\om$ and $\psi_i(\al)[x]= \psi_i(\al[0])\cdot x$.
    \end{itemize}
\end{definition}
For technical reasons, we put $0[0]=0$, $(\al+1)[x] =\al$ for all $x\in T$ and if $tp(\al)\in\{0,1\}$, we put $\al[m]:=\al[0]$ for all $m<\om$. 

\begin{lemma} For all $\alpha \in \T$ the following hold:
\begin{itemize}
    \item $tp(\al) =0 \Longleftrightarrow \al=0$.
    \item $tp(\al)=1 \Longrightarrow \al =\al[0]+1$.
    \item $x<tp(\al) \Rightarrow \al[x]<\al$
    \item $x<y <tp(\al) \Rightarrow \al[x]<\al[y]$.
\end{itemize}\smallskip
\end{lemma}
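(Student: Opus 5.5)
We prove all four items simultaneously by induction on the length (equivalently the norm $N(\al)$) of the term $\al\in\T$, following the clauses of Definition~\ref{def:FundamentalSeq}. For each clause we check that the computed $tp(\al)$ is $0$ only for $\al=0$, and equal to $1$ only in the successor situations. We then verify monotonicity of $x\mapsto\al[x]$ and the bound $\al[x]<\al$ for $x<tp(\al)$. Throughout we use the clauses of the definition of $<$ on $\T$ and the linearity of $<$, from the Lemma stating that $<$ is a linear order on $\T$.

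The first two items come from reading off the definition. $tp(\al)=0$ arises only from the clause $tp(0)=0$: $tp(\Om_i)=\Om_i\neq 0$, sums inherit $tp$ from their last summand (which is nonzero by induction), and the $\psi_i$-clauses yield $tp(\al)$ (nonzero by induction), $\om$, or $\om$. For $tp(\al)=1$, the value $1$ arises from the term $1$ itself (and $1=0+1=1[0]+1$), or from a sum whose last summand $\al_n$ has type $1$, hence $\al_n=1$ by induction and $\al=(\al_0+\dots+\al_{n-1})+1$. It could also arise from $\psi_i\al'$ with $tp(\al')=1<\Om_{i+1}$; but the clause for $tp(\al')=1$ assigns type $\om$, so this case is excluded. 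We must fix the convention that this clause takes precedence over the ``$tp(\al)<\Om_{i+1}$'' clause, i.e.\ the latter applies only when $tp(\al)\notin\{0,1\}$, and similarly for $\psi_i0=\Om_i$.

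For items three and four we treat the remaining clauses in turn. For $\Om_i[x]=x$ with $x<\Om_i$ both claims are immediate. For sums, $\al[x]=\al_0+\dots+\al_{n-1}+\al_n[x]$, and the inductive hypothesis $\al_n[x]<\al_n$ (respectively $\al_n[x]<\al_n[y]$) transfers through the lexicographic clause~(8) of $<$. Here we note that $\al_n[x]$ is a sum whose leading summand is at most $\al_n\le\al_{n-1}$, so the result is again a legitimate term in $\T$. For $\psi_i\al$ with $tp(\al)<\Om_{i+1}$ we use the induction hypothesis and clause~(6). For $tp(\al)=1$ we have $\psi_i(\al[0])\cdot x<\psi_i(\al[0])\cdot y<\psi_i\al$: the first inequality follows from comparing the lengths of the sums, and the second from clause~(6) together with clause~(8), since the leading summand $\psi_i(\al[0])<\psi_i\al$. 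In the collapsing clauses ($tp(\al)=\Om$ for $\psi_0$, $tp(\al)=\Om_2$ for $\psi_1$), we first show by a side induction on $x$ that $z_x<\Om_{i+1}=tp(\al)$, so that $\al[z_x]$ is defined. This holds because $z_{x+1}=\psi_i(\cdot)$, and clause~(7) gives $\psi_i\be<\psi_{i+1}0=\Om_{i+1}$. We then show that $z_x<z_{x+1}$. By the induction hypothesis for $\al$ (item four), this gives $\al[z_x]<\al[z_{x+1}]$, and hence $z_{x+1}=\psi_i(\al[z_x])<\psi_i(\al[z_{x+1}])=z_{x+2}$ by clause~(6). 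The base case is $0<\psi_i(\al[0])$, which holds by clause~(4). Item four for $\psi_i\al$ now follows from clause~(6), and item three from $\al[z_x]<\al$ and clause~(6).

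The main obstacle is making the induction well founded: $\al[z_x]$ involves the argument $z_x$, which may be large, so we cannot induct on the norm of the output. We avoid this by inducting on the term $\al$ while stating item four as a claim for \emph{all} $x<y<tp(\al)$ with $x,y\in\T$. The argument $x$ is then a parameter rather than an induction variable, and the only recursive calls are to $\al$ and $\al_n$, which are proper subterms. A secondary nuisance is checking that the clause conventions (the cases $tp\in\{0,1\}$, and $\al[m]:=\al[0]$) are consistent with the stated implications. For instance, $\Om_i=\psi_i0$ must be governed by its own clause rather than by the general $\psi_i$-clause, since $tp(0)=0<\Om_{i+1}$ would otherwise wrongly give $tp(\Om_i)=0$.
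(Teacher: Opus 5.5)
Your proposal is correct and is essentially the paper's proof, which consists only of the remark ``by induction on the length of the term''. You supply the case analysis the paper omits: the clause-precedence conventions for $\psi_i 0=\Om_i$ and for $tp(\al)=1$, and the side induction showing $z_x<z_{x+1}<tp(\al)$ in the collapsing clauses, with $x,y$ treated as parameters rather than induction variables. One small point to add: terms such as $\psi_0\al$ with $tp(\al)=\Om_2$ fall under no clause of the definition. They therefore have no type, and all four items hold vacuously for them.
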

\begin{proof}
    By induction on the length of term $a$.
\end{proof}

\begin{definition}[Single–step descent]
For each $a,b \in \T$ and any natural number $n$, define the {\em single-step descent relation} by 
$a \triangleleft_n b :\iff \exists x<n, (a = b[x])$.
\\
Let $<_k$ be the transitive closure of $\{(\al, \be) : \al = \be[k]\}$, and $\leq_k$ the reflexive closure of $<_k$.
Thus, $b<_n a$ means there is a finite chain
$b = a_0\triangleleft_n a_1\triangleleft_n \cdots \triangleleft_n a_k = a.$
\end{definition}

\begin{lemma}\label{lm:propertyTC} The following properties hold:
\begin{enumerate}
    \item If $x<_n y<tp(\al)=\Om_{v+1}$ then $\al[x]<_n \al[y]$.
    \item \label{item:1 lm:propertyTC}If $tp(\al)=\om$ then $\al[x]<_n \al[x+1]$.
    \item  \label{item:2 lm:propertyTC}If $\be <_n \al$ and $tp(\al)=\om$ then $\be\leq_n\al[n]$.
\end{enumerate}
\end{lemma}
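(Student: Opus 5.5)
We prove the three items together by induction on the length (equivalently the norm $N$) of the term $\al$, following the case distinction of Definition~\ref{def:FundamentalSeq}. The basic tool is the observation that if $\ga <_n \de$ is witnessed by a chain $\ga = \de[x_1][x_2]\cdots[x_k]$ with all $x_i<n$, then whenever the map $\xi \mapsto F(\xi)$ commutes with fundamental sequences on the relevant terms (that is, $F(\xi)[x]=F(\xi[x])$), we get $F(\ga) <_n F(\de)$. For sums $\al=\al_0+\ldots+\al_m$ with $m\ge 1$ this holds with $F(\xi)=\al_0+\ldots+\al_{m-1}+\xi$, because $(\al_0+\ldots+\al_{m-1}+\xi)[x]=\al_0+\ldots+\al_{m-1}+\xi[x]$. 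Similarly, for $\psi_i$ with $tp(\xi)<\Om_{i+1}$ it holds with $F=\psi_i$. So each item reduces to the last summand, respectively the argument of $\psi_i$, which is shorter.

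For item (1), the case $\al=\Om_{v+1}$ is immediate, since $\Om_{v+1}[x]=x$. For a sum we reduce to $\al_m$ via the observation. For $\al=\psi_i\be$ with $tp(\al)=tp(\be)=\Om_{v+1}$, we have $\Om_{v+1}<\Om_{i+1}$, and we use the induction hypothesis $\be[x]<_n\be[y]$ and apply $\psi_i$ along the chain. The point to check here is that every intermediate term $\ga$ in the chain still has $tp(\ga)<\Om_{i+1}$, so that $\psi_i(\ga)[z]=\psi_i(\ga[z])$. This holds because every term $\ga$ in a chain below $\be$ is of the form $\be[x']\ldots$, and types only decrease or stay countable/small along such steps.

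For item (2), $tp(\al)=\om$ arises in several ways. If $\al=\Om_0=\om$, then $\om[x]=x<_n x+1$, since $(x+1)[0]=x$. For a sum or a $\psi_i$ of type $<\Om_{i+1}$ we reduce as above. If $tp(\be)=1$, then $\psi_i(\be)[x+1]=\psi_i(\be[0])\cdot x+\psi_i(\be[0])$, and we descend the last copy of $\psi_i(\be[0])$ down to $0$ using $[0]$ steps, via the sum case. If $tp(\be)=\Om$ and $\al=\psi_0\be$, then $\al[x+1]=\psi_0(\be[z_{x+1}])$ and $\al[x]=\psi_0(\be[z_x])$. Here we need $z_x<_n z_{x+1}$, which we obtain by an inner induction on $x$: $z_0=0$, and $z_{x+1}=\psi_0(\be[z_x])$. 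We then apply item (1) for $\be$ (note $tp(\be)=\Om$) to get $\be[z_x]<_n\be[z_{x+1}]$, and push this through $\psi_0$. This last step is the main obstacle, because $\psi_0$ does not commute with $[\cdot]$ on arguments of type $\Om$. The plan is to strengthen the induction claim to the following: if $\ga<_n\de$, $tp(\de)\le\Om$ (respectively $\le\Om_2$ for $\psi_1$) and all relevant terms are in a common block, then $\psi_0\ga\le_n\psi_0\de$. This is proved simultaneously, using item (2) itself at the limit step. The $\psi_1$ case with $tp(\be)=\Om_2$ is identical.

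For item (3), we induct on the length of the chain witnessing $\be<_n\al$. The first step gives $\al[x]$ with $x<n$, and then $\be\le_n\al[x]$. By item (2), iterated, $\al[x]\le_n\al[n-1]<_n\al[n]$. Since $\le_n$ is transitive, $\be\le_n\al[n]$.
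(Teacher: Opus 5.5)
Your proposal has two genuine gaps: the lifting step is applied to chains where it fails, and the central case is left unproved.

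\textbf{The lifting step.} The condition $tp(\gamma)<\Omega_{i+1}$ does not give $\psi_i(\gamma)[z]=\psi_i(\gamma[z])$. For $tp(\gamma)=1$ the definition gives $\psi_i(\gamma)[z]=\psi_i(\gamma[0])\cdot z$, which equals $\psi_i(\gamma[z])$ only when $z=1$. In your reading of $<_n$ (indices $x<n$) the argument therefore ``proves'' false instances. For $\alpha=\psi_1(\omega)$ you have $1<_1 2$ via $2[0]=1$, but $\psi_1(2)[0]=0$, so $\alpha[1]=\psi_1(1)\not<_1\psi_1(2)=\alpha[2]$. Item (1) fails in the same way for $\alpha=\psi_1(\Omega)$, $x=0$, $y=1$, $n=1$. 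Item (2) also fails trivially for $n=0$, where $<_0$ is empty. (Under the paper's other definition, with index exactly $n$, the same examples fail at $n=0$ and item (3) becomes trivial.) So you need a lower bound such as $n\ge 2$, and successor steps under $\psi_i$ must be taken with index $1$, where $\psi_i(\gamma+1)[1]=\psi_i(\gamma)$.

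Your justification that types only decrease along chains is also false. $\psi_1(\Omega+1)$ has type $\omega$, but $\psi_1(\Omega+1)[2]=\psi_1(\Omega)\cdot 2$ has type $\Omega$. For $\beta=\psi_2(\Omega)$, a chain witnessing $\beta[0]<_n\beta[1]$ may pass through $\psi_2(1)[2]=\Omega_2\cdot 2$, of type $\Omega_2$, where $\psi_1$ no longer commutes. So you cannot push an arbitrary chain supplied by the induction hypothesis through $\psi_i$; you must construct a specific one.

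\textbf{The central case.} The case $\alpha=\psi_0\beta$ with $tp(\beta)=\Omega$ (and $\psi_1\beta$ with $tp(\beta)=\Omega_2$) is the real content of the lemma, and it is only announced. Your strengthened claim has an undefined hypothesis (``common block'') and no induction measure. As sketched it is circular: handling a step $\delta\to\delta[x_1]$ with $tp(\delta)=\Omega$ leads, via item (1), back to the same question for $\delta[x_1]<_n\delta[z_w]$, whose chain need not be shorter.

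The missing idea is a substitution lemma, proved by induction on $\beta$ (the path to the $\Omega_{v+1}$-slot runs only through last summands and $\psi_j$ with $j>v$). It states: if $tp(\beta)=\Omega_{v+1}$ and $0<\xi<\Omega_{v+1}$, then
\begin{itemize}
\item $tp(\beta[\xi])=tp(\xi)$ and $\beta[\xi][w]=\beta[\xi[w]]$ whenever $tp(\xi)\notin\{0,1\}$;
\item $\beta[\xi+1][1]=\beta[\xi]$ and $tp(\beta[\xi+1])\in\{1,\omega\}$.
\end{itemize}
With this lemma:
\begin{itemize}
\item Item (1) is just the lift of the chain $y\to\cdots\to x$ along $\xi\mapsto\alpha[\xi]$.
\item In the critical case you lift the chain of countable terms $z_{x+1}\to\cdots\to z_x$ along $\xi\mapsto\psi_0(\beta[\xi])$, using index $1$ at successor steps. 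Every $\beta[\xi]$ on this chain has type $1$ or $\omega$, so $\psi_0$ never meets an argument of type $\Omega$. A single induction on $x$ then gives $z_x<_n z_{x+1}$ and $\alpha[x]<_n\alpha[x+1]$ together.
\end{itemize}
So the obstacle you identified disappears once the chain is chosen as such a lift, and no $\psi_0$-monotonicity for arbitrary $\gamma<_n\delta$ is needed. Your derivation of (3) from (2) is fine. The paper itself gives no argument here and only cites Weiermann.
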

\begin{proof}
See \cite{Weiermann95}.
\end{proof}

The next theorem summarizes some key properties of the fundamental sequences in $\OT$.
\begin{theorem}\label{thm:PropFundSeqOT} Properties of system $\OT$.
    \begin{itemize}
        \item If $\al,x \in \OT$ and $x<tp(\al)$ then $\al[x] \in \OT$.
        \item If $\al \in \OT$ then $tp(\al)\in \OT$.
        \item If $\al,\be \in \OT$ and $\al[0]<\be<\al$ then $\exists x \in \OT$ such that $x<tp(\al)$ and $\al[x]\leq \be <\al[x+1]$.
    \end{itemize}
\end{theorem}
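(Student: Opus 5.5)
All three items will be proved by one simultaneous induction on the term length (equivalently on the norm $N\al$) of $\al$. The case split follows the clauses of Definition~\ref{def:FundamentalSeq}: $\al\in\{0,1\}$, $\al=\Om_i$, $\al$ a sum, $\al=\psi_2\be$, and $\al=\psi_i\be$ for $i\in\{0,1\}$. Within the last case we distinguish $tp(\be)=1$, $tp(\be)<\Om_{i+1}$, and $tp(\be)=\Om_{i+1}$.

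\textbf{Item (2).} This is a direct check. In every clause $tp(\al)$ is $0$, $1$, $\om=\psi_00$, some $\Om_i=\psi_i0$, or $tp$ of a proper subterm of $\al$, and the induction hypothesis handles the last possibility.

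\textbf{Item (1), routine cases.} Sums and $\psi_2$ follow immediately from the induction hypothesis. For a sum we also need the normal-form condition $\al_{n-1}\ge \al_n[x]$, which holds because $\al_n[x]<\al_n$.

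\textbf{Item (1), the $\psi$ cases.} Here the work lies in keeping the side conditions $G_i\be<\be$ (and $\be<\Om_2$ when $i=0$).

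Suppose first $tp(\be)<\Om_{i+1}$, so that $\psi_i(\al)[x]=\psi_i(\be[x])$. I will prove an auxiliary lemma by induction on $\be$:
\[
  x<tp(\be)<\Om_{i+1} \;\Longrightarrow\; G_i(\be[x])\subseteq G_i\be\cup G_i x .
\]
Since $x\in\OT$ is below $tp(\be)\le\Om_{i+1}$, we have $G_i x<\be[x]$ whenever the parameter is small. Together with $G_i\be<\be[x]$, the latter must then be argued separately. The plan is a companion lemma: if $\ga<\be$ with $\ga\in G_i\be$, then $\ga<\be[x]$. This holds because $G_i$-components are frozen inside the leading part of $\be$ which the fundamental sequence does not touch.

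Now suppose $tp(\be)=\Om_{i+1}$. The iterates $z_x$ are of the form $\psi_i(\be[z_{x-1}])$ and are below $\Om_{i+1}$. I show by induction on $x$ that $z_x\in\OT$, using the same two lemmas, and use $G_i z_x=\{\be[z_{x-1}]\}\cup\cdots$ bounded by $\be[z_x]$.

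For $tp(\be)=1$ we use $\be[0]$ with $\be=\be[0]+1$, so $G_i\be[0]=G_i\be<\be[0]+1$. This needs care: we require $G_i\be<\be[0]$. That is true because elements of $G_i$ of a successor are at most the predecessor, and equality with $\be[0]$ is impossible by a norm argument.

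\textbf{Item (3).} I argue by induction on $\al$ with $\al[0]<\be<\al$.

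In the sum case we reduce to the last summand.

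If $tp(\al)=\om$, convergence and monotonicity give $x\in\Nat$ as the least $x$ with $\be<\al[x+1]$. Existence of such an $x$ follows since $\sup_x\al[x]=\al$ in $\OT$; this needs a short separate argument for the collapsing clause.

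If $tp(\al)=\Om_{v+1}$, let $x$ be the largest element of $\OT$ below $tp(\al)$ with $\al[x]\le\be$; this exists by well-ordering (Theorem~1). If $x$ is not a successor-type witness, the strict inequality $\be<\al[x+1]$ is obtained by continuity: $\al[y]$ for limit $y$ is $\sup_{z<y}\al[z]$, proved by induction.

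\textbf{Main obstacle.} I expect the hardest step to be the $G_i$ bookkeeping in item (1) for $\psi_0\be$ with $tp(\be)=\Om$. There the argument $x$ may itself contain $\psi_0$-terms, and I must verify $G_0(\be[x])<\be[x]$. I would first try to prove the general inequality $G_i(\be[x])<\be[x]$ whenever $G_i\be<\be$ and $G_i x<\be[x]$, by induction along the structure of $\be$, following Buchholz's treatment.
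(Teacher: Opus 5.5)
The paper does not prove this theorem itself; it defers to Buchholz--Sch\"utte. Your sketch therefore has to stand on its own, and its central step for item~(1) fails.

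\textbf{Item (1).} The auxiliary lemma $G_i(\beta[x])\subseteq G_i\beta\cup G_i x$ (for $x<tp(\beta)<\Omega_{i+1}$) is false. Take $i=0$ and $\beta=\psi_0\omega$ with $\omega=\psi_0 0$. Then $\beta\in\mathsf{OT}$ and $G_0\beta=\{\omega,0\}<\beta$, so $\psi_0\beta\in\mathsf{OT}$, and this is exactly your case $tp(\beta)=\omega<\Omega$. But $\beta[x]=\psi_0(\omega[x])=\psi_0 x$, so for $x\geq 1$
\[
G_0(\beta[x])=\{x\}\not\subseteq\{\omega,0\}=G_0\beta\cup G_0x .
\]
Your companion lemma fails on the same term: $\omega\in G_0\beta$ and $\omega<\beta$, yet $\beta[0]=\psi_0 0=\omega$.

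The justification you give ("components are frozen in a leading part the fundamental sequence does not touch") is the opposite of what happens. Every $\psi$-argument on the path along which $\beta[x]$ is computed is replaced by some $\gamma[y]$. In the collapsing clauses, the iterates $z_x$ also bring in new components $\gamma[z_{x-1}]$. So no inclusion into $G_i\beta\cup G_ix$ can work. A bound by $\max G_i\beta<\beta$ would not suffice either, since $\beta[x]<\beta$.

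What you need is to prove $G_i(\beta[x])<\beta[x]$ directly, by an induction that treats two kinds of component separately:
\begin{itemize}
\item components that survive unchanged, which you must show lie below the untouched part of $\beta[x]$ (using that a term cannot contain itself as a proper subterm);
\item the rewritten components $\gamma[y]$ and those contributed by the $z_x$. Here the natural tool is $\delta<\psi_i\delta$ for $\delta\in C_i(\delta)\cap\Omega_{i+1}$, i.e.\ the characterization $\psi_i\delta=C_i(\delta)\cap\Omega_{i+1}$, together with the induction hypothesis that $\psi_i(\gamma[y])\in\mathsf{OT}$.
\end{itemize}
Your handling of item~(2), of sums, of $\psi_2$, and of the clause $tp(\beta)=1$ (with the norm argument) is fine.

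\textbf{Item (3).} This item is also not established. A well-order gives least elements, not a largest $x$ with $\alpha[x]\le\beta$. The workable version takes the least $y<tp(\alpha)$ with $\beta<\alpha[y]$. That presupposes two things you do not prove: that such a $y$ exists (convergence), and that $y$ cannot be a limit (continuity).

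For $\alpha=\psi_v\gamma$ with $tp(\gamma)=\Omega_{v+1}$, convergence of $\psi_v(\gamma[z_x])$ to $\psi_v\gamma$ is the heart of the matter, not a short side argument. The standard route is induction on $\alpha$:
\begin{itemize}
\item Let $\psi_v\delta$ be the leading summand of $\beta$, so that $\gamma[0]\le\delta<\gamma$.
\item Apply the induction hypothesis to $\gamma$ to get $y$ with $\gamma[y]\le\delta<\gamma[y+1]$.
\item Show $z_x\le y<z_{x+1}$ for some $x$. This needs a lemma bounding $y$ through the structure of $\delta$. For instance, show $G_vy<\delta$; then $y\in C_v(\delta)\cap\Omega_{v+1}$, hence $y<\psi_v\delta\le\beta$.
\item Finish with an induction on $\beta$, applying item~(3) to $y$ itself when $y\geq\alpha[0]$.
\end{itemize}
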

\begin{proof}
    Proofs can be found in \cite{BuchholzSchutte}.
\end{proof}

We now restrict ourselves to ordinal terms $\alpha < \varepsilon_{\Omega+1}$, defined in terms of $\OT$.
Define 
$$\OT_0:= \{\al \in \OT | \al< \varepsilon_{\Om+1} \text{ and } G_0\al <\varepsilon_{\Om+1} \}. $$ 
This restriction will be used in Section \ref{sec:PropID2}.

\begin{theorem} 
Let $\al,\be \in \OT$. If  $\al<\be$ in $\OT$ then  $\al<\be$ in $\OT_0$.
\end{theorem}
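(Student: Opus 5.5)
The statement mixes two orders. On the left is the relation $<$ of the term system $(\T,<)$, restricted to $\OT$. On the right is the order of the subsystem $\OT_0$. I plan to read the latter as the relation generated \emph{internally} in $\OT_0$: $\al<\be$ in $\OT_0$ holds when it is derivable by clauses (4)--(8) of the definition of $(\T,<)$ using only comparisons between terms of $\OT_0$. Under this reading the theorem says that no comparison in $\OT$ ever needs to leave $\OT_0$, so $\OT_0$ with its own intrinsic order is an initial-segment-faithful piece of $\OT$. This is what Section~\ref{sec:PropID2} needs. One caveat: for $\al,\be\in\OT\setminus\OT_0$ the conclusion has no content as literally worded. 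The argument below gives the statement for all pairs at which the right-hand side is defined, which is the case $\al,\be\in\OT_0$. I do not see how to extend it to arbitrary $\al,\be\in\OT$ without a further construction, such as an order-preserving projection of $\OT$ into $\OT_0$, which I have not identified.

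The proof will go by induction on $N(\al)+N(\be)$, with a case distinction on the last clause used to derive $\al<\be$ in $\T$. Clauses (4), (5) and (7) are immediate, since they involve no recursive comparison. Clause (8) compares first the leading summands $\al_0,\be_0$ and then the tails. It therefore suffices to show that $\OT_0$ is closed under taking summands and tails of sums. This holds because $\al_i\le\al_0\le\al<\varepsilon_{\Om+1}$, and because $G_0$ of a sum is the union of the $G_0$ of its summands, so the bound $G_0<\varepsilon_{\Om+1}$ is inherited. Clause (6) for $\psi_2$ cannot occur, since $\psi_2\be\ge\Om_2>\varepsilon_{\Om+1}$. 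For $\psi_1$, the terms $\psi_1\al<\psi_1\be<\varepsilon_{\Om+1}$ force $\al,\be<\Om_2$, because $\psi_1(\Om_2)=\varepsilon_{\Om+1}$. Moreover $G_0\psi_1\be=G_0\be$, so the arguments again lie in $\OT_0$, and the induction hypothesis applies.

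I expect the main obstacle to be the case $\psi_0\al<\psi_0\be$ with $\al<\be$. Here the arguments may lie anywhere below $\Om_2$, possibly above $\varepsilon_{\Om+1}$. What saves us is the defining condition of $\OT_0$ itself: $\be\in G_0\psi_0\be\subseteq G_0(\text{term})<\varepsilon_{\Om+1}$. So $\be<\varepsilon_{\Om+1}$, and $G_0\be\subseteq G_0\psi_0\be$ is bounded as well. The same holds for $\al$. Hence $\al,\be\in\OT_0$, using the lemma that $G_0\be\subseteq\OT$, and the induction hypothesis gives $\al<\be$ in $\OT_0$. This in turn yields $\psi_0\al<\psi_0\be$ in $\OT_0$. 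Throughout, I will use Lemma ``$G_0\be,G_1\be\subseteq\OT$'' and the characterisation $\al\in C_0(\be)\iff G_0\al<\be$ to keep every intermediate term inside $\OT$.
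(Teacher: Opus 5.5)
The paper states this theorem without proof, so there is no argument of the paper's to match. On the plain reading, in which $\OT_0$ carries the restriction of $<$, the only meaningful case ($\al,\be\in\OT_0$) holds by definition. Your reading is strictly more informative: the comparison must be derivable by clauses (4)--(8) using only pairs of $\OT_0$-terms. Your method, checking that every rule instance whose conclusion is an $\OT_0$-pair has $\OT_0$-pairs as premises, is the right way to prove it.

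Your caveat about arbitrary $\al,\be\in\OT$ can be made sharper than you state. No reading that requires both terms to lie in $\OT_0$ can hold there, because $\OT_0$ is not an initial segment of $\OT$. For example, $\psi_0\psi_1\psi_2 0=\psi_0(\varepsilon_{\Om+1})$ lies in $\OT$, since $G_0(\psi_1\psi_2 0)=\emptyset$. It is also below $\Om=\psi_1 0\in\OT_0$. Yet its $G_0$-set is $\{\varepsilon_{\Om+1}\}$, so it is not in $\OT_0$. Restricting to $\OT_0$-pairs is therefore forced by the statement, not a shortcoming of your method.

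One step needs repair, in the $\psi_1$ case. From $\psi_1\al<\psi_1\be<\varepsilon_{\Om+1}$ you obtain $\al,\be<\Om_2$ and $G_0\al,G_0\be<\varepsilon_{\Om+1}$, and then conclude $\al,\be\in\OT_0$. But $\OT_0$ also requires $\al,\be<\varepsilon_{\Om+1}$, and $\Om_2$ lies far above $\varepsilon_{\Om+1}$. The device from your $\psi_0$ case is not available here, because $G_0\psi_1\be=G_0\be$ does not contain $\be$. The bound must come from the $G_1$-condition instead:
\begin{itemize}
\item $\psi_1\be\in\OT$ gives $G_1\be<\be<\Om_2$, so $\be$ has no $\psi_2$-summands.
\item Each summand $\psi_1\ga$ of $\be$ has $\ga\in G_1\be$. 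Hence $\ga<\Om_2$ and $\psi_1\ga<\psi_1\psi_2 0=\varepsilon_{\Om+1}$.
\item The remaining summands are $1$ and $\psi_0$-terms, which lie below $\Om$.
\end{itemize}
Since $\varepsilon_{\Om+1}$ is additively principal, $\be<\varepsilon_{\Om+1}$, and the same argument applies to $\al$. Equivalently, $G_1\be<\be$ means $\be\in C_1(\be)\cap\Om_2$, so $\psi_1\be=\om^{\Om+\be}$, and this being below $\varepsilon_{\Om+1}$ forces $\be<\varepsilon_{\Om+1}$. With this inserted, your argument is complete.
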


\begin{lemma}The following properties hold in $\OT_0$;
\begin{enumerate}
    \item If $\al\in \OT_0$ and $\be <_n \al$ then $\be+1 <_{n+1}\al$.
    \item $\al,\be \in \OT_0$ and $\al<\be$ then $\exists n (\be<_n\al)$.
\end{enumerate}
\end{lemma}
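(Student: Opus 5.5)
Both parts go by induction, and both rest on Lemma~\ref{lm:propertyTC} together with the explicit clauses of Definition~\ref{def:FundamentalSeq}.

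\emph{Part (1).} We argue by induction on the length of the chain witnessing $\be <_n \al$, and inside that on the term complexity of $\al$. It is enough to handle a single step $\be = \al[x]$ with $x<n$ and then propagate along the chain. For the propagation, note that if $\be+1 <_{n+1} \ga$ and $\ga \le_{n+1}\al$, then $\be+1<_{n+1}\al$, because $<_{n+1}$ is transitive and every step with index $<n$ is also a step with index $<n+1$. So the real content is the single-step claim $\al[x]+1 <_{n+1} \al$ for $x<n$. We split by $tp(\al)$.
\begin{itemize}
\item If $tp(\al)=1$, then $\al=\al[0]+1$. For $\be=\al[0]$ we get $\be+1=\al$, and $\be+1 \le_{n+1}\al$ is what the chain argument needs. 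For this reason we prove the slightly stronger statement with $\le_{n+1}$ for $tp(\al)=1$, and obtain strictness once an earlier step occurs.
\item If $tp(\al)=\om$, Lemma~\ref{lm:propertyTC}(\ref{item:1 lm:propertyTC}) gives $\al[x] <_{n} \al[x+1]$. Since $x+1\le n<n+1$, we have $\al[x+1] \triangleleft_{n+1}\al$. It then remains to show $\al[x]+1\le_{n+1}\al[x+1]$. We get this by descending from $\al[x+1]$ to $\al[x]$ through a chain whose last element above $\al[x]$ has successor type with predecessor $\al[x]$, or else is a limit whose step with index $0$ lands on or above $\al[x]+1$.
\item If $tp(\al)\in\{\Om,\Om_2\}$, the relevant fundamental sequences come from $\al\in\OT_0$ and $x<n$ natural. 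We reduce to the subterm case via $\psi_i(\al)[x]=\psi_i(\al[x])$ and sums, and use the induction hypothesis on the last summand: from $\ga+1<_{n+1}\al_k$ we get $\al_0+\dots+\ga+1 <_{n+1}\al_0+\dots+\al_k$.
\end{itemize}

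\emph{Part (2).} We prove $\al<\be \Rightarrow \exists n\,(\al<_n\be)$ by induction on $\be$ (well-founded by the Buchholz--Sch\"utte theorem). Suppose $\al<\be$.
\begin{itemize}
\item If $\be=\ga+1$, then either $\al=\ga$, giving $\al<_1\be$, or $\al<\ga$. In the latter case the induction hypothesis gives some $n$ with $\al<_n\ga$, hence $\al<_n\be$.
\item If $\be$ is a limit, Theorem~\ref{thm:PropFundSeqOT}(3) (or $\al\le\be[0]$ directly) yields $x$ with $\be[x]\le\al<\be[x+1]$. Restricted to $\OT_0$ the relevant types are $\om$ or $\Om$; type $\Om_2$ would have to be excluded through the constraint $G_0\al<\varepsilon_{\Om+1}$.
\begin{itemize}
\item For $tp(\be)=\om$, we have $\al<\be[x+1]$. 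Choose $n> x+1$; then $\be[x+1]\triangleleft_n\be$, and the induction hypothesis gives $m$ with $\al<_m\be[x+1]$. Taking $N=\max(n,m)$ gives $\al<_N\be$, by monotonicity of $<_n$ in $n$.
\item For $tp(\be)=\Om$, the index $x$ may be a countable ordinal rather than a numeral. Then we need $\be[\al']$ for a suitable $\al'$ with $\al'<\Om$, and we apply the induction hypothesis to $x<\Om$ (giving $x<_m y$) together with Lemma~\ref{lm:propertyTC}(1) to transport this to $\be[x]<_m\be[y]$.
\end{itemize}
\end{itemize}
Monotonicity of $<_n$ in $n$ is needed throughout. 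It follows from Lemma~\ref{lm:propertyTC}(\ref{item:2 lm:propertyTC}) together with part (1) and an induction.

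The main obstacle is the uncountable-type case in part (2), where $\be[\cdot]$ is indexed by ordinals and a numeral $n$ has to be extracted. My first attempt will be to choose $y$ with $\al\le\be[y]$ and $y<\Om$ minimal such that $N(y)$ is bounded by the norm of $\al$, then bound $n$ by $N(\al)$ overall, in effect proving the quantitative version $\al<\be\Rightarrow \al<_{N\al+1}\be$ by induction.
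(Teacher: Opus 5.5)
The paper states this lemma without proof, so I am judging your proposal on its own merits. You are right to read (2) as $\alpha<\beta\Rightarrow\exists n\,(\alpha<_n\beta)$; the printed $\beta<_n\alpha$ is a typo.

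\textbf{Part (2), uncountable case.} What you call the ``main obstacle'' is in fact a counterexample. Every descent step uses a numeral index, and $\Omega[x]=x$, so the only $\gamma$ with $\gamma<_n\Omega$ are natural numbers. Thus $\omega<\Omega$ holds in $\mathsf{OT}_0$, while $\omega\not<_n\Omega$ for every $n$. More generally, if $tp(\beta)=\Omega$, everything $<_n\beta$ lies at or below $\beta[n]<\beta[\omega]$. Hence every $\alpha$ with $\beta[\omega]\le\alpha<\beta$ is a counterexample.

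Your transport through Lemma~\ref{lm:propertyTC}(1) does give $\beta[x]<_m\beta[y]$ for countable $x<_m y$. But nothing links $\beta$ itself to $\beta[y]$ when $y\ge\omega$, and no norm bound on $y$ can create such a link. So (2) must be restricted to $\beta<\Omega$, i.e.\ to $\mathsf{OT}_0\cap\Omega$, which is the only setting where it is used in Section~\ref{sec:GoodsteinID2}. On that segment your argument is correct: the successor case, and for $tp(\beta)=\omega$ choosing $x$ with $\alpha<\beta[x+1]$ via Theorem~\ref{thm:PropFundSeqOT} and applying the induction hypothesis to $\beta[x+1]$. This works by induction on the ordinal $\beta$, using the $\triangleleft_n$ reading of $<_n$ (indices $x<n$), under which $<_n\subseteq<_{n+1}$ is immediate. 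Say explicitly that you use this reading, since the paper also gives a single-index definition.

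\textbf{Part (1).} It is false as printed, and your patch does not repair it. Since $1[0]=0$ we have $0<_n1$, but $0+1=1\not<_{n+1}1$: $\gamma[x]<\gamma$ for $\gamma\neq 0$, so no descent chain from $1$ returns to $1$. This one-step chain is a genuine counterexample, and it has no ``earlier step'' from which strictness could come. The provable claim is $\beta+1\le_{n+1}\alpha$, which is weaker than the printed statement, not stronger.

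To prove it, induct on the ordinal $\alpha$, not on term complexity. In general $\alpha[x+1]$ is longer than $\alpha$ (compare $\psi_0(\Omega)[x]$), so your inner induction never reaches it. With this change, one uniform argument covers all cases:
\begin{enumerate}
\item For successor $\alpha$, the one-step case is just $\alpha[0]+1=\alpha$.
\item For limit $\alpha$ and $x<n$, Lemma~\ref{lm:propertyTC} gives $\alpha[x]<_n\alpha[x+1]$: use item (2) if $tp(\alpha)=\omega$, and item (1) with $x<_n x+1$ if $tp(\alpha)$ is uncountable. The induction hypothesis for $\alpha[x+1]<\alpha$ gives $\alpha[x]+1\le_{n+1}\alpha[x+1]$, and $\alpha[x+1]\triangleleft_{n+1}\alpha$.
\item If instead $\beta<_n\alpha[x]$, apply the induction hypothesis to $\alpha[x]$ and add the step $\alpha[x]\triangleleft_{n+1}\alpha$.
\end{enumerate}

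Your separate reduction to subterms for the uncountable types does not work as written. From $\gamma[x]+1\le_{n+1}\gamma$ you cannot conclude $\psi_i(\gamma[x])+1\le_{n+1}\psi_i(\gamma)$ without a further lemma that $\psi_i$ respects $\le_{n+1}$. That lemma is not routine, since $\psi_i(\delta+1)[y]=\psi_i(\delta)\cdot y$ rather than $\psi_i(\delta)$. Your informal description of the type-$\omega$ case (the last chain element above $\alpha[x]$) amounts, once made precise, to exactly the induction hypothesis applied to $\alpha[x+1]$.
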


The system $\OT_0$ together with the fundamental sequences in Definition~\ref{def:FundamentalSeq} and the Norm function $N$, has the Bachmann property as shown in \cite{Weiermann95}.
\begin{theorem}\label{thm:bachmannProp} For $\al,\be ,\in \OT_0$ and $x\in \mathbf{N}$ the following holds: 
\begin{itemize}
    \item $\al[x] < \be <\al$ then $\al[x] \leq \be[1]$.
    \item $\al[x] < \be <\al$ then $N(\al[x])<N(\be)$.
    \item $N(\al)\leq N(\al[1]) +1$.
\end{itemize}
\end{theorem}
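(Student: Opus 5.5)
We would prove the three clauses of Theorem~\ref{thm:bachmannProp} simultaneously, by induction on $N(\al)$, or equivalently on the length of the term $\al$. Inside the induction we split according to the shape of $\al$, following the clauses of Definition~\ref{def:FundamentalSeq}. For each clause we also split on the shape of $\be$.

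\textbf{Sums.} Let $\al=\al_0+\dots+\al_n$ with $n\ge 1$. Then $\al[x]=\al_0+\dots+\al_{n-1}+\al_n[x]$. Any $\be$ with $\al[x]<\be<\al$ must share the prefix $\al_0+\dots+\al_{n-1}$, so $\be=\al_0+\dots+\al_{n-1}+\be'$ with $\al_n[x]<\be'<\al_n$. Hence $\be[1]$ is obtained by applying $[1]$ to the last summand of $\be'$. The claim for $\al$ then follows from the induction hypothesis for $\al_n$ together with additivity of $N$.

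\textbf{Principal terms with $tp(\al)$ in $\{1,\om\}$.} For $\al=\psi_i\ga$ with $tp(\ga)=1$, the sequence is $\psi_i(\ga[0])\cdot x$. If $\al[x]<\be<\al$, then $\be$ has leading summand $\psi_i(\ga[0])$, with at least $x$ copies of it, or else $\be$ is a principal term strictly between. In either case $\be[1]$ still dominates $\psi_i(\ga[0])\cdot x$. The norm inequality is a direct count: $N(\al[x])=x\cdot N(\psi_i\ga[0])$, and $\be$ carries at least this much norm plus one extra symbol.

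\textbf{Principal terms with a passed-through type.} This is the case $tp(\ga)<\Om_{i+1}$ and $\psi_i(\al)[x]=\psi_i(\ga[x])$. Here we use the normal-form characterisation, the Lemma relating $C_i$ to $G_i$. A principal $\be$ lying strictly between $\psi_i(\ga[x])$ and $\psi_i\ga$ must be of the form $\psi_i\de$ with $\ga[x]<\de<\ga$. Otherwise it is a $\psi_j$-term for some $j<i$, or a $\psi_2$-term lying in $C_i(\ga[x])$, hence below $\psi_i(\ga[x])$. We then apply the induction hypothesis to $\ga$ and $\de$ and push the result through $\psi_i$, using monotonicity of $\psi_i$ and $N(\psi_i\de)=1+N(\de)$. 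The $\psi_2$ case is handled in the same way, with no side condition.

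\textbf{The diagonal cases, the main obstacle.} These are $tp(\ga)=\Om$ or $\Om_2$, where $\psi_i(\ga)[x]=\psi_i(\ga[z_x])$. Given $\psi_i(\ga[z_x])<\psi_i\de<\psi_i\ga$, we need $\psi_i(\ga[z_x])\le(\psi_i\de)[1]$. The plan is as follows.
\begin{enumerate}
\item Apply the third item of Theorem~\ref{thm:PropFundSeqOT} to $\de$ to locate $y$ with $\ga[y]\le\de<\ga[y+1]$.
\item Use the collapsing condition $G_i\de<\de$ to show that $y\ge z_x$.
\item Invoke Lemma~\ref{lm:propertyTC} on the $<_n$-chains to conclude that $\de[1]$, or $\de[z]$ when $\de$ itself has uncountable type, stays above $\ga[z_x]$.
\end{enumerate}
The delicate point is step~2 together with step~3: the nested collapse $\psi_0\psi_1$ means $\de$ may itself have type $\Om$ or $\Om_2$. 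We would first try to reduce this to the $\OT_0$ restriction, which guarantees $G_0\de<\varepsilon_{\Om+1}$. This should make comparisons inside $C_0$ behave as in the single-collapse Bachmann–Howard argument of \cite{Weiermann95}.

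\textbf{Third clause.} The bound $N(\al)\le N(\al[1])+1$ follows by the same case split. In the $\psi_i(\ga[0])\cdot x$ case one copy of the term is kept. In the diagonal case $z_1=0$, so $\al[1]=\psi_i(\ga[0])$. The bound then follows from the induction hypothesis on $\ga$, which contributes at most one symbol lost.
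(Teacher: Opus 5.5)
There is a genuine gap, and it begins with the scope of your induction. As literally stated for all of $\OT_0$, items 1 and 2 are false for terms of type $\Om$, and your case split skips exactly these terms. The clause $\Om_i[x]=x$ of the definition of fundamental sequences is never treated. In your ``passed-through'' case $\psi_1\ga$ with $tp(\ga)=\Om$, you appeal to the induction hypothesis for $\ga$. Concretely, $\Om\in\OT_0$ and $\Om[5]=5<\om<\Om$, yet $\om[1]=1<5$ and $N(5)=5>1=N(\om)$. Likewise $\psi_1(5)<\psi_1(\om)<\psi_1(\Om)$, while $\psi_1(\om)[1]=\psi_1(1)$. In general, for any $\al$ of type $\Om$, putting $\om$ into its $\Om$-slot gives such a $\be$ for every $x\ge 2$. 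So a simultaneous induction on $N(\al)$ over all of $\OT_0$ carries a false hypothesis, and items 1 and 2 must be restricted. The case actually used later is $\al,\be\in\OT_0\cap\Om$, where the Hardy hierarchy and Lemma~\ref{lm:Hprop} live; the paper itself gives no argument beyond deferring to \cite{Weiermann95}. Item 3 is unaffected, since for $tp(\ga)=\Om$ one has $N(\ga[\xi])=N(\ga)-1+N(\xi)$.

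After this restriction, the real work is the diagonal case $\al=\psi_0\ga$ with $tp(\ga)=\Om$ and $\be=\psi_0\de$, $\ga[z_x]<\de<\ga$. In $\OT_0$, which lies below $\psi_1\Om_2$, there is no $\psi_1$-diagonal case. Your plan does not supply what is needed here.

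\textbf{A separate lemma for type-$\Om$ terms.} Because the theorem is false for the type-$\Om$ term $\ga$, you cannot use it as induction hypothesis there. You need a separate lemma, e.g.\ that $\ga[y]<\de<\ga[y+1]$ implies $\ga[y]\le\de[1]$, with $y$ given by Theorem~\ref{thm:PropFundSeqOT}. Lemma~\ref{lm:propertyTC} presupposes $<_n$-chains and cannot produce this comparison.

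\textbf{The role of the normal-form condition.} Your step 2 misplaces it: $y\ge z_x$ already follows from $\ga[z_x]<\de<\ga[y+1]$ and monotonicity of $\ga[\cdot]$. What $G_0\de<\de$ really gives is $y<\al$. A countable parameter $\ge\psi_0\ga$ in the $\Om$-slot would put some $\theta\ge\ga>\de$ into $G_0\de$. Without this bound the property fails. For $\ga=\psi_1(\psi_0(\Om\cdot 2))+\Om$ and $y=\psi_0\psi_1\Om>\al$ one has $y[1]<\al[0]$, so the non-normal term $\psi_0(\ga[y])$ violates item 1 for every $x\ge 1$.

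\textbf{The induction must also run on $x$.} Since $z_x=\al[x-1]$ for $x\ge 1$, you get $\al[x-1]\le y<\al$. In the critical subcase $\de=\ga[y]$ with $y$ a limit, $\be[1]=\psi_0(\ga[y[1]])$, so you need $y[1]\ge\al[x-1]$. That is the property for the same $\al$ at index $x-1$. The induction therefore needs a side induction on $x$ (or must run on $N(\be)$), not only on $N(\al)$.

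Finally, in your third clause it is $z_0=0$, not $z_1$, so $\al[1]=\psi_i(\ga[\psi_i(\ga[0])])$. The bound still holds by the norm identity above.
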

\begin{proof}
The proof of \cite{Weiermann95} can be adapted to the current context.
\end{proof}

\subsection{Proof-theoretic properties}\label{subsec:ProofTheo}

We will use the system of fundamental sequences to establish new independence results for Goodstein processes by appealing to proof-theoretic ordinals. If $\Lambda$ is an ordinal then for every $\alpha<\Lambda $ and every $m$ there is $n$ such that $ \la [0][1] \ldots [n] = 0$. However, this fact is not always provable when $\Lambda$ is very large.

Proof-theoretic ordinals mentioned earlier can also be defined in the context of an ordinal notation system with fundamental sequences. 
Each of the theories $\mathbb{T}$ we are interested in can be assigned a proof-theoretic ordinal $|\mathbb{T}|$, which provides substantial information about what is (un)provable in $\mathbb{T}$.
First, it characterizes the amount of transfinite induction available, such that $|\mathbb{T}|$ can be characterized as the least ordinal $\xi$ such that $\mathbb{T}\not\vdash {\rm TI}(\xi)$.

The ordinal $|\mathbb{T}|$ can also be used to bound the provably total computable functions in $\mathbb{T}$.
Fundamental sequences can be used to define fast-growing functions on
the natural numbers, such as the Fast Growing Hierarchy $F_\al$ in the following definition.
\begin{definition}
    For $n\in \Nat$ and $\al<\varepsilon_0$ define:
    \begin{itemize}
        \item $F_0(n)=n+1$
        \item $F_{\al+1}(n) = F^n_\al(n)$
        \item $F_\la(n) = F_{\la[n]}(n)$ for $\la\in Lim$.
    \end{itemize}
\end{definition}
The intuition is that each $F_\al$ is an increasing function, which grows
more quickly for larger $\al$.
The totality of these functions cannot be proven over weak theories.
\\

Given $\xi\leq |\mathbb{T}| $, let $H(\xi)$ be the least $n$ so that $\xi[0][1]\ldots [n] =0$, and for $n\in\mathbb N$, let $F_\xi(n) = H ( \xi [n])$.
Then, the proof-theoretic ordinal of $\mathbb{T}$ is also the least $\xi\leq |\mathbb{T}|$ such that $\mathbb{T} \not\vdash \forall \zeta<\xi \ \forall n \  \exists m \ ( m = F_\xi(n) )$, if it exists (otherwise, we may define it to be $\infty$, or just leave it undefined).

Say that a partial function $f\colon \mathbb N \to \mathbb N$ is {\em computable} if there is a $\Sigma_1$ formula $\varphi_f (x,y)$ in the language of first order arithmetic (with no other free variables) such that for every $m,n$, $f(m) = n$ if and only if $\varphi_f (m,n)$ holds.
The function $f$ is {\em provably total} in a theory $\mathbb{T}$ if $\mathbb{T}\vdash \forall x \exists y \varphi _f (x,y) $ (more precisely, $f$ is provably total if there is at least one such choice of $ \varphi _f $).

\begin{theorem}
For $\mathbb{T}\in \{ \ID_2\}$ and $\al\leq \psi_0(\psi_1\varepsilon_{\Om_2+1})$, the following are equivalent~\cite{BuchholzCW1994}:
\begin{enumerate}

\item $\al<|\mathbb{T}|$.

\item $\mathbb{T} \vdash {\rm TI}( \al)$.

\item $F_\al$ is provably total in $\mathbb{T}$.

\item There exists a provably total computable function $f\colon \Nat\to \Nat$ in $\mathbb{T}$ such that $f(n) > F_{|\mathbb{T}|}(n)$ for all $n$.
\end{enumerate}
\end{theorem}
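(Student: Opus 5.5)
This equivalence is the standard characterisation of the proof-theoretic ordinal, and I would obtain it by chaining known results in a cycle (1)$\Rightarrow$(2)$\Rightarrow$(3)$\Rightarrow$(1), with (4) attached to (3) by a majorisation argument. Throughout I work with the notation system $\OT$ and the fundamental sequences of Definition~\ref{def:FundamentalSeq}. By the theorem of Buchholz and Sch\"utte above, $|\ID_2| = \psi_0\psi_1\varepsilon_{\Omega_2+1}$ under this representation. I read (4) in its intended form, in which $F_\alpha$ is dominated by a provably total function; as literally written, with $F_{|\mathbb{T}|}$, it cannot hold, so I treat it as the mirror statement.

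\textbf{(1)$\Rightarrow$(2).} This is the upper-bound (well-ordering proof) half of the ordinal analysis. For every $\alpha<\psi_0\psi_1\varepsilon_{\Omega_2+1}$ there is $n$ such that $\alpha<\psi_0\psi_1\omega_n(\Omega_2+1)$. Using the two inductively defined accessible parts (for the $\psi_1$-level and the $\psi_0$-level), $\ID_2$ proves transfinite induction up to each such bound. This is the classical argument of Buchholz--Sch\"utte, adapted to the two-step collapse, and I would cite it.

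\textbf{(2)$\Rightarrow$(3).} Given ${\rm TI}(\alpha)$, one proves by transfinite induction on $\beta\le\alpha$ that $\forall n\,\exists m\,(m=F_\beta(n))$. The induction formula is $\Pi_2$, and the recursion clauses for $F$ use only the primitive recursive maps $\beta\mapsto\beta[n]$ and $N$ on $\OT$. The successor step requires iterating $F_\beta$ $n$ times, which is an ordinary induction on $n$.

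\textbf{(3)$\Leftrightarrow$(4) and (3)$\Rightarrow$(1).} The direction (3)$\Rightarrow$(4) is immediate: take $f=F_\alpha+1$. For (4)$\Rightarrow$(3) I would use the Bachmann property (Theorem~\ref{thm:bachmannProp}) together with Lemma~\ref{lm:propertyTC}. These give the monotonicity $\beta<_n\gamma\Rightarrow F_\beta(n)\le F_\gamma(n)$, so dominating $F_\alpha$ eventually suffices.

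The core step is (3)$\Rightarrow$(1), the lower bound. Every provably total computable function of $\ID_2$ is eventually dominated by some $F_\beta$ with $\beta<|\ID_2|$. I would obtain this from the ordinal analysis of $\ID_2$ via infinitary derivations with $\psi_0,\psi_1$-collapsing and cut elimination, following the uniform treatment of \cite{BuchholzCW1994}. The resulting operator-controlled derivations bound witnesses by Hardy/fast-growing functions indexed below $\psi_0\psi_1\varepsilon_{\Omega_2+1}$. If $\alpha\ge|\ID_2|$, then $F_\alpha$ dominates every $F_\beta$ with $\beta<|\ID_2|$, once more by the Bachmann-property monotonicity, so $F_\alpha$ could not be provably total.

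The main obstacle is this last step. One has to check that the collapsing and fundamental-sequence conventions of the present two-step system, with the restriction to $\OT_0$, match those under which the cited embedding and cut-elimination bounds hold. Concretely, one must verify that the Bachmann property and the norm inequalities of Theorem~\ref{thm:bachmannProp} hold for all terms produced by the collapse. Then $\beta<\alpha$ implies $\beta<_n\alpha$ for large $n$, which converts the ordinal bound into a pointwise majorisation $F_\beta(n)\le F_\alpha(n)$.
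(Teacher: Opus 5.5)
Your reading of (4) is correct. As printed, clause (4) does not involve $\al$, and it is false for $\ID_2$, since no provably total function majorizes $F_{|\ID_2|}$. It therefore cannot be equivalent to (1) for $\al<|\ID_2|$, and the intended clause is $f(n)>F_\al(n)$. Your cycle is the standard one: a well-ordering proof for (1)$\Rightarrow$(2), the routine $\Pi_2$ transfinite induction for (2)$\Rightarrow$(3), and the collapsing/cut-elimination upper bound for (3)$\Rightarrow$(1). This is exactly what the paper compresses into its citation of \cite{BuchholzCW1994}. The paper gives no argument of its own, so importing those two ordinal-analysis results puts you on the same footing as the paper.

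The one step that fails as you justify it is (4)$\Rightarrow$(3). The Bachmann property and Lemma~\ref{lm:propertyTC} are statements about the notation system alone. They yield $\be<_n\ga\Rightarrow F_\be(n)\le F_\ga(n)$, but they say nothing about what $\ID_2$ proves. Suppose some provably total $f$ satisfies $f(n)>F_\al(n)$; this inequality is true but possibly unprovable. That does not give $\ID_2\vdash\forall n\,\exists m\,(m=F_\al(n))$. A bounded search for $F_\al(n)$ below $f(n)$ would require $\ID_2$ to prove the bound, and (4) does not provide that.

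Replace this step by (4)$\Rightarrow$(1), proved contrapositively with the same upper-bound theorem you use for (3)$\Rightarrow$(1). Suppose $\al=|\ID_2|$. Then each provably total $f$ is eventually bounded by some $F_\be$ with $\be<|\ID_2|$. Since $\be+1<\al$, we have $\be+1<_n\al$ for all large $n$. Hence $f(n)\le F_\be(n)<F_{\be+1}(n)\le F_\al(n)$ eventually, contradicting (4).

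With that change the equivalences close via (1)$\Rightarrow$(3)$\Rightarrow$(4)$\Rightarrow$(1). Your remaining caveat is the right one to flag: the fundamental-sequence and collapsing conventions of this two-step system must match those under which the cited upper bound is proved.
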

Thus a general strategy for proving independence of $\Pi^0_2$ statements is
showing that they require witnesses growing faster than the suitable fast-growing
function.
The Bachmann property will be useful in transferring unprovability results stemming from proof-theoretic ordinals to the setting of Goodstein processes in view of the following.

\begin{proposition}\label{prop:Majorize}
Let $\Lambda$ be an ordinal with a system of fundamental sequences satisfying the Bachmann property, and let $(\xi_n)_{n\in \mathbb N}$ be a sequence of elements of $\Lambda$ such that, for all $n$, $\xi_n[n+1] \leq \xi_{n+1} \leq \xi_n$.
Then, for all $n$, $\xi_n \geq \xi_0 [0][1] \ldots [n]$.
\end{proposition}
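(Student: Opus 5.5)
We argue by induction on $n$ and prove the statement $\xi_n \geq \xi_0[0][1]\ldots[n]$ directly. Write $\eta_n := \xi_0[0][1]\ldots[n]$, so $\eta_{n+1} = \eta_n[n+1]$. The sequence $(\eta_n)$ is non-increasing, since $\alpha[k]\leq\alpha$ for every $\alpha$. The indexing is slightly off at the base: for $n=0$ we need $\xi_0 \geq \xi_0[0]$, which holds because fundamental sequences never increase. For the inductive step we assume $\xi_n \geq \eta_n$ and must show $\xi_{n+1} \geq \eta_n[n+1]$.

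We split into two cases according to how $\xi_n$ compares with $\eta_n$.

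\emph{Case $\xi_n = \eta_n$.} The hypothesis $\xi_n[n+1]\leq \xi_{n+1}$ gives $\xi_{n+1} \geq \xi_n[n+1] = \eta_n[n+1] = \eta_{n+1}$ directly.

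\emph{Case $\xi_n > \eta_n$.} If $\xi_{n+1}\geq \eta_n$, we are done, since $\eta_n \geq \eta_n[n+1]$. Otherwise $\xi_{n+1} < \eta_n < \xi_n$. Combined with $\xi_n[n+1]\leq \xi_{n+1}$, this gives
\[
\xi_n[n+1] \leq \xi_{n+1} < \eta_n < \xi_n ,
\]
so in particular $\xi_n[n+1] < \eta_n < \xi_n$. The Bachmann property, applied with $\alpha=\xi_n$, $\beta=\eta_n$ and $k=n+1$, yields $\xi_n[n+1] \leq \eta_n[1]$. Monotonicity of fundamental sequences in the argument ($\beta[m]\leq\beta[m']$ for $m\leq m'$) then gives $\eta_n[1]\leq \eta_n[n+1]$. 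Altogether $\xi_{n+1}\geq \xi_n[n+1]$, but this bounds $\xi_{n+1}$ from the wrong side: it shows $\xi_n[n+1]\leq\eta_{n+1}$, not $\xi_{n+1}\geq \eta_{n+1}$.

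This second case is the main obstacle. The plain induction hypothesis is too weak, and the right fix is to strengthen the invariant. We will prove the stronger statement
\[
\text{for all } n:\quad \xi_n \geq \eta_n, \text{ and whenever } \beta \text{ satisfies } \eta_n \leq \beta \leq \xi_n, \text{ we have } \beta[n+1] \geq \eta_{n+1}.
\]
Equivalently, $\eta_n$ is reached from $\xi_n$ by a chain of $[1]$-steps, i.e.\ $\eta_n \leq_1 \xi_n$ in the notation $<_k$, taken as a transitive closure. The tool is the standard Bachmann consequence that $\gamma \leq_k \delta$ and $k\leq m$ imply $\gamma[m] \leq_m \delta[m]$, or at least $\gamma[m]\leq\delta[m]$. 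This follows by iterating the Bachmann inequality: if $\gamma<\delta$ then either $\delta[m]\geq\gamma$, or $\delta[m]<\gamma<\delta$ and hence $\delta[m]\leq\gamma[1]\leq\gamma[m]$.

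With this tool, the argument runs as follows. From $\xi_n \geq \eta_n$ we get $\xi_n[n+1]\geq \eta_n[n+1]$: either $\xi_n=\eta_n$, or $\eta_n<\xi_n$, and then the dichotomy $\xi_n[n+1]\geq\eta_n$ or ($\xi_n[n+1]<\eta_n<\xi_n$, hence $\xi_n[n+1]\leq\eta_n[1]$) must be refined to a lower bound. The lower bound comes from a second induction along the $[1]$-chain from $\xi_n$ down to $\eta_n$, using that each $[n+1]$-step dominates the corresponding $[1]$-step. Finally $\xi_{n+1}\geq\xi_n[n+1]\geq\eta_{n+1}$. We expect the bookkeeping of this chain argument to be the only delicate point; the remaining steps are routine monotonicity.
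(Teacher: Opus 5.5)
\textbf{There is a genuine gap.} You are right that $\xi_n\geq\eta_n$ alone does not propagate and must be strengthened to a chain relation; the paper does the same. But the invariant you settle on, $\eta_n\leq_1\xi_n$, is false in general.

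Take the system below $\varepsilon_0$ from Section 2 and the constant sequence $\xi_n=\omega^2\cdot 2$, which satisfies the hypothesis. Then $\eta_0=\omega^2$, $\eta_1=\omega$ and $\eta_2=\omega[2]=2$. The $[1]$-chain below $\omega^2\cdot 2$ is $\omega^2+\omega,\ \omega^2+1,\ \omega^2,\ \omega,\ 1,\ 0$, so $\eta_2\not\leq_1\xi_2$. The step $\eta_{n+1}=\eta_n[n+1]$ is an $[n+1]$-step, which is usually not a $[1]$-chain (already $2\not\leq_1\omega$). So the index must grow with $n$: the invariant that works, and the one the paper uses, is $\eta_n\leq_n\xi_n$.

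Your justification of the ``tool'' is also backwards. In the case $\delta[m]<\gamma<\delta$, the Bachmann property gives $\delta[m]\leq\gamma[1]\leq\gamma[m]$. That is an upper bound on $\delta[m]$, not the lower bound you need. For plain $<$ the implication $\gamma<\delta\Rightarrow\gamma[m]\leq\delta[m]$ is in fact false: $\gamma=\omega\cdot 5$, $\delta=\omega^2$, $m=1$ gives $\delta[1]=\omega<\omega\cdot 4+1=\gamma[1]$. For the same reason, the step ``from $\xi_n\geq\eta_n$ we get $\xi_n[n+1]\geq\eta_n[n+1]$'' is not valid.

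\textbf{The invariant is never propagated.} Your closing argument yields at best $\xi_{n+1}\geq\eta_{n+1}$, which is again only the weak statement, so the induction cannot continue. The paper proceeds as follows.
\begin{enumerate}
\item From $\eta_n\leq_n\xi_n$, obtain $\eta_n\leq_{n+1}\xi_n$. This is monotonicity of $\leq_k$ in $k$, and it is one place where the Bachmann property is used.
\item Deduce $\eta_{n+1}\leq_{n+1}\xi_n[n+1]$. This is immediate, since a nontrivial $[n+1]$-chain from $\xi_n$ passes through $\xi_n[n+1]$.
\item Transfer this to $\xi_{n+1}$ using the hypothesis $\xi_n[n+1]\leq\xi_{n+1}\leq\xi_n$, which your final argument never uses in this way.
\begin{enumerate}
\item If $\xi_{n+1}=\xi_n$, this is just transitivity.
\item If $\xi_n[n+1]\leq\xi_{n+1}<\xi_n$, use the interval consequence of Bachmann: $\alpha[k]\leq\beta<\alpha\Rightarrow\alpha[k]\leq_1\beta$, proved by induction on $\beta$. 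It gives $\xi_n[n+1]\leq_1\xi_{n+1}$, hence $\xi_n[n+1]\leq_{n+1}\xi_{n+1}$ and so $\eta_{n+1}\leq_{n+1}\xi_{n+1}$.
\end{enumerate}
\end{enumerate}
This interval lemma, together with monotonicity of $\leq_k$ in $k$, is the missing ingredient. An inequality between $[m]$-values of arbitrary comparable ordinals cannot replace it.
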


\proof
Let $\leq_k$ be the reflexive transitive closure of $\{(\al[k],\al):\al<\varphi_2(0)\}$. We need a few properties of these orderings.
           Clearly, if $\al \leq_k \be$, then $\al \leq \be$.
                     It can be checked by a simple induction and the Bachmann property that, if $\alpha[n] \leq \beta < \alpha$, then $\alpha[n] \leq_1 \beta$.
Moreover, $\leq_k$ is monotone in the sense that if
          $\al\leq_k \be$, then $\al\leq_{k+1}\be$, and if $\alpha\leq _k \beta$, then $\alpha[k] \leq_k \beta[k]$ (see, e.g., \cite{Schmidt1977} for details).

We claim that for all $n$, $\xi_n \succeq _n \xi_0[1]\ldots[n]$, from which the desired inequality immediately follows.
For the base case, we use the fact that $\geq_0$ is transitive by definition.
For the successor, note that the induction hypothesis yields $\xi_0[1]\ldots[n] \leq_n \xi_n $, hence $\xi_0[1]\ldots[n+1] \leq_{n+1} \xi_n[n+1] $.
Then, consider three cases.

\begin{enumerate}[label*={\sc Case \arabic*},wide, labelwidth=!, labelindent=0pt]

\item ($\xi_{n+1} = \xi_n$).
By transitivity and monotonicity, $\xi_0[1]\ldots [n+1] \leq_{n+1} \xi_0[1]\ldots [n ] \leq _n \xi_n =  \xi_{n+1}$
yields $\xi_0[1]\ldots [n+1] \leq_{n+1}  \xi_{n+1}$.

\item ($\xi_{n+1} = \xi_n[n+1]$).
Then, $\xi_0[1]\ldots [n+1] \leq_{n+1} \xi_n[n+1] = \xi_{n+1}$.

\item ($\xi_n[n+1] < \xi_{n+1} < \xi_n$).
The Bachmann property yields $\xi_n[n+1] \leq_1 \xi_{n+1}$, and since $\xi_0[1]\ldots[n+1] \leq_{n+1} \xi_n[n+1] $, monotonicity and transitivity yield $\xi_0[1]\ldots[n+1] \leq_{n+1} \xi_{n+1} $.\qedhere

\end{enumerate}
\endproof

As an immediate corollary we obtain that for such a sequence of ordinals $(\xi_n)_{n\in\mathbb N}$, if $m$ is least so that $\xi_m = 0$, then $m\geq F_{\xi_0}(n)$.
So, our strategy will be to show that the respective Goodstein process for $\mathbb{T}$ grows at least as quickly as $F_{|\mathbb{T}|}$, from which we obtain that the theorem is unprovable in $\mathbb{T}$.

\section{Properties of the ordinal notation system for \texorpdfstring{$\ID_2$}{ID2}} \label{sec:PropID2}

In this section we provide the technical foundation for the Goodstein process analyzed in Section~\ref{sec:GoodsteinID2}. Throughout this section, we work exclusively within the ordinal notation system $\OT_0$ established in Section~\ref{subsec:FundSeqBach}, relying on the system of fundamental sequences and the Bachmann property (Theorem~\ref{thm:bachmannProp}).
We introduce the concept of an \emph{ordinal context} to facilitate the comparison of uncountable and countable ordinals within $\OT_0$. The technical results derived here, specifically the comparison criteria and growth bounds in Lemmas~\ref{lm:Context1} and \ref{lm:Context2}, are essential for proving the monotonicity and normal form preservation of the base change operation defined in Section~\ref{subsec:BaseChange}.

\subsection{Ordinal Context }
To compare ordinals $\alpha < \beta$ in $\OT_0$, we must analyze the first difference between their structures. In standard Cantor normal form, this corresponds to identifying the first index $i$ such that the exponents $\alpha_i \neq \beta_i$. Because our notation system involves nested $\psi$-collapsing functions, this structural difference is obscured. We introduce an \emph{ordinal context} to factor out the common prefix of $\alpha$ and $\beta$, isolating the subterm where the divergence occurs.\medskip
\\
\noindent \textbf{Schematic Representation:}
We view an ordinal context $\lambda\scbr{\cdot}$ as a function with a slot $\scbr{\cdot}$. For two ordinals $\alpha < \beta$ that share a common prefix and suffix, we express them as:
\[
\begin{array}{cc}
\alpha := & \lambda[[\gamma]] \\
\beta  := & \lambda^- [[\delta]],
\end{array}
\]
where $\lambda\scbr{\cdot}$ captures the shared structure, $\gamma$ and $\delta$ are the {\em differing} subterms (with $\gamma < \delta$), 
and $\lambda^- \scbr{\cdot}$ accounts for the truncation of terms following the divergence point. This is formally stated in the following definitions.

\begin{definition}[Ordinal Context]
Let $\psi_1\al_0 \geq \cdots \geq \psi_1\al_n $ and $\psi_0\be_0 \geq \ldots \geq \psi_0\be_m$ be additive principal ordinals in $\OT_0$ and $l$ a natural number.

\begin{enumerate}[leftmargin=*]
    \item An ordinal context $\lambda\scbr{\cdot}$ is a formal expression with a placeholder $\scbr{\cdot}$ representing an ordinal term at which $\alpha$ and $\beta$ differ.
    \begin{enumerate}[label=(\roman*)]
        \item $\la\scbr{\cdot} := \psi_1{\al_0} +\ldots  + \psi_1{\al_{i-1}}+ \scbr{\cdot}+\psi_1{\al_{i+1}} +\ldots + \psi_0\be_n+l$ where $\scbr{\cdot}$ is a placeholder for the ordinal term $\psi_1\al_i$, and $\la\scbr{\psi_1\al_i} = \psi_1{\al_0} +\ldots  + \psi_1{\al_{i-1}}+ \psi_1\al_i+\psi_1{\al_{i+1}} +\ldots + \psi_0\be_n+l = \al$ for an $\al \in \OT_0$.
        
        \item Similarly, $\la\scbr{\cdot} := \psi_1{\al_0} +\ldots + \psi_0\be_0 + \ldots + \psi_0\be_{j-1} +\scbr{\cdot} + \psi_0\be_{j+1} +\ldots +l$ is an ordinal context where $\scbr{\cdot}$ is a placeholder for the ordinal term $\psi_0\be_j$, and 
        $\la\scbr{\psi_0\be_j} = \psi_1{\al_0} + \ldots + \psi_0\be_{j-1} + \psi_0\be_i+ \psi_0\be_{j+1} +\ldots +l =\al$ for an $\al \in \OT_0$.
        
        \item  $\la\scbr{\cdot} :=  \psi_1\al_0 + \ldots + \psi_0\be_m+ \scbr{\cdot}$ is an ordinal context where $\scbr{\cdot}$ is a placeholder for $l$, and $\la\scbr{l} =  \psi_1\al_0 + \ldots + \psi_0\be_m+ l=\al$ for an $\al \in \OT_0$.        
    \end{enumerate}
    
    \item Let $\mu\scbr{\cdot}$ be an ordinal context then,
    \begin{enumerate}[label=(\roman*)]
        \item $\la\scbr{\cdot}:= \psi_1{\al_0} +\ldots+ \psi_1(\mu\scbr{\cdot})+\ldots +l$ is an ordinal context where $\scbr{\cdot}$ is a placeholder 
        for $\al_i$ and $\la\scbr{\al_i} = \psi_1{\al_0} +\ldots+ \psi_1(\mu\scbr{\al_i})+\ldots +l= \al$ for an $\al \in \OT_0$.
        \item $\la\scbr{\cdot}:= \psi_1{\al_0} +\ldots+ \psi_0(\mu\scbr{\cdot})+\ldots +l$ is an ordinal context for $\be_j$ in $\al$, and 
        $\la\scbr{\be_j}= \psi_1{\al_0} +\ldots+ \psi_0(\mu\scbr{\be_j})+\ldots +l =\al $ for an $\al\in \OT_0$.
    \end{enumerate}
\end{enumerate}
\end{definition}
\smallskip
The following examples illustrate the definition of an ordinal context.
\begin{example}
Let $\al= \psi_1{(\om\cdot 3)}  + \psi_0{(\psi_1\om)}+ 11$. 
Define an ordinal context as $\la\scbr{\cdot} :=  \scbr{\cdot} + \psi_0{(\psi_1\om)} +11$. To obtain $\al$ from the ordinal context $\la\scbr{\cdot} $, set $\psi_1{(\om\cdot 3)}$ into the placeholder, so $\al = \la\scbr{\psi_1{(\om\cdot 3)}}$.
\medskip\\
Let $\be = \psi_1(\om)$, and define an ordinal context $\tau\scbr{\cdot} :=\scbr{\cdot}$. Then $\be=\tau\scbr{\psi_1\om}$. 
Define $\la_1\scbr{\cdot}:= \psi_1(\om\cdot 3) + \psi_0(\tau\scbr{\cdot}) +11$.
Then $\al =\psi_1{(\om\cdot 3)}  + \psi_0{(\psi_1\om)}+ 11$ can be written using the $\la_1$ context as:
$\al= \la_1\scbr{\psi_1\om}$.
\end{example}

\smallskip

\begin{definition}[Truncated ordinal context]
Let $\la\scbr{\cdot}$ be an ordinal context. The truncated context $\lambda^-\scbr{\cdot}$ is the context resulting from removing all terms hereditarily following the placeholder.
\begin{itemize}
    \item If $\la\scbr{\cdot} = \scbr{\cdot}$ then $\la^-\scbr{\cdot} =\scbr{\cdot}$.
    
    \item If $\la\scbr{\cdot}=\al_0 + \ldots +\al_{i-1} +\scbr{\cdot} + \al_{i+1}+\ldots +l$ then 
    $\la^-\scbr{\cdot}=\al_0 + \ldots +\al_{i-1} + \scbr{\cdot}$.
    
    \item If $\la\scbr{\cdot}= \al_0 + \ldots +\al_{i-1}+ \psi_1(\tau{\scbr{\cdot}})+\al_{i+1}+\ldots +l$ for an ordinal context $\tau$,
    then $\la^-\scbr{\cdot}=\al_0 + \ldots +\al_{i-1}+ \psi_1(\tau^-{\scbr{\cdot}})$.
    
    \item If $\la\scbr{\cdot}=\al_0 + \ldots +\al_{i-1}+ \psi_0(\tau{\scbr{\cdot}})+\al_{i+1}+\ldots +l$ for an ordinal context $\tau$,
    then $\la^-\scbr{\cdot}= \al_0 + \ldots +\al_{i-1}+ \psi_0(\tau^-{\scbr{\cdot}})$.		
\end{itemize}
\end{definition}
\smallskip

\begin{example}
We employ ordinal contexts to facilitate the comparison of ordinals, as demonstrated in the following examples.\\
Let  $\al= \psi_1{(\Om + \om\cdot 2)} + \psi_1{(\om\cdot 3)}  + \psi_0{(\psi_1 \om + \psi_07)}+ 11$.\medskip
\\
If $\ga = \psi_1{(\Om+ \om\cdot 2)} + \psi_1{(\om^2)}$, then the first terms $\al$ and $\ga$ differ are $\psi_1{(\om\cdot 3)}$ and $\psi_1(\om^2)$. Using the ordinal context  emphasizes their differences. \\
Define $\la\scbr{\cdot} :=  \psi_1{(\Om + \om\cdot 2)} + \scbr{\cdot} + \psi_0{(\psi_1 \om + \psi_07)} +11$. Then $\al = \la\scbr{\psi_1{(\om\cdot 3)}}$. To get $\ga$ from the context $\la$ place into the placeholder $\psi_1{(\om^2)}$ and remove the terms on its right; $\ga = \la^-\scbr{\psi_1{(\om^2})}$.\medskip
\\
Let $\be = \psi_1\om + \psi_07$. 
Define an ordinal context $\tau\scbr{\cdot} :=\scbr{\cdot} + \psi_07$. Then $\be = \tau\scbr{\psi_1\om}$.
\\
If $\de = \psi_1{(\Om + \om\cdot 2)} + \psi_1{(\om\cdot 3)}  +  \psi_0{(\psi_15 )}$, the first terms where $\al$ and $\de$ differ are $\psi_0(\psi_1 \om + \psi_07)$ and $\psi_0{(\psi_15 )}$ respectively.
Using the ordinal context $\tau$, 
define $\la\scbr{\cdot} := \psi_1{(\Om + \om \cdot 2)} + \psi_1{(\om\cdot 3)} + \psi_0(\tau\scbr{\cdot}) + 11$. Then $\al = \la\scbr{\psi_1\om}$. 
We get $\de$ by truncating $\la$ and  $\tau$; hence\\
$\la^-\scbr{\cdot} =  \psi_1{(\Om + \om \cdot 2)} + \psi_1{(\om\cdot 3)} + \psi_0(\tau^-\scbr{\cdot})$. Then $\de = \la^-\scbr{\psi_15}$.
\\
\end{example}

\begin{remark}
For sake of brevity, we will write $\al = \xi + \psi_1\al_i + \eta$ instead of $\al =\psi_1{\al_1}+\ldots+\psi_1\al_i +\ldots +l$. 
Similarly, we will write $\al= \xi + \psi_0\be_i +\eta$ instead of $\al =\psi_0{\be_1}+\ldots+\psi_0\be_i +\ldots +l$.
\end{remark}

\smallskip
\subsection{Properties of an ordinal context}
An ordinal context is a formal expression with a placeholder, and as such it can be an ordinal $\al$ in the system $\OT_0$, only if the right term is replaced in the placeholder. The following Lemmas show some properties of an ordinal context.\medskip


    

\begin{lemma} \label{lm:lala-}
Let $\la\scbr{\cdot}$ be an ordinal context. 
Then for all $\ga<\Om$ such that $\la\scbr{\ga} \in \OT_0$,  $\la^-\scbr{\ga} \leq_{1} \la\scbr{\ga}$.
\end{lemma}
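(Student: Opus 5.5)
We argue by induction on the construction of the context $\la\scbr{\cdot}$, i.e.\ on the nesting depth of the placeholder inside $\psi_1$ and $\psi_0$. The main tools are two facts. First, for a sum $\xi+\eta$ in normal form with $\eta\neq 0$, repeatedly applying $[1]$ only affects the last summand. Hence $\xi \leq_1 \xi+\eta$: iterating $[1]$ on $\eta$ (which is well founded, the terms lying in $\OT_0$) eventually reaches $0$, and each step is an instance of $\al \mapsto \al[1]$ on the whole sum. Second, $\leq_1$ is compatible with the collapsing functions wherever the fundamental sequence of $\psi_i(\cdot)$ is computed componentwise. In general, however, we will need the Bachmann property (Theorem~\ref{thm:bachmannProp}) together with Lemma~\ref{lm:propertyTC} to pass a $\leq_1$-relation through $\psi_0$ or $\psi_1$.

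\emph{Base cases.} If $\la\scbr{\cdot}=\scbr{\cdot}$, then $\la^-=\la$ and there is nothing to prove. If $\la\scbr{\cdot}=\xi+\scbr{\cdot}+\eta$ with $\eta$ the hereditarily following terms, then $\la^-\scbr{\ga}=\xi+\ga$ and $\la\scbr{\ga}=\xi+\ga+\eta$. Since the sum is in normal form, the first fact gives $\xi+\ga\leq_1\xi+\ga+\eta$, by induction on $\eta$ along $[1]$: $(\xi+\ga+\eta)[1]=\xi+\ga+\eta[1]$, and $\eta[1]$ is again a (possibly shorter) tail. The same argument covers case (iii), where the placeholder is the final natural number $l$.

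\emph{Inductive step.} Let $\la\scbr{\cdot}=\xi+\psi_i(\tau\scbr{\cdot})+\eta$. By the base-case argument, $\xi+\psi_i(\tau\scbr{\ga})\leq_1\la\scbr{\ga}$, so by transitivity it suffices to show
\[
\psi_i(\tau^-\scbr{\ga})\leq_1\psi_i(\tau\scbr{\ga})
\]
and then prepend $\xi$, using that $(\xi+\de)[1]=\xi+\de[1]$. The induction hypothesis gives $\tau^-\scbr{\ga}\leq_1\tau\scbr{\ga}$. To lift this under $\psi_i$, we proceed by induction along the chain $\tau^-\scbr{\ga}=\de_k\triangleleft_2\cdots\triangleleft_2\de_0=\tau\scbr{\ga}$, where $\de_{j+1}=\de_j[1]$, and we show $\psi_i(\de_j[1])\leq_1\psi_i(\de_j)$ for each step. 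If $tp(\de_j)<\Om_{i+1}$, then $\psi_i(\de_j)[1]=\psi_i(\de_j[1])$ directly. If $tp(\de_j)=1$, then $\psi_i(\de_j)[1]=\psi_i(\de_j[0])$, which is what we need since $\de_j[1]=\de_j[0]$ in that case. If $tp(\de_j)=\Om_{i+1}$, then $\de_j[1]$ has argument $1$, while $\psi_i(\de_j)[1]=\psi_i(\de_j[z_1])$ with $z_1=\psi_i(\de_j[0])\geq 1$. By Lemma~\ref{lm:propertyTC}(1) we get $\de_j[1]\leq_1\de_j[z_1]$. We would then use the intermediate claim that $\psi_i$ preserves $\leq_1$ on arguments below the relevant one, proved by simultaneous induction on term length, to obtain $\psi_i(\de_j[1])\leq_1\psi_i(\de_j[z_1])=\psi_i(\de_j)[1]$.

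\emph{Main obstacle.} The hardest step is this last case, where $tp=\Om_{i+1}$ and the fundamental sequence of $\psi_i$ jumps to the index $z_1$ rather than $1$. There is also the issue that the intermediate terms $\de_j$ need not lie in $\OT_0$, or need not satisfy the normal form condition $G_i\de_j<\de_j$. In that situation we would fall back on the Bachmann property: because $\psi_i(\de_j)[1]<\psi_i(\de_j[1])<\psi_i(\de_j)$ or the reverse order holds, Theorem~\ref{thm:bachmannProp} yields the comparison in $\leq_1$ form, as in the proof of Proposition~\ref{prop:Majorize}, where Bachmann gives $\al[n]\leq\be<\al\Rightarrow\al[n]\leq_1\be$. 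Using that ``$\al[n]\leq\be<\al$ implies $\al[n]\leq_1\be$'' consequence, together with the hypothesis $\ga<\Om$ (which keeps the placeholder countable, so the truncation never alters a $\tp$ equal to $\Om$ or $\Om_2$ at the top level of the placeholder), is the route I would try first.
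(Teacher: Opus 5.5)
Your skeleton is the paper's: induction on the context, the base case by running $[1]$ down the tail $\eta$, and the inductive step by the induction hypothesis plus the claim that $\psi_i$ carries $\leq_1$ to $\leq_1$. The paper simply asserts that claim without proof. The gap is in your justification of it when $tp(\de_j)=\Om_{i+1}$.

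This case does occur despite $\ga<\Om$, because the offending type comes from the discarded tail, not from the placeholder. For $\la\scbr{\cdot}=\psi_0(\psi_1(\scbr{\cdot}+1)+\Om)$ and $\ga=\om$, the first term $\psi_1(\om+1)+\Om$ of your chain already has type $\Om$. Neither of your routes handles it:
\begin{itemize}
\item Induction on term length is unavailable, since $z_1=\psi_i(\de_j[0])$ makes $\de_j[z_1]$ longer than $\de_j$.
\item The Bachmann fallback points the wrong way. From $\al[n]\leq\be<\al$ it yields $\al[n]\leq_1\be$, which makes the smaller term reachable from the larger one. You need $\psi_i(\de_j[1])$ to be reachable from $\psi_i(\de_j)$, and ordinal comparisons alone cannot give that: e.g.\ $\om\cdot 2<\psi_0(1)$, yet the $[1]$-descent from $\psi_0(1)$ is $\psi_0(1),\om,1,0$. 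In fact $z_1\geq 1$ gives $\psi_i(\de_j[1])\leq\psi_i(\de_j)[1]$, which is exactly the configuration in which Theorem~\ref{thm:bachmannProp} says nothing.
\end{itemize}

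The missing observation is that no recursion into the hard case is needed.
\begin{itemize}
\item A routine induction on $\de_j$ shows that for $1\leq y<\Om_{i+1}$ one has $\de_j[y][1]=\de_j[y[1]]$ and $tp(\de_j[y])\in\{tp(y),\om\}$, hence $tp(\de_j[y])<\Om_{i+1}$. The cases are $\Om_{i+1}$ itself, sums, and $\psi_1$- and $\psi_2$-terms; a $\psi_0$-term never has uncountable type.
\item A nonzero term $\al$ has $\al[1]=0$ only if $\al=1$, so the $[1]$-descent from $z_1$ ends in $1$.
\item Letting $y$ run along that descent gives a $[1]$-chain from $\de_j[z_1]$ to $\de_j[1]$ (this is what underlies Lemma~\ref{lm:propertyTC}(1)). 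Every step of this chain is one of your two easy cases, so each lifts directly through $\psi_i$.
\end{itemize}
This gives $\psi_i(\de_j[1])\leq_1\psi_i(\de_j[z_1])=\psi_i(\de_j)[1]$. With it inserted, your argument proves the lemma and also supplies the step the paper leaves unjustified.
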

\begin{proof}
Proof by induction on the context $\la$.
\begin{Cases}

\item If $\la\scbr{\cdot} = \xi + \scbr{\cdot} + \eta$, then $\la^-\scbr{\cdot} = \xi +\scbr{\cdot}$. 
If $\eta=1$, then $\la^-\scbr{\ga}=\la\scbr{\ga}[1]$.\\
Otherwise, $\la\scbr{\ga} = \xi+\ga+\eta \geq_1 \xi+\ga+\eta[1] \geq_1\ldots \geq_1 \xi+\eta =\la^-\scbr{\ga}$.


\item If $\la\scbr{\cdot} = \xi + \psi_0(\mu\scbr{\cdot}) + \eta$, then $\la^-\scbr{\cdot} = \xi + \psi_0(\mu^-\scbr{\cdot})$. 
Then by induction hypothesis, $\mu^-\scbr{\ga} \leq_1 \mu\scbr{\ga}$. Then $\psi_0(\mu^-\scbr{\ga}) \leq_1 \psi_0( \mu\scbr{\ga})$,
therefore 
$\la^-\scbr{\ga} = \xi + \psi_0(\mu^-\scbr{\ga}) \leq_1 \xi + \psi_0(\mu\scbr{\ga}) + \eta =\la\scbr{\ga}$.

\item If $\la\scbr{\cdot} = \xi + \psi_1(\mu\scbr{\cdot}) + \eta$, then $\la^-\scbr{\cdot} = \xi + \psi_1(\mu^-\scbr{\cdot})$. 
Then similarly to above,
$\la^-\scbr{\ga} = \xi + \psi_1(\mu^-\scbr{\ga}) \leq_1 \xi + \psi_1(\mu\scbr{\ga}) + \eta =\la\scbr{\ga}$.
\end{Cases}
\end{proof}

\begin{lemma}
Let $\la\scbr{\cdot}$ be an ordinal context such that $\al =\la\scbr{\ga}$ is in $\OT_0$ for $\ga\in \OT_0$. Then $\la^-\scbr{\ga}$ is also in $\OT_0$.\medskip
\end{lemma}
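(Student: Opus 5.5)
We prove the statement by structural induction on the context $\la\scbr{\cdot}$, mirroring the case split of Lemma~\ref{lm:lala-}. Two closure facts about $\OT$ (and hence $\OT_0$) follow directly from the definition of $\OT$ and of $G_0,G_1$. First, an initial segment $\al_0+\ldots+\al_{i}$ of a sum in $\OT$ is again in $\OT$: each summand lies in $\OT$, and the ordering $\al_0\ge\ldots\ge\al_i$ is inherited (for $i=0$ it is just the principal term $\al_0$). Second, removing summands can only shrink $G_0$ and $G_1$, since both are defined as unions over the summands. The $\OT_0$ conditions $\al<\varepsilon_{\Om+1}$ and $G_0\al<\varepsilon_{\Om+1}$ are then inherited once we know $\la^-\scbr{\ga}\le\la\scbr{\ga}$ and $G_0(\la^-\scbr{\ga})\subseteq$ a set bounded by $G_0(\la\scbr{\ga})$.

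\textbf{Base case.} If $\la\scbr{\cdot}=\scbr{\cdot}$, then $\la^-\scbr{\ga}=\ga=\la\scbr{\ga}$. If $\la\scbr{\cdot}=\xi+\scbr{\cdot}+\eta$, then $\la^-\scbr{\ga}=\xi+\ga$, which is an initial segment of the normal-form sum $\xi+\ga+\eta$. It therefore lies in $\OT$, it is $\le\la\scbr{\ga}$, and its $G_0$ is a subset of $G_0(\la\scbr{\ga})$.

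\textbf{Inductive case.} Let $\la\scbr{\cdot}=\xi+\psi_0(\mu\scbr{\cdot})+\eta$. Since $\psi_0(\mu\scbr{\ga})\in\OT$, we have $\mu\scbr{\ga}\in\OT$ by clause (5), and the induction hypothesis gives $\mu^-\scbr{\ga}\in\OT$. We must show that $\psi_0(\mu^-\scbr{\ga})\in\OT$, i.e.\ that $G_0(\mu^-\scbr{\ga})<\mu^-\scbr{\ga}$ and $\mu^-\scbr{\ga}<\Om_2$. This is the main obstacle: truncation lowers the argument, but it also removes elements of $G_0$. We need a strengthened induction hypothesis of the form
\[
G_i(\mu^-\scbr{\ga})\subseteq G_i(\mu\scbr{\ga})\cup\{\text{subterms coming from } \ga\}.
\]
The elements of $G_0(\mu\scbr{\ga})$ that survive are those sitting in the common prefix of $\mu$. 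An element $\de\in G_0(\mu\scbr{\ga})$ is an argument of some $\psi_0$ occurring in $\mu\scbr{\ga}$. If that occurrence lies in the prefix, then $\de$ is compared to $\mu^-\scbr{\ga}$ through the leading summand, which is unchanged, so $\de<\mu^-\scbr{\ga}$ follows by comparing first differing summands as in the definition of $<$. If instead the occurrence lies on the path to the placeholder, we use the induction hypothesis that the nested truncated term still dominates it. Elements contributed by $\ga$ itself are handled by the case assumption $\ga<\Om$ from Lemma~\ref{lm:lala-} where it applies: then $G_1\ga=\emptyset$, and its $G_0$ part is bounded by the outer argument because it already was inside $\mu\scbr{\ga}$.

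If the leading summand itself contains the placeholder, we instead use the $\le_1$ relation from Lemma~\ref{lm:lala-} together with the facts that $\le_k$ implies $\le$ and that fundamental sequences preserve $\OT$ (Theorem~\ref{thm:PropFundSeqOT}). Since $\mu^-\scbr{\ga}\le_1\mu\scbr{\ga}$, the term $\mu^-\scbr{\ga}$ is reached by iterated $[1]$-steps from an $\OT$ term and so lies in $\OT$. The normal-form condition for $\psi_0(\cdot)$ of the truncated argument then follows because $\psi_0(\mu^-\scbr{\ga})\le_1\psi_0(\mu\scbr{\ga})$ is a descent along fundamental sequences inside $\OT$. I expect to rely on this $\le_1$ route as the cleanest argument, and to use the $G$-set bookkeeping only as a fallback.

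The case $\psi_1(\mu\scbr{\cdot})$ is identical, with $G_1$ in place of $G_0$ and clause (4) in place of clause (5). Finally, appending the prefix $\xi$ preserves $\OT$ by the initial-segment observation, since $\xi+\psi_i(\mu^-\scbr{\ga})$ is in normal form: $\psi_i(\mu^-\scbr{\ga})\le\psi_i(\mu\scbr{\ga})\le$ the last summand of $\xi$.
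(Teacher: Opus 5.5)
Your main route, obtaining $\la^-\scbr{\ga}\le_1\la\scbr{\ga}$ from Lemma~\ref{lm:lala-} and then using closure of $\OT$ under fundamental sequences (Theorem~\ref{thm:PropFundSeqOT}), organized as an induction on the context with a sum case and $\psi_0$/$\psi_1$ cases, is essentially the paper's proof; your direct initial-segment argument for the sum case is a slightly cleaner version of the paper's Case~1. As in the paper, the $\le_1$ step inherits the restriction $\ga<\Om$ from Lemma~\ref{lm:lala-}, and your $G$-set ``fallback'' does not lift it: it only bounds the surviving elements of $G_0$ by $\mu\scbr{\ga}$, not by $\mu^-\scbr{\ga}$, so that part is best dropped.
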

\begin{proof}
By induction on $\la$:
\begin{Cases}
    \item  Let $\la\scbr{\cdot}=\xi +\scbr{\cdot} + \eta$ and $\al =\la\scbr{\ga} \in \OT_0$ such that $\al =_{NF} \xi + \ga + \de$. 
    Then $\xi\geq \ga \geq \de$ are additive principal ordinals.\\ 
    By Theorem \ref{thm:PropFundSeqOT}, if $\al\in \OT_0$ then also $\al[x] \in \OT_0$ and by Lemma \ref{lm:lala-} above,
    $\la\scbr{\ga}= \xi + \ga + \de > \xi + \ga + \de[1] \geq_1 \xi + \ga= \la^-\scbr{\ga}$.
    Hence $\xi+\ga = \la^-\scbr{\ga}$ is also an ordinal in $\OT_0$.\medskip
    
    \item  Let $\la\scbr{\cdot}=\xi+ \psi_1(\tau{\scbr{\cdot}})+\eta$ for an ordinal context $\tau$ such that $\al=\la\scbr{\ga} =\xi+\psi_1(\tau\scbr{\ga})+\eta \in \OT_0$ for  $\xi\geq \psi_1(\tau^-\scbr{\ga}) \geq \eta$ and $\xi,  \psi_1(\tau^-\scbr{\ga}) $ additive principal ordinals.
    \\
    By induction hypothesis, $\tau^-{\scbr{\ga}}$ is in $\OT_0$, and if $G_1(\tau^-{\scbr{\ga}})< \tau^-{\scbr{\ga}}$, then 
    also $\psi_1(\tau^-\scbr{\ga}) \in \OT_0$.
    \\
    Then $\la\scbr{\ga}= \xi+\psi_1(\tau\scbr{\ga})+\eta \geq_1 \xi+ \psi_1(\tau^-{\scbr{\ga}}) =\la^-\scbr{\ga}$ is in $\OT_0$.\medskip
    
    \item Let $\la\scbr{\cdot}=\xi+ \psi_0(\tau{\scbr{\cdot}})+\eta$ for an ordinal context $\tau$ such that $\al= \xi+\psi_0(\tau\scbr{\ga})+\eta  \in \OT_0$ as in the case above.
    By induction hypothesis, $\tau^-{\scbr{\ga}}$ is in $\OT_0$, and if $G_0(\tau^-{\scbr{\ga}})< \tau^-{\scbr{\ga}}$, then 
    also $\psi_0(\tau^-\scbr{\ga}) \in \OT_0$.
    Then $\la\scbr{\ga}= \xi+\psi_0(\tau\scbr{\ga})+\eta \geq_1 \xi+ \psi_0(\tau^-{\scbr{\ga}}) =\la^-\scbr{\ga}$ is in $\OT_0$.
    
\end{Cases}
\end{proof}

\smallskip
\begin{example} The following example illustrates the application of the proved Lemmas:\\
Let $\la\scbr{\cdot} = \psi_0(\om) + \scbr{\cdot} +\om +11$.
For $\ga= \psi_03$, $\la\scbr{\psi_03} = \psi_0(\om) + \psi_03 +\om +11$ is in $\OT_0$. Also $\la^-\scbr{\psi_0 3} = \psi_0\om + \psi_03$  is in $\OT_0$ as well.
However, for $\ga=\psi_0\Om$, $\la\scbr{\psi_0\Om} = \psi_0\om + \psi_0\Om +\om+11$ is not in $\OT_0$, neither $\la^-\scbr{\psi_0(\Om)} = \psi_0\om +\psi_0\Om$.\medskip
\end{example}

\smallskip

\begin{lemma}
Let $\ga <\de\in \OT_0$, and $\de\in AP$ and $\la\scbr{\cdot}$ is an ordinal context, such that $\la\scbr{\ga}$  and $\la^-\scbr{\de}$ are in $\OT_0$, then $\la\scbr{\ga} < \la^-\scbr{\de}$.\medskip
\end{lemma}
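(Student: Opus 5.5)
We argue by induction on the ordinal context $\la\scbr{\cdot}$, following the three cases of the definition (base contexts at the top-level sum, contexts nested under $\psi_1$, contexts nested under $\psi_0$). Throughout we use the comparison clauses of the term system $(\T,<)$: the lexicographic clause (8) for sums, and the monotonicity clause (6) together with clause (7) for $\psi_i$-terms.

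\emph{Base case.} Here $\la\scbr{\cdot}=\xi+\scbr{\cdot}+\eta$, so $\la^-\scbr{\cdot}=\xi+\scbr{\cdot}$.
\begin{itemize}
\item Write $\xi=\xi_0+\ldots+\xi_{k}$ and $\eta=\eta_0+\ldots$ as sums of principal terms. The placeholder sits at a summand, so $\la\scbr{\ga}=\xi_0+\ldots+\xi_k+\ga_0+\ldots+\eta_0+\ldots$, where $\ga_0$ is the leading summand of $\ga$ (if $\ga$ is principal, $\ga_0=\ga$). Likewise $\la^-\scbr{\de}=\xi_0+\ldots+\xi_k+\de$.
\item Applying clause (8) repeatedly, we strip the common prefix $\xi_0,\ldots,\xi_k$.
\item We are then reduced to comparing $\ga+\eta$ with $\de$. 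Since $\de$ is principal, clause (8) says $\ga+\eta<\de$ as soon as the leading summand $\ga_0$ of $\ga$ is $<\de$. This holds because $\ga_0\le\ga<\de$.
\item If the remaining tail is empty, i.e.\ $\la\scbr{\ga}=\xi+\ga$, the statement is just $\xi+\ga<\xi+\de$, again by (8).
\item One degenerate point: if $\ga$ is the trailing natural number $l$ and $\de$ is principal, then $l<\de$ still forces $l_0=1\le\de$. If $l_0=1=\de$, we would need $l<1$, impossible, so this case cannot occur.
\end{itemize}

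\emph{Inductive case.} Here $\la\scbr{\cdot}=\xi+\psi_i(\mu\scbr{\cdot})+\eta$ with $i\in\{0,1\}$, and $\la^-\scbr{\cdot}=\xi+\psi_i(\mu^-\scbr{\cdot})$.
\begin{itemize}
\item First we check that the hypotheses pass to $\mu$. Both $\mu\scbr{\ga}$ and $\mu^-\scbr{\de}$ are subterms (arguments of $\psi_i$) of terms in $\OT_0$, hence lie in $\OT_0$. For $\mu\scbr{\ga}$ this uses clauses (4)/(5) of the definition of $\OT$ read backwards, plus the $\OT_0$ bound.
\item The induction hypothesis then gives $\mu\scbr{\ga}<\mu^-\scbr{\de}$.
\item Clause (6) yields $\psi_i(\mu\scbr{\ga})<\psi_i(\mu^-\scbr{\de})$.
\item Both of these are principal terms. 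The same prefix-stripping argument as in the base case, now with $\psi_i(\mu^-\scbr{\de})$ in the role of $\de$, gives $\xi+\psi_i(\mu\scbr{\ga})+\eta<\xi+\psi_i(\mu^-\scbr{\de})$.
\end{itemize}

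The main obstacle is bookkeeping rather than ordinal theory. The relation $<$ on $\T$ is defined syntactically, so the argument needs the following.
\begin{itemize}
\item When the placeholder is filled with a non-principal $\ga$, the resulting expression must be read as the normal-form sum obtained by concatenation. This concatenation must still be a legitimate element of $\T$, with summands non-increasing.
\item The hypothesis that both $\la\scbr{\ga}$ and $\la^-\scbr{\de}$ lie in $\OT_0$ guarantees exactly this, and also that clause (8) applies termwise.
\end{itemize}
I would make this explicit with a short auxiliary observation: if $\rho$ is principal and $\sigma<\rho$ in $\T$, then $\xi+\sigma+\eta<\xi+\rho$ whenever both sides are in $\T$. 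This follows by induction on the length of $\xi$ using (8). Both cases then reduce to this observation.
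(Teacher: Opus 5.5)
Your proof is correct and is essentially the paper's own argument. The paper also inducts on the context: the base case $\xi+\ga+\eta<\xi+\de$ follows from $\de\in AP$, and the $\psi_0$/$\psi_1$ cases follow from the induction hypothesis, monotonicity of $\psi_i$, and the same top-level sum comparison. Your auxiliary observation and your check that $\mu\scbr{\ga},\mu^-\scbr{\de}\in\OT_0$ only make explicit bookkeeping that the paper leaves implicit; it just imposes the relevant $G_i$-conditions.

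One caution, which applies equally to the paper: the base case tacitly needs $\ga\neq 0$. Under your concatenation reading the claim fails for $\ga=0$, for example with $\la\scbr{\cdot}=\om+\scbr{\cdot}+2$, $\ga=0$, $\de=1$. That case should be excluded explicitly, rather than through your ``degenerate point'' remark, which does not handle it.
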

\begin{proof}
Proof by induction on the context $\la$.
\begin{Cases}
    \item If $\la\scbr{\cdot}=\xi+  \scbr{\cdot}+\eta$ and $\la^-\scbr{\cdot}=\xi +\scbr{\cdot}$, 
    then for $\ga<\de \in AP$, $\ga+\eta<\de$.
    Hence $\la\scbr{\ga} =\xi + \ga + \eta < \xi + \de  =  \la^-\scbr{\de} $.\medskip
    
    \item  If $\la\scbr{\cdot}=\xi +  \psi_0(\mu\scbr{\cdot})+\eta$ and $\la^-\scbr{\cdot}=\xi+\psi_0(\mu^-\scbr{\cdot})$.\\
    Then by induction hypothesis, $\mu\scbr{\ga} <\mu^-\scbr{\de}$. If $G_0(\mu\scbr{\ga})< \mu\scbr{\ga}$ and $G_0(\mu^-\scbr{\de}) < \mu^-\scbr{\de}$, then
    $\la\scbr{\ga} =\xi+  \psi_0(\mu\scbr{\ga})+\eta < \xi+\psi_0(\mu^-\scbr{\de}) =\la^-\scbr{\de} $.\medskip
    
    \item  If $\la\scbr{\cdot}=\xi + \psi_1(\mu\scbr{\cdot})+\eta$ and $\la^-\scbr{\cdot}=\xi+\psi_1(\mu^-\scbr{\cdot})$.\\
    Then by induction hypothesis, $\mu\scbr{\ga} <\mu^-\scbr{\de}$.
    If $G_1(\mu\scbr{\ga})< \mu\scbr{\ga}$ and $G_1(\mu^-\scbr{\de}) < \mu^-\scbr{\de}$, then
    $\la\scbr{\ga} =\xi+  \psi_1(\mu\scbr{\ga})+\eta < \xi+\psi_1(\mu^-\scbr{\de}) =\la^-\scbr{\de} $.
\end{Cases}
\end{proof}

\smallskip

To ensure that the difference between ordinals can be analyzed directly via the fundamental sequences without interference from further collapses, we define the restriction to \emph{$\psi_0$-nesting free contexts} where the placeholder $\scbr{\cdot}$ is not inside the scope of a $\psi_0$ function. 
\\
This machinery allows us to relate the ordering $\alpha < \beta$ directly to the properties of the fundamental sequences $\beta[t]$, effectively providing a robust tool to perform comparisons analogous to those in standard ordinal arithmetic. \smallskip

\begin{definition}[$\psi_0-$nesting free ordinal context]
Let $\la\scbr{\cdot}$ be an ordinal context. 
Then $\la\scbr{\cdot}$ is a $\psi_0-$nesting free context if the placeholder $\scbr{\cdot}$ is not in the scope of a $\psi_0$ function.
\begin{enumerate}[label=(\roman*)]
    \item $\la\scbr{\cdot} =\xi+\scbr{\cdot}+\eta$ is $\psi_0-$nesting free context.
    \item If $\mu$ is a $\psi_0-$nesting free context, then $\la\scbr{\cdot} = \xi + \psi_1(\mu\scbr{\cdot}) +\eta$ is a $\psi_0-$nesting free context.
    Additionally, $\la\scbr{\cdot} = \xi +\mu\scbr{\cdot} +\eta$ is $\psi_0-$nesting free context.
\end{enumerate}
\end{definition}
\smallskip
We now present several illustrative examples.\smallskip

\begin{example}
Let $\al= \psi_1(\Om+1)+\psi_1({\psi_0\Om})  + \psi_0{3}  +11 $. Then $\la\scbr{\cdot}:=  \psi_1\Om + \psi_1{ \scbr{\cdot}} + \psi_0{3} +11 $ is a $\psi_0-$nesting free context for the placeholder.
Also $\la\scbr{\cdot} := \psi_1\Om + \scbr{\cdot} + \psi_0{3} +11$ is a $\psi_0-$nesting free ordinal context.
\\
However, $\la\scbr{\cdot} = \psi_1{\Om}+ \psi_1({\psi_0\scbr{\cdot}}) + \psi_0{3}  +11 $ is not a $\psi_0-$nesting free context since the placeholder $\scbr{\cdot}$ is under the scope of a $\psi_0$ function.
\\
\end{example}

\begin{remark}
If $\be = \xi +\psi_0(\eta + \Om)$, then $tp(\psi_0(\eta+\Om)) = \om$ and $tp(\eta+\Om) =\Om$. Then an ordinal context for $\psi_0(\eta +\Om)$ is always
$\la\scbr{\cdot} =\xi + \scbr{\cdot}$ and cannot be $\la\scbr{\cdot} =\xi+ \psi_0(\scbr{\cdot})$.
\\
Then $\be =\la\scbr{\psi_0(\mu\scbr{\Om})}$, for a $\psi_0-$nesting free context $\mu$. 
The context $\mu$ is used for the ordinals, in this particular case $\eta$, which don't affect the type of the inner argument of $\psi_0$.
\\
If $\be = \xi + \psi_1\Om$, then $tp(\psi_1(\Om)) =\Om$ and an ordinal context for $\psi_1\Om$ can be written as $\la\scbr{\Om}=\xi + \psi_1\scbr{\Om}$ or $\la\scbr{\psi_1(\Om)} = \xi +\scbr{\psi_1\Om}$ as in both cases the type of the ordinal $\psi_1\Om$ doesn't change.\\
\end{remark}

\begin{lemma} \label{lm:fundamOrd}
Let $\la\scbr{\cdot}$ be an ordinal context such that $\be = \la^-\scbr{\de} \in \OT_0$.
Then if $tp(\de) = \om$, $\be[t] = \la^-\scbr{\de[t]}$.
\end{lemma}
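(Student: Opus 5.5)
We prove the statement by induction on the construction of the context $\la\scbr{\cdot}$, following the case split used in Lemma~\ref{lm:lala-}. The key point is that in $\la^-\scbr{\de}$ the placeholder is always the \emph{last} summand at every level of nesting, because the truncation removes everything to its right. Hence the fundamental-sequence clauses of Definition~\ref{def:FundamentalSeq} always act on the component containing the placeholder. We must also check at each level that the type is passed through unchanged, i.e.\ that $tp(\la^-\scbr{\de}) = tp(\de) = \om$, so that the clause applied is the ``structural'' one and not a collapsing clause.

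\emph{Base case} $\la\scbr{\cdot} = \scbr{\cdot}$. Then $\la^-\scbr{\cdot} = \scbr{\cdot}$ and the claim $\de[t]=\de[t]$ is trivial.

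\emph{Additive case} $\la\scbr{\cdot} = \xi + \scbr{\cdot} + \eta$. Then $\la^-\scbr{\de} = \xi + \de$. If $\xi$ is empty this is the base case. Otherwise, by the sum clause of Definition~\ref{def:FundamentalSeq}, $tp(\xi+\de) = tp(\de) = \om$ and $(\xi+\de)[t] = \xi + \de[t] = \la^-\scbr{\de[t]}$. Here $\de$ is a principal term, so $\xi + \de$ is the written sum $\al_0+\ldots+\al_n$ whose last summand is $\de$.

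\emph{Collapsing cases} $\la\scbr{\cdot} = \xi + \psi_i(\mu\scbr{\cdot}) + \eta$ with $i\in\{0,1\}$. Then $\la^-\scbr{\de} = \xi + \psi_i(\mu^-\scbr{\de})$.
\begin{itemize}
\item By the induction hypothesis applied to $\mu$, we have $tp(\mu^-\scbr{\de}) = \om$ and $\mu^-\scbr{\de}[t] = \mu^-\scbr{\de[t]}$. To apply the hypothesis we need $\mu^-\scbr{\de}\in\OT_0$; this holds because it is a subterm of $\be\in\OT_0$, by the lemma giving $G_0\be, G_1\be \subseteq \OT$.
\item Since $\om < \Om_{i+1}$, the clause ``$tp(\al)<\Om_{i+1}$'' applies and gives $tp(\psi_i(\mu^-\scbr{\de})) = \om$ together with
\[
\psi_i(\mu^-\scbr{\de})[t] = \psi_i(\mu^-\scbr{\de}[t]) = \psi_i(\mu^-\scbr{\de[t]}).
\]
\item Finally the sum clause, exactly as in the additive case, gives $\be[t] = \xi + \psi_i(\mu^-\scbr{\de[t]}) = \la^-\scbr{\de[t]}$.
\end{itemize}

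To make the induction go through, we strengthen the statement to include $tp(\la^-\scbr{\de}) = tp(\de)$. This is exactly what is needed to select the correct clause at the next level.

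The main obstacle is this type bookkeeping. We must make sure that no outer $\psi_i$ ever falls into a collapsing clause, which would require the inner type to be $\Om$, $\Om_2$, or $1$; since the inner type is $\om$, none of these applies. We must also make sure that no summand other than the last one is touched. Once $tp=\om$ is carried upward, all the remaining steps are definitional unfoldings.
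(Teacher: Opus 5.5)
Your proof is correct. It uses the same induction on the context $\la$ as the paper, but organized differently. The paper inducts on $\la$ with a subsidiary case split on the shape of $\de$, computing $\de[t]$ explicitly in each case. Its subcases are $\de=\om$, $\psi_0(\ga+1)$, $\psi_0(\psi_1(\ga+1))$, $\psi_0(\mu\scbr{\Om})$, $\psi_1(\ga+1)$, and versions of these under an inner context $\sigma^-$ when the placeholder sits directly under $\psi_0$ or $\psi_1$. You instead observe that the shape of $\de$ is irrelevant and carry the invariant $tp(\la^-\scbr{\de})=tp(\de)=\om$ upward, so that only the sum clause and the clause for $tp(\al)<\Om_{i+1}$ are ever invoked.

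Your version is shorter and exhaustive by construction. The paper's lists of subcases are not exhaustive as written: for instance, $\de=\sigma^-\scbr{\om}$ is missing from its third case $\la^-\scbr{\cdot}=\xi+\psi_1(\scbr{\cdot})$, so $\be=\xi+\psi_1(\om)$ is never treated explicitly. Each missing case is settled by the same one-line computation you give. The paper's enumeration does have one side benefit: it records the explicit values of $\de[t]$ for the shapes that are reused later in Lemmas~\ref{lm:OrdMeasure} and~\ref{lm:OrdMeasure2}.

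One small imprecision: the lemma giving $G_0\be,G_1\be\subseteq\OT$ only yields $\mu^-\scbr{\de}\in\OT$, not $\mu^-\scbr{\de}\in\OT_0$. This is harmless, since fundamental sequences are defined on all of $\T$ and your argument is purely syntactic. You can therefore drop the $\OT_0$ hypothesis from the strengthened statement you induct on.
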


\begin{proof} 
Proof by induction on $\la$ by a subsidiary induction on $\de$.
\begin{Cases}
\item $\la^-\scbr{\cdot} =\xi + \scbr{\cdot}$. Then $\be=\xi+\de=\la^-\scbr{\de}$.

\begin{Cases}
\item ($\de=\om$). Then $\be[t]=\la^-\scbr{\om}[t] = \xi + \om[t]= \xi+t=\la^-\scbr{t} = \la^-\scbr{\om[t]}$.\medskip

\item $(\de = \psi_0(\ga+1))$. \label{case:psi0 lm:fundamOrd}
Then $\be[t] = \xi + \psi_0(\ga+1)[t] = \xi+ \psi_0(\ga)\cdot t = \la^-\scbr{\psi_0(\ga)\cdot t} = \la^-\scbr{\psi_0(\ga+1)[t]}$.\medskip

\item $(\de = \psi_0(\psi_1(\ga+1)))).$ \label{case:psi1 lm:fundamOrd}
Then $\be[t] = \xi + \psi_0(\psi_1(\ga+1))[t] = \xi+ \psi_0(\psi_1(\ga+1)[t]) = 
\xi+ \psi_0(\psi_1(\ga) \cdot t) = \la^-\scbr{\psi_0(\psi_1(\ga)\cdot t)} = \la^-\scbr{\psi_0(\psi_1(\ga+1))[t]}$.\medskip

\item ($\de= \psi_0(\mu\scbr{\Om})$ for $\mu$ a $\psi_0-$nesting free context. First, recall the definition of fundamental sequences for this case. 
Let $\al= \psi_0(\al_0)$,  $tp(\al) =\om$ and $\al_0 :=\mu\scbr{\Om}$. 
Then $\al[x] = \psi_0(\al_0[z_x])$  where $z_0= 0$ and $z_{x+1} = \psi_0(\al_0[z_{x}])$.
\\
Then $\be[t] = \xi+ \psi_0(\mu\scbr{\Om})[t] = \xi+\psi_0(\mu\scbr{\Om}[z_t]) = \la^-\scbr{\psi_0(\mu\scbr{\Om}[z_t])}  = \la^-\scbr{\psi_0(\mu\scbr{\Om})[t]}$.\medskip

\item $(\de= \psi_1(\ga+1)$. 
Then $\be[t] = \xi + \psi_1(\ga+1)[t] = \xi+ \psi_1(\ga)\cdot t = \la^-\scbr{\psi_1(\ga)\cdot t} = \la^-\scbr{\psi_1(\ga+1)[t]}$.\medskip

\end{Cases}

\item $\la^-\scbr{\cdot} =\xi + \psi_0(\scbr{\cdot})$. Then $\be=\xi+\psi_0(\de) = \la^-\scbr{\de}$.
\begin{Cases}
\item  $(\de = \sigma^-\scbr{\om})$ for an ordinal context $\sigma$.
In this case we consider, for instance, the cases $\be=\xi+\psi_0(\om)$ or $\de= \psi_0(\psi_0(\om))$.
\\
Then by induction hypothesis, $\sigma^-\scbr{\om}[t] =\sigma^-\scbr{\om[t]}$.
Define $\la^-\scbr{\cdot} = \xi +\sigma^-\scbr{\cdot}$, then $\be=\la^-\scbr{\om}$.
And we have $\be[t] = \xi +\sigma^-\scbr{\om}[t] = \xi + \sigma^-\scbr{\om[t]} = \la^-\scbr{\om[t]}$.\medskip

\item $(\de = \sigma^-\scbr{\psi_0(\ga+1)})$  for an ordinal context $\sigma$.
We cover cases such as $\be = \xi+ \psi_0(\psi_0(\ga+1))$.
By induction hypothesis \ref{case:psi0 lm:fundamOrd}, 
$\sigma^-\scbr{\psi_0(\ga+1)}[t] = \sigma^-\scbr{\psi_0(\ga+1)[t]}$.
\\
Define $\la^-\scbr{\cdot} = \xi +\sigma^-\scbr{\cdot}$, then $\be=\la^-\scbr{\psi_0(\ga+1)}$.
And we have $\be[t] = \xi +\sigma^-\scbr{\psi_0(\ga+1)}[t] = \xi + \sigma^-\scbr{\psi_0(\ga+1)[t]} = \la^-\scbr{\psi_0(\ga+1)[t]}$.\medskip

\item $(\de = \sigma^-\scbr{\psi_1\ga+1)})$  for an ordinal context $\sigma$.
We cover cases such as $\be = \xi+ \psi_0(\psi_1(\psi_1(\ga+1)))$.
By induction hypothesis \ref{case:psi1 lm:fundamOrd},
$\sigma^-\scbr{\psi_1(\ga+1)}[t] = \sigma^-\scbr{\psi_1(\ga+1)[t]}$.\\
Define $\la^-\scbr{\cdot} = \xi +\sigma^-\scbr{\cdot}$, then $\be=\la^-\scbr{\psi_1(\ga+1)}$.
And we have $\be[t] = \xi +\sigma^-\scbr{\psi_1(\ga+1)}[t] = \xi + \sigma^-\scbr{\psi_1(\ga+1)[t]} = \la^-\scbr{\psi_1(\ga+1)[t]}$.\medskip

\end{Cases}

\item $\la^-\scbr{\cdot} =\xi + \psi_1(\scbr{\cdot})$. Then $\be=\xi+\psi_1(\de) = \la^-\scbr{\de}$.
\begin{Cases}  
\item $(\de= \sigma^-\scbr{\psi_1(\ga+1)}$ for an ordinal context $\sigma$. 
Then $\be[t] = \xi + \psi_1(\ga+1)[t] = \xi+ \psi_1(\ga)\cdot t = \la^-\scbr{\psi_1(\ga)\cdot t} = \la^-\scbr{\psi_1(\ga+1)[t]}$.\medskip

\item ($\de =\sigma^-\scbr{\psi_0(\mu\scbr{\Om}}$).
In this case we consider, for instance, the cases $\be = \xi + \psi_1(\psi_0(\Om))$.
By induction hypothesis, $\sigma^-\scbr{\psi_0(\mu\scbr{\Om}}[t] = \sigma^-\scbr{\psi_0(\mu\scbr{\Om}[t]}$. 
Define $\la^-\scbr{\cdot} = \xi + \sigma^-\scbr{\cdot}$, then $\be = \la^-\scbr{\psi_0(\mu\scbr{\Om}}$.
And $\be[t] = \xi + \sigma^-\scbr{\psi_0(\mu\scbr{\Om}}[t] = \xi + \sigma^-\scbr{\psi_0(\mu\scbr{\Om}[t]} = \la^-\scbr{\psi_0(\mu\scbr{\Om}[t]}$.\medskip
\end{Cases}

\end{Cases}
\end{proof}

\begin{lemma}\label{lm:SandwichContext}
Let $\de,\ga,\tilde{\de}\in \OT_0$ and $\la$ be a context in $\OT_0$  such that 
$\la^-\scbr{\de} < \la\scbr{\ga} < \la^-\scbr{\tilde{\de}}$, 
then $\la^-\scbr{\de} < \la^-\scbr{\ga} < \la^-\scbr{\tilde{\de}}$.
\end{lemma}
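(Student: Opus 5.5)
The lower bound $\la^-\scbr{\de} < \la^-\scbr{\ga}$ is the real content. The upper bound is immediate: by Lemma~\ref{lm:lala-}, $\la^-\scbr{\ga}\leq_1 \la\scbr{\ga}$, so $\la^-\scbr{\ga}\leq \la\scbr{\ga}<\la^-\scbr{\tilde\de}$. I will argue by induction on the context $\la$, following the case split used in the preceding lemmas. The guiding idea is that the truncated context $\la^-\scbr{\cdot}$ is strictly monotone in its argument wherever it yields a term of $\OT_0$. Once monotonicity is available, comparing $\la^-\scbr{\de}$ with $\la\scbr{\ga}$ reduces to comparing $\de$ with $\ga$.

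First I show that $\de<\ga$. Suppose instead $\ga\leq \de$.
\begin{itemize}
\item If $\ga=\de$, then $\la^-\scbr{\de}=\la^-\scbr{\ga}\leq \la\scbr{\ga}$ by Lemma~\ref{lm:lala-}.
\item If $\ga<\de$, then either the comparison lemma preceding Lemma~\ref{lm:fundamOrd} applies (when $\de\in AP$), giving $\la\scbr{\ga}<\la^-\scbr{\de}$, or a direct argument does.
\end{itemize}
Both outcomes contradict the hypothesis $\la^-\scbr{\de}<\la\scbr{\ga}$. For the direct argument in the non-principal case I treat each shape of $\la$:
\begin{itemize}
\item For $\la\scbr{\cdot}=\xi+\scbr{\cdot}+\eta$, the slot holds a summand of a normal form. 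I compare lexicographically via clause (8) of the definition of $<$: the common prefix $\xi$ cancels, and since $\ga<\de$ the tail $\eta$ cannot compensate, because in normal form every summand of $\eta$ is at most $\ga$.
\item For $\la\scbr{\cdot}=\xi+\psi_i(\mu\scbr{\cdot})+\eta$ with $i\in\{0,1\}$, I apply the induction hypothesis to $\mu$ to get $\mu\scbr{\ga}<\mu^-\scbr{\de}$. Clause (6) of the order then gives $\psi_i(\mu\scbr{\ga})<\psi_i(\mu^-\scbr{\de})$, and the same lexicographic argument absorbs $\eta$.
\end{itemize}

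With $\de<\ga$ in hand, I prove $\la^-\scbr{\de}<\la^-\scbr{\ga}$ by the same induction.
\begin{itemize}
\item In the base case $\la^-\scbr{\cdot}=\xi+\scbr{\cdot}$, this is just right-monotonicity of addition in $\T$, using the addition operation and clause (8).
\item In the cases $\la^-\scbr{\cdot}=\xi+\psi_i(\mu^-\scbr{\cdot})$, the induction hypothesis gives $\mu^-\scbr{\de}<\mu^-\scbr{\ga}$. Clause (6) lifts this through $\psi_i$, and adding the prefix $\xi$ preserves it.
\end{itemize}
Membership of the relevant truncated terms in $\OT_0$ comes from the lemma stating that $\la^-\scbr{\ga}\in\OT_0$ whenever $\la\scbr{\ga}\in\OT_0$. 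This matters because the restriction of $<$ to $\OT_0$ is what makes clause (6) reflect the true ordinal order, since both arguments are then in normal form.

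The main obstacle is the inner step of ruling out $\ga<\de$ under a $\psi_i$-context. There, $\la\scbr{\ga}$ and $\la^-\scbr{\de}$ differ by the tail $\eta$ outside the collapse, and possibly also by tails inside $\mu$. One has to make sure that $\psi_i(\mu\scbr{\ga})+\eta$ stays below $\psi_i(\mu^-\scbr{\de})$. This needs principality of $\psi_i(\mu^-\scbr{\de})$ together with the normal-form condition that $\eta$'s leading summand is at most $\psi_i(\mu\scbr{\ga})$. I would handle it by first proving the auxiliary claim that $\ga<\de$ implies $\la\scbr{\ga}<\la^-\scbr{\de}$ for arbitrary $\de$. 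The route is to replace $\de$ by its leading principal summand, apply the earlier comparison lemma, and use $\la^-\scbr{\de_0}\leq \la^-\scbr{\de}$.
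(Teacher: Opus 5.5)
Your upper bound is fine. Lemma~\ref{lm:lala-} is stated only for $\ga<\Om$, but $\la^-\scbr{\ga}\le\la\scbr{\ga}$ holds for every $\ga$ by a direct induction on $\la$. The lower bound fails at the case $\ga=\de$. There you get $\la^-\scbr{\de}=\la^-\scbr{\ga}\le\la\scbr{\ga}$ and call this a contradiction with $\la^-\scbr{\de}<\la\scbr{\ga}$, but the two inequalities point the same way and are compatible. This cannot be patched, because the case occurs and the strict conclusion is false there. Take the context $\la\scbr{\cdot}=\scbr{\cdot}+1$ (clause (1)(ii), slot for $\psi_0 0$, $l=1$), so $\la^-\scbr{\cdot}=\scbr{\cdot}$. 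Let $\de=\ga=\om=\psi_0 0$ and $\tilde\de=\psi_0 1=\om^2$. Then $\la^-\scbr{\de}=\om<\om+1=\la\scbr{\ga}<\om^2=\la^-\scbr{\tilde\de}$, while the claimed $\la^-\scbr{\de}<\la^-\scbr{\ga}$ reads $\om<\om$. Your auxiliary claim for non-principal $\de$ is also false. Passing to the leading summand $\de_0$ needs $\ga<\de_0$, which does not follow from $\ga<\de$. For $\la\scbr{\cdot}=\scbr{\cdot}+\om$, $\ga=\om$, $\de=\om+1$ you have $\ga<\de$ but $\la\scbr{\ga}=\om\cdot 2>\om+1=\la^-\scbr{\de}$. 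With $\tilde\de=\om^2$ this even refutes the non-strict conclusion $\la^-\scbr{\de}\le\la^-\scbr{\ga}$.

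The paper's own proof breaks at the same point. After cancelling $\xi$ it drops $\eta$ from $\mu\scbr{\de}<\mu\scbr{\ga}+\eta$ ``since $\mu\scbr{\ga},\eta\in AP$'', which is exactly what the first example refutes. So the statement itself must be weakened to: if $\de$ is principal and $\la^-\scbr{\de}\le\la\scbr{\ga}<\la^-\scbr{\tilde\de}$, then $\la^-\scbr{\de}\le\la^-\scbr{\ga}<\la^-\scbr{\tilde\de}$. This is all that Lemma~\ref{lm:BaseChangeContext2} uses, since there the left inequality is already $\le$ and $\de=\psi_0(\tilde\mu\scbr{\Om})[t]$ is principal. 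Your own strategy proves the weakened version once the faulty case is dropped. For principal $\de$, the comparison lemma preceding Lemma~\ref{lm:fundamOrd} rules out $\ga<\de$, so $\de\le\ga$. Monotonicity of $\la^-$ in its slot (your second step, with $\le$ instead of $<$) then gives $\la^-\scbr{\de}\le\la^-\scbr{\ga}$. The upper bound is as you had it.
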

\begin{proof}
By induction on $\la$.
\begin{Cases}
    \item If $\la=\xi+ \mu\scbr{\cdot}+\eta$,
    $$\xi + \mu\scbr{\de}<\xi + \mu\scbr{\ga}+\eta <\xi + \mu\scbr{\tilde{\de}},$$
    $$\mu\scbr{\de} < \mu\scbr{\ga}+\eta < \mu\scbr{\tilde{\de}},$$
    $$\mu\scbr{\de} < \mu\scbr{\ga} < \mu\scbr{\tilde{\de}},$$
    since $\mu\scbr{\ga},\eta \in AP$.\smallskip

\item 
If $\la=\xi+ \mu\scbr{\cdot}+\eta$ and $\la^-= \xi+ \mu^-\scbr{\cdot}$,
$$\xi + \mu^-\scbr{\de}<\xi + \mu\scbr{\ga}+\eta <\xi + \mu^-\scbr{\tilde{\de}}$$
By the induction hypothesis, 
$\mu^-\scbr{\de} < \mu^-\scbr{\ga} < \mu^-\scbr{\tilde{\de}}$.
\end{Cases}
\end{proof}

The following Lemma explores the differences between two uncountable ordinals. Our examination will focus on the specific ordinal context in which these distinctions occur. Additionally, we will impose the constraint of a $\psi_0$-nesting free context.

\begin{lemma}\label{lm:Context1}
Let $\al<\be \in \OT_0$ then one of the following occurs,
\begin{enumerate}
    \item $\al+1=\be$,
    \item $\al<\be[1]$,
    \item there is a $\psi_0$-nesting free context $\la$, an ordinal $\ga$ and a natural number $s$  such that $\al = \la\scbr{\psi_1(\ga) \cdot s}$ and $\be = \la^-\scbr{\psi_1(\ga+1)}$ and $\al<\be[s+1]$,
    \item there is a $\psi_0$-nesting free context $\la$, ordinals $\ga,\de$ with $\ga<\de< \Om$ and a natural number $t$, such that  $ \al=\la\scbr{\ga}$ and $\be=\la^-\scbr{\de}$ and	 $\be[t] \leq \al < \be[t+1]$,
    \item there is a $\psi_0$-nesting free context $\la$, an ordinal $\ga<\Om$ and an ordinal $\tau<\Om$ such that  $ \al=\la\scbr{\ga}$ and  $\be=\la^-\scbr{\Om}$ and $ \be[\tau] \leq \al<\be[\tau+1]$ .
\end{enumerate}
\end{lemma}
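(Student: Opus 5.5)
We argue by induction on the term complexity of $\be$ (equivalently on $N(\be)$), with a case split on the syntactic form of $\be$ and on where $\al$ and $\be$ first differ. First we dispose of the trivial situations. If $\be$ is a successor, $\be=\be'+1$, then either $\al=\be'$, which gives case (1), or $\al<\be'=\be[1]$, which gives case (2). So assume $\be$ is a limit, and write $\al=\xi+\al'$, $\be=\xi+\be'$, where $\xi$ is the longest common initial segment of summands. Then $\be'_0>\al'_0$, where these are the first summands after $\xi$. If $\be'$ has more than one summand, then $\be[1]=\xi+\be'_0+\ldots+\be'_k[1]\geq \xi+\be'_0>\al$, which is case (2). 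Hence we may assume $\be=\xi+\de_0$ with $\de_0$ principal and $\al=\xi+\ga_0+\eta$ with $\ga_0<\de_0$; the context found for $\ga_0,\de_0$ will be prefixed by $\xi+\scbr{\cdot}$, and the tail $\eta$ is absorbed through $\la$ versus $\la^-$.

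Next we split on the principal term $\de_0$.

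\begin{itemize}
\item If $\de_0<\Om$, we use the last clause of Theorem~\ref{thm:PropFundSeqOT}, since $\de_0$ has type $\om$ or $1$. If $\ga_0+\eta\leq \de_0[0]$ we are in case (2); otherwise there is $t$ with $\de_0[t]\leq \ga_0+\eta<\de_0[t+1]$. Lemma~\ref{lm:fundamOrd} transports this to $\be[t]\leq\al<\be[t+1]$ with $\la\scbr{\cdot}=\xi+\scbr{\cdot}+\eta$, which is case (4).

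\item If $\de_0=\Om$, then $\al<\Om$ part-wise, and the same argument with $tp=\Om$ and $\be[\tau]=\xi+\tau$ gives case (5).

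\item If $\de_0=\psi_1(\ga+1)$, then $\be[s+1]=\xi+\psi_1(\ga)\cdot(s+1)$. Any $\al$ below $\be$ with $\al\geq\be[1]$ has the form $\xi+\psi_1(\ga)\cdot s+\eta'$ with $\eta'<\psi_1\ga$. This is case (3), where we set $\la\scbr{\cdot}=\xi+\scbr{\cdot}+\eta'$ and regard $\psi_1(\ga)\cdot s$ as the placeholder content.
\end{itemize}

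The interesting subcases are $\de_0=\psi_1(\de')$ and $\de_0=\psi_0(\de')$ with $\de'$ a limit. For $\psi_0$ the context must not enter the argument, since it must be nesting free. We handle this as in the remark preceding the lemma. If $tp(\de')<\Om$, then $\psi_0(\de')[t]=\psi_0(\de'[t])$, and comparison of $\al$ reduces to the countable case (4) applied to $\de_0$ as a whole. If $tp(\de')=\Om$, the diagonal sequence $z_x$ gives $\psi_0(\de')[x]$ cofinal in $\psi_0\de'$, so case (4) again holds with the placeholder at $\de_0$.

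For $\de_0=\psi_1\de'$ we compare $\ga_0$. If $\ga_0<\psi_1\de'[1]$ we are in case (2). Otherwise $\ga_0=\psi_1\ga'$ with $\ga'<\de'$, or $\ga_0$ is a $\psi_0$-term or a $\psi_2$-term lying below. Here we apply the induction hypothesis to $\ga'<\de'$, which have smaller norm, obtaining one of the cases (1)--(5) with a nesting free context $\mu$. We then wrap it as $\la\scbr{\cdot}=\xi+\psi_1(\mu\scbr{\cdot})+\eta$, which is still nesting free. Since $tp(\psi_1\de')=tp(\de')$ whenever $tp(\de')<\Om_2$, Lemma~\ref{lm:fundamOrd} and the clause $\psi_1(\de')[x]=\psi_1(\de'[x])$ lift the inequalities $\de'[t]\leq\ga'<\de'[t+1]$ to $\be$. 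The sandwich Lemma~\ref{lm:SandwichContext} removes the tail $\eta$, and the normal-form condition $G_1$ ensures monotonicity of $\psi_1$ in the relevant range. Case (2) for $\ga'<\de'[1]$ lifts to case (2) for $\al,\be$. Case (1) for $\ga'$ lifts to case (3) with $s$ obtained from the tail of $\al$. If $tp(\de')=\Om_2$, then $\psi_1(\de')[x]$ is the diagonal sequence, which we treat like the $\psi_0$ diagonal case above.

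The main obstacle will be this lifting through $\psi_1$. When $\psi_1\ga'<\psi_1\de'$ it is not automatic that $\ga'<\de'$, because a $\psi_1$-term may sit below $\psi_1\de'$ while having a larger argument that violates $G_1$. We must therefore use the normal form condition $G_1\ga'<\ga'$ and the characterization $\al\in C_1(\be)\iff G_1\al<\be$ to exclude this. Separately, we have to check that the tail $\eta$ of $\al$ does not push $\al$ past $\be[t+1]$; here we use that $\psi_1(\de'[t+1])$ is additive principal, so $\psi_1(\ga')+\eta<\psi_1(\de'[t+1])$ whenever $\ga'<\de'[t+1]$.
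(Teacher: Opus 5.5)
Your proposal is correct and follows essentially the paper's route. Reduce to $\be=\xi+\de_0$ with $\de_0$ principal, settle countable $\de_0$, $\de_0=\Om$ and $\de_0=\psi_1(\ga+1)$ directly, and for $\de_0=\psi_1\de'$ with $\de'$ a limit apply the induction hypothesis to $\ga'<\de'$ and wrap the resulting nesting-free context in $\psi_1(\cdot)$. Your side worries are moot: in $\OT_0$ everything lies below $\psi_1(\Om_2)=\varepsilon_{\Om+1}$, so no $\psi_2$-terms and no arguments of type $\Om_2$ occur, and $\psi_1\ga'<\psi_1\de'$ already forces $\ga'<\de'$ by the definition of the linear order on $\T$, without any appeal to $C_1$.
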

\begin{proof}
Proof by case distinction on the form of $\be$.
\\
Let $\al=\psi_1{\al_0} + \ldots + \psi_1{\al_m} + \al_{m+1} $ and $\be = \psi_1{\be_0}+ \ldots + \psi_1{\be_n} +\be_{n+1}$.\medskip
\\
If $m<n$ and for all $i\leq m$, $\al_i=\be_i$.  Then $\be=\al+1$ or $\al<\be[1]$.\medskip
\\
If $m=n$ and for all $i<n, \al_i=\be_i$, and $\al_{n+1}<\be_{n+1}$ then define $\psi_0-$free nesting context $\la\scbr{\cdot} := \psi_1\al_0 + \ldots + \psi_1\al_n+ \scbr{\cdot}$.
Let $\ga := \al_{m+1}$ and $\de := \be_{n+1}$, then $\al = \la\scbr{\ga}$ and $\be = \la^-\scbr{\de}$, 
then there exist a natural number $t\geq 1$ such that $\al < \be[t]$.\medskip
\\
Otherwise there exists an $i\leq \text{min}\{m,n\}$ such that for all $j<i  (\al_j=\be_j)$ and $\al_i<\be_i$. If $n>i$ then $\al<\be[1]$. \\
Otherwise assume $i=n$ and $\be_{n+1}=0$. We write the equal part as $\xi$ for sake of brevity and we have $\al = \xi + \psi_1\al_i+ \ldots +\al_{m+1} $ and $\be = \xi+ \psi_1\be_i$. 
By induction on $\be_i$ , the following cases occur:
\smallskip
\begin{Cases}
    \item ($\be_i=0$).
    Hence, $\psi_1\be_i=\Om$. Since $\al<\be$ , then $\al = \xi + \al_{m+1}$, since otherwise $\al>\be$.  Define the $\psi_0-$nesting free context  $\la\scbr{\cdot} =  \xi+  \scbr{\cdot}$. \\
    Let $\ga := \al_{n+1}$. Then $\al = \la\scbr{\ga}$ and $\be= \la^-\scbr{\Om}$ 
    and there is an ordinal $\tau \geq \ga$ such that
    $\al =\la\scbr{\ga}<\la^-\scbr{\ga+1} =\la^-\scbr{\Om[\ga+1]} \leq \la^-\scbr{\Om}[\tau+1] =\be[\tau+1]$ and $\be[\tau]= \la^-\scbr{\tau} \leq \la\scbr{\ga} = \al$. Hence $\be[\tau] \leq \al < \be[\tau+1]$.\medskip
    \\
    Assume $\be_i > 0$, then it must be that $\al_i<\be_i$, and we have the following cases:
    \smallskip
    
    \item ($\be_i= \ga+1$).
    If $\ga>\al_i$ then $\al<\be[1]$ .\\
    If $\ga=\al_i$, then  $\al = \xi+ \psi_1\al_i \cdot s+ \ldots+\al_{m+1} $ and  $\be =\xi+\psi_1(\al_i+1 )$ where $\al$ has at least one occurrence of $\psi_1\al_i$. 
    We count the number of occurrences of $\psi_1(\al_i)$ by $s\geq 1$.\\
    Define the $\psi_0-$nesting free context $\la\scbr{\cdot} =  \xi+  \scbr{\cdot} + \ldots+\al_{m+1} $. \\
    Let $\ga :=\al_i$. Then $\al = \la\scbr{\psi_1(\ga )\cdot s}$ and $\be = \la^-\scbr{\psi_1(\ga+1)}$.
    And we have $\al = \la\scbr{\psi_1(\ga)\cdot s} <\la^-\scbr{\psi_1(\ga)\cdot (s+1)} = \la^-\scbr{\psi_1(\ga+1)[s+1]} =\be[s+1]$.	
    
    \smallskip
    \item ($\be_i$ not a successor.)
    We have the following cases;
    \begin{Cases}
        \item Since $\al_i<\be_i$, then $\be_i \neq \al_i+1$.\medskip

        \item Assume by induction hypothesis that $\al_i<\be_i[1]$.\\
        Then $\psi_1(\al_i) < \psi_1(\be_i[1]) \in AP$ hence 
        $\al = \xi + \psi_1(\al_i) + \ldots + \al_{m+1} < \xi + \psi_1(\be_i[1]) = \xi+ \psi_1(\be_i)[1]  = \be[1]$.
        Hence $ \al< \be[1]$.
        \medskip
        
        \item Assume by induction hypothesis that  $\be_i = \mu^-\scbr{\Om}$ and $\al_i= \mu\scbr{\ga}$ for some $\psi_0-$nesting free context $\mu$, and $\be_i[\tau] \leq \al_i < \be_i[\tau+1]$ for an ordinal $\tau$.\\	
        Then $\psi_1(\be_i[\tau]) \leq \psi_1(\al) <\psi_1(\be_i[\tau+1])$.
        Define a new $\psi_0-$nesting free context\\
        $\la\scbr{\cdot} := \xi+ \psi_1(\mu\scbr{\cdot}) +...+ {\al_{m+1}}$, then  $\al = \la\scbr{\ga}$ and $\be = \la^-\scbr{\Om}$ and 
        then $\be[\tau]= \xi + \psi_1(\be_i[\tau])\leq \al = \xi + \psi_1(\al_i)+\ldots+ \al_{m+1} < \xi + \psi_1(\be_i[\tau+1])$.
        Hence $\be[\tau] \leq \al < \be[\tau+1]$.
        \medskip
        
        \item Assume by induction hypothesis that $\be_i=\mu^-\scbr{\psi_1(\ga+1)}$ and $\al_i= \mu\scbr{\psi_1\ga \cdot s}$  for some context $\mu$ and $\al_i<\be_i[s+1]$.\\
        Then $\psi_1(\al_i) <\psi_1(\be_i[s+1])$.
        Define a new $\psi_0-$nesting free context\\
        $\la\scbr{\cdot} = \xi + \mu\scbr{\cdot} +\ldots + {\al_{m+1}}$, then $\al = \la\scbr{\psi_1\ga \cdot s}$ and $\be = \la^-\scbr{ \psi_1(\ga+1)}$, 
        and $\al= \xi + \psi_1(\al_i)+\ldots+\al_{m+1}<\xi + \psi_1(\be_i[s+1])=\be[s+1]$.
        Hence $\al < \be[s+1]$.		
        \medskip
        
        \item  Assume by induction hypothesis that $\be_i=\mu^-\scbr{\de}$ and $\al_i= \mu\scbr{\ga}$ for some $\psi_0-$nesting free ordinal context $\mu$ where $\ga<\de<\Om$ and $\be_i[t]\leq \al_i < \be_i[t+1]$ for some natural number $t$.
        Then $\psi_1(\be_i[t]) \leq \psi_1(\al) < \psi_1(\be_i[t+1])$.
        Define the $\psi_0-$nesting free context\\
        $\la\scbr{\cdot} := \xi+ \psi_1(\mu\scbr{\cdot}) + \ldots+ {\al_{m+1}}$, then  $\al = \la\scbr{\ga}$ and $\be = \la^-\scbr{\de}$, 
        and $\be[t] = \xi + \psi_1(\be_i[t])\leq \al = \xi + \psi_1(\al_i) +\ldots+\al_{m+1} < \xi+\psi_1(\be_i[t+1])=\be[t+1]$.
        Hence  $\be[t] \leq \al < \be[t+1]$.
        
    \end{Cases}
\end{Cases}
\end{proof}

In the following Lemma, we analyze the differences between two countable ordinals.\begin{lemma}\label{lm:Context2}
Let $\al<\be<\Om \in \OT_0$ , then one of the following occurs,
\begin{enumerate}
    \item $\al+1=\be$,
    \item $\al<\be[1]$,
    \item there is a context $\la$, a natural number $t$ such that $\al=\la\scbr{t}$ and $\be=\la^-\scbr{\om}$ and $\al<\be[t+1]$,
    \item  there is a context $\la$, an ordinal $\ga<\Om$, a natural number $t$ such that $\al=\la\scbr{\ga}$ and $\be=\la^-\scbr{\psi_0(\mu\scbr{\Om})}$ and $\be[t]\leq\al<\be[t+1]$,
    \item \label{item:con1 lm:Context2} there is some context $\la$, an ordinal $\ga$ and some $s$ such that $\al=\la\scbr{\psi_0(\ga)\cdot s}$ and $\be=\la^-\scbr{\psi_0(\ga+1)}$ and $\al<\be[s+1]$.
    
    \item \label{item:con2 lm:Context2} there is some context $\la$, an ordinal $\ga$ and some $s$ such that $\al=\la\scbr{\psi_1(\ga)\cdot s}$ and $\be=\la^-\scbr{\psi_1(\ga+1)}$ and $\al<\be[s+1]$.
    
\end{enumerate}
\end{lemma}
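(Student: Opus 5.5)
We follow the template of the proof of Lemma~\ref{lm:Context1}, now for countable ordinals, where the principal summands are $\psi_0$-terms (below $\Om$ no $\psi_1$-summand occurs at top level, but $\psi_1$-terms appear inside $\psi_0$ arguments). We write $\al=\psi_0\al_0+\ldots+\psi_0\al_m+k$ and $\be=\psi_0\be_0+\ldots+\psi_0\be_n+l$, with $k,l$ natural numbers, and compare the summands from the left.

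\emph{Top-level cases.} Suppose first that $\al$ is a proper initial segment of $\be$ in the $\psi_0$-part, or that the $\psi_0$-parts agree and $k<l$. Then either $\al+1=\be$, or dropping one unit or one trailing summand still exceeds $\al$, giving $\al<\be[1]$. If the first difference occurs at a summand that is not the last summand of $\be$, then truncating the tail of $\be$ via Lemma~\ref{lm:lala-} and the additive principality of the summands again gives $\al<\be[1]$. This leaves the main case $\be=\xi+\psi_0\be_i$ and $\al=\xi+\psi_0\al_i+\eta$ with $\al_i<\be_i$. We handle it by an induction on the term $\be_i$ (that is, on the length of $\be$), distinguishing cases by the shape of $\be_i$.

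\emph{Inner cases.}
\begin{itemize}
\item $\be_i=0$. Then $\psi_0\be_i=\om$, $\al=\xi+t$, and with $\la\scbr{\cdot}=\xi+\scbr{\cdot}$ we have $\be=\la^-\scbr{\om}$ and $\al<\xi+t+1=\be[t+1]$. This is case (3).
\item $\be_i=\ga+1$. If $\ga>\al_i$ then $\al<\be[1]=\xi+\psi_0\ga$. If $\ga=\al_i$, count the $s$ copies of $\psi_0\ga$ in $\al$; this gives case (5), exactly as in the successor case of Lemma~\ref{lm:Context1}.
\item $\be_i$ a limit. Apply the induction hypothesis to $\al_i<\be_i$, or Lemma~\ref{lm:Context1} when $\be_i\ge\Om$, and push the result through $\psi_0$. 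We use Lemma~\ref{lm:fundamOrd} to commute fundamental sequences with contexts, and we use that $\psi_0$ is monotone on normal-form terms, since $\al_i$ and $\be_i[t]$ have their $G_0$ below the relevant arguments.
\end{itemize}

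\emph{Translating the inner alternatives.}
\begin{itemize}
\item $\al_i<\be_i[1]$ yields $\al<\be[1]$.
\item A $\psi_0$-successor or $\om$ alternative for $\al_i,\be_i$ lifts to $\psi_0(\mu\scbr{\cdot})$ contexts, giving cases (3) and (5).
\item A $\psi_1(\ga+1)$ alternative of Lemma~\ref{lm:Context1} lifts to case (6).
\item An alternative of the form $\be_i=\mu^-\scbr{\Om}$, $\be_i[\tau]\le\al_i<\be_i[\tau+1]$ with $\tau<\Om$ is the collapsing situation. Here $tp(\be_i)=\Om$, so $tp(\psi_0\be_i)=\om$ and $\be[t]=\xi+\psi_0(\be_i[z_t])$. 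We then take $\la$ so that $\be=\la^-\scbr{\psi_0(\mu\scbr{\Om})}$; this is case (4).
\end{itemize}

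\emph{Main obstacle.} The hard step is the last item: we must pass from a countable index $\tau$ to a natural number $t$ with $z_t\le\tau<z_{t+1}$, and then show $\be[t]\le\al<\be[t+1]$. The lower bound is monotonicity. For the upper bound we need $\al_i<\be_i[z_{t+1}]$, where $z_{t+1}=\psi_0(\be_i[z_t])$. Because $\psi_0\al_i\in\OT_0$, we have $G_0\al_i<\al_i$, so every countable parameter of $\al_i$, in particular $\tau$, lies below $\psi_0\al_i$. We plan to combine this with Theorem~\ref{thm:PropFundSeqOT} (existence of $x$ with $\al[x]\le\be<\al[x+1]$) and Lemma~\ref{lm:propertyTC}. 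Choosing $t$ least with $\al_i<\be_i[z_{t+1}]$ should then ensure $\be_i[z_t]\le\al_i$, and so $\be[t]\le\al$. We also need the sequence $(z_t)$ to be cofinal, which should follow from $\al<\be$ together with $\psi_0\al_i<\psi_0\be_i$. We expect the verification that $\tau<z_{t+1}$ for this least $t$, via the $G_0$-bounds, to be the most delicate point.
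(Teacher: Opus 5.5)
Your proposal follows the paper's proof essentially step for step. You use the same left-to-right comparison of $\psi_0$-summands and the same reduction to $\be=\xi+\psi_0\be_i$, followed by induction on $\be_i$, with the uncountable case routed through Lemma~\ref{lm:Context1} and the collapsing case producing alternative~(4). The step you flag as delicate is exactly the one the paper only asserts, and it closes as you indicate: the slot term that determines $\tau$ has its $G_0$-set inside $G_0\al_i<\al_i$, so it lies in $C_0(\al_i)\cap\Om=\psi_0\al_i<\psi_0\be_i=\sup_t z_t$; more directly, you can locate $\psi_0\al_i$ in the fundamental sequence of $\psi_0\be_i$ by Theorem~\ref{thm:PropFundSeqOT}.
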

\begin{proof}
Proof by case distinction on the form of $\be$.\\
Let $\al=\psi_0{\al_0} + \ldots + \psi_0{\al_m} +l$ and $\be = \psi_0{\be_0} + \ldots + \psi_0{\be_n} + d$.\medskip
\\
If $m<n$ and for all $i\leq m$, $\al_i=\be_i$, then $\be=\al+1$ or $\al<\be[1]$.\medskip
\\
If $m=n$ and $l<d$ then $\al+1=\be$ or $\al<\be[1]$. \medskip
\\
Otherwise there exists an $i\leq \text{min}\{m,n\}$ such that for all $j<i (\al_j=\be_j)$ and $\al_i<\be_i$.  If $n>i$ then $\al<\be[1]$.
Otherwise assume $i=n$, then $\al = \xi+\psi_0\al_i + \ldots + l$ and $\be =\xi+\psi_0\be_i$.
By induction on $\be_i$, the following cases occur:
\begin{Cases}
    \item ($\be_i = 0$).
    Hence $\be = \xi + \om$. Since $\al<\be$, then $\al = \xi + l$.
    Define a context  $\la\scbr{\cdot} = \xi+ \scbr{\cdot}$.
    Let $t := l$, then $\al = \la\scbr{t}$ and $\be = \la^-\scbr{\om}$, as a consequence $\al =  \la\scbr{t}< \la^-\scbr{\om[t+1]} = \be[t+1]$.\medskip
    \\
    Assume $\be_i > 0$, then it must be that $\al_i<\be_i$, and we have the following cases:
    \item ($\be_i=\ga+1$).
    If $\ga>\al_i$, then $\al<\be[1]$.\\
    If $\ga=\al_i$, then $\al =\xi+\psi_0\al_i \cdot s+ \ldots + l$ and $\be = \xi+\psi_0(\al_i+1) = \xi+\psi_0(\al_i) \cdot \om $, where $\al$ has at least one occurrence of $\psi_0\al_i$. We count the number of occurrences of $\psi_0\al_i$ by $s\geq 1$.\\
    Define a context $\la\scbr{\cdot} =  \xi +  \scbr{\cdot} + \ldots+ l$.
    Let $\ga:= \al_i$. Then $\al = \la\scbr{\psi_0(\ga)\cdot s }$ and $\be= \la^-\scbr{\psi_0(\ga+1)}$.
    Then $\al = \la\scbr{\psi_0(\ga)\cdot s}< \la^-\scbr{\psi_0(\ga)\cdot (s+1)}  = \la^-\scbr{\psi_0(\ga+1)[s+1]} = \be[s+1].$
    
    \item ($\be_i$ not a successor). Then, applying the induction hypothesis to $\al_i<\be_i$ where $\be_i$ can be of countable or uncountable cofinality, there are several cases to consider.
    First we consider the cases for $\be_i < \Om$. 
    \begin{Cases}
        \item Assume by induction hypothesis that $\al_i<\be_i[1]$, then $\al=\xi + \psi_0(\al_i) +\ldots+l < \xi +\psi_0(\be_i[1]) = \be[1]$ because $\psi_0(\be_i[1]) \in AP$. \medskip
        
        \item Assume by induction hypothesis that $\be_i = \mu^-\scbr{\om}$ and  $\al_i=\mu\scbr{t}$ for some context $\mu$ and $\al_i<\be_i[t+1]$.
        Then $\psi_0(\al_i)<\psi_0(\be_i[t+1])\in AP$.
        Define a new context\\
        $\la\scbr{\cdot} = \xi + \psi_0({\mu\scbr{\cdot}}) +\ldots+ \psi_0{\al_m}+l$. Then $\al= \la\scbr{t}$ and $\be=\la^-\scbr{\om}$.
        Then $\al= \xi +\psi_0(\al_i)+\ldots+l<\xi + \psi_0(\be_i[t+1]) =\be[t+1]$ since $\psi_0(\be_i[t+1]) \in AP$.
        \medskip
        
        \item Assume by induction hypothesis that $\be_i=\mu^-\scbr{\psi_0(\ga+1)}$ and $\al_i = \mu\scbr{\psi_0(\ga)\cdot s}$  for some context $\mu$ and $s$ is the number of occurrences of $\psi_0\ga$ in $\al_i$, and $\al_i<\be_i[s+1]$. 
        Then $\psi_0(\al_i) <\psi_0(\be_i[s+1])$.
        Define a new context\\
        $\la\scbr{\cdot} = \xi+ {\mu\scbr{\cdot}} + \ldots+ \psi_0{\al_m}+l$. Then $\al= \la\scbr{\psi_0(\ga)\cdot s}$ and $\be= \la^-\scbr{\psi_0(\ga+1)}$.
        And we have $\al =\xi + \psi_0(\al_i)+\ldots+l<\xi+\psi_0(\be_i[s+1] =\be[s+1]$ because $\psi_0(\be_i[s+1] \in AP$.
    \end{Cases}
    
    \item $(\be_i$ is not a successor \label{case:beiNotSucc}, $ \al_i<\Om \leq \be_i)$.
    \begin{Cases}
    \item Let $\be_i= \mu\scbr{\Om}$ for some $\psi_0-$nesting free context $\mu$.
    Let $\ga := \psi_0(\al_i)$.	Define the context
    $\la\scbr{\cdot} = \xi+\scbr{\cdot}+ \ldots + l$, then  $\al = \la\scbr{\ga}$ and $\be = \la^-\scbr{\psi_0(\mu\scbr{\Om}}$.
    \\
    Recall the fundamental sequences for $\psi_0(\al_0)$ where $tp(\psi_0(\al_0))=\om$ and $\al_0 :=\mu^-\scbr{\Om}$ for a $\psi_0-$free context $\mu^-$.
    $\psi_0(\al_0)[x] = \psi_0(\al_0[z_x])$ where $z_0=0$ and $z_{x+1}= \psi_0(\al_0[z_x])$.
    \\
    Then there is a natural number $t$ such that $\psi_0(\mu\scbr{\Om})[t] \leq \ga < \psi_0(\mu\scbr{\Om})[t+1]$,
    and we have
    $  \la^-\scbr{\mu\scbr{\Om}}[t] \leq \al = \la\scbr{\ga} < \la\scbr{\ga+1} \leq \la^-\scbr{\mu\scbr{\Om}}[t+1]$. 
    Hence $\be[t] \leq \al < \be[{t+1}]$.\medskip

    \item $(\be_i= \mu^-\scbr{\psi_1(\ga+1)})$, for some  $\psi_0-$nesting free context $\mu$. Since $\al_i$ is a countable ordinal, then $\al_i<\be_i[1]$. Hence $\al<\be[1]$.\medskip
    
    \end{Cases}
    
    \item $(\be_i$ is not a successor, $\Om \leq \al_i<\be_i)$.
    By Lemma \ref{lm:Context1}, the following cases occur;
    \begin{Cases}
        \item ($\be_i =\al_i+1$). Let $\al_i=\psi_1(\al_{i0})$ and $\be_i=\psi_1(\be_{i0})$.
        Then $\al = \xi + \psi_0(\psi_1(\al_{i0}))+\ldots+l$ and $\be=\xi + \psi_0(\psi_1(\al_{i0}+1)) =\xi + \psi_0(\psi_1(\al_{i0})\cdot \om)$.
        Then $\al< \be[2]$.\medskip

        \item ($\al_i<\be_i[1]$). 
        Then $\al = \xi + \psi_0(\al_i)+\ldots +l$ and $\be = \xi + \psi_0(\be_i[1])$.
        And we have $\al = \xi + \psi_0(\al_i)+\ldots +l <\xi + \psi_0(\be_i[1]) =\be[1]$.\medskip

        \item ($\be_i= \mu^-\scbr{\psi_1(\ga+1)}$ and $\al_i=\mu\scbr{\psi_1(\ga)\cdot s}$  for some  $\psi_0-$nesting free context $\mu$ and $s$  the number of occurrences of $\psi_1\ga$ in $\al_i$ and $\al_i<\be_i[s+1]$.)\\
        Define the context 
        $\la\scbr{\cdot} = \xi + \psi_0\mu\scbr{\cdot}+ \ldots+ \psi_0{\al_m} +l$. 
        \\Then $\al=\la\scbr{\psi_1(\ga)\cdot s}$ and $\be= \la^-\scbr{\psi_1(\ga+1)}$.
        We have $\al = \la\scbr{\psi_1(\ga)\cdot s} = \xi + \psi_0(\mu\scbr{\psi_1(\ga)\cdot s})+ \ldots+l
        < \xi + \psi_0(\mu^-\scbr{\psi_1(\ga+1)[s+1]}) =\la^-\scbr{\psi_1(\ga+1)}[s+1] =\be[s+1]$.\medskip

        \item $\be_i= \mu^-\scbr{\ga}$ and $\al_i=\mu\scbr{\de}$ and $\ga<\de<\Om$ for some $\psi_0-$nesting free context $\mu$.
        By induction hypothesis for $\ga,\de<\Om$ there are several cases to consider. We will consider only a few non-obvious ones here.\medskip
        \begin{Cases}
            \item There is a context $\sigma$ such that $\ga= \sigma\scbr{t}$ and $\de=\sigma^-\scbr{\om}$ and $\ga<\de[t+1]$ for a natural number $t$.
            Then $\al_i= \mu\scbr{\sigma\scbr{t}}$ and $\be_i= \mu^-\scbr{\sigma^-\scbr{\om}}$, and by Lemma \ref{lm:Context1}, $\be_i[t] \leq \al_i<\be_i[t+1]$.
            \\
            Define the context 
            $\la\scbr{\cdot} = \xi+ \psi_0(\mu\scbr{\sigma\scbr{\cdot}})+\ldots+l$.\\
            Then  $\al = \la\scbr{t}$ and $\be = \la^-\scbr{\om}$. Note that $\la^-\scbr{\cdot}$ means $\xi+\psi_0(\mu^-\scbr{\sigma^-\scbr{\cdot}})$.
            And we have 
            $\al= \la\scbr{t} = \xi+ \psi_0(\mu\scbr{\sigma\scbr{t}})+\ldots+l < \xi+\psi_0(\mu^-\scbr{\sigma^-\scbr{\om}}[t+1])
            = \la^-\scbr{\om}[t+1] = \be[1]$.\medskip
        \end{Cases}

        \item ($\be_i=\mu^-\scbr{\Om}$ and $\al_i=\mu\scbr{\ga}$ and $\be_i[\tau] \leq \al_i<\be_i[\tau+1]$ for some ordinal $\tau$ and some $\psi_0-$nesting free context $\mu$).
        This changes from the previous \ref{case:beiNotSucc} as now we consider, for instance, the cases such as $\al = \xi + \psi_0(\psi_1(\ga))$ and $\be = \xi  + \psi_0(\psi_1(\Om))$.\\
        Define a new context
        $\la\scbr{\cdot} = \xi+ \psi_0(\mu\scbr{\cdot})+\ldots + l$.\\
        Then $\al = \la\scbr{\ga}$ and $\be = \la^-\scbr{\psi_0(\mu\scbr{\Om})}$.
        And we have 
        $\al = \la\scbr{\ga} = \xi+ \psi_0(\mu\scbr{\ga})+\ldots + l < \xi+ \psi_0(\mu\scbr{\Om})$.
        \\
        Recall the fundamental sequences for $\psi_0(\al_0)$ where $tp(\psi_0(\al_0))=\om$ and $\al_0 :=\mu^-\scbr{\Om}$ for a $\psi_0-$free context $\mu^-$.
        $\psi_0(\al_0)[x] = \psi_0(\al_0[z_x])$ where $z_0=0$ and $z_{x+1}= \psi_0(\al_0[z_x])$.
        \\
        We have that $\al_i<\be_i[\tau+1]$, and if $G_0(\al_i)<\al_i$ then $\psi_0(\al_i) \in \OT_0$ and if $G_0(\be_i[\tau+1])< \be_i[\tau+1]$ then $\psi_0(\be_i[\tau+1]) \in \OT_0$.
        Then there exists an ordinal of the form $z_{t+1}\geq \tau+1$ for a natural number $t$ such that
        $\psi_0(\al_i) < \psi_0(\be_i[\tau+1]) \leq \psi_0(\be_i[z_{t+1}]) = \psi_0(\be_i)[t+1]$.
        \\
        And we have $\xi  +\psi_0(\be_i)[t] \leq \al =\xi + \psi_0(\al_i) +\ldots+\al_{m+1} < \xi  +\psi_0(\be_i)[t+1]$.
        Hence $\be[t] \leq \al < \be[t+1]$.\\
        
    \end{Cases}
\end{Cases}
\end{proof}

{\em In Section \ref{sec:GoodsteinID2}, in order not be very repetitive, we will write $\la\scbr{\psi_i(\ga)\cdot s}$ and $\la^-\scbr{\psi_i(\ga+1)}$ instead of the two cases \ref{item:con1 lm:Context2} and \ref{item:con2 lm:Context2} in Lemma \ref{lm:Context2}.
It will be clear from the context what we mean.}
\\

\section{Goodstein sequences for \texorpdfstring{$\mathrm{ID}_{2}$}{ID2}}
\label{sec:GoodsteinID2}
In this section, we construct a new Goodstein process associated with an extended version of the Hardy hierarchy, building from Section \ref{sec:PropID2}.

In \ref{subsec:HardyGoodstein} introduces the extended Hardy hierarchy $(H_\alpha)$ and defines the $k$-normal forms for which we prove basic growth properties using the fundamental sequences and Bachmann property from \ref{subsec:FundSeqBach}.

Subsection \ref{subsec:BaseChange} defines the base change operation and proves monotonicity and preservation of normal forms for base change (Lemma \ref{lm:MonBaseChange} and \ref{lm:NFpreservBaseChange}), relying on the combinatorial properties of $H_\alpha$.

In \ref{subsec:OrdAssig} the ordinal assignment ($\ok$) is defined and shown to be monotone and to preserve normal forms.
Finally, we show that the ordinal assignment produces strictly decreasing ordinals along the Goodstein process, using the structural properties of the ordinal notation system $\OT$ developed in Section \ref{sec:PropID2}.

\subsection{Hardy hierarchy and Goodstein process}\label{subsec:HardyGoodstein}
The $k$-representation of a number $m$ is constructed with respect to the fast-growing Hardy hierarchy. The ordinal $\al \in \OT_0 \cap \Om$ is a countable ordinal in $\psi_0$ normal form. It is known that $\OT_0 \cap \Om= \psi_0(\psi_1(\varepsilon_{\Om+1}))$ is the Bachmann-Howard ordinal. 
\begin{definition} Hardy hierarchy for $k\geq 0$;
\begin{enumerate}[label=(\roman*)]
    \item $H_0(k) = k$
    \item $H_{\al+1}(k) =H_{\al}(k) \cdot k$
    \item $ H_\lambda(k) =   H_{\lambda{\{k,k\}}} (k) \text{ where } \\
    \lambda{\{0,k\}} = \lambda[0] \text{ and } \lambda{\{b+1,k\}} = \lambda[H_{\lambda\{b,k\}} (k)] $.\medskip
    \\
    From now on, we will not write the second $k$, since it can be read from the function's argument. We will write $H_{\lambda{\{k\}}} (k)$ to say $ H_{\lambda{\{k,k\}}} (k)$ and $H_{\lambda{\{b\}}} (k)$ for $0<b<k$ to say $H_{\lambda{\{b,k\}}} (k)$. \medskip
\end{enumerate}
\end{definition}
The function $H_\al$ has the natural monotonicity properties due to the Bachmann property of the system of fundamental sequences stated in Lemma \ref{thm:bachmannProp}. The function $\al \rightarrow H_\al(k)$ is not monotone, but it behaves in a monotone way with respect to the relation $\leq_{x}$.
Recall that $\leq_{x}$ is the transitive and reflexive closure of $\{ (\al[x],\al ) : \al \in \OT_0\}$.

\begin{definition} Define the maximal coefficient, $mc(\al)$, of an ordinal $\al$ recursively as:
\begin{enumerate}[label=(\roman*)]
\item If $\al=0$, let $mc(0) := 0$.
\item If $\al =_{NF} \al_1+\ldots+\al_n +l$, define $mc(\al) := max \{ mc(\al_1), \ldots , mc(\al_n), l\}$.
\item If $\al=\psi_i(\al_0)$ then $mc(\al)=max(\al_0)$. 
\end{enumerate}
\end{definition}

We repeat some standard majorization properties of the Hardy hierarchy.
\begin{lemma} \label{lm:Hprop} For $\al,\be \in \OT_0\cap \Om$ the following hold;
\begin{enumerate}
    \item \label{case1:lm:Hprop} $H_\al(k)<H_{\al+1}(k)$,
    
    \item  \label{case2:lm:Hprop} If $\al <_{x} \be$ then $H_\al(k) \leq H_{\be}(k)$ for $1\leq x \leq k$,
    
    \item \label{case3:lm:Hprop} $H_\al(k) < H_\al(k+1)$,
    
    \item If $\be[x]<\al<\be$  then  $H_{\be[x]}(k)<H_\al(k)$,
    
    \item 	$N(\al) \leq H_\al(k)$.
\end{enumerate}
\end{lemma}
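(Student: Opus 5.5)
The plan is to prove all five items together by induction on $\al$ along the well-order of $\OT_0\cap\Om$, with a side induction on $k$ where needed. The recursion of $H$ has to be unfolded through the nested indices $\la\{b\}$, so the items feed into each other. Item~\ref{case1:lm:Hprop} is immediate from the successor clause: $H_{\al+1}(k)=H_\al(k)\cdot k$. This is strictly larger than $H_\al(k)$ once $k\ge 2$ and $H_\al(k)\ge 1$; the degenerate values $k\le 1$ are excluded by convention, as is tacit in the definition. The positivity $H_\al(k)\ge k$ is itself a trivial induction on $\al$.

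For item~\ref{case2:lm:Hprop}, it suffices by transitivity of $\le_x$ to treat a single step $\al=\be[x]$ with $1\le x\le k$. We show $H_{\be[x]}(k)\le H_\be(k)$ by induction on $\be$.
\begin{itemize}
\item If $\be=\ga+1$, then $\be[x]=\ga$ and item~\ref{case1:lm:Hprop} applies.
\item If $\be$ is a limit, then $H_\be(k)=H_{\be[c]}(k)$ with $c=H_{\be\{k-1\}}(k)$. One checks $c\ge k\ge x$, since $c\ge k$ by the positivity bound.
\end{itemize}
In the limit case we then need $H_{\be[x]}(k)\le H_{\be[c]}(k)$ for $x\le c$. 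This follows from Lemma~\ref{lm:propertyTC}: when $tp(\be)=\om$ it gives $\be[x]\le_1\be[c]$, i.e.\ $\be[x]<_1\be[c]$ or equality, and the inductive hypothesis for the smaller ordinal $\be[c]$ applies. The case $tp(\be)=\Om$ does not occur for countable $\be$. Here one has to be careful that the inductive hypothesis is applied to $\le_1$ rather than $\le_x$; we use the monotonicity $\le_1\subseteq\le_x$ noted in the proof of Proposition~\ref{prop:Majorize}.

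Item~\ref{case3:lm:Hprop} is again by induction on $\al$. The successor case follows from $H_\al(k)k<H_\al(k+1)(k+1)$. In the limit case, we first show $\la\{b,k\}\le_1\la\{b,k+1\}$ by induction on $b$, using the inductive hypothesis on smaller ordinals together with Lemma~\ref{lm:propertyTC}(\ref{item:1 lm:propertyTC}). Then
\[
H_{\la\{k,k\}}(k)<H_{\la\{k,k\}}(k+1)\le H_{\la\{k+1,k+1\}}(k+1),
\]
where the second inequality uses item~\ref{case2:lm:Hprop} for the chain $\la\{k,k+1\}\le\la\{k+1,k+1\}$ from Lemma~\ref{lm:propertyTC}.

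Item~4 follows from the Bachmann property, Theorem~\ref{thm:bachmannProp}. If $\be[x]<\al<\be$, then $\be[x]\le\al[1]$, and the argument in Proposition~\ref{prop:Majorize} upgrades this to $\be[x]\le_1\al[1]$. Items~\ref{case2:lm:Hprop} and~\ref{case1:lm:Hprop} then give
\[
H_{\be[x]}(k)\le H_{\al[1]}(k)<H_\al(k),
\]
where the last strict step unfolds $H_\al(k)$ and uses that at least one step of descent occurs. Item~5 is by induction using the third clause of Theorem~\ref{thm:bachmannProp}, $N(\al)\le N(\al[1])+1$: for limits, $N(\al)\le N(\al[1])+1\le H_{\al[1]}(k)+1\le H_\al(k)$, the last inequality by item~4 and strictness. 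For successors, $N(\al+1)=N(\al)+1\le H_\al(k)+1\le H_\al(k)k$.

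The main obstacle is the limit case of items~\ref{case2:lm:Hprop}–\ref{case3:lm:Hprop}. The index $\la\{k\}$ is produced by an inner iteration through $H$, so the inductive hypothesis must be applied to ordinals $\la\{b\}<\la$ at varying arguments. This forces a simultaneous induction on $\al$ for all $k$ and $x$ at once, and the comparisons $\la\{b\}\le_1\la\{b'\}$ must be tracked via Lemma~\ref{lm:propertyTC}.
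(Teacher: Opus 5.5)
Your proof is correct, and for items 1--3 it is the paper's argument: induction on the ordinal, unfolding $H_\be(k)=H_{\be\{k\}}(k)$ at limits and comparing indices via Lemma~\ref{lm:propertyTC}. You are also right that $k\ge 2$ is tacitly needed, since item~1 fails for $k=1$.

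Your variations on items 1--3 are minor improvements. Reducing item~2 to single steps $\al=\be[x]$ avoids the paper's appeal to item~3 of Lemma~\ref{lm:propertyTC}. In item~3 you prove $\la\{b,k\}\le_1\la\{b,k+1\}$ by induction on $b$. This handles the dependence on the base that the paper's one-line claim $\al\{k\}\le_x\al\{k+1\}$ ignores. One small addition is needed there: $\la\{k,k+1\}\le_1\la\{k+1,k+1\}$ also requires a short induction on $b$, showing $\la\{b\}\le_1\la\{b+1\}$ via item~2. Lemma~\ref{lm:propertyTC} alone does not give it.

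The paper only cites items 4 and 5, and your derivation from the Bachmann property and $N(\al)\le N(\al[1])+1$ is the standard one. The one step to make explicit is the strict inequality $H_{\al[1]}(k)<H_\al(k)$ for limit $\al$, because item~2 only yields $\le$. Prove $H_{\ga[1]}(k)<H_\ga(k)$ for all $\ga>0$ by its own induction:
\begin{itemize}
\item For limit $\ga$, put $c=H_{\ga\{k-1\}}(k)\ge k\ge 2$.
\item Then $\ga[1]<_1\ga[c]$, hence $\ga[1]\le_1\ga[c][1]$.
\item Item~2 and the inductive hypothesis at $\ga[c]$ give $H_{\ga[1]}(k)\le H_{\ga[c][1]}(k)<H_{\ga[c]}(k)=H_\ga(k)$.
\end{itemize}
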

\begin{proof}
Adaptions from \cite{BuchholzCW1994} or \cite{Weiermann2006}. We give the proof of the first three assertions.
\begin{enumerate}[leftmargin=*]
\item Since $H_\al$ is monotone, then 
 $H_\al(k)<H_\al(k)\cdot 2< \ldots<H_\al(k)\cdot k = H_{\al+1}(k)$.

\item By induction on $\be$.
\begin{Cases}
    \item $(\be=\ga+1)$. Then $\al\leq \ga$ and by the induction hypothesis and assertion \ref{case1:lm:Hprop}, 
    $H_\al(k)\leq H_\ga(k) < H_{\ga+1} \leq H_\be(k)$.

    \item $(tp(\be=\om))$, then $\al\leq_x \be[x]$ by Lemma \ref{lm:propertyTC} item \ref{item:2 lm:propertyTC}, and \\
    $\be[k]\leq_x \be\{k\}$ by Lemma \ref{lm:propertyTC} item \ref{item:1 lm:propertyTC}.
    By the induction hypothesis  
    $H_\al(k)\leq H_{\be[x]}(k) \leq H_{\be[k]}(k) \leq H_{\be\{k\}}(k)=H_\be(k)$.
\end{Cases}

\item  By induction on $\al$.
\begin{Cases}
    \item $(\al=0)$. Then it's obvious.
    \item $(\al=\ga+1)$. Then the assertion follows immediately by the induction hypothesis.
    \item $(tp(\al=\om))$.
    By the induction hypothesis,
    $H_\al(k) = H_{\al\{k\}}(k)<H_{\al\{k\}}(k+1)$. Then 
    by Lemma \ref{lm:propertyTC} item \ref{item:1 lm:propertyTC} we have that $\al\{k\} \leq_x \al\{k+1\}$,
    and assertion \ref{case2:lm:Hprop}, we have that,
    $H_{\al\{k\}}(k+1) \leq H_{\al\{k+1\}}(k+1) = H_\al(k+1)$.
\end{Cases}
\end{enumerate}

\end{proof}


\begin{lemma}
For any natural number $m$, there will be at most, finitely many $\al \in \OT_0\cap \Om$ such that $H_\al(k)\leq m$.
\end{lemma}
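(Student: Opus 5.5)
The plan is to reduce the statement to a finiteness property of the norm $N$, using the last item of Lemma~\ref{lm:Hprop}, namely $N(\al)\leq H_\al(k)$ for $\al\in\OT_0\cap\Om$. If $H_\al(k)\leq m$, then $N(\al)\leq m$. It therefore suffices to show that for each fixed $m$ there are only finitely many terms $\al\in\OT_0\cap\Om$ (indeed in $\T$) with $N(\al)\leq m$. Since $\OT_0\subseteq\OT\subseteq\T$ and distinct ordinals in $\OT_0$ are given by distinct normal-form terms, counting terms bounds the number of ordinals.

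For the finiteness of $\{\al\in\T : N(\al)\leq m\}$ I would argue by induction on $m$, following the recursive definition of $N$.
\begin{itemize}
\item Terms of norm $0$: only the term $0$. Indeed, $N(1)=1$, $N(\psi_i\be)=1+N(\be)\geq 1$, and a sum of $n+1\geq 2$ principal summands has norm at least the number of summands, since every principal term has norm $\geq 1$.
\item Principal terms of norm $\leq m+1$: these are $1$ or $\psi_i\be$ with $i\in\{0,1,2\}$ and $N(\be)\leq m$. By the induction hypothesis there are finitely many such $\be$, hence finitely many such principal terms.
\item Sums: a term $\al_0+\ldots+\al_n$ of norm $\leq m+1$ has at most $m+1$ summands, each a principal term of norm $\leq m+1$. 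There are only finitely many such tuples.
\end{itemize}
Hence only finitely many terms have norm $\leq m+1$.

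The main obstacle is making sure the inequality $N(\al)\leq H_\al(k)$ is valid as cited. The paper attributes it to adaptations of \cite{BuchholzCW1994, Weiermann2006} and does not prove it. If a proof is required, I would establish it by induction on $\al$:
\begin{itemize}
\item For $\al=0$ it is trivial.
\item For successors, $H_{\ga+1}(k)=H_\ga(k)\cdot k\geq H_\ga(k)+1$ once $k\geq 2$ and $H_\ga(k)\geq 1$.
\item For $tp(\al)=\om$, I would use the third item of Theorem~\ref{thm:bachmannProp}, $N(\al)\leq N(\al[1])+1$, together with the Bachmann-property consequence that norms along $\al\{b\}$ grow. This gives $N(\al)\leq N(\al\{k\})+1$, and I would compensate for the $+1$ using $H_{\al\{k\}}(k)$ with the first two items of Lemma~\ref{lm:Hprop}.
\end{itemize}
The cases $k\leq 1$ are degenerate and would be handled separately. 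For $k=0$, $H_\al(0)=0$ for every $\al$, so the statement can only be meant for $k\geq 2$, the bases actually used in the Goodstein process. I would state the result with that restriction.
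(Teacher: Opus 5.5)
Your proposal is correct and is essentially the paper's proof: combine $N(\al)\le H_\al(k)$ from Lemma~\ref{lm:Hprop} with the finiteness of $\{\al : N(\al)\le m\}$. You additionally justify that finiteness by induction on $m$, where the paper simply asserts it. Your restriction to $k\ge 2$ is warranted, since for $k\le 1$ one has $H_\al(k)=k$ for every $\al$ and the statement fails. In your optional limit-case sketch the $+1$ is absorbed directly: $\al\{k\}=\al[x]$ with $x\ge 2$, so the Bachmann property gives $N(\al[1])<N(\al\{k\})$ and hence $N(\al)\le N(\al\{k\})$.
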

\begin{proof}
For a given $m$, the set $M := \{ \al | N(\al)\leq m \}$ is finite, where recall $N(\al)$ is the term length in the normal form of $\al$. 
We have that $N(\al)\leq H_\al(k)$, and the Lemma follows.

\end{proof}

We use the Hardy function to define $k$ normal forms for natural numbers.\medskip

\begin{lemma}\label{lm:HardyNF}
Fix $k\geq 2$.  For all $m\geq k$, there exist unique $\al\in  \OT_0\cap \Om$ and $q<\om$ such that
\begin{enumerate}
    \item $m=H_\al(k) +q$,
    \item $\al$ is maximal with $H_\al(k) \leq m$. 
    \item $q$ is the natural number with $q < H_{\al+1}(k)$ such that $H_\al(k) +q \leq m < H_\al(k) +q+1$.
\end{enumerate}
\end{lemma}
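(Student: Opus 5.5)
We prove existence and uniqueness by choosing $\al$ as the largest element of a finite nonempty set and then reading off $q$ as a difference. Fix $k\geq 2$ and $m\geq k$ and put
\[
S_m:=\{\al\in\OT_0\cap\Om : H_\al(k)\leq m\}.
\]
First, $S_m$ is nonempty: $H_0(k)=k\leq m$, so $0\in S_m$. Second, $S_m$ is finite by the preceding lemma; it follows from $N(\al)\leq H_\al(k)$ (Lemma~\ref{lm:Hprop}) together with the finiteness of $\{\al : N(\al)\leq m\}$. Since $<$ is a linear order on $\OT_0$, the finite nonempty set $S_m$ has a unique maximum, and we let $\al$ be this maximum. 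This gives item (2). Uniqueness of $\al$ is then immediate, because ``maximal with $H_\al(k)\leq m$'' singles out exactly one element of a finite linearly ordered set.

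Next we set $q:=m-H_\al(k)$. This is a natural number because $H_\al(k)\leq m$, and it is unique once $\al$ is fixed, since $m=H_\al(k)+q$. Hence item (1) holds. The sandwich condition in item (3), $H_\al(k)+q\leq m<H_\al(k)+q+1$, holds trivially because the middle term is exactly $m$.

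The substantive part of item (3) is the bound $q<H_{\al+1}(k)$. By Lemma~\ref{lm:Hprop}(1), $H_\al(k)<H_{\al+1}(k)$. The ordinal $\al+1$ is again a term of $\OT_0\cap\Om$, since $\OT_0$ is closed under the sum rule and $1$ is principal. Maximality of $\al$ in $S_m$ therefore gives $\al+1\notin S_m$, that is, $H_{\al+1}(k)>m$. Consequently
\[
q\leq m<H_{\al+1}(k).
\]

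The main obstacle is not this bound but the definition of $S_m$ itself. Because $\al\mapsto H_\al(k)$ is not monotone, $S_m$ need not be an initial segment of $\OT_0\cap\Om$. We must therefore be careful that ``maximal'' refers to the ordinal order on the finite set $S_m$, and not to maximality of the value $H_\al(k)$. Without the finiteness lemma, a maximum might fail to exist, so this is the step where the norm bound $N(\al)\leq H_\al(k)$ (Lemma~\ref{lm:Hprop}(5)) is essential.
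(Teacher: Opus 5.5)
Your proof is correct and matches the route the paper intends. The paper gives no separate proof; it places the finiteness lemma (via $N(\al)\leq H_\al(k)$) immediately before this statement so that $\al$ can be taken as the maximum of the finite nonempty set $\{\al\in\OT_0\cap\Om : H_\al(k)\leq m\}$, with $q=m-H_\al(k)$. Your explicit check that $\al+1\in\OT_0\cap\Om$, which yields $q\leq m<H_{\al+1}(k)$ from maximality, fills in the one detail the paper leaves implicit.
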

We write $m=_{k} H_\al(k)+q$ in this case and say that $m$ is $k$ normal form. This means that we have in mind a fixed $k$ and a number $m$ for which we have uniquely determined $\al$ and $q$. Also, $q$ will be written in $k$ normal form recursively.
If it is the case that $q>H_\al(k)$, then it will be convenient to count the number of occurrences of $H_\al(k)$ by $p$ and have $m=_{k} H_\al(k) \cdot p + q$, where $p<k$.
If $m\leq k$ then $m$ is its own $k$ normal form. 
If $m=_{k}H_\al +q$ and $\al$ is of the form $\al = \psi_0\al_0+\ldots +\psi_0\al_n+ l$ where $l>k$, then $l$ is written in $k$ normal form and $\al_0\geq\ldots \geq \al_n$ is written in $k$ normal form.\medskip
\\
The following properties follow from Bachmann property.
\begin{lemma}\label{lm:NForms} Fix $k\geq 2$.
\begin{enumerate}
    \item If $m=_{k} H_\al(k) \cdot p$ then $H_\al(k) \cdot (p-1)$ is in $k$ normal form as well.
    \item If  $m=_{k} H_\al(k) +q+1$ then $H_\al(k) + q$ is in $k$ normal form.
    \item If $m=_{k}H_\al(k)$ and $\al$ is a limit, then $H_{\al\{b\}}(k)$ for $0<b<k$ is in $k$ normal form. 
    \item If $m=_{k}H_{\al}(k)$ and $\al =\al_0+l$ is successor for $l$ a natural number,
    then $H_{\al_0+l-1}(k)$ is in $k$ normal form. 
\end{enumerate}
\end{lemma}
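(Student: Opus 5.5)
Each of the four items reduces to the same criterion. A number of the form $H_\be(k)+q'$ is in $k$ normal form exactly when $\be$ is the maximal ordinal in $\OT_0\cap\Om$ with $H_\be(k)\le H_\be(k)+q'$, and $q'<H_{\be+1}(k)$ (Lemma~\ref{lm:HardyNF}). The plan is therefore: in each case name the candidate exponent $\be$, then show that no $\be'>\be$ with $H_{\be'}(k)\le m'$ exists, where $m'$ is the new number.

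For every such $\be'$ we may use the fact that $m$ is already in normal form with exponent $\al$. Since $m'<m$, we get $\be'\le\al$. Since $\be<\al$, the ordinal $\be'$ lies in $(\be,\al]$.

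\emph{Items (1) and (2).} The exponent does not change, $\be=\al$. If some $\be'>\al$ had $H_{\be'}(k)\le m'<m$, then $\al$ would not be maximal for $m$, a contradiction. The remainder condition holds because the new remainder is smaller than the old one; in (1) this is $H_\al(k)(p-2)<H_{\al+1}(k)$, since $p<k$.

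\emph{Item (4).} Here $\be=\al_0+l-1$ and $\al=\be+1$, so no ordinal lies strictly between $\be$ and $\al$. The only candidate is $\be'=\al$, and it is excluded because $H_\al(k)=H_\be(k)\cdot k>H_\be(k)$ when $k\ge2$ (Lemma~\ref{lm:Hprop}(\ref{case1:lm:Hprop})).

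\emph{Item (3).} This is the main obstacle. Put $m'=H_{\al\{b\}}(k)$ with $0<b<k$. Suppose $\al\{b\}<\be'<\al$ and $H_{\be'}(k)\le H_{\al\{b\}}(k)$. I would derive a contradiction from the Bachmann property.

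\begin{itemize}
\item By definition $\al\{b\}=\al[x]$ with $x=H_{\al\{b-1\}}(k)$ (for $b=1$, $x=H_{\al[0]}(k)$).
\item Theorem~\ref{thm:bachmannProp} gives $\al[x]\le\be'[1]$. Iterating it, and using Lemma~\ref{lm:propertyTC}, yields $\al[x]\le_1\be'$, and indeed $\al[x]<_1\be'$ because $\be'\ne\al[x]$.
\item I would sharpen this to the strict inequality $H_{\al[x]}(k)<H_{\be'}(k)$, which is item (4) of Lemma~\ref{lm:Hprop}: if $\be[x]<\al<\be$ then $H_{\be[x]}(k)<H_\al(k)$. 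Applied with $\be:=\al$ and $\al:=\be'$, it gives the contradiction at once.
\end{itemize}

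For the endpoint $\be'=\al$ we need $H_{\al\{b\}}(k)<H_\al(k)=H_{\al\{k\}}(k)$. This follows from Lemma~\ref{lm:propertyTC}(\ref{item:1 lm:propertyTC}), which gives $\al\{b\}<_x\al\{k\}$, together with Lemma~\ref{lm:Hprop}(\ref{case2:lm:Hprop}) for the weak inequality. Strictness comes from the strict growth of the indices $H_{\al\{b\}}(k)<H_{\al\{b+1\}}(k)$, which uses Lemma~\ref{lm:Hprop}(1) along a $<_1$ chain. I expect this strictness bookkeeping to be the delicate part.

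Finally, the remainder is $q=0$, so no remainder condition needs checking. In (1) and (2) the remainder is re-expressed in $k$ normal form recursively, as in Lemma~\ref{lm:HardyNF}.
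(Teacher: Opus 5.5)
Your proposal is correct and follows essentially the paper's route. For item (3) the paper makes the same three-way split on a competitor $\delta>\alpha\{b\}$: the Bachmann property handles $\alpha\{b\}<\delta<\alpha$, maximality of $\alpha$ handles $\delta>\alpha$, and $b<k$ handles $\delta=\alpha$; items (1), (2) and (4) are the routine maximality checks you give. One small tightening: strictness at the endpoint $\delta=\alpha$ does not come from item (1) of Lemma~\ref{lm:Hprop}, which covers only successor steps. It comes from item (4) of that lemma applied to $\alpha\{b\}<\alpha\{k\}<\alpha$, where the index growth $H_{\alpha\{j\}}(k)<H_{\alpha\{j+1\}}(k)$ is proved by induction on $j$ using the same item (4).
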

\begin{proof}
By induction on $\al$, using the definition of fundamental sequences from \ref{subsec:FundSeqBach} and the Bachmann property (Theorem \ref{thm:bachmannProp}).

Proof of the third assertion.\\
Fix $\de>\al\{b\}$. 
If $\al\{b\} <\de<\al$, the Bachmann property yields $H_{\al\{b\}}(k) <H_\de(k)$.
\
If $\de>\al$ then $H_\de(k)\leq H_\al(k)$ which would contradict the fact that $H_\al(k)$ is in $k$ normal form. 
\\
If $\de=\al$, by the Bachmann property $H_\de(k) = H_{\de\{k\}} (k)> H_{\al\{b\}}(k)$ since $b<k$.
In all cases we find $H_{\de}(k)>H_{\al\{b\}}(k)$.\medskip\\
\end{proof}

\begin{lemma} \label{lm:MeasureH}
 Assume by Lemma \ref{lm:lala-} that $\la^-\scbr{\ga} \leq_1 \la\scbr{\ga}$. Then $H_{\la^-\scbr{\ga}}(k) \leq H_{\la\scbr{\ga}}(k)$.
\end{lemma}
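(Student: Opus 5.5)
The statement is a direct consequence of Lemma~\ref{lm:lala-} combined with the monotonicity of the Hardy functions along the relation $\leq_1$, namely Lemma~\ref{lm:Hprop}, item~\ref{case2:lm:Hprop}. I would therefore not argue about the shape of the context at all. I would only check that the hypotheses of Lemma~\ref{lm:Hprop} are met.

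First, fix the context $\la$ and the ordinal $\ga<\Om$ with $\la\scbr{\ga}\in\OT_0$. By the lemma preceding the examples, $\la^-\scbr{\ga}\in\OT_0$ as well, so both ordinals lie in $\OT_0\cap\Om$. This is the domain on which $H$ and Lemma~\ref{lm:Hprop} are formulated, and it uses that $\ga$ is countable and the whole term stays below $\Om$.

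Next, distinguish two cases. If $\la^-\scbr{\ga}=\la\scbr{\ga}$, for instance when nothing follows the placeholder, the inequality is trivially an equality. Otherwise Lemma~\ref{lm:lala-} gives the strict relation $\la^-\scbr{\ga}<_1\la\scbr{\ga}$, meaning a finite chain of single steps $\be\mapsto\be[1]$ leading from $\la\scbr{\ga}$ down to $\la^-\scbr{\ga}$. Since $k\geq 1$, which is needed anyway for $H_\al(k)$ to be meaningful here, Lemma~\ref{lm:Hprop} item~\ref{case2:lm:Hprop} applies with $x=1\leq k$ and yields $H_{\la^-\scbr{\ga}}(k)\leq H_{\la\scbr{\ga}}(k)$.

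The only delicate point is whether item~\ref{case2:lm:Hprop} really handles a chain of $[1]$-steps when the fundamental sequences in $H_\lambda$ are evaluated at the nonstandard indices $\lambda\{b\}$ rather than at $\lambda[k]$. If that needs justification, I would unfold the chain and prove the single-step inequality $H_{\be[1]}(k)\leq H_\be(k)$ by induction on $\be$:
\begin{itemize}
\item for a successor $\be$ this follows from item~\ref{case1:lm:Hprop};
\item for $tp(\be)=\om$ it follows from Lemma~\ref{lm:propertyTC}, which gives $\be[1]\leq_1\be[k]\leq_1\be\{k\}$, since the indices $H_{\be\{b\}}(k)$ are at least $k$, together with the induction hypothesis.
\end{itemize}
Transitivity along the chain then finishes the argument.
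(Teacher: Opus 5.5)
Your proposal is correct and is exactly the paper's argument: the paper's proof only invokes Lemma~\ref{lm:Hprop}, item~\ref{case2:lm:Hprop}, with $x=1\leq k$, applied to the relation $\la^-\scbr{\ga}\leq_1\la\scbr{\ga}$ from Lemma~\ref{lm:lala-}. Your optional fallback induction (successor case via item~\ref{case1:lm:Hprop}, limit case via Lemma~\ref{lm:propertyTC} and $\be[1]\leq_1\be[k]\leq_1\be\{k\}$) just repeats the paper's own proof of that item, so nothing further is needed.
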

\begin{proof}
Follows from assertion \ref{case2:lm:Hprop} in Lemma \ref{lm:Hprop}.    
\end{proof}
\smallskip

In the following Lemma, we prove some bounds on ordinals of the $k$ normal form of $m$. These bounds will prove crucial later on.
\begin{lemma}\label{lm:OrdMeasure}
Assume that  $\al$ is maximal with $H_\al(k)\leq m$. Then for all contexts $\la$,
\begin{enumerate}
    \item If $\al = \la\scbr{t}$ and $\al<\de= \la^-\scbr{\om}$ and $\al< \de[t+1]$  then $t <H_{\de\{k-1\}}(k)$.
    
    \item If $\al =\la\scbr{\ga}$ and $\al< \de=\la^-\scbr{\psi_0(\mu\scbr{\Om})}$ and if there is some $t$ such that
    $\de[t]\leq\al<\de[t+1]$ then $t<H_{\de\{k-1\}}(k)$.
    
    \item If $\al = \la\scbr{\psi_i(\ga)\cdot s}$ and $\al < \de = \la^-\scbr{\psi_i(\ga+1)}$  for $i \in \{0,1\}$ and $\al < \de[s+1]$ then $s< H_{\de\{k-1\}}(k)$. 
\end{enumerate}
\end{lemma}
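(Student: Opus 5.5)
The plan is to argue by contradiction in all three cases, using the maximality of $\al$. The template is the same each time. Suppose the relevant coefficient is large, namely $t\geq H_{\de\{k-1\}}(k)$ in (1) and (2), or $s\geq H_{\de\{k-1\}}(k)$ in (3). We then exhibit an ordinal $\al'$ with $\al<\al'$ and $H_{\al'}(k)\leq H_\al(k)\leq m$, which contradicts the maximality of $\al$ in Lemma~\ref{lm:HardyNF}. The candidate for $\al'$ is always an element of the $k$-fundamental sequence of $\de$, namely $\de\{b\}$ for a suitable $b\leq k-1$, or $\de[x]$ for a suitable index $x$ lying between consecutive $z$'s. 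The comparison of Hardy values will go through Lemma~\ref{lm:Hprop}(2), using the relations $\leq_x$ together with Lemma~\ref{lm:fundamOrd}, which gives $\de[x]=\la^-\scbr{\cdot[x]}$ when $tp(\cdot)=\om$.

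For case (1), Lemma~\ref{lm:fundamOrd} gives $\de[x]=\la^-\scbr{x}$. By definition $\de\{k-1\}=\de[y]$ with $y=H_{\de\{k-2\}}(k)$ (or $y=0$ when $k=1$), and the indices $H_{\de\{b\}}(k)$ increase with $b$.

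1. Since $H_{\de\{k-1\}}(k)\geq y$, the assumption $t\geq H_{\de\{k-1\}}(k)$ gives $t\geq y$.
2. By Lemma~\ref{lm:lala-} we get $\la^-\scbr{y}\leq_1\la^-\scbr{t}\leq_1\la\scbr{t}=\al$, so Lemma~\ref{lm:Hprop}(2) yields $H_{\de\{k-1\}}(k)\leq H_\al(k)$.
3. To get a strict gain, look at the next element $\de\{k\}=\de[H_{\de\{k-1\}}(k)]=\la^-\scbr{H_{\de\{k-1\}}(k)}$. This satisfies $\de\{k\}\leq_1\la^-\scbr{t}\leq_1\al$, so $H_\de(k)=H_{\de\{k\}}(k)\leq H_\al(k)\leq m$.
4. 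Since $\de>\al$, this contradicts maximality.

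Case (3) is identical. There $\de[x]=\la^-\scbr{\psi_i(\ga)\cdot x}$, and $\psi_i(\ga)\cdot x\leq_1\psi_i(\ga)\cdot s$ for $x\leq s$. The $\psi_1$ subcase is handled through the $\psi_0$-nesting free context, where Lemma~\ref{lm:fundamOrd} again applies.

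Case (2) is the main obstacle. There $\de[x]=\la^-\scbr{\psi_0(\mu\scbr{\Om}[z_x])}$ is defined through the iterates $z_{x+1}=\psi_0(\mu\scbr{\Om}[z_x])$, and the hypothesis is only the sandwich $\de[t]\leq\al<\de[t+1]$ rather than a $\leq_1$-relation. The plan is as follows.

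1. Apply the Bachmann property (Theorem~\ref{thm:bachmannProp}) together with the remark in the proof of Proposition~\ref{prop:Majorize}: from $\de[t]\leq\al<\de$ we get $\de[t]\leq_1\al$.
2. Use Lemma~\ref{lm:propertyTC}(2) to chain $\de[x]\leq_1\de[t]$ for all $x\leq t$. Transitivity then gives $\de[x]\leq_1\al$.
3. Choose $x=H_{\de\{k-1\}}(k)\leq t$. Then $\de\{k\}=\de[x]\leq_1\al$, so $H_\de(k)\leq H_\al(k)\leq m$, and this contradicts maximality as in case (1).

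The delicate point is verifying that Lemma~\ref{lm:propertyTC} applies inside the context $\la^-$. This is where I expect to need Lemma~\ref{lm:SandwichContext} to move the sandwich from $\al$ to $\la^-\scbr{\ga}$, and Lemma~\ref{lm:MeasureH} to pass back to $H_\al(k)$.
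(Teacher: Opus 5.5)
Your proposal is correct and follows the paper's own argument. In each case you assume the coefficient is at least $H_{\delta\{k-1\}}(k)$ and note that $\delta\{k\}=\delta[H_{\delta\{k-1\}}(k)]$ then lies $\leq_1$-below $\alpha$, using Lemma~\ref{lm:fundamOrd}, Lemma~\ref{lm:lala-} and the Bachmann property; this gives $H_\delta(k)=H_{\delta\{k\}}(k)\leq H_\alpha(k)\leq m$ with $\delta>\alpha$, contradicting maximality.

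Steps 1--2 of case (1) are superfluous. So is your closing worry about transporting Lemma~\ref{lm:propertyTC} through the context, since it applies to $\delta$ itself ($tp(\delta)=\omega$). Also, the witness you actually use is $\delta$ itself, not some $\delta\{b\}$ with $b\leq k-1$ as your opening paragraph says.
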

\begin{proof}
First assertion. 
Assume towards a contradiction $t\geq H_{\de\{k-1\}}(k)$ then
\begin{align*}
    H_\de(k) &=H_{\de\{k\}}(k) \\
    &=H_{\de[H_{\de\{k-1\}}(k)]} (k) \\
    &= H_{(\la^-\scbr{\om})[H_{\de\{k-1\}}(k)]}(k)\\
    &= H_{\la^-\scbr{\om[H_{\de\{k-1\}}(k)]}}(k)\\
    &= H_{\la^-\scbr{H_{\de\{k-1\}}(k)}} (k)\\
    & \leq H_{\la^-\scbr{t}}(k)\\
    & \leq H_{\la\scbr{t}}(k) =  H_\al(k).
\end{align*}
since  $\de[l] = (\la^-\scbr{\om})[l] =  \la^-\scbr{l} $ for every $l$ by Lemma \ref{lm:fundamOrd}, \\
and $H_{\la^-\scbr{H_{\de\{k-1\}}(k)}} (k) \leq H_{\la^-\scbr{t}}(k) $ by Bachmann property and the last step is by Lemma \ref{lm:MeasureH}.
Contradiction to the maximality of $\al$.\medskip
\\\\
Second assertion. \\
Recall the fundamental sequences for $\psi_0(\al_0)$ where $tp(\psi_0(\al_0))=\om$ and $\al_0 :=\mu^-\scbr{\Om}$ for a $\psi_0-$free context $\mu^-$.
$\psi_0(\al_0)[x] = \psi_0(\al_0[z_x])$ where $z_0=0$ and $z_{x+1}= \psi_0(\al_0[z_x])$.\\
Assume for a contradiction  $t\geq H_{\de\{k-1\}}(k)$ and $\de[t]\leq \al<\de[t+1]$.
Then  
\begin{align*}
    H_\de(k) & = H_{\de\{k\}}(k) \\
    & = H_{\de[H_{\de\{k-1\}}(k)]}(k)\\
    &= H_{(\la^-\scbr{\psi_0(\mu^-\scbr{\Om})})[H_{\de\{k-1\}}(k)]} (k) \\
    &= H_{\la^-\scbr{\psi_0((\mu^-\scbr{\Om})[z_{H_{\de\{k-1\}}(k)}] )}}(k) \\ 
    &\leq H_{\la^-\scbr{\psi_0(\mu^-\scbr{\Om}[z_t])} }(k) \\
    &\leq H_{\la^-\scbr{\psi_0(\mu^-\scbr{\Om})}[t] }(k) \\
    &\leq H_\al(k).
\end{align*}
bsince $\de[l] = \la^-\scbr{\psi_0(\mu^-\scbr{\Om})}[l] = \la^-\scbr{\psi_0(\mu^-\scbr{\Om}[z_l])}$ and the first inequality is by Bachmann property and the last step is by Lemma \ref{lm:MeasureH}.
This is in contradiction with the maximality of $\al$.\medskip\\
\\
Third assertion.
Assume for a contradiction $s\geq H_{\be\{k-1\}}(k)$ then
\begin{align*}
    H_\de(k) &=H_{\de\{k\}}(k) \\
    &=H_{\de[H_{\de\{k-1\}}(k)]} (k) \\
    &= H_{(\la^-\scbr{\psi_i(\ga+1)}) [H_{\de\{k-1\}}(k)] }(k)\\
    &= H_{\la^-\scbr{\psi_i(\ga) \cdot  H_{\de\{k-1\}}(k) } }(k)\\	
    &\leq H_{\la^-\scbr{\psi_i(\ga) \cdot  s } }(k)\\
    &= H_{\la\scbr{\psi_i(\ga) \cdot s }}(k) = H_{\la}(k).
\end{align*}
since $\de[l] = \la^-\scbr{\psi_i(\ga+1)}[l] =  \la^-\scbr{\psi_i(\ga) \cdot \om}[l] =  \la^-\scbr{\psi_i(\ga) \cdot l}$ for every $l$  and
and the inequality is by Bachmann property and by Lemma \ref{lm:MeasureH}.
This is in contradiction with the maximality of $\al$.\medskip\\
\end{proof}

\begin{remark}
Note that the measure of the bound for  $\al = \la\scbr{\psi_i(\ga)\cdot s}$ indicates that $s$ is not a maximal coefficient bound, rather it represents the limit on the number of occurrences of $\psi_i(\ga)$ in $\al$.  In contrast, for $\al=\la\scbr{t}$, the bound on $t$ is a proper bound on the coefficient of the ordinal in $\al$. Likewise, for $\al=\la\scbr{\ga}$, the bound on $t$ is a limit on the number of iterations of the fundamental sequence for $\la^-\scbr{\mu\scbr{\Om}}$.\medskip
\end{remark}

To prove the preservation of normal forms after performing a base change operation the following Lemma will be of key importance.\smallskip

\begin{lemma}\label{lm:OrdMeasure2}
Let $\al<\Om$. Then for all contexts $\la$, ordinal $\ga$ and natural numbers $s,t$, one of the following occurs;
\begin{enumerate}
    \item \label{case1:lm:OrdMeasure2}If $\al=\la\scbr{t}$ then $t<H_{(\la^-\scbr{\om})\{k-1\}}(k)$,
    \item  \label{case2:lm:OrdMeasure2}If $\al=\la\scbr{\ga}$ then $\la^-\scbr{\psi_0(\mu\scbr{\Om})}[t]\leq\al<\la^-\scbr{\psi_0(\mu\scbr{\Om})}[t+1]$ where $\mu$ is a $\psi_0-$nesting free context, then $t<H_{\la^-\scbr{\psi_0(\mu\scbr{\Om})}\{k-1\}}(k)$,
    \item  \label{case3:lm:OrdMeasure2}If $\al=\la\scbr{\psi_0(\ga)\cdot s}$ then $s<H_{\la^-\scbr{\psi_0(\ga+1)}\{k-1\}}(k)$.
\end{enumerate}
Then if $\al<\be<\Om$, either  $\al+1=\be$ or $\al+1\leq \be[H_{\be\{k-1\}}(k)]=\be\{k\}$. \\
Moreover $H_{\al+1}(k)\leq H_\be(k)$.\medskip
\end{lemma}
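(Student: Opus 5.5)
The idea is to feed the case analysis of Lemma~\ref{lm:Context2} into the coefficient bounds recorded in hypotheses (1)--(3), which play here the role Lemma~\ref{lm:OrdMeasure} played under the maximality assumption. Fix $\al<\be<\Om$ in $\OT_0$ and apply Lemma~\ref{lm:Context2}. The case $\al+1=\be$ is the first alternative, so nothing is to be shown. In every other case we aim to show $\al+1\leq\be\{k\}$.

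We split along the remaining cases of Lemma~\ref{lm:Context2}.

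\textbf{Case $\al<\be[1]$.} Since $\be$ is then a limit and $1\leq H_{\be\{k-1\}}(k)$, monotonicity of fundamental sequences (Lemma~\ref{lm:propertyTC}) gives $\al+1\leq\be[1]\leq\be[H_{\be\{k-1\}}(k)]=\be\{k\}$.

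\textbf{Case $\al=\la\scbr{t}$, $\be=\la^-\scbr{\om}$, $\al<\be[t+1]$.} Hypothesis~(1) gives $t+1\leq H_{\be\{k-1\}}(k)$. Hence $\al<\be[t+1]\leq\be\{k\}$. By Lemma~\ref{lm:fundamOrd}, $\be\{k\}=\la^-\scbr{H_{\be\{k-1\}}(k)}$ has the form $\la^-\scbr{n}$, so it is not a limit of the relevant kind; this lets us upgrade the strict inequality to $\al+1\leq\be\{k\}$.

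\textbf{Case $\be=\la^-\scbr{\psi_0(\mu\scbr{\Om})}$ with $\be[t]\leq\al<\be[t+1]$.} This is handled identically using hypothesis~(2).

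\textbf{Case $\al=\la\scbr{\psi_i(\ga)\cdot s}$, $\be=\la^-\scbr{\psi_i(\ga+1)}$.} Hypothesis~(3) gives $s+1\leq H_{\be\{k-1\}}(k)$, hence $\al<\be[s+1]\leq\be\{k\}$. For $i=1$ the bound is not literally among (1)--(3); I would either read (3) as covering both $i\in\{0,1\}$, as the convention after Lemma~\ref{lm:Context2} allows, or derive it from the same argument.

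In all these cases the strict inequality $\al<\be\{k\}$ gives $\al+1\leq\be\{k\}$ directly, since these are ordinals.

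For the ``moreover'' part, first take $\al+1=\be$, where the claim is trivial. Otherwise $\al+1\leq\be\{k\}<\be$. We want $\al+1\leq_x\be\{k\}$ for some $1\leq x\leq k$, so that Lemma~\ref{lm:Hprop}(\ref{case2:lm:Hprop}) applies. We then get $H_{\al+1}(k)\leq H_{\be\{k\}}(k)=H_\be(k)$, the last equality being the definition of $H$ at limits.

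To obtain $\leq_x$ from $\leq$ I use the Bachmann property (Theorem~\ref{thm:bachmannProp}), through the fact already used in Proposition~\ref{prop:Majorize}: if $\de[n]\leq\eta<\de$, then $\de[n]\leq_1\eta$. Applying this along the chain $\be\{k\}=\be[z]$ and using Lemma~\ref{lm:propertyTC}(\ref{item:2 lm:propertyTC}), every $\eta<\be\{k\}$ with small enough norm satisfies $\eta\leq_k\be\{k\}$.

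The main obstacle is precisely this transfer from $\leq$ to $\leq_k$, because $<$ does not imply $<_k$ in general. The strategy is to pass to the descending chain $\be\{k\},\be\{k\}[k],\dots$, which by the Bachmann property must pass through the interval containing $\al+1$. I would then argue that $\al+1$ lies in the $\leq_k$-cone using the coefficient bounds (1)--(3) once more.

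If this direct route stalls, the fallback is to prove $H_{\al+1}(k)\leq H_\be(k)$ by induction on $\be$. In the limit case one uses $H_\be(k)=H_{\be\{k\}}(k)$ and the first part of the lemma, and applies the induction hypothesis to the pair $\al<\be\{k\}$ (or equality when $\al+1=\be\{k\}$). This requires re-verifying hypotheses (1)--(3) relative to $\be\{k\}$, which should follow from Lemma~\ref{lm:SandwichContext}.
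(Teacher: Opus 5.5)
Your proof of the first assertion is the paper's own: the same case split from Lemma~\ref{lm:Context2}, with (1)--(3) giving $t+1$ (resp.\ $s+1$) $\leq H_{\be\{k-1\}}(k)$ and hence $\al<\be\{k\}$. The paper also tacitly reads (3) as covering $\psi_1$. For the ``moreover'' part, your fallback is exactly the paper's argument: induction on $\be$, using $H_\be(k)=H_{\be\{k\}}(k)$ and the induction hypothesis for $\al<\be\{k\}$. No re-verification via Lemma~\ref{lm:SandwichContext} is needed there. Hypotheses (1)--(3) are conditions on $\al$ alone, quantified over all contexts of $\al$, and never mention $\be$, so they carry over unchanged to the pair $\al<\be\{k\}$.

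You should discard the primary $\leq_x$ route, because its target can be false. Take $k=2$, $\be=\om^2=\psi_0(1)$ and $\al=\om\cdot 5+99$. Then (1)--(3) hold for $\al$, and $\be\{2\}=\om\cdot H_{\om\cdot 2}(2)$. Yet $\om\cdot 5+100$ is not $\leq_x$-below $\be\{2\}$ for any $x\leq 2$, since every $\leq_x$-predecessor of $\om\cdot n$ has the form $\om\cdot j+i$ with $i\leq x$. The coefficient bounds in (1)--(3) are calibrated to indices of size $H_{\be\{k-1\}}(k)$, far beyond the range $x\leq k$ where Lemma~\ref{lm:Hprop}(\ref{case2:lm:Hprop}) applies. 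That is why the paper unwinds $H_\be$ through $\be\{k\}$ by induction on $\be$ instead.
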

\begin{proof}
First assertion. By Lemma \ref{lm:Context2}, there are five cases to consider;
\begin{Cases}
    \item Clear if $\be= \al+1$.
    
    \item If $\al<\be[1]$ then $\al+1 \leq \be[H_{\be\{k-1\}}(k)]$.
    
    \item 	Assume by Lemma \ref{lm:Context2}, that $\al=\la\scbr{t}$ and  $\be=\la^-\scbr{\om}$.\\
    Then  $\be[l] = \la^-\scbr{\om}[l] =  \la^-\scbr{l} $ for every $l$, and $\be\{k\}= \be[H_{\be\{k-1\}}(k)]$.\\
    By assumption,  $t< H_{\la^-\scbr{\om}\{k-1\}}(k)$. Then
    \begin{align*}
        \al = \la\scbr{t} &< \la^-\scbr{t+1} \\
        &\leq \la^-\scbr{H_{\la^-\scbr{\om}\{k-1\}}(k)} \\
        &= \be[H_{\la^-\scbr{\om}\{k-1\}}(k)]\\
        &= \be[H_{\be\{k-1\}}(k)].
    \end{align*}
    
    \item 
    Assume by Lemma \ref{lm:Context2} now that  $\al=\la\scbr{\ga}$ and $\be=\la^-\scbr{\psi_0(\mu\scbr{\Om})}$.\medskip\\
    Recall the fundamental sequences for $\al = \psi_0(\al_0)$ where $tp(\al)=\om$ and $\al_0=\mu\scbr{\Om}$ where $\mu$ is a $\psi_0-$nesting free context, we have 
    $\psi_0(\al_0)[x] = \psi_0(\al_0[z_x])$ where $z_0=0$ and $z_{x+1}= \psi_0(\al_0[z_x])$.\medskip\\
    Then $\be[l] = \la^-\scbr{\psi_0(\mu\scbr{\Om})}[l] = \la^-\scbr{\psi_0(\mu\scbr{\Om}[z_l])}$ and $\be\{k\}= \be[H_{\be\{k-1\}}(k)]$.\\
    Since $\al> \be[1]$ and $\al<\be$ then there is some $t$ such that $\be[t]\leq \al<\be[t+1]$, that is $\la^-\scbr{\psi_0(\mu\scbr{\Om})}[t]\leq \al < \la^-\scbr{\psi_0(\mu\scbr{\Om})}[t+1]$. Then, by assumption, $t<H_{\la^-\scbr{\psi_0(\mu\scbr{\Om})}\{k-1\}}(k)$.
    Then
    \begin{align*}
        \al = \la\scbr{\ga}
        &< \la^-\scbr{\psi_0(\mu\scbr{\Om}[z_{t+1}])}\\
        &\leq \la^-\scbr{\psi_0(\mu\scbr{\Om}[z_{H_{\la^-\scbr{\psi_0(\mu\scbr{\Om})}\{k-1\}}(k)}])}\\
        &= \la^-\scbr{\psi_0(\mu\scbr{\Om})}[H_{\la^-\scbr{\psi_0(\mu\scbr{\Om})}\{k-1\}}(k)]\\
        &=\be[H_{\la^-\scbr{\psi_0(\mu\scbr{\Om})}\{k-1\}}(k)]\\
        &= \be[H_{\be\{k-1\}}(k)].
    \end{align*}
    
    \item 
    Assume now that $\al=\la\scbr{\psi_i(\ga)\cdot s}$ and $\be=\la^-\scbr{\psi_i(\ga+1)}$.\\
    Then $\be[l]= \la^-\scbr{\psi_i\ga \cdot l}$ for every $l$ and $\be\{k\}= \be[H_{\be\{k-1\}}(k)]$.\\
    By assumption, $s<H_{\la^-\scbr{\psi_i(\ga+1)}\{k-1\}}(k)$. Then
    \begin{align*}
        \al =\la\scbr{\psi_i(\ga)\cdot s} &< \la^-\scbr{\psi_i(\ga) \cdot (s+1)} \\
        &\leq \la^-\scbr{\psi_i(\ga) \cdot H_{\la^-\scbr{\psi_i(\ga+1)}\{k-1\}}(k)}\\
        &= \be[H_{\be\{k-1\}}(k)].
    \end{align*}
\end{Cases}
The second assertion follows from the first assertion by induction on $\be$.  The claim is obvious when $\be=\al+1$. Otherwise, we have $\al<\be[H_{\be\{k-1\}}(k)]$ and the induction hypothesis gives $H_{\al+1}(k)\leq H_{ \be[H_{\be\{k-1\}}(k)]} = H_\be(k)$.\medskip\\

\end{proof}

\begin{lemma}\label{lm:lex}
Let $m=_{k}H_\al(k)+ q$  and  $n=_{k}H_\be(k) +v$ be in $k$ normal form.
Then $m<n$ if and only if the tuple $\langle \al , q \rangle $ is lexicographically smaller($<_{lex}$) than the tuple 
$\langle \be, v \rangle$.
\end{lemma}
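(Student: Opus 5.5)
Both directions follow from one implication: if $\langle \al,q\rangle <_{lex} \langle \be,v\rangle$ then $m<n$. Since $<_{lex}$ is a linear order on pairs, and distinct pairs give distinct numbers by the uniqueness clause of Lemma~\ref{lm:HardyNF}, the converse is then automatic. If $\langle\be,v\rangle<_{lex}\langle\al,q\rangle$ we would get $n<m$. If the pairs are equal then $m=n$. Neither is compatible with $m<n$.

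So assume $\langle \al,q\rangle <_{lex}\langle\be,v\rangle$ and split into two cases.

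\emph{Case $\al=\be$ and $q<v$.} Then $m=H_\al(k)+q<H_\al(k)+v=n$ immediately.

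\emph{Case $\al<\be$.} This is the main case. Lemma~\ref{lm:HardyNF}(3) gives $q<H_{\al+1}(k)$, and therefore
\[ m = H_\al(k)+q < H_\al(k)+H_{\al+1}(k). \]
This bound is not good enough on its own, so I would use maximality of $\al$ instead. By Lemma~\ref{lm:HardyNF}(2), $\al$ is maximal with $H_\al(k)\le m$. Since $\al+1>\al$, maximality forces $m<H_{\al+1}(k)$.

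It then suffices to show $H_{\al+1}(k)\le H_\be(k)$, because then $m<H_{\al+1}(k)\le H_\be(k)\le n$. This inequality is the ``Moreover'' clause of Lemma~\ref{lm:OrdMeasure2}, applied to $\al<\be<\Om$.

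The main obstacle is justifying the hypotheses of Lemma~\ref{lm:OrdMeasure2}, namely the coefficient bounds (1)--(3) on $\al$ in every relevant context. My plan is to obtain them from Lemma~\ref{lm:OrdMeasure}, which derives exactly these bounds from the maximality of $\al$ with $H_\al(k)\le m$. That lemma covers the situations produced by Lemma~\ref{lm:Context2}: $\la\scbr{t}$ versus $\la^-\scbr{\om}$, the $\psi_0(\mu\scbr{\Om})$ case, and the $\psi_i(\ga)\cdot s$ case.

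Concretely, I would apply Lemma~\ref{lm:Context2} to the pair $\al<\be$. The trivial outcomes are $\be=\al+1$, where the claim holds with equality, and $\al<\be[1]$. In every remaining outcome, Lemma~\ref{lm:OrdMeasure} supplies the needed bound, with $\de:=\be$. This yields $\al+1\le\be\{k\}$. Then Lemma~\ref{lm:Hprop}(2), together with induction on $\be$ as in the proof of Lemma~\ref{lm:OrdMeasure2}, gives $H_{\al+1}(k)\le H_{\be\{k\}}(k)=H_\be(k)$.

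One point needs care. In the outcome $\al<\be[1]$, I would recurse on $\be[1]$ rather than $\be$, so that the coefficient bounds are always checked against the current comparison ordinal. This is where the Bachmann property (Theorem~\ref{thm:bachmannProp}) is needed, to ensure the chain $\al<\dots<\be$ stays within $\le_1$-descents.
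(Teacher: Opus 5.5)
Your proposal is correct and follows the paper's proof. The forward direction goes via $m<H_{\alpha+1}(k)\le H_\beta(k)\le n$, together with the trivial case $\alpha=\beta$, $q<v$; the converse goes by trichotomy, which the paper phrases as a contradiction using the same inequality with $\alpha$ and $\beta$ swapped. You are more careful than the paper at two points: you derive $m<H_{\alpha+1}(k)$ from the maximality of $\alpha$ rather than from the weaker bound $q<H_{\alpha+1}(k)$, and you justify $H_{\alpha+1}(k)\le H_\beta(k)$ via the ``Moreover'' clause of Lemma~\ref{lm:OrdMeasure2} with its hypotheses supplied by Lemma~\ref{lm:OrdMeasure}, where the paper only says ``monotonicity in $\alpha$''.
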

\begin{proof} 
First we check that if $\langle\alpha,q\rangle <_{\lex} \langle\beta,v\rangle$, then $m<n$.
\\
If $\alpha<\beta$, then $m = H_\alpha(k)+q < H_{\alpha+1}(k)\le H_\beta(k)\le H_\beta(k)+v = n,$ so $m<n.$
\\
If $\alpha=\beta$ and $q<v$, then $m = H_\alpha(k)+q < H_\alpha(k)+v = n.$
\\
So $\langle\alpha,q\rangle <_{\lex} \langle\beta,v\rangle \implies m<n.$
\\
In the other direction, if $m<n$, then $\langle\alpha,q\rangle <_{\lex} \langle\beta,v\rangle$
Suppose $m<n$ but $\langle\alpha,q\rangle \not<_{\lex} \langle\beta,v\rangle.$
\\
If $\alpha>\beta$, then monotonicity in $\alpha$ gives $H_{\beta+1}(k) \le H_\alpha(k)$, so 
$n < H_{\beta+1}(k) \le H_\alpha(k) \le m,$
contradicting $m<n.$
Thus $\alpha = \beta$. If also $q\ge v$, then $m = H_\alpha(k)+q \ge H_\alpha(k)+v = n$, again contradicting $m<n.$\medskip

Hence the only remaining possibility is $\alpha<\beta$ or $\alpha=\beta, q<v,$ i.e. $(\langle\alpha,q\rangle <_{\mathrm{lex}} \langle\beta,v\rangle).$
So $m<n$ iff $\langle\alpha,q\rangle <_{\mathrm{lex}} \langle\beta,v\rangle.$

\end{proof}

\subsection{Base change operation}\label{subsec:BaseChange}
The following definition of the base change operation takes the base value $k$ to $k+1$.
We then define a Goodstein process based on this base change operator.\medskip

\begin{definition}Base change operation for natural numbers.
\begin{itemize}
    \item If $m<k$ then $m\kp=m$. 
    \item If $m=k$ then $m\kp=k+1$.
    \item If $m=_{k} H_\al(k)\cdot p + q$ then $m\kp:=H_{\al\kp} (k+1)\cdot p + q\kp$.\\
\end{itemize}
\end{definition}

Next, the base change operation $\kp$ is naturally extended to ordinals $\al \in \OT_0$ as follows: the operation maps ordinals $\al \in \OT_0$ to ordinals $\al \in T\cap \Om_1$. Later it will be shown that $\al\kp \in \OT_0$ by Lemma \ref{lm:MonBaseChange} on monotonicity of base change operation.

\begin{definition}Base change operation for ordinal $\al \in \OT_0$;
\begin{itemize}
    \item $0\kp =0$.
    \item If $\al =_{NF} \psi_i(\al_0)$  for $i \in \{0,1\}$ then $\al\kp = \psi_i(\al_0\kp)$.
    \item If $\al =_{NF} \al_0 +...+\al_n $ then $\al\kp=\al_0\kp +...+ \al_n\kp$.\\
\end{itemize}
\end{definition}
Next, define the base change operation for an ordinal context $\la\scbr{\cdot}$  to a context $\la\scbr{\cdot}\kp$. Again, if $\la$ is a context, then $\la'$ is not necessarily in $\OT_0$.

\begin{definition}Base change operation for context.
\begin{itemize}
    \item If $\la\scbr{\cdot}=\xi+ \scbr{\cdot} + \eta$ then 
    $\la\scbr{\cdot}\kp=\xi\kp+ \scbr{\cdot}+ \eta\kp$. 
    
    \item If $\la\scbr{\cdot}=\xi+ \psi_i(\scbr{\cdot}) + \eta$ then 
    $\la\scbr{\cdot}\kp=\xi\kp+ \psi_i(\scbr{\cdot})\kp+ \eta\kp$ for $i \in \{0,1\}$.
    
    \item If $\la\scbr{\cdot}=\xi+ \psi_i(\tau{\scbr{\cdot}})+\eta$ then \\
    $\la\scbr{\cdot}\kp=\xi\kp+ \psi_i(\tau{\scbr{\cdot}\kp})+\eta\kp$ for $i \in \{0,1\}$.
    
\end{itemize}
\end{definition}
\begin{remark}{\em We write for shorthand $m'$ or $\al'$ or $(\la\scbr{\cdot})'$ for  $m\kp$, $\al\kp$, $\la\scbr{\cdot}\kp$ respectively.} \\
\end{remark}

\begin{example}
Note that the base change operation does not affect the number of term occurrences. Let $k=3$ and $\al = \psi_0(3) \cdot 3$, then $\al' = \psi_0(4) \cdot 3$. 
Neither the occurrences of an $H_\al(k)$ term; if $m=H_{\al}(k) \cdot 3$ then $m'=H_{\al'}(k+1)\cdot 3$.
Another definition of the base change operation would be to consider also of the number of term occurrences.\medskip
\end{example}
We then define a new Goodstein process based on this base change operator.

\begin{definition}
Let $\ell<\om$ and $\ell$ is written in $k$ normal form for $k\geq2$.
Put $G_0(\ell):=\ell.$
Assume recursively that $G_k(\ell)$ is defined and $G_k(\ell)>0$.
Then, $G_{k+1}(\ell)=G_k(\ell)[k+2 \leftarrow k+3] -1 $. If $G_k(\ell)=0$, then $G_{k+1}(\ell):=0$.
\end{definition}
\smallskip

We will show that for every $\ell$, there is $k$ such that $G_k(\ell) = 0$.
To prove this, we need to establish two key properties: first, that the base change operation is monotone, and second, that it preserves normal forms. 
To assist us in proving these properties, the following Lemmas will be helpful.
\medskip

\begin{lemma} \label{lm:basechangeDiff}Let $m' :=m\kp$. Then 
$m \leq m'$ and $\al \leq \al'$.
\end{lemma}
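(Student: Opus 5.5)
We prove both inequalities simultaneously by induction, since the base change on numbers calls the base change on ordinals (through $H_{\al'}$), and the base change on ordinals calls the base change on numbers (natural-number coefficients and summands inside $\al$ are written in $k$ normal form and changed recursively). The induction is on $m$ for numbers and on the term length $N(\al)$ for ordinals. Every recursive call is to a strictly smaller number or a strictly shorter term, because the $\al$ in $m=_k H_\al(k)\cdot p+q$ satisfies $N(\al)\le H_\al(k)\le m$ by Lemma \ref{lm:Hprop}. So this is a legitimate joint induction.

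\textbf{Ordinal part.} For $\al\le\al'$ we argue by cases on the normal form.
\begin{itemize}
\item If $\al=0$, then $\al'=0$.
\item If $\al$ is a natural number $l$, the number part of the induction hypothesis gives $l\le l'$. We also use $k\le k+1$ for the coefficient $k$ itself.
\item If $\al=_{NF}\psi_i(\al_0)$, the induction hypothesis gives $\al_0\le\al_0'$. Strict monotonicity of $\psi_i$ in the term ordering (clause (6) of the definition of $<$ on $\T$) then yields $\psi_i\al_0\le\psi_i\al_0'$.
\item If $\al=_{NF}\al_0+\dots+\al_n$, each summand satisfies $\al_j\le\al_j'$. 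Compare the two sums lexicographically via clause (8). Either all summands are unchanged, or the first changed summand $\al_j<\al_j'$ decides the comparison, giving $\al\le\al'$.
\end{itemize}
The subtle point in the last case is that $\al_0'+\dots+\al_n'$ might not be literally a term in $\T$ if the ordering $\al_0'\ge\dots\ge\al_n'$ fails. In that case we read the sum as ordinal addition. Since $\le$ on principal terms is preserved componentwise, the lexicographic comparison still shows the sum is at least $\al$; ordinal addition with the first differing summand larger dominates regardless of tails. Order preservation of base change is not needed here, only $\xi\le\xi'$ componentwise.

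\textbf{Number part.}
\begin{itemize}
\item If $m<k$, then $m'=m$.
\item If $m=k$, then $m'=k+1>m$.
\item Otherwise $m=_k H_\al(k)\cdot p+q$. The induction hypothesis gives $q\le q'$ and $\al\le\al'$, so it suffices to show $H_\al(k)\le H_{\al'}(k+1)$. Chain
\[ H_\al(k)<H_\al(k+1)\le H_{\al'}(k+1), \]
where the first step is Lemma \ref{lm:Hprop}(\ref{case3:lm:Hprop}).
\end{itemize}

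\textbf{Main obstacle.} The second inequality in the chain is the hard step, because $\be\mapsto H_\be(k+1)$ is not monotone in $<$, only in $\le_x$. We plan to strengthen the ordinal claim to $\al\le_{1}\al'$ (or $\al\le_{k+1}\al'$) whenever $\al\le\al'$, and then apply Lemma \ref{lm:Hprop}(\ref{case2:lm:Hprop}).

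The first thing to try is the Bachmann-based fact used in Proposition \ref{prop:Majorize}: if $\be[n]\le\ga<\be$ then $\be[n]\le_1\ga$. From this, together with $\OT_0$ Lemma (2) ($\al<\be\Rightarrow\exists n\,(\al<_n\be)$), we aim to get $\al\le_{k+1}\al'$ directly. The key is to bound the relevant $n$ by $k+1$, which should be possible because all coefficients of $\al$ are below $k$ after normal forms; here Lemmas \ref{lm:Context2} and \ref{lm:OrdMeasure} control where $\al$ sits inside $\al'$. If this strengthening cannot be carried out cleanly, the fallback is to prove $H_\al(k+1)\le H_{\al'}(k+1)$ directly by induction on $\al$ along the fundamental sequences. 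For that fallback we would use Lemma \ref{lm:fundamOrd} to commute base change with the fundamental sequence of the relevant context, i.e.\ $(\al[t])'=\al'[t']$ up to $\le_{k+1}$.
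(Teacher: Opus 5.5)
Your ordinal part is fine and matches the paper: componentwise induction, plus monotonicity of $\psi_i$ and of addition. The number part, however, has a genuine gap. The second inequality of your chain, $H_\al(k+1)\le H_{\al'}(k+1)$, is never proved, and the strengthening you propose for it is false.

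You want to bound the index $n$ in $\al<_n\al'$ by $k+1$ ``because all coefficients of $\al$ are below $k$''. That premise does not hold. Coefficients $\ge k$ do occur. More importantly, multiplicities of repeated summands are untouched by base change and are only bounded by $H_{\de\{k-1\}}(k)$ (Lemma~\ref{lm:OrdMeasure}(3) and the remark after it).

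For a concrete failure, take $k=2$ and $\al=\psi_0(2)\cdot 4$. Its multiplicity respects the bound, since $4<H_{\psi_0(3)\{1\}}(2)=H_{\psi_0(2)\cdot 2}(2)$, and $\al'=\psi_0(3)\cdot 4$.
\begin{itemize}
\item Along any chain of fundamental-sequence steps from $\al'$, the leading summand $\psi_0(3)$ can only be lowered once it stands alone.
\item That lowering happens via $\psi_0(3)[x]=\psi_0(2)\cdot x$, so with $x\le 3$ you never obtain four copies of $\psi_0(2)$.
\item Hence $\al\not\le_{3}\al'$, and a fortiori $\al\not\le_1\al'$, so Lemma~\ref{lm:Hprop}(\ref{case2:lm:Hprop}) cannot be applied.
\end{itemize}
Your fallback, proving $H_\al(k+1)\le H_{\al'}(k+1)$ along fundamental sequences while commuting base change past $[\cdot]$, is only a sketch. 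Carried out, it would be a monotonicity result of the strength of Lemma~\ref{lm:MonBaseChange}, which the paper proves only with the present lemma already in hand.

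The missing idea is to use the maximality built into the $k$ normal form (Lemma~\ref{lm:HardyNF}), rather than comparing $H$ at two different ordinals.
\begin{itemize}
\item If $\al'=\al$, Lemma~\ref{lm:Hprop}(\ref{case3:lm:Hprop}) gives $H_\al(k)<H_\al(k+1)$, and together with $q\le q'$ you are done.
\item If $\al<\al'$, then because $\al$ is maximal with $H_\al(k)\le m$, you get $m<H_{\al'}(k)$. Monotonicity in the argument then yields $m<H_{\al'}(k)<H_{\al'}(k+1)\le m'$.
\end{itemize}
So the only monotonicity of $H$ you need is monotonicity in the argument, which is unproblematic. This route treats $\al'$ as an admissible competitor in the maximality condition, i.e.\ as an element of $\OT_0\cap\Om$. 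The paper takes that for granted at this point; if you follow this route, state it explicitly.
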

\begin{proof} Proof by simultaneous induction on $m$ and $\al$.\\
Proof of $m \leq m'$ by induction on $m$.
The assertion is clear if $m=0$.\\
If $m<k$, then $m=m'$. \\
If $m=k$, then $m < k+1 =m'$.\\
If $m=_k H_\al(k) +q$, then the induction hypothesis yields $q\leq q'$.\\
Assume for simplicity that $m=_k H_\al(k)$. By the second assertion, $\al\leq \al'$.
\medskip\\
If $\al=\al'$,
then $H_{\al}(k)< H_{\al'}(k) < H_{\al'}(k+1)$ by assertion \ref{case3:lm:Hprop} in Lemma \ref{lm:Hprop}.
If $\al <\al'$, then since $\al$ is maximal for $m$, we have that 
$m<H_{\al'}(k) < H_{\al'}(k+1) =m'.$
\\
\\
Proof of the second assertion by induction on $\al$.\\
If $\al=0$ it is clear.
\\
If $\al=\al_0+ \ldots +\al_n$, then by induction hypothesis $\al_i\leq\al_i'$ for $0 \leq i \leq n$, then $\al=\al_0 + \ldots +\al_n \leq \al_0'+ \ldots +\al_n' =\al'$.
\\
If $\al=\psi_i(\al_0)$  for $i \in \{0,1\}$ then $\al'=\psi_i(\al_0')$. By induction hypothesis $\al_0\leq \al_0'$ then $\psi_i(\al_0)\leq \psi_i(\al_0')=\al'$.\\
\end{proof}


Before proving the following claims, recall the definition of $m=H_\al(k) = H_{\al\{k,k\}}(k)$ which we shortly wrote $H_{\al\{k\}}(k)$.
Then $H_\xi(k))' = H_{\xi'}(k+1) = H_{\xi'\{k+1,k+1\}}(k+1)$, which we shortly write $H_{\xi'\{k+1\}}(k+1)$.
\\
\begin{claim} \label{claim1}
Let $m =_k H_\xi(k)$ and ${\xi}=\la^-\scbr{\om}$ and $\xi' = (\la^-)'\scbr{\om}$.
Then for $b<k+1$, $ (\xi\{b\})'={\xi}'\{b\}$.
\end{claim}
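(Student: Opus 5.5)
We argue by induction on $b$, unfolding the definition $\xi\{0\}=\xi[0]$ and $\xi\{b+1\}=\xi[H_{\xi\{b\}}(k)]$, with the bracketed index computed at base $k$ on the left-hand side and at base $k+1$ on the right (so ${\xi}'\{b+1\}={\xi}'[H_{\xi'\{b\}}(k+1)]$). The key tool is Lemma \ref{lm:fundamOrd}. Since $tp(\om)=\om$, it gives $\xi[t]=\la^-\scbr{t}$ for every natural number $t$. Applied to the primed context, it gives $\xi'[t]=(\la^-)'\scbr{t}$, at least once we know $\xi'\in\OT_0$; alternatively, the proof of that lemma is purely syntactic and transfers verbatim to $(\la^-)'$. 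Thus both sides of the claim have the form ``context with a natural number in the slot'', and the claim reduces to showing that the slot entries correspond under base change.

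\emph{Base case $b=0$.} We have $\xi\{0\}=\xi[0]=\la^-\scbr{0}$. Base change acts componentwise on contexts (Definition of base change for contexts), and $0'=0$, so $(\la^-\scbr{0})'=(\la^-)'\scbr{0}=\xi'[0]=\xi'\{0\}$.

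\emph{Induction step $b\to b+1$ with $b+1<k+1$.} Set $t=H_{\xi\{b\}}(k)$, so that $\xi\{b+1\}=\la^-\scbr{t}$. Then
\[
(\xi\{b+1\})'=(\la^-)'\scbr{t'},
\]
where $t'=t\kp$ is the base change of the natural number $t$. By the definition of base change on numbers, if $t=_k H_{\xi\{b\}}(k)$ is in $k$ normal form, then
\[
t'=H_{(\xi\{b\})'}(k+1)=H_{\xi'\{b\}}(k+1)
\]
by the induction hypothesis. Hence $(\xi\{b+1\})'=(\la^-)'\scbr{H_{\xi'\{b\}}(k+1)}=\xi'[H_{\xi'\{b\}}(k+1)]=\xi'\{b+1\}$.

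\emph{Main obstacle.} The delicate point is justifying that $H_{\xi\{b\}}(k)$ is genuinely in $k$ normal form, so that its base change is computed as $H_{(\xi\{b\})'}(k+1)$ rather than through some other decomposition. For $0<b<k$ this is exactly Lemma \ref{lm:NForms}(3), using the hypothesis $m=_kH_\xi(k)$. For $b=0$ it is not needed, because the base case is syntactic. The edge case $b=k$, which produces the index $\xi\{k\}$, only arises as the final step and again needs only that $\xi\{k-1\}$ is in normal form. A secondary issue is that $t$ may be small ($t\le k$), where base change is the identity or $k\mapsto k+1$. We handle this by checking directly that $H_{\xi\{b\}}(k)\ge k$ for $b\ge0$ (already $H_0(k)=k$), and that the number-level and ordinal-level definitions agree in the boundary case $t=k$, $\xi\{b\}=0$. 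We also need that the base change of a context commutes with filling in a natural number. This is immediate from the definitions by induction on $\la$ and is routine.
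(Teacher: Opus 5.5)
Your proposal is correct and is essentially the paper's proof: induction on $b$, with the base case $\xi\{0\}=\la^-\scbr{0}$ and the step obtained from Lemma \ref{lm:fundamOrd} together with the induction hypothesis applied to $t=H_{\xi\{b\}}(k)$. The paper does not even discuss why $t$ is in $k$ normal form, and your added justification has one slip: the step from $b=0$ to $b=1$ does need $H_{\xi\{0\}}(k)$ to be in $k$ normal form, which the syntactic base case does not provide, but the proof of Lemma \ref{lm:NForms}(3) never uses $b>0$ and so covers $b=0$ as well.
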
		
\begin{proof}
By induction on $b$. \\
If $b=0$, then ${\xi}'\{0\} = (\la^-)'\scbr{\om}\{0\}= (\la^-)'\scbr{0}= (\xi\{0\})'$.\\
Inductively for $b+1$, \\
${\xi'\{b+1\}} =\xi'[H_{\xi'\{b\}}(k+1)] =  (\la^-)'\scbr{\om[H_{\xi'\{b\}}(k+1)]} =
(\la^-)'\scbr{ H_{\xi'\{b\}}(k+1)}  =  (\la^-\scbr{ H_{\xi\{b\}}(k})' =(\xi\{b+1\})'$.
\end{proof} 

\smallskip

\begin{claim}\label{claim2}
Let $m =_k H_\xi(k)$ and $\xi = \la^-\scbr{\psi_0(\mu\scbr{\Om})}$ for $\mu$ a $\psi_0-$nesting free context such that $\xi' = (\la^-)'\scbr{\psi_0(\mu'\scbr{\Om})}$. Then for $b<k+1$ we have that ${\xi}'\{b\} = (\xi\{b\})'$.
\end{claim}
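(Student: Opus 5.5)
We argue by induction on $b$, following the template of Claim~\ref{claim1}. The only new ingredient is that the fundamental sequence of $\psi_0(\mu\scbr{\Om})$ is defined through the auxiliary sequence $z_x$. So alongside the main statement we prove that the base change commutes with the $z$-sequence: writing $z_x$ for the sequence attached to $\psi_0(\mu\scbr{\Om})$ and $z'_x$ for the one attached to $\psi_0(\mu'\scbr{\Om})$, we show $(z_x)' = z'_{x}$ for all $x$ that occur. The base-change operation on contexts leaves the placeholder untouched and commutes with $\psi_0$, $\psi_1$ and sums. Hence, once the placeholder is filled with a term whose base change is known, $(\la^-\scbr{\psi_0(\mu\scbr{\theta})})' = (\la^-)'\scbr{\psi_0(\mu'\scbr{\theta'})}$. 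Combined with Lemma~\ref{lm:fundamOrd}, which gives $\xi[t] = \la^-\scbr{\psi_0(\mu\scbr{\Om})[t]} = \la^-\scbr{\psi_0(\mu\scbr{\Om}[z_t])}$, the whole computation reduces to the inner argument.

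\emph{Base case $b=0$.} Here $\xi\{0\} = \xi[0] = \la^-\scbr{\psi_0(\mu\scbr{\Om}[0])} = \la^-\scbr{\psi_0(\mu\scbr{0})}$, since $\mu$ is $\psi_0$-nesting free and $\Om[0]=0$. Likewise $\xi'\{0\} = (\la^-)'\scbr{\psi_0(\mu'\scbr{0})}$, and the two sides agree because $0' = 0$.

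\emph{Inductive step $b \to b+1$, with $b+1 < k+1$.} We have
\[
\xi'\{b+1\} = \xi'[H_{\xi'\{b\}}(k+1)] = (\la^-)'\scbr{\psi_0(\mu'\scbr{\Om}[z'_{H_{\xi'\{b\}}(k+1)}])}.
\]
By the induction hypothesis $\xi'\{b\} = (\xi\{b\})'$, so $H_{\xi'\{b\}}(k+1) = H_{(\xi\{b\})'}(k+1) = (H_{\xi\{b\}}(k))'$. This uses that $H_{\xi\{b\}}(k)$ is in $k$ normal form (Lemma~\ref{lm:NForms}(3)) and the definition of base change on numbers. It then remains to show that
\[
\mu'\scbr{\Om}[z'_{N'}] = (\mu\scbr{\Om}[z_N])', \qquad N = H_{\xi\{b\}}(k).
\]
Since $\Om[x] = x$ and $\mu$ is $\psi_0$-nesting free, we have $\mu\scbr{\Om}[x] = \mu\scbr{x}$. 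So it suffices to prove $(z_N)' = z'_{N'}$ and then apply base change to $\mu\scbr{z_N}$ componentwise.

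\emph{Main obstacle.} The hard step is the identity $(z_N)' = z'_{N'}$. The index changes from $N$ to $N'$, and the $z$-sequence is iterated $N$ times in base $k$ but $N'$ times in base $k+1$. Literally, this only works if the iteration count is read off from the normal form. We would handle it by an inner induction on $x$ establishing $(z_{x+1})' = \psi_0(\mu'\scbr{(z_x)'}) = z'_{x+1}$, which follows from the clause $\psi_i(\al_0)' = \psi_i(\al_0')$. We would then check that the number of iterations is matched, as in Lemma~\ref{lm:OrdMeasure}(2), where $t < H_{\de\{k-1\}}(k)$. If a literal equality of indices fails, we would instead read $\xi\{b+1\}$ through its $k$ normal form (as the ordinal with $\xi[t] \le \cdot < \xi[t+1]$) and use Lemma~\ref{lm:Context2} together with the Bachmann property to identify both sides. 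This mismatch between iteration count and base is where we expect the real work to be.
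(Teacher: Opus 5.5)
\textbf{There is a genuine gap, and it cannot be closed.} Your reduction is the same as the paper's: induction on $b$, the same base case, and Lemma~\ref{lm:fundamOrd} to move the index inside $\la^-$. You also correctly isolated the one nontrivial step. But you leave that step open, and the identity it needs, $(z_N)'=z'_{N'}$, is false.

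Your inner induction on $x$ is sound, but it only gives $(z_x)'=z'_x$ \emph{at the same index}. Hence $(\xi[t])'=\xi'[t]$ for every $t$, which is exactly what Lemma~\ref{lm:BaseChangeContext2} records. Consequently
\[
(\xi\{b+1\})'=(\xi[N])'=\xi'[N],\qquad \xi'\{b+1\}=\xi'[N'],\qquad N=H_{\xi\{b\}}(k),\ N'=H_{\xi'\{b\}}(k+1).
\]
Take $b=0$, where the induction hypothesis does hold. Then $N'=H_{(\xi[0])'}(k+1)>H_{\xi[0]}(k)=N$ (Lemma~\ref{lm:basechangeDiff}). Since $x\mapsto\xi'[x]$ is strictly increasing, the claimed equality already fails at $b=1$.

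A concrete case: take $\xi=\psi_0(\Om)$ with trivial $\la$ and $\mu$. Then $\xi'=\xi$, $\xi[x]=z_{x+1}$ with $z_{x+1}=\psi_0(z_x)$, and $z_x'=z_x$. So $(\xi\{1\})'=z_{N+1}$ while $\xi'\{1\}=z_{N'+1}$, where $N=H_\om(k)<H_\om(k+1)=N'$ by Lemma~\ref{lm:Hprop}(3).

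The reason is that here $N$ enters only as an iteration count, and base change never touches iteration counts, just as it leaves the multiplicities $s$ alone. In Claim~\ref{claim1}, by contrast, $N$ sits in the term as a numeral and really is sent to $N'$. Your fallback through Lemma~\ref{lm:Context2} and the Bachmann property cannot rescue this, because it would have to identify two distinct ordinals.

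The paper's own proof has the same hole. Its middle step
\[
(\la^-)'\scbr{\psi_0(\mu'\scbr{\Om})[H_{(\xi\{b\})'}(k+1)]}=(\la^-\scbr{\psi_0(\mu\scbr{\Om})[H_{\xi\{b\}}(k)]})'
\]
is asserted without argument, and it is precisely the equality that fails.

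What your ingredients do prove is an inequality, by induction on $b$. Use the same-index commutation $(\xi[t])'=\xi'[t]$. Add the strict bound $H_{\xi\{b\}}(k)<H_{(\xi\{b\})'}(k+1)$ from Lemma~\ref{lm:basechangeDiff}, and Lemma~\ref{lm:Hprop}(2). Finally use the consequence of the Bachmann property that $\xi'[n]\le\be<\xi'$ implies $\xi'[n]\le_1\be$. Together these give $(\xi\{b\})'\le_1\xi'\{b\}$, and hence $H_{(\xi\{b\})'}(k+1)\le H_{\xi'\{b\}}(k+1)$.

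This upper bound is the direction actually used where Claim~\ref{claim2} is invoked, in Lemma~\ref{lm:MonBaseChange} and Lemma~\ref{lm:NFpreservBaseChange}. So the right repair is to weaken the statement to this inequality, not to look for a proof of the equality.
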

\begin{proof}			
By induction on $b$.\\
Recall the fundamental sequences for $\al = \psi_0(\al_0)$ where $tp(\al)=\om$ and $\al_0 :=\mu^-\scbr{\Om}$ for a $\psi_0-$nesting free context $\mu^-$.
$\psi_0(\al_0)[x] = \psi_0(\al_0[z_x])$ where $z_0=0$ and $z_{x+1}= \psi_0(\al_0[z_x])$.\\
If $b=0$, then ${\xi}'\{0\} = (\la^-)'\scbr{\psi_0(\mu'\scbr{\Om})}[0]=  (\la^-)'\scbr{\psi_0(\mu'\scbr{0})}= (\xi\{0\})'$.\\
Inductively for $b+1$, 
\begin{align*}
    {\xi'\{b+1\}} &=(\la^-)'\scbr{\psi_0(\mu'\scbr{\Om})}[H_{\xi'\{b\}}(k+1)] \\ 
    &=( \la^-)'\scbr{\psi_0(\mu'\scbr{\Om})[H_{(\xi\{b\})'}(k+1)] }\\
    &=( \la^-\scbr{\psi_0(\mu\scbr{\Om} )[{H_{\xi\{b\}}(k) }]})' \\
    &=(\xi\{b+1\})'.
\end{align*}
\end{proof}		

Since the base change operation moves ordinals from $\OT_0$ to the system $T\cap \Om$, retrieving an ordinal $\al$ from $\al'$ means that $\al$ may not necessarily in $\OT_0$. Hence in the following Lemma, the context $\la$ is not necessarily in normal form.
\smallskip

\begin{lemma} \label{lm:BaseChangeContext} Let $\al':=\al\kp$.
If $\al'=\tilde{\la} \scbr{\tilde{\ga}}$ then $\al=\la\scbr{\ga}$ for $\la \in T\cap \Om$, where $\la'=\tilde{\la} , \ga'= \tilde{\ga}, $ and $\al'=\la'\scbr{\ga'}$.
\end{lemma}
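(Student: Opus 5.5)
The plan is to prove the statement by induction on the structure of the context $\tilde{\la}$, or equivalently on the term length of $\al$. The underlying observation is that the base change $\al\mapsto\al'$ acts homomorphically on terms. By its definition it commutes with sums, $(\al_0+\ldots+\al_n)'=\al_0'+\ldots+\al_n'$, and with the collapsing functions, $\psi_i(\al_0)'=\psi_i(\al_0')$. It fixes $0$, and it alters only natural-number coefficients; through the $k$-normal forms these become other natural numbers. In particular the number of summands and the nesting pattern of $\psi_0,\psi_1$ are unchanged, as the example after the definition of base change illustrates. So a decomposition of $\al'$ through a context should be liftable position by position to a decomposition of $\al$.

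\textbf{Base case.} Here $\tilde{\la}\scbr{\cdot}=\tilde{\xi}+\scbr{\cdot}+\tilde{\eta}$. Write $\al=\al_0+\ldots+\al_n$ in normal form, so that $\al'=\al_0'+\ldots+\al_n'$. The placeholder in $\tilde{\la}$ occupies a fixed summand position $i$, with $\tilde{\ga}=\al_i'$. We set $\ga:=\al_i$, $\xi:=\al_0+\ldots+\al_{i-1}$ and $\eta:=\al_{i+1}+\ldots+\al_n$, and define $\la\scbr{\cdot}:=\xi+\scbr{\cdot}+\eta$. By the context clause of the base change definition, $\la'=\xi'+\scbr{\cdot}+\eta'=\tilde{\la}$, and $\ga'=\tilde{\ga}$ by construction.

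One subtlety arises when the placeholder stands for the trailing natural number $l$, as in context type (iii). Then $\tilde{\ga}=l'$, and we take $\ga:=l$. Here we must use that $l\mapsto l'$ is determined by $l$ via the base change on naturals, so the preimage in that slot is simply the original coefficient of $\al$. Moreover $l\le l'$ by Lemma~\ref{lm:basechangeDiff}.

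\textbf{Inductive cases.} Here $\tilde{\la}\scbr{\cdot}=\tilde{\xi}+\psi_i(\tilde{\tau}\scbr{\cdot})+\tilde{\eta}$. The summand of $\al$ at that position has the form $\psi_i(\be)$ with $\psi_i(\be)'=\psi_i(\be')$, and therefore $\be'=\tilde{\tau}\scbr{\tilde{\ga}}$. Applying the induction hypothesis to the shorter term $\be$ gives $\tau,\ga$ with $\be=\tau\scbr{\ga}$, $\tau'=\tilde{\tau}$ and $\ga'=\tilde{\ga}$. We then put $\la:=\xi+\psi_i(\tau\scbr{\cdot})+\eta$. The third clause of the context base change gives $\la'=\tilde{\la}$, and $\al'=\la'\scbr{\ga'}$ holds by construction.

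\textbf{Main obstacle.} The delicate point is identifying positions: the summand index, or the $\psi$-nesting position, seen in $\al'$ must correspond to the same position in $\al$. This requires that base change not merge or reorder summands. It is not automatic, since $\al_j<\al_{j+1}$ and $\al_j'\ge\al_{j+1}'$ type collisions, or equalities created by the base change, could in principle alter the decomposition. I would avoid the issue by arguing purely syntactically in $T$: $\al'$ is defined as the term $\al_0'+\ldots+\al_n'$ whether or not it lies in $\OT_0$, and contexts are formal expressions. The position is then read off the term tree of $\al'$, which is the same tree as that of $\al$ with relabelled leaves. This is exactly why the statement only asks for $\la\in T\cap\Om$ and not that $\la$ be in normal form. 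Reading positions syntactically in this way also settles the $\psi_0$-nesting free property: $\la$ is $\psi_0$-nesting free whenever $\tilde{\la}$ is, so the lemma can be applied to the contexts arising in Lemmas~\ref{lm:Context1} and~\ref{lm:Context2}.
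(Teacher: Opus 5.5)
Your proposal is correct and takes essentially the same route as the paper. Both argue by induction on the term ($N(\al)$ in the paper) and lift the context slot by slot, through summands and through $\psi_i$, using that base change commutes with sums and with $\psi_0,\psi_1$. You also add an explicit base case for the trivial or top-level slot, and you insist that positions be read syntactically in $\T$ rather than via Lemma~\ref{lm:MonBaseChange}. Both points are left implicit in the paper, and the second matters, since that lemma itself uses this one and relying on it here would be circular.
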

\begin{proof}
By induction on $N(\al)$.
\begin{Cases}
    \item Assume $\al' =\tilde{\la}\scbr{\tilde{\al_0}} =\psi_i(\tilde{\la_1}\scbr{\tilde{\ga}})$ where $\tilde{\la}\scbr{\cdot} = \psi_i(\tilde{\la_1}\scbr{\cdot})$ and $\tilde{\al_0}=\tilde{\la_1}\scbr{\tilde{\ga}}$ for $i \in \{0,1\}$.
    \\
    Since $N(\al_0)<N(\al)$, then by the induction hypothesis $\al_0=\la_1\scbr{\ga}$ where $\la_1'=\tilde{\la_1}$ and $\ga'=\tilde{\ga}$ and $\al_0'=\la_1'\scbr{\ga'}$.\\
    Define the context $\la\scbr{\cdot} = \psi_i(\la_1\scbr{\cdot})$,
    then $\al = \la\scbr{\ga} = \psi_i(\la_1\scbr{\ga}) = \psi_i(\al_0)$
    and $\la'\scbr{\cdot} = \psi_i(\la'\scbr{\cdot})= \psi_i(\tilde{\la_1}\scbr{\cdot}) = \tilde{\la_1}\scbr{\cdot}$ 
    and $\ga'=\tilde{\ga}$ by induction hypothesis,
    and $\al'= \psi_i(\la_1'\scbr{\ga'})$.
    
    \item 
    Assume  $\al' =\tilde{\la}\scbr{\tilde{\al_i}} =  \al_0' + \ldots + \tilde{\la_1}\scbr{\tilde{\ga}} + \ldots +\al_n' +l'$, 
    where $\tilde{\la}\scbr{\cdot} = \al_0' + \ldots + \tilde{\la_1}\scbr{\cdot} +\ldots +\al_n'+l'$
    and $\tilde{\al_i} = \tilde{\la_1}\scbr{\tilde{\ga}}$.\\
    Then by induction hypothesis, $\al_i' = \la_1'\scbr{\ga'}$ where $\tilde{\la_1}= \la_1'$ and $\tilde{\ga}=\ga'$ and $\al_i=\la_1\scbr{\ga}$.
    Define the context $\la\scbr{\cdot} =  \al_0 + \la_1\scbr{\cdot} + \al_n$, 	\\
    then $\al = \la\scbr{\ga} = \al_0 + \ldots+ \la_1\scbr{\ga}+\ldots+\al_n+l$  where $\la'=\tilde{\la}$ and $\ga' = \tilde{\ga}$
    and $\al' =  \al_0' + \ldots+ \la_1'\scbr{\ga'}+ \ldots+\al_n' +l'$. 
    
\end{Cases}
\end{proof}

\begin{lemma} \label{lm:BaseChangeContext2}
Define $\al' := \al\kp$.
Let $\al'=\la'\scbr{\ga'}$ and $\be' =(\la^-)'\scbr{\psi_0(\tilde{\mu}\scbr{\Om})}$ for a $\psi_0$-nesting free context $\tilde{\mu}$ and there is some $t$ such that $\be'[t]\leq \al'<\be'[t+1]$.
Then there is some $\psi_0-$nesting free context $\mu$ such that $\mu'=\tilde{\mu}$ and  $\be=\la^-\scbr{\psi_0(\mu\scbr{\Om})}$ and $\be'=(\la^-)'\scbr{\psi_0(\mu'\scbr{\Om})}$ and $(\be[t])' =\be'[t]$.
\end{lemma}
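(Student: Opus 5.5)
We will argue in two stages: first recover the shape of $\be$ from that of $\be'$, then verify the commutation $(\be[t])'=\be'[t]$.

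\textbf{Recovering $\be$.} The base change $\kp$ acts homomorphically on terms: it commutes with $+$ and with $\psi_0,\psi_1$, fixes $0$ and the symbol $\Om$, and alters only the natural-number coefficients. So we apply Lemma~\ref{lm:BaseChangeContext} to $\be'=(\la^-)'\scbr{\psi_0(\tilde{\mu}\scbr{\Om})}$. It gives a context $\la_1$ and an ordinal $\de$ with $\be=\la_1\scbr{\de}$, $\la_1'=(\la^-)'$ and $\de'=\psi_0(\tilde\mu\scbr{\Om})$.

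We then apply Lemma~\ref{lm:BaseChangeContext} again, now to $\de$. Since $\de'$ is a $\psi_0$-term, the homomorphic action forces $\de=\psi_0(\de_0)$ with $\de_0'=\tilde\mu\scbr{\Om}$. A further application to $\de_0$ yields $\mu$ with $\mu'=\tilde\mu$ and $\de_0=\mu\scbr{\Om}$.

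The context $\mu$ is $\psi_0$-nesting free because $\tilde\mu$ is: the base change does not create or delete $\psi_0$ symbols, so the position of the placeholder relative to the $\psi_0$'s is the same in $\mu$ and $\mu'$.

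To identify $\la_1$ with $\la^-$, we use that both $\al=\la\scbr{\ga}$ and $\be$ are taken through the same structural prefix. Since $\la_1'=(\la^-)'$ and the prefix is determined by the term structure outside the coefficients, we get $\la_1=\la^-$ at the level of term shape. Hence $\be=\la^-\scbr{\psi_0(\mu\scbr{\Om})}$ and $\be'=(\la^-)'\scbr{\psi_0(\mu'\scbr{\Om})}$.

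\textbf{Commutation.} This follows the pattern of Claim~\ref{claim2}. By Lemma~\ref{lm:fundamOrd} together with the definition of fundamental sequences for $tp=\Om$ arguments, we have $\be[t]=\la^-\scbr{\psi_0(\mu\scbr{\Om}[z_t])}$, with $z_0=0$ and $z_{x+1}=\psi_0(\mu\scbr{\Om}[z_x])$. Similarly $\be'[t]=(\la^-)'\scbr{\psi_0(\mu'\scbr{\Om}[z'_t])}$, with $z'_0=0$ and $z'_{x+1}=\psi_0(\mu'\scbr{\Om}[z'_x])$.

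We prove $z_x'=(z_x)'$ by induction on $x$. Two facts are needed:
\begin{itemize}
\item $\mu\scbr{\Om}[z]=\mu\scbr{z}$, which holds because $\mu$ is $\psi_0$-nesting free, so the placeholder is reached only through sums and $\psi_1$'s with $tp=\Om$;
\item $(\mu\scbr{z})'=\mu'\scbr{z'}$, which is the homomorphic behaviour of $\kp$ on contexts.
\end{itemize}
With these, $(z_{x+1})'=\psi_0((\mu\scbr{z_x})')=\psi_0(\mu'\scbr{z_x'})=z'_{x+1}$. Substituting into the formulas above gives $(\be[t])'=\be'[t]$.

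\textbf{Main obstacle.} The main risk is the first stage, namely justifying $\la_1=\la^-$ and $\mu'=\tilde\mu$ up to genuine equality rather than mere agreement after base change. The map $\kp$ need not be injective on $\T$: a coefficient $k$ and the number $k+1$ interact, and Lemma~\ref{lm:BaseChangeContext} allows non-normal-form preimages. I would handle this by working with the specific preimage $\be$ given in advance, reading $\mu$ directly off $\be$'s term tree at the positions matching $\tilde\mu$. This way injectivity is never needed: $\mu$ is simply defined as the corresponding subcontext of $\be$, and $\mu'=\tilde\mu$ then follows by structural induction.

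A secondary point is checking that $t$ lies below the type ($tp=\om$) on both sides, which is immediate here since $t$ is a natural number.
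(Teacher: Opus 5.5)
\paragraph{The gap: your argument presupposes a preimage of $\be'$.} Your first stage applies Lemma~\ref{lm:BaseChangeContext} to $\be'$. That only makes sense if an ordinal $\be\in\OT_0$ with $\be\kp=\be'$ is already available. As a result, the hypothesis $\be'[t]\le\al'<\be'[t+1]$ does no work in your argument.

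This is a genuine gap, because the lemma exists to supply such a preimage. In the proof of Lemma~\ref{lm:NFpreservBaseChange}, the ordinal playing the role of $\be'$ comes from applying Lemma~\ref{lm:Context2} to $\al'<\xi$, where $\xi$ is an arbitrary ordinal above $\al'$. Nothing guarantees that $\xi$ is a base change. Your route covers only the use in Lemma~\ref{lm:MonBaseChange}, where $\be$ is given.

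The missing idea is to transport the ``is an image'' property from $\ga'$ to $\tilde\mu$ through the sandwich. The paper does this as follows.
\begin{itemize}
\item Lemma~\ref{lm:fundamOrd} and Lemma~\ref{lm:SandwichContext} reduce the hypothesis to $\psi_0(\tilde\mu\scbr{\Om})[t]\le\ga'<\psi_0(\tilde\mu\scbr{\Om})[t+1]$. Both bounds have the form $\psi_0(\tilde\mu\scbr{z})$.
\item An induction on $\ga'$, with a subsidiary induction on $\tilde\mu$, shows that $\ga'$ carries the summands and $\psi_1$-arguments of $\tilde\mu$ at the corresponding positions.
\item Since $\ga'$ is the base change of $\ga$, this gives $\tilde\mu=\mu'$.
\end{itemize}
The prefix $(\la^-)'$ is an image by hypothesis, so the paper can then simply \emph{define} $\be:=\la^-\scbr{\psi_0(\mu\scbr{\Om})}$.

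\paragraph{The prefix identification and the commutation step.} Even if $\be$ is given, the identification $\la_1=\la^-$ is not established. From $\la_1'=(\la^-)'$ you only get agreement of term shape. The prefixes also contain natural-number coefficients and ordinal subterms, and concluding that these coincide requires injectivity of the base change on $\OT_0$ and on $k$-normal forms. So injectivity is needed here, contrary to your remark. Reading $\mu$ off $\be$'s term tree settles $\mu$, but not $\la_1$.

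That injectivity comes from Lemma~\ref{lm:MonBaseChange}, whose proof invokes the present lemma. You would therefore have to organise a simultaneous induction. The paper's route avoids the issue because $\be$ is constructed rather than compared.

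Your second stage is fine: proving $(z_x)'=z'_x$ by induction, as in Claim~\ref{claim2}, makes explicit the commutation $(\be[t])'=\be'[t]$ that the paper leaves tacit. It needs $\mu=\mu^-$, which the paper also assumes tacitly. Without it, $\mu\scbr{\Om}[z]=\mu\scbr{z}$ fails, for example for $\mu\scbr{\cdot}=\scbr{\cdot}+1$.
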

\begin{proof}
Assume that $\al'=\la'\scbr{\ga'}$ and $\be' =(\la^-)'\scbr{\psi_0(\tilde{\mu}\scbr{\Om})}$ and there is some $t$ such that $\be'[t] \leq \al' <\be'[t+1]$. Then replacing for $\al'$ and $\be'$ we have

\[(\la^-)'\scbr{\psi_0(\tilde{\mu}\scbr{\Om})}[t] \leq \la'\scbr{\ga'} < (\la^-)'\scbr{\psi_0(\tilde{\mu}\scbr{\Om})}[t+1].\]
By Lemma \ref{lm:fundamOrd},
\[(\la^-)'\scbr{\psi_0(\tilde{\mu}\scbr{\Om})[t]} \leq \la'\scbr{\ga'} < (\la^-)'\scbr{\psi_0(\tilde{\mu}\scbr{\Om})[t+1]}.\]
By Lemma \ref{lm:SandwichContext},
\[(\la^-)'\scbr{\psi_0(\tilde{\mu}\scbr{\Om})[t]} \leq (\la^-)'\scbr{\ga'} < (\la^-)'\scbr{\psi_0(\tilde{\mu}\scbr{\Om})[t+1]},\]
\[ \psi_0(\tilde{\mu}\scbr{\Om})[t]  \leq \ga' < \psi_0(\tilde{\mu}\scbr{\Om})[t+1].\] \medskip
We will show the last inequality in the following Claim:
\begin{claim}
    If	$\psi_0(\tilde{\mu}\scbr{\Om})[t] \leq \ga' < \psi_0(\tilde{\mu}\scbr{\Om})[t+1]$, then $\tilde{\mu} =\mu'$ for some $\psi_0-$nesting free context $\mu$.
\end{claim}
Proof by induction on $\ga'$ with subsidiary induction on $\tilde{\mu}$.
\begin{Cases}
    \item   If $\ga'$ is a sum, $\ga' = \ga_0' +\ldots +\ga_n'$, then the assertion follows by induction hypothesis. 
    
    \item Let $\ga' = \psi_0(\ga_0')$. Since 
    $$\psi_0(\tilde{\mu}\scbr{\Om})[t] \leq \psi_0(\ga_0') < \psi_0(\tilde{\mu}\scbr{\Om})[t+1],$$
    then 
    $$\tilde{\mu}\scbr{\psi_0(\tilde{\mu}\scbr{\Om})[t-1]} \leq \ga_0' < \tilde{\mu}\scbr{\psi_0(\tilde{\mu}\scbr{\Om})[t]}.$$
    \begin{Cases}
      \item  If $\tilde{\mu}$ is a sum, $\tilde{\mu} = \tilde{\mu}_0+\ldots +\tilde{\mu}_n$. Then $\ga_0'= \tilde{\mu}_0+\ldots + \de_0'$ and 
         $$\tilde{\mu}_0+\ldots + \tilde{\mu}_n\scbr{\psi_0(\tilde{\mu}\scbr{\Om})[t-1]} \leq \tilde{\mu}_0+ \ldots + \de_0' < \ \tilde{\mu}_0+\ldots+ \tilde{\mu}_n\scbr{\psi_0(\tilde{\mu}\scbr{\Om})[t]}.$$
    By induction hypothesis, $\tilde{\mu}_i=\mu_i'$ for $0=i\leq n$.\smallskip
    
    \item
    If $\tilde{\mu} = \psi_1(\tilde{\mu}_0)$ then
    $$\psi_1(\tilde{\mu}_0 \scbr{\psi_0(\tilde{\mu}\scbr{\Om})[t-1]}) \leq  \ga_0' < \psi_1(\tilde{\mu}_0 \scbr{\psi_0(\tilde{\mu}\scbr{\Om})[t]}).$$
    Then $\ga_0' = \psi_1(\de_1') + \eta'$ and by applying the induction hypothesis to $\de_1$, $\tilde{\mu}_0=\mu_0'$.
    \end{Cases}
\end{Cases}
This ends the proof of the Claim and of the Lemma.
\end{proof}
\smallskip
The next step in showing that the Goodstein process for our normal forms terminates is to show that normal forms are preserved after base-change, which we will use later to show that the ordinal assignment gives a decreasing sequence of ordinals.\smallskip

\begin{lemma} \label{lm:MonBaseChange} {Monotonicity on system $\OT_0$.}
Let $m':=m\kp$ and $\al':=\al\kp$.
\begin{enumerate}
    \item If $m<n$ then $m'<n'$. 
    \item If $\al<\be$ then $\al'<\be'$.
\end{enumerate}
\end{lemma}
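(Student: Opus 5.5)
We will prove both assertions simultaneously, by induction on $\max(m,n)$ for (1) and on $N(\al)+N(\be)$ for (2). The two parts feed into each other. The ordinal part uses the numerical part for the finite coefficients and for the numbers hidden in fundamental-sequence arguments. The numerical part uses the ordinal part through the $k$ normal form $m=_k H_\al(k)\cdot p+q$.

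\textbf{Numerical part.} Write $m=_kH_\al(k)\cdot p+q$ and $n=_kH_\be(k)\cdot r+v$. The cases $m\le k$ or $n\le k$ are handled directly from the definition of base change together with Lemma~\ref{lm:basechangeDiff}. Otherwise, Lemma~\ref{lm:lex} turns $m<n$ into a lexicographic comparison of $\langle\al,p,q\rangle$ with $\langle\be,r,v\rangle$.

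If $\al=\be$ and $p<r$, or $p=r$ and $q<v$, the claim follows from the induction hypothesis on $q,v$. We also use $q'<H_{\al'}(k+1)$, which follows from $q<H_\al(k)$ by the same induction.

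The interesting case is $\al<\be$. Here we need
\[m'=H_{\al'}(k+1)\cdot p+q'<H_{\be'}(k+1)\le n'.\]
The ordinal induction hypothesis gives $\al'<\be'$, but $H$ is not monotone in its index, so this alone is not enough. We instead want $\al'+1\le\be'\{k+1\}$, or $\al'+1=\be'$; then the last assertion of Lemma~\ref{lm:OrdMeasure2}, applied at base $k+1$, yields $H_{\al'+1}(k+1)\le H_{\be'}(k+1)$. Since $p<k+1$ and $q'<H_{\al'}(k+1)$, this suffices.

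To get that inequality we apply Lemma~\ref{lm:Context2} to $\al<\be$ and go through its cases.
\begin{itemize}
\item Case $\al+1=\be$: this is preserved, since base change commutes with $+1$.
\item Case $\al<\be[1]$: by the ordinal induction hypothesis, $\al'<(\be[1])'=\be'[1]$.
\item Contextual cases $\al=\la\scbr{t}$, $\be=\la^-\scbr{\om}$ (and the analogous ones with $\psi_i(\ga)\cdot s$): Lemma~\ref{lm:OrdMeasure} gives $t<H_{\be\{k-1\}}(k)$. By Claims~\ref{claim1} and~\ref{claim2}, $(\be\{k-1\})'=\be'\{k-1\}$, and by the numerical induction hypothesis,
\[t'<H_{\be'\{k-1\}}(k+1)\le H_{\be'\{k\}}(k+1).\]
Hence $\al'=\la'\scbr{t'}<\be'[H_{\be'\{k\}}(k+1)]=\be'\{k+1\}$, using Lemma~\ref{lm:fundamOrd} to compute fundamental sequences inside contexts.
\end{itemize}

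\textbf{Ordinal part.} We induct on term structure, comparing normal forms.
\begin{itemize}
\item Sums: use the lexicographic comparison of the summands together with the induction hypothesis. Base change preserves the decreasing order of summands, which is itself an instance of the induction hypothesis.
\item Terms $\psi_i\al_0<\psi_i\be_0$ with $i$ equal: we have $\al_0<\be_0$, hence $\al_0'<\be_0'$.
\item Different $i$: the order is unchanged, since the outer symbol is preserved.
\item Natural numbers inside $\al$ (as the $l$ and the coefficients): these are handled by the numerical part.
\end{itemize}
We must also check $\al'\in\OT_0$, i.e.\ $G_i(\al_0')<\al_0'$. The membership $\be\in G_i(\al_0)$ is decided by comparisons of subterms, which the induction hypothesis preserves. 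Lemma~\ref{lm:BaseChangeContext} lets us pull contexts of $\al'$ back to contexts of $\al$.

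\textbf{Main obstacle.} The hardest step is the case $\be=\la^-\scbr{\psi_0(\mu\scbr{\Om})}$ with $\be[t]\le\al<\be[t+1]$. Here the index $t$ counts iterations $z_t$ of an uncountable-type fundamental sequence, not a visible coefficient. We must show that base change keeps $\al'$ in the window between $\be'[t']$ and $\be'[t'+1]$ with $t'$ small enough. We plan to use Lemma~\ref{lm:BaseChangeContext2} to obtain $(\be[t])'=\be'[t]$, then Lemma~\ref{lm:SandwichContext}, and then the bound on $t$ from Lemma~\ref{lm:OrdMeasure}(2) transported via Claim~\ref{claim2}. We expect the bookkeeping there, namely that $(z_t)'=z_t'$ and that the iterates stay in $\OT_0$, to be the delicate point.
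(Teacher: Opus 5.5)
Your proposal follows the same route as the paper. Both use a simultaneous induction in which the only nontrivial numerical case, $\al<\be$, is reduced to $\al'<\be'\{k+1\}$ through the case split of Lemma~\ref{lm:Context2}, using Lemma~\ref{lm:OrdMeasure}, Claims~\ref{claim1}--\ref{claim2}, Lemmas~\ref{lm:BaseChangeContext}--\ref{lm:BaseChangeContext2} and Lemma~\ref{lm:OrdMeasure2} at base $k+1$; the ordinal part is then a structural induction on normal forms. One correction: base change does not commute with $+1$, and $(\be[1])'=\be'[1]$ can fail as an equality (for $k=2$, $3'=4$ but $4'=9$); what your cases $\al+1=\be$ and $\al<\be[1]$ actually need is $\al'+1\le\be'$ and $(\be[1])'\le\be'[1]$, and both follow from the numerical induction hypothesis applied to the finite tails.
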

\begin{proof}
By simultaneous induction on $n$ and $\be$.\medskip\\
The first assertion by induction on $n$.
Let $m=_k H_\al(k)\cdot p +q$ and $n=_k H_\be(k) \cdot v + u$.\\
If $H_\al(k)=H_\be(k)$ and $p<v$ then $m'<n'$.\\
If $H_\al(k)=H_\be(k)$ and $p=v$, then $q<u$ by induction hypothesis $q'<u'$ and the assertion follows.
We now assume that $m=_k H_\al(k)< H_\be(k)=_k n$. We have that $\al<\be$, since if $\be<\al$, then $H_\be(k)$ is not in normal form by a similar argument as in Lemma \ref{lm:basechangeDiff}.
\medskip
\\
We want to show that $m'<H_{\be'}(k+1)$. First, we need the following claim:\medskip
\begin{claim} 	 $\al' < \be'\{k+1\}$.
\end{claim}
\begin{proof}
By Lemma \ref{lm:Context2} there are several cases to consider.
We will consider the non-obvious ones.
\begin{Cases}
    \item First, for all contexts $\la$, ordinals $\ga$ and natural numbers $t$, 
    if $\al=\la\scbr{t}< \la^-\scbr{\om}=\be $ and $\al<\be[t+1]$, by Lemma \ref{lm:OrdMeasure}, $t<H_{\la^-\scbr{\om}\{k-1\}}(k)$. \smallskip\\
    To apply item \ref{case1:lm:OrdMeasure2} in Lemma \ref{lm:OrdMeasure2}, we must show that also  $t'< H_{(\la^-\scbr{\om})'\{k\}}(k+1) $. \\
    Assume now  that $\al'=\tilde{\la}\scbr{\tilde{t}}$, then by  Lemma \ref{lm:BaseChangeContext} there exists some $\la$ and $t$ such that  $\al= \la\scbr{t}$ and $\al' = \la'\scbr{t'}$.
    \medskip\\
    By the induction hypothesis on $t<H_{\la^-\scbr{\om}\{k-1\}}(k)$, we have that 
    \medskip\\
    $t'<(H_{\la^-\scbr{\om}\{k-1\}}(k))' = H_{(\la^-\scbr{\om}\{k-1\})'}(k+1)$.\medskip \\
    Using Claim \ref{claim1}, 
    $t' < H_{(\la^-\scbr{\om}\{k-1\})'}(k+1) \leq H_{(\la^-)'\scbr{\om}\{k-1\}}(k+1)$.
    \medskip\\
    Then $\al' < \be'\{k+1\}= \be'[H_{\be'\{k\}}(k+1)]$ by Lemma \ref{lm:OrdMeasure2}.
    \\
    
    \item  
    Secondly, for all contexts $\la$, ordinals $\ga$ and natural numbers $t$, 
    if $\al=\la\scbr{\ga}< \la^-\scbr{\psi_0(\mu\scbr{\Om})}=\be $ for $\mu$ a $\psi_0-$nesting free context and there is some $t$ such that $\be[t]\leq \al <\be[t+1]$ then by Lemma \ref{lm:OrdMeasure}, $t<H_{\la^-\scbr{\psi_0(\mu\scbr{\Om})}\{k-1\}}(k)$. To apply item \ref{case2:lm:OrdMeasure2} in Lemma \ref{lm:OrdMeasure2}, we must show that $t'<H_{(\la^-)'\scbr{\psi_0(\mu'\scbr{\Om})}\{k\}}(k+1)$.
    \medskip\\
    Assume now  that $\al'=\tilde{\la}\scbr{\tilde{\ga}}$ and $\be'=\tilde{\la}^-\scbr{\psi_0(\tilde{\mu}\scbr{\Om})}$ and there is some $t$ such that $\be'[t] \leq\al\leq\be'[t+1]$.
    Then by Lemma \ref{lm:BaseChangeContext}, $\al' = \la'\scbr{\ga'}$ where $\tilde{\la}=\la'$ and $\tilde{\ga}=\ga'$. 
    \smallskip\\
    Moreover, by Lemma \ref{lm:BaseChangeContext2}, $\be' = (\la^-)'\scbr{\psi_0(\mu'\scbr{\Om})}$ for $\tilde{\la^-} = (\la^-)'$ and $\tilde{\mu}=\mu'$.
    \smallskip\\
    Also, $\be'[t]\leq \al' <\be'[t+1]$ implies $\be[t] \leq \al<\be[t+1]$ by the Bachmann property.\smallskip
    \\
    By induction hypothesis on  $t<H_{\la^-\scbr{\psi_0(\mu\scbr{\Om})}\{k-1\}}(k)$, we have that
    \medskip\\
    $t' < (H_{\la^-\scbr{\psi_0(\mu\scbr{\Om})}\{k-1\}}(k))'< H_{(\la^-)'\scbr{\psi_0(\mu'\scbr{\Om})}\{k\}}(k+1)$, where the last assertion follows from Claim \ref{claim2}.
    \smallskip
\\
    Then $\al' < \be'\{k+1\}= \be'[H_{\be'\{k\}}(k+1)]$ by Lemma \ref{lm:OrdMeasure2}.
    \\

    \item
    Lastly, for all contexts $\la$, ordinals $\ga$ and natural numbers $s$, 
    if $\al=\la\scbr{\psi_i(\ga) \cdot s}< \la^-\scbr{\psi_i(\ga+1)}=\be $  for $i \in\{0,1\}$ then by Lemma \ref{lm:OrdMeasure}, $s<H_{\la^-\scbr{\psi_i(\ga+1)}\{k-1\}}(k)$.
    \medskip\\
    Assume now that $\al'=\tilde{\la}\scbr{\psi_i(\tilde{\ga}) \cdot s }$ and $\be'=\tilde{\la}^-\scbr{\psi_i(\tilde{\ga}+1)}$. Then by Lemma \ref{lm:BaseChangeContext} there exists some $\la$ and $\ga$ such that  $\al= \la\scbr{\psi_i(\ga) \cdot s }$ and $\be=\la^-\scbr{\psi_i(\ga+1)}$
    and $\al' = \la'\scbr{\psi_i(\ga') \cdot s}$ and  $\be'=(\la^-)'\scbr{\psi_i(\ga'+1)}$.
    The value $s$ is the number of occurrences of $\psi_i(\ga)$, and it does not change under base change operation.         
    \medskip\\
    Since $s<H_{\la^-\scbr{\psi_i(\ga+1)}\{k-1\}}(k)$ and $H_{\la^-\scbr{\psi_i(\ga+1)}\{k-1\}}(k)$ is in $k$ normal form, then from Lemma \ref{lm:basechangeDiff}, $s<H_{\la^-\scbr{\psi_i(\ga+1)}\{k-1\}}(k) <H_{(\la^-)'\scbr{\psi_i(\ga'+1)}'\{k\}}(k+1)$.\smallskip
    \\
    By Lemma \ref{lm:OrdMeasure2}, $\al' < \be'\{k+1\}= \be'[H_{\be'\{k\}}(k+1)]$.
\end{Cases}
\end{proof}
Finally we can conclude that,\\
$m'<  H_{\al'+1}(k+1) \leq H_{\be'[H_{\be'\{k\}}(k+1)]}(k+1) = H_{\be'\{k+1\}}(k+1) = H_{\be'}(k+1) = n' $.
\\
\\
\\
Proof of the second assertion by induction on $\be \in \OT_0$.
\begin{Cases}
    \item  $\al =_{NF} \al_0 +\ldots +\al_n +l$  and $\be =_{NF} \be_0 + \ldots + \be_m + s$.\\
    Then either $\al_0<\be_0$ and by induction hypothesis $\al_0'<\be_0'$,\\
    or there is some $0 \leq i \leq min(m,n)$ such that $\al_i=\be_i$  and $\al_{i+1}< \be_{i+1}$ and by induction hypothesis $\al_i'=\be_i'$ and $\al_{i+1}' < \be_{i+1}'$, \\
    or $\al_i=\be_i$ for all $i$ and $l<s$; by the previous assertion $l'<s'$,\\
    then $\al' =\al_0'+\ldots + \al_n'+l' < \be_0'+\ldots+\be_m'+s' =\be'$. 
    
    \item $\al =_{NF} \psi_0(\al_0) $ and $\be =_{NF} \psi_0(\be_0)$ and $G_0(\al_0)<\al_0$ and $G_0(\be_0)<\be_0$, 
    then $\al_0 < \be_0$  then by IH, $\al_0'< \be_0'$ and
    $G_0(\al_0') <\al_0'$
    thus $\al'$ is in normal form and $\al'<\psi_0(\al_0') <\psi_0(\be_0' )<\be'$.
    
    \item $\al =_{NF} \psi_1(\al_0) $ and $\be =_{NF} \psi_1(\be_0)$, 
    then $\al' = \psi_1(\al_0')$ and $\be' = \psi_1(\be_0')$.
    Since $\al<\be$ then $\al_0<\be_0$ and by induction hypothesis $\al_0'<\be_0'$ and $\al_0'$ is in normal form, and $\al' = \psi_1(\al_0') < \psi_1(\be_0')= \be_0'$.
    
    \item $\al =_{NF} \psi_i(\al_0)$ and $\be =\psi_{i+1}{\be_0}$ for $i \in \{0,1\}$ then 
    $\al' = \psi_i(\al_0') < \psi_{i+1}(\be_0')= \be'$.
\end{Cases}
\end{proof} 

\begin{lemma} Let $\al' := \al\kp$.\\
If $G_0\al <\be$ then $G_0\al'<\be'$.
\end{lemma}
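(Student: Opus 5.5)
We will prove the statement by induction on the term length $N(\al)$ (equivalently, on the build-up of $\al$ in $\OT_0$), following the recursive clauses defining $G_0$. The idea is that the base change $\kp$ acts homomorphically on the term structure. Hence it commutes with the $G_0$-operator: we aim to show the auxiliary identity $G_0(\al') = \{\xi' : \xi \in G_0\al\}$. Once this is available, the lemma reduces to monotonicity. Indeed, if every $\xi \in G_0\al$ satisfies $\xi<\be$, then Lemma~\ref{lm:MonBaseChange}(2) gives $\xi'<\be'$ for each such $\xi$, and hence $G_0\al'<\be'$.

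For the auxiliary identity we go through the cases of the definition of $\al\kp$.

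\emph{Trivial cases.} If $\al=0$, then $\al'=0$ and both sides are empty.

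\emph{Sums.} If $\al=_{NF}\al_0+\ldots+\al_n$, then $\al'=\al_0'+\ldots+\al_n'$. By Lemma~\ref{lm:MonBaseChange}(2) the summands remain non-increasing, so this is again a sum decomposition of the same shape. Then $G_0\al'=\bigcup G_0\al_i'$, and the induction hypothesis applies to each summand.

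\emph{The case $\psi_1$.} If $\al=\psi_1\al_0$, then $\al'=\psi_1\al_0'$, and $G_0\al'=G_0\al_0'$. The claim follows from the induction hypothesis.

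\emph{The case $\psi_0$.} If $\al=\psi_0\al_0$, then $\al'=\psi_0\al_0'$ and
\[ G_0\al'=\{\al_0'\}\cup G_0\al_0' . \]
This is the image of $\{\al_0\}\cup G_0\al_0$ by the induction hypothesis.

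\emph{The case $\psi_2$.} For terms $\psi_2\be_0=\om^{\Om_2+\be_0}$ (if these are to be handled), the base change is understood to act as $\psi_2(\be_0')$, and the same argument as for $\psi_1$ applies. Natural-number constants $l$ have $G_0l=\emptyset$ both before and after base change, and so do the $\Om_i$.

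The main obstacle is the normal-form issue hidden in the sum and $\psi$ cases. We need $\al'$ to be read with the same syntactic decomposition as $\al$, so that the clauses of $G_0$ apply in the same way. For sums, the decreasing order of summands is preserved by monotonicity, Lemma~\ref{lm:MonBaseChange}(2). For $\psi_0\al_0$ there is a circularity: to know that $\psi_0\al_0'$ is a legitimate term whose $G_0$ is computed by the $\psi_0$-clause, we would like $G_0\al_0'<\al_0'$. This is precisely the present lemma applied to the smaller term $\al_0$ with $\be=\al_0$. So we will run the proof as a simultaneous induction on $N(\al)$ together with the auxiliary identity.

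Note, however, that $G_0$ is defined on all of $\T$ purely syntactically. So the identity $G_0(\al')=(G_0\al)'$ actually holds for arbitrary terms without any normal-form assumption. The only place where $\OT_0$ matters is the final appeal to Lemma~\ref{lm:MonBaseChange}(2). There we need the elements of $G_0\al$ and $\be$ to lie in $\OT_0$; for $G_0\al$ this holds by the lemma stating $G_0\be\subseteq\OT$, and $\be\in\OT_0$ we take as an assumption of the statement.
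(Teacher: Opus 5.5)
Your proof is correct and is essentially the paper's argument: a structural induction through the clauses defining $G_0$ (sums, $\psi_1$, $\psi_0$), combined with monotonicity of base change, Lemma~\ref{lm:MonBaseChange}(2). You factor it as the commutation identity $G_0(\al')=\{\xi':\xi\in G_0\al\}$ followed by a single elementwise use of monotonicity, which makes explicit the step $\al_0<\be\Rightarrow\al_0'<\be'$ that the paper's $\psi_0$-case uses tacitly. One small point: that lemma needs $G_0\al\subseteq\OT_0$, not just $\subseteq\OT$. This holds because every $\xi\in G_0\al$ lies in $\OT$, satisfies $\xi<\varepsilon_{\Om+1}$, and has $G_0\xi\subseteq G_0\al$.
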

\begin{proof}
By induction on  $2^{N\al} + 2^{N\be}$.
\begin{Cases}
    \item If $\al=_{NF} \al_0 +\ldots +\al_n +l$ and $G_0\al = G_0\al_0 \cup  \ldots  \cup G_0\al_n \cup G_0l< \be$. 
    Then by induction hypothesis, $G_0(\al') =  G_0(\al_0') \cup \ldots \cup G_0(\al_n') \cup G_0(l')< \be'$.
    
    \item If $\al=_{NF} \psi_0\al_0$ and $G_0\al = G_0\al_0 \cup \{\al_0\} \leq \be$.
    Then $G_0\al_0 <\al_0$ and by induction hypothesis $G_0(\al_0') <\al_0'$ and $G_0(\al_0')<\be'$.
    Then $G_0(\al')= G_0\psi_0(\al_0') = G_0(\al_0') \cup \{\al_0'\} < \be'$.
    
    \item $\al =_{NF} \psi_1\al_0$ and $G_0\al= G_0(\al_0) < \be$. By induction hypothesis $G_0(\al_0')< \be'$, then $G_0(\al') = G_0(\psi_1(\al_0'))= G_1(\al_0') <\be'$.
    
\end{Cases}
\end{proof}
Thus, the base-change operation is monotone. Next we show that it also preserves normal
forms.
\medskip

\begin{lemma}[Normal form preservation after base change]\label{lm:NFpreservBaseChange}
Let $m' := m\kp$\\
If $m=_{k} H_{\al}(k)\cdot p+ q$ then $m'=_{k+1} H_{\al'}(k+1) \cdot p+p'$.
\end{lemma}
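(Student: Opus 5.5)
We prove the statement (reading the last summand as $q'$, the base-changed remainder) by induction on $m$, simultaneously with the corresponding claim for ordinals: if $\al\in\OT_0$ then $\al'\in\OT_0$ and the $k$ normal forms of the coefficients inside $\al$ become $(k+1)$ normal forms. The ordinal claim follows from the preceding lemma on $G_0$ together with its analogue for $G_1$, proved the same way, and from the second assertion of Lemma~\ref{lm:MonBaseChange}. Write $m=_k H_\al(k)\cdot p+q$. By the definition of base change, $m'=H_{\al'}(k+1)\cdot p+q'$, and by the induction hypothesis $q'$ is in $(k+1)$ normal form. So two things must be shown. First, $\al'$ is maximal with $H_{\al'}(k+1)\le m'$. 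Second, the remainder condition holds, i.e.\ $q'<H_{\al'}(k+1)$ and $p<k+1$.

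The remainder condition follows from monotonicity. Since $q<H_\al(k)$, both sides are in $k$ normal form, so the first assertion of Lemma~\ref{lm:MonBaseChange} gives $q'<(H_\al(k))'=H_{\al'}(k+1)$. We have $p<k<k+1$ trivially. Combining this with Lemma~\ref{lm:lex} at base $k+1$, it follows that $m'<H_{\al'}(k+1)\cdot(p+1)\le H_{\al'+1}(k+1)$.

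The main work is maximality. Suppose some $\de>\al'$ has $H_\de(k+1)\le m'<H_{\al'+1}(k+1)$. By Lemma~\ref{lm:OrdMeasure2} at base $k+1$, either $\de=\al'+1$, which is impossible, or $\al'+1\le\de\{k+1\}$ and $H_{\al'+1}(k+1)\le H_\de(k+1)$, again a contradiction. Applying Lemma~\ref{lm:OrdMeasure2} this way, however, requires its hypotheses (1)--(3) for $\al'$ at base $k+1$: every coefficient $t$, iteration index, or multiplicity $s$ occurring in a context decomposition of $\al'$ must lie below $H_{(\cdot)\{k\}}(k+1)$ of the corresponding truncated ordinal. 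This is the step I expect to be the obstacle. I would handle it by transporting the bounds of Lemma~\ref{lm:OrdMeasure} for $\al$, which hold because $\al$ is maximal for $m$ at base $k$, through the base change. Concretely, I would use Lemma~\ref{lm:BaseChangeContext} to pull a context decomposition $\al'=\tilde\la\scbr{\tilde\ga}$ back to $\al=\la\scbr{\ga}$, and Lemma~\ref{lm:BaseChangeContext2} to pull back the $\psi_0(\mu\scbr{\Om})$ case. Claims~\ref{claim1} and~\ref{claim2} then give $(\de\{k-1\})'=\de'\{k-1\}$, and Lemma~\ref{lm:Hprop} gives $H_{\de'\{k-1\}}(k+1)\le H_{\de'\{k\}}(k+1)$. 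Finally, the induction hypothesis (monotonicity of base change on smaller numbers) turns $t<H_{\de\{k-1\}}(k)$ into $t'<H_{\de'\{k\}}(k+1)$. For multiplicities $s$, which are unchanged by base change, Lemma~\ref{lm:basechangeDiff} suffices.

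This is exactly the pattern used in the Claim inside the proof of Lemma~\ref{lm:MonBaseChange}, and I would reuse that case split, following Lemma~\ref{lm:Context2} with Lemma~\ref{lm:Context1} for the uncountable arguments. Having established maximality of $\al'$ and the bound on the remainder, the uniqueness part of Lemma~\ref{lm:HardyNF} yields $m'=_{k+1}H_{\al'}(k+1)\cdot p+q'$. The trivial cases $m<k$ and $m=k$ are checked directly.
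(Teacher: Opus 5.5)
Your proposal is correct and follows essentially the paper's proof. You obtain maximality of $\al'$ from Lemma~\ref{lm:OrdMeasure2} at base $k+1$, and verify its hypotheses by pulling context decompositions of $\al'$ back along Lemmas~\ref{lm:BaseChangeContext} and~\ref{lm:BaseChangeContext2} and transporting the bounds of Lemma~\ref{lm:OrdMeasure} via Claims~\ref{claim1}, \ref{claim2} and Lemma~\ref{lm:basechangeDiff}, exactly as the paper does; you also rightly read the final $p'$ as $q'$.

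One small correction: the step from $t<H_{\de\{k-1\}}(k)$ to $t'<(H_{\de\{k-1\}}(k))'$ should cite Lemma~\ref{lm:MonBaseChange} directly, with Lemma~\ref{lm:NForms} giving the normal form of $H_{\de\{k-1\}}(k)$, rather than an induction hypothesis on $m$, because $H_{\de\{k-1\}}(k)$ exceeds $m$ whenever $\de\{k-1\}>\al$.
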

\begin{proof}
We show that $\al'$ is maximal with respect to the value of denoting $m'$.
Assume that $p=1$and $q=0$, such that $m=_k H_{\al}(k)$. We will prove that $\al'$ is maximal thus showing that $H_{\al'}(k+1)$ is in $k+1$-normal form.\medskip
\begin{claim} 	Assume that $\al'<\xi$. Then $H_\xi(k+1)>m'$.
\end{claim} 
\begin{proof}
    We proceed towards a contradiction and consider the non-obvious cases. 
    \begin{Cases}
        \item Assume $\al'=\tilde{\la}\scbr{\tilde{t}}$. By Lemma \ref{lm:BaseChangeContext}, there exists some $\la$ and $t$ such that $\tilde{\la}=\la'$ and $\tilde{t}=t'$ and $\al=\la\scbr{t}$. 
        By Lemma \ref{lm:OrdMeasure} we have that $t<H_{\la^-\scbr{\om}\{k-1\}}(k)$.
        \\
        Let $\be= \la^-\scbr{\om}$. For $\be'=(\la^-)'\scbr{\om}$ and by  applying Claim \ref{claim1} we have that
        $\be'\{b\} =( \be\{b\})'$ for $b < k+1$.
        \medskip\\
        Hence $t'< (H_{\la^-\scbr{\om}\{k-1\}}(k))' \leq H_{(\la^-\scbr{\om}\{k-1\})'}(k+1)
        \leq  H_{(\la^-)'\scbr{\om}\{k\}}(k+1)$, \medskip
        \\since $(\la^-\scbr{\om}\{k-1\})' \leq (\la^-\scbr{\om}\{k\})' = (\la^-)'\scbr{\om}\{k\}$.
        \medskip
        \item Assume $\al'=\tilde{\la}\scbr{\tilde{\ga}}$ then by Lemma \ref{lm:BaseChangeContext}, there exists some $\la$ and $\ga<\Om$ such that $\tilde{\la}=\la'$ and $\tilde{\ga}=\ga'$ and $\al=\la\scbr{\ga}$.
        \medskip\\
        By Lemma \ref{lm:OrdMeasure}, if $\al=\la\scbr{\ga}$ and $\la^-\scbr{\psi_0(\mu\scbr{\Om})}[t]\leq \al<\la^-\scbr{\psi_0(\mu\scbr{\Om})}[t+1]$ for $\mu$ a $\psi_0-$nesting free context, then
        $t<H_{\la^-\scbr{\psi_0(\mu\scbr{\Om})}\{k-1\}}(k)$. 
        \medskip\\
        Let $\be = \la^-\scbr{\psi_0(\mu\scbr{\Om})}$. By Lemma \ref{lm:BaseChangeContext2}, for $\be' = (\la^-)'\scbr{\psi_0(\tilde{\mu}\scbr{\Om})}$ and for some $t$ such that $\be'[t]\leq\al'< \be'[t+1]$, there is some  some $\psi_0-$nesting free context $\mu$ such that $\mu'=\tilde{\mu}$ and $\be' = (\la^-)'\scbr{\psi_0(\mu'\scbr{\Om})}$.
        \medskip\\
        Moreover, by Claim \ref{claim2} we have that $\be'\{b\} =( \be\{b\})'$ for $b < k+1$.
        \smallskip\\
        Hence,  $t'< H_{(\la^-)'\scbr{\psi_0(\mu'\scbr{\Om})}\{k\}}(k+1)$.
        \medskip
        \item Assume $\al'=\tilde{\la}\scbr{\psi_i(\tilde{\ga})\cdot s})$ for $i\in \{0,1\}$ then by Lemma \ref{lm:BaseChangeContext}, there exists some $\la$ and $\ga$, such that $\tilde{\la}=\la'$, $\tilde{\ga} =\ga'$ and $\al=\la\scbr{\psi_i(\ga)\cdot s}$.
        \medskip\\
        Recall here that the value  $s$ is the number of occurrences of $\psi_i(\ga)$ and as such does not change under base change.
        \medskip\\
        By Lemma \ref{lm:OrdMeasure} we have that $s<H_{\la^-\scbr{\psi_i(\ga+1)}\{k-1\}}(k)$ and by Lemma \ref{lm:basechangeDiff},\medskip\\
        $s < H_{(\la^-)'\scbr{\psi_i(\ga'+1)}\{k\}}(k+1)$ .
    \end{Cases}
\end{proof}
Assume $\al'<\xi$ then $m'<H_{\al'+1}(k+1)\leq H_\xi(k+1)$ by Lemma \ref{lm:OrdMeasure2}.
So, for each case, $\al'$ meets the maximality condition and $m'$ is in $k+1$ normal form.\\
If $m=_k H_\al(k)\cdot p +q$ , then by the induction hypothesis, the assertion follows.\medskip
\\
If $p>1$ or ($p=1$ and $q>0$) then we may assume by the previous argument that $H_{\al'}(k+1)$ is in $k+1$ normal form and this yields the assertion.\\
\end{proof}

Having proven the monotonicity and normal form preservation under the base change operation, we can conclude the following Corollary.
\begin{corollary}
    The normal forms after the base change operation are well founded.
\end{corollary}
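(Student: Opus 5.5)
The corollary asserts that normal forms after base change are well founded. I read this as saying two things. First, the base change maps $k$-normal forms to $(k+1)$-normal forms whose ordinal components again lie in $\OT_0$. Second, the ordinals occurring in these iterated normal forms cannot descend forever. The plan is to combine Lemma~\ref{lm:MonBaseChange} with Lemma~\ref{lm:NFpreservBaseChange} and the well-ordering of $\OT$ (Theorem of Buchholz and Sch\"utte).

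\textbf{Step 1: the data stays in $\OT_0$.} Take $m=_k H_\al(k)\cdot p+q$. By Lemma~\ref{lm:NFpreservBaseChange}, $m\kp=_{k+1}H_{\al\kp}(k+1)\cdot p+q\kp$. Maximality of $\al\kp$ is built into $k+1$ normal form, and Lemma~\ref{lm:HardyNF} only produces ordinals in $\OT_0\cap\Om$. Hence $\al\kp\in\OT_0\cap\Om$. The lemma on $G_0$ just before Lemma~\ref{lm:NFpreservBaseChange} shows that the normal-form side conditions $G_0\al_0<\al_0$ and $G_1\al_0<\al_0$ are transported to $\al_0\kp$. This confirms by a second route that the base change of a term of $\OT_0$ is again a term of $\OT_0$. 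An induction on $N(\al)$ handles the nested exponents and coefficients, which are themselves written in normal form.

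\textbf{Step 2: the order is respected.} By Lemma~\ref{lm:MonBaseChange}(2), $\al<\be$ implies $\al\kp<\be\kp$. Since $<$ is linear, the converse also holds, so base change is an order embedding of $\OT_0\cap\Om$ into itself. By Lemma~\ref{lm:lex} the $k$-normal forms are ordered lexicographically by $\langle\al,q\rangle$. Combining this with Lemma~\ref{lm:MonBaseChange}(1), base change of numbers is an order embedding of the set of $k$-normal forms into the set of $(k+1)$-normal forms.

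\textbf{Step 3: well-foundedness.} Suppose there were an infinite sequence of normal forms, each obtained from the previous one by base change followed by a step that strictly decreases it, as in the Goodstein process. Then the associated ordinal pairs $\langle\al_j,q_j\rangle$ would be lex-decreasing with $\al_j\in\OT_0$. An infinite lex-descent forces either an infinite descent in $\OT_0$ or a stable $\al$ with infinitely decreasing natural numbers $q_j$. Both are impossible: the first by well-ordering of $\OT$, the second trivially. The recursion on $q$ uses the same argument.

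\textbf{Main obstacle.} I expect the main obstacle to be Step 3 as stated. A decrease of $m$ after base change need not decrease the top-level $\al$ in a way that is visible at lower levels. A genuinely complete argument therefore needs the full ordinal assignment $\ok$ of the next subsection. For the corollary I would limit the claim to the order-embedding property from Steps 1–2, plus the well-ordering inherited from $\OT$.
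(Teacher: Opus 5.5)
Your retained argument, namely Steps~1--2 together with the well-ordering of $\OT$, is what the paper intends. The paper gives no separate proof. It presents the corollary as an immediate consequence of Lemma~\ref{lm:MonBaseChange} (base change is strictly monotone, hence an order embedding) and Lemma~\ref{lm:NFpreservBaseChange} (the base-changed ordinal is again the normal-form ordinal, so it lies in $\OT_0$, which is well-ordered).

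You are right to drop Step~3. The precise defect is that after one Goodstein step the new pair is only lexicographically below $\langle\al\kp,q\kp\rangle$, not below $\langle\al,q\rangle$. Base change is inflationary: $\al\le\al\kp$ and $q\le q\kp$ by Lemma~\ref{lm:basechangeDiff}, and strictly so in general, e.g.\ $\psi_0(k)\kp=\psi_0(k+1)$. So no lexicographic descent of the raw pairs $\langle\al_j,q_j\rangle$ follows. The paper obtains descent only later, in the termination theorem. There it combines base-invariance $\okpe(m\kp)=\ok(m)$ (Lemma~\ref{lm:okpe=ok}) with strict monotonicity of $\okpe$ (Lemma~\ref{lm:OkMon}).

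One small overstatement in Step~1: the transport lemma just before Lemma~\ref{lm:NFpreservBaseChange} treats only $G_0$. It does not by itself give the $G_1$ side condition for $\psi_1$-terms.
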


\smallskip
To prove that the new Goodstein process terminates, we need to assign ordinals to natural numbers in a way that ensures the process produces a decreasing (and thus finite) sequence. For each integer $k$, define a function $\ok(\cdot): \mathbb{N} \to \Lambda$, where $\Lambda$ is a suitable ordinal, such that $\ok(m)$ is computed from the $k$ normal form of $m$. 
Goodstein normal forms here correspond to ordinals below $\Lambda = \psi_0\psi_1(\varepsilon_{\Om_2+1})$, as illustrated by the following mapping.

\subsection{Ordinal Assignment}\label{subsec:OrdAssig}
The function $ \ok(\cdot)$ maps natural numbers $m$ in $k$ normal form and ordinals $\al \in \OT_0$ to ordinals in the system $T\cap \Om_2$.\\
By monotonicity (Lemma \ref{lm:OkMon}) it will be shown that the ordinal assignment is actually in $\OT$.\medskip
\\
The ordinal mapping for natural numbers is defined as follows:
\begin{itemize}
\item If $m<k$ then $\ok(m)=m$.
\item If $m=k$ then $\ok(k) =\omega$. 
\item If $m = H_\alpha(k) + q$ then $\ok(m)= \psi_0(\ok(\al) ) + \ok(q)$.\\
\\
Now we extend the ordinal mapping to $\al \in \OT_0$:

\item If $\al= \al_0+ \ldots +\al_n$ then $\ok(\al) = \ok(\al_0) + \ldots + \ok(\al_n)$. \textcolor{orange}
\item If $\al = \psi_0\al_0$, $G_0\al_0<\al_0$ 
then $\ok(\al)= \psi_1(\ok(\al_0))$.
\item If $\al = \psi_1\al_0$ then $\ok(\al) = \psi_2(\ok(\al_0))$.
\\
\\
Next we extend the ordinal mapping to a context $\la$:
\item If $\la\scbr{\cdot} = \al_0 +\ldots +\scbr{\cdot}+\ldots+\al_n$ then 
$\ok(\la\scbr{\cdot}) = \ok(\al_0)+ \ldots +\scbr{\cdot}+\ldots+\ok(\al_n)$.

\item If $\la\scbr{\cdot} = \psi_0(\la_0\scbr{\cdot})$ then $\ok(\la\scbr{\cdot}) = \psi_1(\ok(\la_0\scbr{\cdot}))$.

\item If $\la\scbr{\cdot} = \psi_1(\la_0\scbr{\cdot})$ then $\ok(\la\scbr{\cdot}) = \psi_2(\ok(\la_0\scbr{\cdot}))$.
\end{itemize}


\smallskip

\begin{lemma} \label{lm:okpe=ok} Let $m$ be in $k$ normal form and $m\kp$ the $k+1$ normal form after applying base change operation to $m$. Then the following hold;
\begin{enumerate}
    \item $\okpe(m') = \ok(m)$.
    \item $\okpe(\al') =\ok(\al)$.
    \item $\okpe((\la\scbr{\cdot})')=\ok(\la\scbr{\cdot})$.
\end{enumerate}
\end{lemma}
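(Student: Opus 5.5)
We prove the three assertions simultaneously, by induction on the size of the $k$ normal form: on $m$ for (1), on $N(\al)$ for (2), and on the build-up of the context for (3). The point is that the ordinal assignment never records the base itself. It records only the structure of the normal form: every occurrence of the base $k$ becomes $\om$, every numeral below the base is kept, and every Hardy term $H_\al(k)$ becomes $\psi_0$ of the assigned ordinal. The base change $\kp$ changes exactly the base and nothing else, and by Lemma~\ref{lm:NFpreservBaseChange} it turns $k$ normal forms into $k+1$ normal forms of the same shape. So the two assignments should agree term by term.

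\textbf{Numbers.} For (1) we split along the definition of $m\kp$.
\begin{itemize}
\item If $m<k$, then $m'=m<k+1$, so $\okpe(m')=m=\ok(m)$.
\item If $m=k$, then $m'=k+1$, so $\okpe(k+1)=\om=\ok(k)$.
\item If $m=_k H_\al(k)\cdot p+q$, then Lemma~\ref{lm:NFpreservBaseChange} gives that $m'=H_{\al'}(k+1)\cdot p+q'$ is the $k+1$ normal form of $m'$. The assignment $\okpe$ is therefore computed from exactly these data, and
\[
\okpe(m')=\psi_0(\okpe(\al'))\cdot p+\okpe(q').
\]
Since $N(\al)$ is bounded by $H_\al(k)\le m$ (Lemma~\ref{lm:Hprop}) and $q<m$, the induction hypotheses (2) for $\al$ and (1) for $q$ apply. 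They give $\okpe(\al')=\ok(\al)$ and $\okpe(q')=\ok(q)$, and the right-hand side becomes $\ok(m)$.
\end{itemize}
Note that $p<k$ is left unchanged by base change, exactly as in the example following the definition of $\kp$. This is what makes the multiplicity $p$ harmless.

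\textbf{Ordinals.} For (2) we induct on the term structure of $\al\in\OT_0$.
\begin{itemize}
\item If $\al=0$, both sides are $0$.
\item If $\al=\al_0+\dots+\al_n$, then $\al'=\al_0'+\dots+\al_n'$. By Lemma~\ref{lm:MonBaseChange} the summands stay weakly decreasing, so this is still a normal-form sum, and $\okpe(\al')=\sum\okpe(\al_i')=\sum\ok(\al_i)$ by the induction hypothesis.
\item If $\al=_{NF}\psi_0\al_0$, then $\al'=\psi_0(\al_0')$. This is again in normal form by the lemma preceding Lemma~\ref{lm:NFpreservBaseChange} ($G_0\al_0<\al_0$ implies $G_0\al_0'<\al_0'$), so the clause $\okpe(\al')=\psi_1(\okpe(\al_0'))$ applies. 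The induction hypothesis then gives $\psi_1(\ok(\al_0))=\ok(\al)$.
\item If $\al=\psi_1\al_0$, the same argument with $\psi_2$ in place of $\psi_1$ works.
\end{itemize}
The trailing natural-number coefficient $l$ in $\al=\dots+l$ is handled by (1): $l$ is written in $k$ normal form when $l\ge k$, so $\okpe(l')=\ok(l)$.

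\textbf{Contexts.} Assertion (3) follows by induction on the context $\la$, using the definition of $\kp$ on contexts. The summands $\xi,\eta$ surrounding the placeholder are handled by (2). Each $\psi_i$ layer is handled by the induction hypothesis, with the placeholder left untouched on both sides. The case $\psi_i(\scbr{\cdot})\kp$ is simply the placeholder under $\psi_i$, mapped to $\psi_{i+1}(\scbr{\cdot})$ on both sides.

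\textbf{Main obstacle.} The delicate step is justifying that $\okpe$ is evaluated on precisely the decomposition produced by $\kp$. The assignment is defined from normal forms, so we must know that the $k+1$ normal form of $m'$ has the same Hardy index $\al'$, the same $p$, and remainder $q'$. For the outer level this is Lemma~\ref{lm:NFpreservBaseChange}. Hereditarily we also need that $q'$ and the numeral coefficients inside $\al'$ are in $k+1$ normal form, which follows by applying the same lemma inside the induction. A second subtle point arises when a subterm of $m$ or $\al$ equals $k$ exactly, or is a numeral $l$ with $k<l$. There we must check that $l'$ is rewritten through its Hardy normal form rather than kept literal, so that the clauses $\ok(k)=\om$ and $\okpe(k+1)=\om$ match. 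We expect the only real work to lie in this bookkeeping of nested numerals; the rest is structural.
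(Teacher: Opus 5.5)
Your proposal is correct and follows essentially the same route as the paper: a simultaneous induction on $m$ and $\al$. The number case splits into $m<k$, $m=k$ and $m=_k H_\al(k)\cdot p+q$, using normal form preservation so that $\okpe$ is evaluated on $H_{\al'}(k+1)\cdot p+q'$. The ordinals are then handled clause by clause (sums, $\psi_0$, $\psi_1$, and the trailing numeral via assertion (1)). You are somewhat more careful than the paper, which proves only the first two assertions, leaving (3) without proof, and does not explicitly invoke $G_0$-preservation or the numeral base change of coefficients to justify that the defining clauses of $\okpe$ apply after base change.
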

\begin{proof}
Proof of the first two assertions.\\
First assertion by induction on m.
\begin{Cases}
    \item ($m<k$). Then $\okpe(m') = m = \ok(m)$.
    \item ($m=k$). Then $\okpe(m') = \om = \ok(m)$.
    \item ($m=_{k} H_\al(k) + q$). Then $m' = H_{\al'}(k+1)+ q'$ is in $k+1$-normal form. From the second assertion we have that $\okpe(\al)=\ok(\al)$ and by the induction hypothesis $\okpe(q')=\ok(q)$	, 
    \begin{align*}
        \okpe(m')  &= \psi_0(\okpe(\al')) + \okpe(q')\\
        & = \psi_0(\ok(\al))  + \ok(q)\\
        &= \ok(m).
    \end{align*} 
\end{Cases}
Second assertion by induction on $\al \in \OT_0$.
\begin{Cases}
    \item ($\al<\om$). Then the assertion follows from the first one.
    \item ($\al= \psi_0(\al_0)$).
    Then $\okpe(\al') = \psi_1(\okpe(\al_0')) =_{ih} \psi_1(\ok(\al_0)) = \ok(\psi_0(\al_0)) = \ok(\al)$.
    \item ($\al=\psi_1(\al_0)$).
    then  $\okpe(\al') = \psi_2(\okpe(\al_0')) =_{ih} \psi_2(\ok(\al_0)) = \ok(\al)$.
    \item ($\al = \al_0 + \ldots + \al_n +l $). Then from the first assertion $\okpe(l')=\ok(l)$ and
    $\okpe(\al') = \okpe(\al_0') + \ldots + \okpe(\al_n') + \okpe(l) =_{ih} \ok(\al_0) + \ldots + \ok(\al_n) + \ok(l) = \ok(\al)$.
\end{Cases}
\end{proof}


\begin{lemma} \label{lm:OkMon} Monotonicity  of ordinal assignment. 
\begin{enumerate}
    \item If $m<n$ then $\ok m < \ok n$. 
    \item If $\alpha<\beta$ then $\ok \alpha<\ok \beta$.
\end{enumerate}
\end{lemma}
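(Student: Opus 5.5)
We prove both assertions by simultaneous induction, on $n$ for the first and on $\beta$ (via the norm $N(\beta)$) for the second, mirroring the proof of Lemma~\ref{lm:MonBaseChange}. The assignment $\ok$ lifts the $k$ normal form one level: $H_\alpha(k)$ becomes $\psi_0(\ok(\alpha))$, an inner $\psi_0$ becomes $\psi_1$, and $\psi_1$ becomes $\psi_2$. The order comparisons in $\OT_0$ therefore have to be replayed one collapsing level higher in $\OT$.

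For the first assertion, write $m=_k H_\alpha(k)\cdot p+q$ and $n=_k H_\beta(k)\cdot v+u$, and handle the trivial cases $m<k$ and $m=k$ first. These give $\ok(m)\le\omega\le\ok(n)$, with strictness coming from the fact that $\psi_0(\cdot)\ge\omega$.

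For general $m<n$, Lemma~\ref{lm:lex} shows that $\langle\alpha,q\rangle<_{lex}\langle\beta,u\rangle$ (allowing for multiplicities $p,v$).
\begin{itemize}
\item If $\alpha<\beta$, the induction hypothesis for the second assertion gives $\ok(\alpha)<\ok(\beta)$, hence $\psi_0(\ok\alpha)<\psi_0(\ok\beta)$. Since $\ok(q)<\psi_0(\ok\alpha)$ by the induction hypothesis on $q<H_\alpha(k)$ (or because $q$ is built from smaller Hardy terms), the additive-principality of $\psi_0(\ok\beta)$ gives $\ok(m)<\ok(n)$.
\item If $\alpha=\beta$, we compare $p$ and $v$ and then $q$ and $u$, again using the induction hypothesis.
\end{itemize}

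For the second assertion, we split on the normal forms of $\alpha$ and $\beta$, as in Case 1 to Case 4 of Lemma~\ref{lm:MonBaseChange}.
\begin{itemize}
\item \emph{Sums.} Compare the first differing summand and use the induction hypothesis. Since $\ok$ sends additive principal terms to additive principal terms and preserves $\ge$ between summands (induction hypothesis), normal-form sums map to normal-form sums, and the comparison clause for sums in the definition of $(\T,<)$ applies.
\item \emph{$\psi_0\alpha_0$ versus $\psi_0\beta_0$.} Here $\alpha_0<\beta_0$ and we need $\psi_1(\ok\alpha_0)<\psi_1(\ok\beta_0)$.
\item \emph{$\psi_1$ versus $\psi_1$.} Similarly, this reduces to comparing $\psi_2$ values.
\item \emph{Different levels.} For $\psi_i$ versus $\psi_{i+1}$, the images are $\psi_{i+1}$ versus $\psi_{i+2}$, which compare correctly by clause (7) of the definition of $(\T,<)$.
\item \emph{Natural numbers.} For $\alpha<\omega$ against larger $\beta$, we fall back on the first assertion.
\end{itemize}

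The main obstacle is the collapsing cases. In $\OT$ the relation $\psi_i\xi<\psi_i\eta$ is governed by $\xi<\eta$ only when both terms are in normal form, i.e.\ $G_i\xi<\xi$. So it does not suffice to know $\ok\alpha_0<\ok\beta_0$; we must also show that the images $\psi_1(\ok\alpha_0)$ and $\psi_2(\ok\beta_0)$ lie in $\OT$. My plan is to prove alongside the induction the auxiliary fact $G_1(\ok\gamma)<\ok\delta$ whenever $G_0\gamma<\delta$, by induction on $2^{N\gamma}+2^{N\delta}$, exactly like the lemma on $G_0$ after Lemma~\ref{lm:MonBaseChange}. Applied with $\gamma=\delta=\alpha_0$, this makes $\psi_1(\ok\alpha_0)$ a normal form. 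With normal forms secured, monotonicity of $\psi_1$ and $\psi_2$ on $\OT$ finishes the argument, and as a byproduct $\ok$ lands in $\OT$.
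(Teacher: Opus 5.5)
Your proposal is correct and follows essentially the same route as the paper: simultaneous induction on $n$ and $\beta$, the same case split (equal Hardy index versus $\alpha<\beta$ for numbers; sums, $\psi_0$ against $\psi_0$, $\psi_1$ against $\psi_1$, and mixed levels for ordinals), and normal-form membership of the images secured by the fact $G_0\gamma<\delta \Rightarrow G_1(\ok\gamma)<\ok\delta$. The paper proves that fact as the lemma immediately following and later cites it as a ``third assertion'' of this lemma; note also that in $(\T,<)$ clause (6) already makes each $\psi_i$ strictly monotone on all terms, so the $G$-condition is needed only to place the images in $\OT$, not for the inequalities themselves, and you are slightly more careful than the paper about the tail $q$ and the ordering of summands of $\ok(\alpha)$.
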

\begin{proof}
Simultaneously by induction $n$ and $\be$. \medskip\\
The first assertion is by induction on $n$.\\
Let $m=_k H_\al(k)\cdot p +q$ and $n=_k H_\be(k) \cdot v + u$.\\
If $H_\al(k)=H_\be(k)$ and $p<v$ then $\ok(m)<\ok(n)$.\\
If $H_\al(k)=H_\be(k)$ and $p=v$, then $q<u$ by induction hypothesis $\ok(q)<\ok(u)$ and the assertion follows.\smallskip\\
We now assume that $m=_k H_\al(k)< H_\be(k)=_k n$. We have that $\al<\be$, since if $\be<\al$, then $H_\be(k)$ is not in normal form by a similar argument as in Lemma \ref{lm:basechangeDiff}.
Let $m =_{k} H_\al(k) + q$ and $n=_{k} H_\be(k) + v$. 
We need to show that 
$\ok(m)= \psi_0(\ok(\al))< \psi_0(\ok(\be))= \ok(n).$\\
Since $\al< \be<\Om$, then by the second assertion,  $\ok(\al)<\ok(\be)$.
Then in system $\T$ we have that $\psi_0(\ok(\al)) <\psi_0(\ok(\be))$
and we can conclude the first assertion.\medskip
\\
\\
The second assertion by induction on $\be$.
\begin{Cases}
    \item $\al =_{NF} \al_0 +\ldots +\al_n +l$  and $\be =_{NF} \be_0 + \ldots + \be_m + s$.\\
    Then either $\al_0<\be_0$ and by induction hypothesis $\ok(\al_0)<\ok(\be_0)$,\\
    or there is some $0 \leq i \leq min(m,n)$ such that $\al_i=\be_i$  and $\al_{i+1}< \be_{i+1}$ and by induction hypothesis $\ok(\al_i)=\ok(\be_i)$ and $\ok(\al_{i+1}) < \ok(\be_{i+1})$, \\
    or $\al_i=\be_i$ for all $i$ and $l<s$; by the first assertion $\ok(l)<\ok(s)$,\\
    then $\ok(\al) =\ok(\al_0)+\ldots + \ok(\al_n)+\ok(l) < \ok(\be_0)+\ldots+\ok(\be_m)+\ok(s) =\ok(\be)$. 
    
    \item $\al =_{NF} \psi_0(\al_0)$ and $\be =\psi_1{\be_0}  $ then 
    $\ok(\al) = \psi_1(\ok(\al_0)) < \psi_2(\ok(\be_0))= \ok(\be)$.
    
    \item $\al =_{NF} \psi_0(\al_0) $ and $\be =_{NF} \psi_0(\be_0)$ and $G_0(\al_0)<\al_0$ and $G_0(\be_0)<\be_0$, 
    then $\al_0 < \be_0$  then by induction hypothesis $\ok(\al_0)< \ok(\be_0)$ and
    $G_1(\ok(\al_0)) <\ok(\al_0)$
    thus $ok(\al)$ is in normal form and $\ok(\al)<\psi_1(\ok(\al_0) ) <\psi_1(\ok(\be_0) )<\ok(\be)$.
    
    \item $\al =_{NF} \psi_1(\al_0) $ and $\be =_{NF} \psi_1(\be_0)$, 
    then $\ok(\al) = \psi_2(\ok(\al_0))$ and $\ok(\be) = \psi_2(\ok(\be_0))$.
    Since $\al<\be$ then $\al_0<\be_0$ and by induction hypothesis $\ok(\al_0)<\ok(\be_0)$ and $\ok(\al_0)$ is in normal form, and $\ok(\al) = \psi_2(\ok(\al_0)) <\psi_2(\ok(\be_0))$.
\end{Cases}
\end{proof}

\begin{lemma}If $G_0(\al) < \be $ then $G_1(\ok(\al)) < \ok(\be)$.
\end{lemma}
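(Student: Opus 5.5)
The plan is to prove the statement by induction on $2^{N\al}+2^{N\be}$, in the same way as the analogous lemma for the base change operation ($G_0\al<\be \Rightarrow G_0\al'<\be'$). The guiding observation is that $\ok$ raises every collapsing index by one: $\psi_0\mapsto\psi_1$ and $\psi_1\mapsto\psi_2$. Under this shift, $G_0$ on the source side corresponds to $G_1$ on the target side. Recall that $G_1\psi_1\xi=\{\xi\}\cup G_1\xi$, $G_1\psi_0\xi=\emptyset$, and $G_1\om^{\Om_2+\xi}=G_1\xi$, while $G_0\psi_0\xi=\{\xi\}\cup G_0\xi$ and $G_0\psi_1\xi=G_0\xi$. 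So the clauses match one-for-one once we use $\ok\psi_0=\psi_1\ok$ and $\ok\psi_1=\psi_2\ok$.

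The cases follow the normal form of $\al$.

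\textbf{Natural numbers.} For $\al<\om$ we have $\ok(\al)$ equal to a natural number or $\om=\psi_00$. In either case $\ok(\al)<\Om$, so $G_1(\ok\al)=\emptyset$ and there is nothing to show.

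\textbf{Sums.} If $\al=_{NF}\al_0+\ldots+\al_n+l$, then $G_0\al=\bigcup G_0\al_i\cup G_0 l<\be$. The induction hypothesis applied to each summand gives $G_1\ok(\al_i)<\ok(\be)$. Since $\ok$ distributes over the sum and $G_1$ is a union over summands, this yields the claim.

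\textbf{$\psi_0$-terms.} Suppose $\al=_{NF}\psi_0\al_0$. Then $G_0\al=\{\al_0\}\cup G_0\al_0<\be$, so $\al_0<\be$ and $G_0\al_0<\be$. We have $\ok(\al)=\psi_1(\ok\al_0)$, hence $G_1\ok(\al)=\{\ok\al_0\}\cup G_1\ok\al_0$. The element $\ok\al_0<\ok\be$ follows from the monotonicity Lemma~\ref{lm:OkMon}(2). The part $G_1\ok\al_0<\ok\be$ follows from the induction hypothesis applied to the pair $(\al_0,\be)$, which has smaller measure.

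\textbf{$\psi_1$-terms.} Suppose $\al=_{NF}\psi_1\al_0$. Then $G_0\al=G_0\al_0<\be$. We have $\ok(\al)=\psi_2(\ok\al_0)=\om^{\Om_2+\ok\al_0}$, so $G_1\ok(\al)=G_1\ok(\al_0)$, and the induction hypothesis finishes this case.

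\textbf{Expected obstacle.} The main difficulty is justifying the use of Lemma~\ref{lm:OkMon} in the $\psi_0$ case. That lemma is stated for ordinals of $\OT_0$, whereas $\be$ here is an arbitrary bound. The first thing I would try is to note that $\be$ can be taken in $\OT_0$: in every application, $\be$ is the argument of a $\psi_0$-term in normal form, so it lies in $\OT_0$. With that, $\al_0<\be$ transfers directly through Lemma~\ref{lm:OkMon}.

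A second point needs care: $\ok(\al_0)$ might be an uncountable ordinal, whereas $G_1$ is only nontrivial above $\Om$. That is harmless, because every clause above holds uniformly regardless of whether the term lies above or below $\Om$. Finally, the result gives exactly the normal-form condition needed for $\psi_1(\ok\al_0)\in\OT$: take $\be=\al_0$ to obtain $G_1\ok\al_0<\ok\al_0$.
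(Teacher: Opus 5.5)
Your proof is correct and is essentially the paper's argument: induction on $2^{N\al}+2^{N\be}$ with the sum, $\psi_0$ and $\psi_1$ cases. In the $\psi_0$ case the paper likewise needs $\ok(\al_0)<\ok(\be)$, which it leaves implicit and which monotonicity of $\ok$ supplies as you do. It combines this with the induction hypothesis for $G_1(\ok(\al_0))$.

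One small slip: a natural number $m>k$ is sent to $\psi_0(\ok(\ga))+\ok(q)$, not to a natural number or $\om$. This value is still below $\Om$, so $G_1(\ok(m))=\emptyset$ as you need.
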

\begin{proof} Proof by induction on $2^{N\al} + 2^{N\be}$.
\begin{Cases}
    \item If $\al=_{NF} \al_0 +\ldots +\al_n +l$ and $G_0\al = G_0\al_0 \cup  \ldots  \cup G_0\al_n \cup G_0l< \be$. 
    Then by ih, $G_1(ok(\al))=  G_1(ok(\al_0)) \cup \ldots \cup G_1(ok(\al_n)) \cup G_1(ok(l))< \ok(\be)$.
    
    \item If $\al=_{NF} \psi_0\al_0$ and $G_0\al = G_0\al_0 \cup \{\al_0\} \leq \be$.
    Then $G_0\al_0 <\al_0$ and by ih $G_1(\ok(\al_0)) <\ok(\al_0)$ and $G_1(\ok(\al_0))<\ok(\be)$.
    Then $G_1(\ok(\al))= G_1\psi_1(\ok(\al_0)) = G_1(\ok(\al_0)) \cup \{\ok(\al_0)\} < \ok(\be)$.
    
    \item $\al =_{NF} \psi_1\al_0$ and $G_0\al= G_0(\al_0) < \be$. By induction hypothesis $G_1(\ok(\al_0))< \ok(\be)$, then $G_1(\ok(\al)) = G_1(\ok(\psi_1(\al_0))) = G_1(\psi_2(\ok(\al_0)))= G_1(\ok(\al_0)) < \ok(\be)$.
\end{Cases}
\end{proof}

We now show that the normal form is preserved under ordinal assignment. 
Recall what it means for an ordinal $\psi_0(\al)$ to be in $\OT$:
\begin{center}{\em $\psi_{0}\al \in \OT$ if and only if $G_0\al < \al$}.\end{center}
Specifically, we have to show that for $m$ written in $k$ normal form, $\ok(m)$ is in $\OT$.
Let $m =_k H_\al(k)\cdot p +q $, and $\ok(m)=\psi_0(\ok(\al))\cdot p + \ok(q)$. We must show that  $G_0(\ok(\al)) <  \ok(\al) $. Moreover, $\ok(q)\in \OT$ by the induction hypothesis. 
Since $q<H_{\al}(k)$, then by monotonicity of ordinal assignment, $\ok(q)<\ok(H_{\al}(k))=\psi_0(\ok(\al))<\psi_0(\ok(\al))\cdot p=\ok(H_\al(k)) \cdot p$.\\
If $m=_k H_\al(k)$, then $\ok(m) =\psi_0(\ok(\al))$, and we must show only that  $G_0(\ok(\al)) <  \ok(\al) $.\\
Finally, we must check that $\ok(\al)$ is in $\psi_1$ normal form.
\medskip\\
To prove the normal form preservation of $\ok(m)$, first we need the following Claim. 

\begin{claim}\label{cl:maxCoef}
Let $\al$ be a countable ordinal in $\OT_0$. Then there exists a context $\la$ and a natural number $t>0$ such that $\al = \la\scbr{t}$ where $t$ is the maximal coefficient of $\al$.
Moreover $max G_0(\ok\al) = maxG_0(\ok(t))$.
\end{claim}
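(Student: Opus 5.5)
I will prove both parts of Claim~\ref{cl:maxCoef} simultaneously by induction on the term length $N(\al)$ of $\al\in\OT_0\cap\Om$, following the shape of the normal form. I will fix the convention that the maximal coefficient $mc(\al)$ is the largest natural number occurring as a summand anywhere in the hereditary normal form of $\al$. This includes the trailing constant $l$ of any sum and the numerals occurring inside $\psi_0$- and $\psi_1$-arguments; in the degenerate case $\al=\om=\psi_00$ the coefficient is the one hidden in $\psi_0 0$, and I treat such $\psi_i 0$ as the coefficient $1$ position. Since $\al$ is countable, it has the form $\psi_0\al_0+\ldots+\psi_0\al_n+l$ with $\al_i$ possibly uncountable. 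Each $\al_i$ in turn decomposes into $\psi_1$-summands and countable summands. So $\al$ is a finite tree, and $t$ is attained at some leaf. I will take $\la$ to be the context whose placeholder marks that leaf. The context is built inductively exactly as in the Definition of ordinal context, clauses (1)(iii), (2)(i) and (2)(ii).

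\textbf{Existence of $\la$.} If $l=mc(\al)$, I set $\la\scbr{\cdot}=\psi_0\al_0+\ldots+\psi_0\al_n+\scbr{\cdot}$. Otherwise the maximum is attained inside some $\psi_0\al_i$ (or inside a $\psi_1$-summand of $\al_i$). The induction hypothesis, applied to the argument, gives an inner context $\mu$ with $\al_i=\mu\scbr{t}$. I then wrap it as $\la\scbr{\cdot}=\xi+\psi_0(\mu\scbr{\cdot})+\eta$, or analogously with $\psi_1$. The hypothesis $t>0$ holds because every nonzero $\al$ has some positive numeral leaf; the case $\al=0$ is excluded.

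\textbf{The $G_0$ statement.} I will first compute $\ok$ on each constructor. Sums go to sums, $\psi_0\mapsto\psi_1$, $\psi_1\mapsto\psi_2$, and numerals $t<k$ are fixed, with $\ok(k)=\om$ and $\ok(t)=\psi_0(\ok\be)+\ok(q)$ for $t=_kH_\be(k)+q$. By the clauses of $G_0$, the image of a $\psi_0$ node is a $\psi_1$ node, which contributes nothing new to $G_0$ (since $G_0\psi_1\be=G_0\be$). The image of a $\psi_1$ node is a $\psi_2$ node, which is also transparent for $G_0$. Hence $G_0(\ok\al)$ is the union of the sets $G_0(\ok(s))$ over all numeral leaves $s$ of $\al$. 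For a numeral $s$, the set $G_0(\ok(s))$ consists of the arguments $\ok(\be)$ coming from its $k$-normal form $s=_kH_\be(k)+q$ (and recursively from $q$). So the claim reduces to showing that $\max G_0(\ok(s))\le\max G_0(\ok(t))$ whenever $s\le t$. This follows from Lemma~\ref{lm:lex} together with Lemma~\ref{lm:OkMon}. If $s<t$, then the leading Hardy index $\be_s$ of $s$ satisfies $\be_s\le\be_t$ lexicographically, so $\ok(\be_s)\le\ok(\be_t)$. The remaining elements of $G_0(\ok(s))$ come from $q<H_{\be_s+1}(k)$, and I will bound them by induction on $s$. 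Applying the leaf induction then yields $\max G_0(\ok\al)=\max G_0(\ok(t))$.

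\textbf{Main obstacle.} The delicate step is the last one. I must show that the maximum of $G_0$ is realised by the leading term $\ok(\be_t)$ of the largest numeral, and that it is not exceeded by some term coming from the remainder $q$, or from a smaller numeral whose normal form might involve a larger index. To handle this I will prove the auxiliary fact $\max G_0(\ok(s))=\ok(\be_s)$ for every $s\ge k$, by induction on $s$, using $q<H_{\be_s}(k)\cdot k$ and the maximality in Lemma~\ref{lm:HardyNF}, which gives $\be_q\le\be_s$. If this direct route fails, I would fall back on the monotonicity Lemma~\ref{lm:OkMon} applied to $G_0$-elements.
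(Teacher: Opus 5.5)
\textbf{There is a genuine gap, in how you prove that the maximum of $G_0$ is attained at the largest numeral leaf.}

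What is right: your construction of $\la$ is the same structural induction the paper uses. You split into the trailing constant versus a summand, then descend into the $\psi_0$- or $\psi_1$-argument. Your observation that $G_0(\ok\al)$ is the union of the sets $G_0(\ok(s))$ over the numeral leaves $s$ of $\al$ is also correct, since the images of $\psi_0$- and $\psi_1$-nodes are $\psi_1$- and $\psi_2$-nodes, which are transparent for $G_0$. It correctly reduces the second assertion to monotonicity of $s\mapsto\max G_0(\ok(s))$, and this is more careful than the paper's node-by-node assertion.

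The gap lies in your proof of that monotonicity. Since $G_0\psi_0\be=\{\be\}\cup G_0\be$, for $s=_kH_{\be_s}(k)+q$ one has
\[
G_0(\ok(s))=\{\ok(\be_s)\}\cup G_0(\ok(\be_s))\cup G_0(\ok(q)).
\]
Your description (the arguments $\ok(\be)$ ``and recursively from $q$'', ``the remaining elements come from $q$'') omits the middle set. That set contains everything contributed by numerals $s'\geq k$ sitting inside the Hardy index $\be_s$. Your auxiliary fact $\max G_0(\ok(s))=\ok(\be_s)$ therefore needs $G_0(\ok(\be_s))\leq\ok(\be_s)$. This is essentially the normal-form condition for $\psi_0(\ok(\be_s))$, which the paper later derives \emph{from} this claim. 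The tools you list, $q<H_{\be_s}(k)\cdot k$ and $\be_q\leq\be_s$, do not address it. Without it, a smaller leaf $s$ could a priori contribute, through $G_0(\ok(\be_s))$, an element above $\ok(\be_t)$.

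\textbf{How to close the gap.} The missing step can be supplied from lemmas proved before the claim.
\begin{enumerate}
\item Let $s'\geq k$ be a numeral leaf of $\be_s$. By the last item of Lemma~\ref{lm:Hprop}, $s'\leq N(\be_s)\leq H_{\be_s}(k)\leq s$. In fact $s'<s$, since equality throughout would force $\be_s=s'$, whereas $H_{s'}(k)=k^{s'+1}>s'$.
\item By maximality of $\be_s$ for $s$, the number $H_{\be_s}(k)$ is itself in $k$-normal form with index $\be_s$. Lemma~\ref{lm:lex}, together with uniqueness of normal forms in the case of equality, then gives $\be_{s'}\leq\be_s$.
\item Since $s'<s$ and $q<s$, induction on $s$ together with Lemma~\ref{lm:OkMon} gives $\max G_0(\ok(s'))=\ok(\be_{s'})\leq\ok(\be_s)$ and $\max G_0(\ok(q))\leq\ok(\be_s)$.
\item Your leaf decomposition applied to $\be_s$ then yields $G_0(\ok(\be_s))\leq\ok(\be_s)$. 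This completes the auxiliary fact, and with it the claim.
\end{enumerate}

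\textbf{A smaller point.} It is not true that every nonzero $\al$ has a positive numeral leaf. For example, $\al=\om=\psi_00$ and $\al=\psi_0\Om$ have none, so $mc(\al)=0$ and the requirement $t>0$ cannot be met. Relabelling $\psi_00$ as a ``coefficient~$1$ position'' does not change the term. This is a defect of the statement itself, and it is harmless for the $G_0$ assertion, since in these cases $G_0(\ok\al)=\emptyset$.
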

\begin{proof}
Proof by induction on $\al$.
For $\al=0$ it is obvious.
\begin{Cases}
    \item ($\al= \al_0+ \ldots +\al_n +l$). Then $\ok(\al) =\ok(\al_0)+\ldots+\ok(\al_n)+\ok(l)$ and $G_0(\ok(\al)) = G_0(\ok(\al_0)) \cup \ldots \cup G_0(\ok(\al_n)) \cup G_0(\ok(l))$.
    Assume that $mc(\al)=t$.
    
    \begin{Cases}
        \item  If $l=t$, define the ordinal context 
    $\la\scbr{\cdot} = \al_0+\ldots +\ldots+\al_n+\scbr{\cdot}$. Then $\al =\al_0+\ldots+\al_n+t=\la\scbr{t}$.\\
    $G_0(\ok(\al))=G_0(\ok(\al_0))\cup\ldots \cup G_0(\ok(\al_n))\cup G_0(\ok(t))=G_0(\ok(t))$, 
    since $tp(\ok(\al_i))$ for $0\leq i\leq n$ is uncountable.\medskip

    \item 
    If $mc(\al)=\al_i$, define
    $\la\scbr{\cdot} = \al_0+\ldots +\la_i\scbr{\cdot}+\ldots+\al_n$ where $\al_i=\la_i\scbr{t}$ by induction hypothesis
    and $G_0(\ok(t))=G_0(\ok(\al_i))$.
    \\
    Then $G_0(\ok(\al)) = G_0(\ok(\al_0)) \cup \ldots \cup G_0(\ok(\al_n)) \leq max(\{\ok(l) \cup \ok(\al_i)\}) < \ok(\al)$.
    \end{Cases}

    \item $(\al = \psi_0(\al_0))$.
      By induction hypothesis $\al_0=\la_0\scbr{t}$ for a context $\la_0$. Then define $\la\scbr{\cdot} = \psi_0\la_0\scbr{\cdot}$, thus $\al = \la\scbr{t}$.
    Moreover $\ok(\al)= \psi_1(\ok(\al_0))$ and $G_0(\ok(\al))= G_0(\ok(\al_0))$.            
    
\end{Cases}
\end{proof}
Take for example, $\al = \psi_0(\psi_10 +\psi_0t)$ then $\ok(\al)=\psi_1(\psi_20 + \psi_1 \ok(t))$ and $G_0(\ok(\al))= G_0(\psi_20) \cup G_0(\psi_1\ok(t))= G_0(0) \cup G_0(\ok(t))\leq G_0(\ok(t))$.
\medskip\\

\begin{lemma}[NF preservation]
Let $m$ be in $k$ normal form, then $\ok(m) \in \OT$.
\end{lemma}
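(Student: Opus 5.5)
We argue by induction on $m$, proving simultaneously the auxiliary statement that $\ok(\al)\in\OT$ and $G_0(\ok(\al))<\ok(\al)$ for every $\al\in\OT_0\cap\Om$ occurring in the $k$ normal form of $m$. The latter is by a side induction on $N(\al)$. For $m<k$ we have $\ok(m)=m\in\OT$, and for $m=k$ we have $\ok(m)=\om=\psi_00\in\OT$.

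Now let $m=_k H_\al(k)\cdot p+q$. By the discussion preceding Claim~\ref{cl:maxCoef}, it suffices to establish three facts:
\begin{enumerate}[label=(\roman*)]
\item $\ok(\al)\in\OT$ and $\ok(\al)<\Om_2$;
\item $G_0(\ok(\al))<\ok(\al)$, so that $\psi_0(\ok(\al))\in\OT$;
\item $\ok(q)\in\OT$ with $\ok(q)<\psi_0(\ok(\al))$.
\end{enumerate}
Fact (iii) follows from the induction hypothesis on $q<m$ together with Lemma~\ref{lm:OkMon}: since $q<H_\al(k)$, monotonicity gives the required ordering, so the sum $\psi_0(\ok(\al))\cdot p+\ok(q)$ is in normal form. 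Fact (i) is by the side induction on $\al$:
\begin{itemize}
\item For sums, apply the induction hypothesis to each summand; Lemma~\ref{lm:OkMon} preserves the order $\al_0\geq\dots\geq\al_n$.
\item For $\al=\psi_1\al_0$, the image $\psi_2(\ok(\al_0))$ is in $\OT$ by clause (2) of the definition of $\OT$.
\item For $\al=\psi_0\al_0$ with $G_0\al_0<\al_0$, the image is $\psi_1(\ok(\al_0))$. Its membership in $\OT$ needs $G_1(\ok(\al_0))<\ok(\al_0)$, which follows from the preceding lemma ($G_0(\al)<\be\Rightarrow G_1(\ok(\al))<\ok(\be)$) applied with $\be=\al_0$.
\item Natural-number coefficients $l$ inside $\al$ satisfy $l<m$, so the main induction applies to them.
\end{itemize}
The bound $\ok(\al)<\Om_2$ holds because every $\psi_1$ is mapped to $\psi_2$ with argument below $\varepsilon_{\Om_2+1}$. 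Strictly, $\psi_2$ terms are $\geq\Om_2$; I will check this point against the requirement $\be<\Om_2$ in clause (5) of the definition of $\OT$, and if needed use only $\ok(\al)<\varepsilon_{\Om_2+1}$ as in $\OT$.

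The main obstacle is (ii). Here I use Claim~\ref{cl:maxCoef}: write $\al=\la\scbr{t}$ with $t=mc(\al)$; then $\max G_0(\ok(\al))=\max G_0(\ok(t))$. Since $t$ is a natural-number coefficient of $\al$, and by maximality of $\al$ (Lemma~\ref{lm:OrdMeasure}) we have $t<H_{\al}(k)\leq m$, the induction hypothesis applies to $t$. The image $\ok(t)$ is either $t$ or $\om$, or it has the form $\psi_0(\ok(\be))+\ok(q_t)$ with $H_\be(k)\leq t<H_\al(k)$, hence $\be<\al$ by Lemma~\ref{lm:lex}. In the last case,
\[
G_0(\ok(t))=\{\ok(\be)\}\cup G_0(\ok(\be))\cup G_0(\ok(q_t)).
\]
Its elements are bounded by $\ok(\be)<\ok(\al)$, using Lemma~\ref{lm:OkMon} and the induction hypothesis $G_0(\ok(\be))<\ok(\be)$; the $q_t$ part is handled recursively in the same way. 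When $\ok(t)\in\{t,\om\}$ we get $G_0=\emptyset$ or $\{0\}$, which is trivially below $\ok(\al)$, since $\ok(\al)>0$ as $\al>0$; the case $\al=0$ is immediate.

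The delicate point is the passage from $t<H_\al(k)$ to $\be<\al$, in the case where $t$ itself exceeds $k$ and is written in $k$ normal form. I would justify this via Lemma~\ref{lm:lex} and the fact that $N$ is bounded by $H$ (Lemma~\ref{lm:Hprop}(5)); this is where I expect most of the care to be needed.
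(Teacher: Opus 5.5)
Your treatment of (i) and (iii) matches the paper's. For the key step (ii), however, you take a genuinely different and simpler route, and it works.

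\textbf{The paper's route.} After writing $\al=\la\scbr{t}$ with $t$ the maximal coefficient, the paper compares $\al$ with the auxiliary ordinal $\be=\la^-\scbr{\om}>\al$. Lemma~\ref{lm:OrdMeasure2} gives $\al<\be\{k\}$, and it takes $d$ minimal with $\al<\be\{d\}$. From this it reads off $t<H_{\be\{d-1\}}(k)$ with $\be\{d-1\}\le\al$, and bounds $G_0(\ok(t))$ by $G_0(\ok(H_{\be\{d-1\}}(k)))$. This requires a monotonicity property of $G_0$ on countable terms. It also requires a separate boundary case $\be\{d-1\}=\al$, handled by descending to $\be\{d-2\}$.

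\textbf{Your route.} You use only the crude bound $t<H_\al(k)$ and Lemma~\ref{lm:lex} to see that the leading Hardy index of $t$ lies below $\al$. Then $G_0(\ok(t))<\ok(\al)$ follows directly from the induction hypothesis on $t$ and Lemma~\ref{lm:OkMon}. This avoids fundamental sequences entirely and has no boundary case. It even makes Claim~\ref{cl:maxCoef} unnecessary: $G_0$ passes unchanged through $\psi_1$ and $\psi_2$, so $G_0(\ok(\al))$ is the union of the sets $G_0(\ok(l))$ over all natural-number coefficients $l$ of $\al$, and your argument applies to each $l$. The paper's finer, fundamental-sequence-relative bound is the kind of information used for base change (Lemma~\ref{lm:OrdMeasure}), but it is not needed for this lemma.

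\textbf{Points to fix.}

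First, the strict inequality $t<H_\al(k)$ does not come from Lemma~\ref{lm:OrdMeasure}. If $t$ is a proper part of $\al$, it follows from $t<N(\al)\le H_\al(k)$ by Lemma~\ref{lm:Hprop}(5). If $\al=t$, it follows from $t<k^{t+1}=H_t(k)$. Maximality of $\al$ enters only through Lemma~\ref{lm:lex}, which needs $H_\al(k)$ itself to be in $k$ normal form. That holds because $\al$ is maximal for $m\ge H_\al(k)$. After that, $\be<\al$ is immediate. So the delicate point is the strictness of $t<H_\al(k)$, not the passage from it to $\be<\al$.

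Second, your reason for $\ok(\al)<\Om_2$ is wrong. Your fallback $\ok(\al)<\varepsilon_{\Om_2+1}$ would also not meet the requirement $\be<\Om_2$ in clause (5) of the definition of $\OT$. The correct reason is that $\al<\Om$, so its top-level summands are $\psi_0$-terms and natural numbers. Their images are $\psi_1$-terms and countable terms, all below $\Om_2$; $\psi_2$ occurs only under a $\psi_1$. The paper itself never checks this condition.

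Third, the side induction for (i) must range over all subterms of $\al$ lying in $\OT_0$, not only those in $\OT_0\cap\Om$. It has to include uncountable ones, such as $\psi_1$-subterms and arguments of $\psi_0$.
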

\begin{proof}
Proof by induction on $m$.
\begin{Cases}
\item ( $m<k$). Then $\ok(m)=m$ and $G_0(\ok(m))= \emptyset$.
\item ($m = k$). Then $\ok(m) = \psi_00$ and $G_0(\ok(m)) = \{0\}$.
\item ($m= H_\al(k)\cdot p+ q$). Then $\ok(m)=\psi_0(\ok(\al)) \cdot p+ \ok(q)$.
By monotonicity of ordinal assignment, $\ok(q) < \psi_0(\ok(\al))\cdot p$, and $\ok(q)\in \OT$ by induction hypothesis. We will check that $G_0(\ok(\al))< \ok(\al)$ in the following case.

\item ($m=_k H_\al(k)$). Then $\ok(m) = \psi_0(\ok(\al))$ and we need to check that $G_0(\ok(\al))< \ok(\al)$. 
By Claim \ref{cl:maxCoef}, we can write $\al=\la\scbr{t}$ where $t$ is the maximal coefficient of $\al$ and $\la$ is an ordinal context and $max G_0(\ok(\al)) = maxG_0(\ok(t))$.
\medskip
\\
Let $\be$ be such that $\al<\be$. Since $m$ is in $k$ normal form, then by Lemma \ref{lm:OrdMeasure2}, $\al<\be\{k\}$.
Pick $d$ minimal such that $\al < \be\{d\}$.
Clearly, $d>0$.\medskip\\
Then from $\la\scbr{t}=\al<\be\{d\}=\la^-\scbr{\om}\{d\}=\la^-\scbr{H_{\be\{d-1\}}(k)}$, and we conclude that $t< H_{\be\{d-1\}}(k)$.
Since $H_{\be\{d-1\}}(k)$  is in $k$ normal form by Lemma \ref{lm:NForms}, then by monotonicity of the ordinal assignment, Lemma \ref{lm:OkMon}, $\ok(t)< \ok(H_{\be\{d-1\}}(k))$.\medskip\\
Hence $G_0(\ok(t))\leq G_0(\ok(H_{\be\{d-1\}}(k)))$,  using the fact that for ordinals $\de<\ga<\Om$, we have that $G_0\de\leq G_0\ga$.
Now we consider the bounds of $\al$ in the following cases;\smallskip

\begin{Cases}
    \item ($\be\{d-1\} < \al$). Then
    \begin{align*}
        G_0(\ok(\al))& \leq G_0(\ok(t)) \\
        &\leq G_0(\ok( H_{\be\{d-1\} }(k))) \\
        &= G_0(\psi_0( {\ok(\be\{d-1\})}))\\
        &= G_0(\ok(\be\{d-1\})) \cup \{\ok(\be\{d-1\})\} \\
        &< G_0(\ok(\be\{d-1\})) \cup \{ \ok(\al)\} 	\\
        &\leq \ok(\al).
    \end{align*}
    Since $\ok(\be\{d-1\}) < \ok(\al)$ by monotonicity of ordinal assignment and
    \\
    $ G_0(\ok(\be\{d-1\})) =G_0(\ok(H_{\be\{d-2\}}(k))) 
    \leq G_0(\ok(\be\{d-2\}))~\cup~ \{H_{\ok(\be\{d-2\})}(k)\}\\
    \leq \ok(\al) \cup G_0(\ok(\be\{d-2\})) $.
%
    \\
    \item ($\be\{d-1\} = \al$). Then
    $\la\scbr{t}=\al=\be\{d-1\} = \la^-\scbr{H_{\be\{d-2\}}(k)}$, 
    and we obtain that $t=H_{\be\{d-2\}}(k)$ and
    $G_0(\ok(\al))= G_0(\ok(t)) = G_0(\ok(H_{\be\{d-2\}}(k)))$ and we argue as in the previous case. 
\end{Cases}
\end{Cases}
As a result, we have 
$G_0(\ok(\al))  < \ok(\al) $, hence $\ok(m)\in \OT$.
\end{proof}
Next, we have to check that $\ok(\al)  = \psi_1(\al_0)$ is in $\psi_1$-normal form.
That is, for  $\al=\psi_0(\al_0)$ and  $\ok(\al) =\psi_1(\ok(\al_0))$  then $G_1(\ok(\al_0)) < \ok(\al_0)$.\\ By Lemma \ref{lm:OkMon} third assertion, if $G_0(\al_0)<\al_0$ then $G_1(\ok(\al_0))<\ok(\al_0)$.\\
Hence we have checked that the ordinal assignment is in normal form and we can conclude the following

\begin{corollary}
The normal forms after the ordinal assignment are well founded.
\end{corollary}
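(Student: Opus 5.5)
The plan is to deduce the corollary directly from the well-foundedness of $\OT$ (Theorem of Buchholz and Sch\"utte) by pulling back the ordering along the ordinal assignment $\ok$. By the NF preservation lemma just proved, $\ok(m)\in\OT$ for every $m$ in $k$ normal form. Moreover, the preceding remark shows via Lemma~\ref{lm:OkMon} and the lemma transferring $G_0(\al)<\be$ to $G_1(\ok(\al))<\ok(\be)$ that every subterm $\psi_1(\ok(\al_0))$ arising from $\al=\psi_0(\al_0)$ is itself in normal form. Hence $\ok$ maps the set of ($k$ normal forms of) natural numbers, and the set $\OT_0\cap\Om$ of ordinals occurring in them, into $\OT\cap\Om_2$.

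The key step is strict order preservation. Lemma~\ref{lm:OkMon} gives that $m<n$ implies $\ok(m)<\ok(n)$, and that $\al<\be$ implies $\ok(\al)<\ok(\be)$. Lemma~\ref{lm:lex} identifies the order on numbers with the lexicographic order on the pairs $\langle\al,q\rangle$ of their normal forms. Consequently any infinite strictly descending sequence of normal forms, whether of numbers or of their ordinal components, is mapped by $\ok$ to an infinite strictly descending sequence in $\OT$. This contradicts part (1) of the Buchholz--Sch\"utte theorem, so the normal forms are well founded. For normal forms produced by base change, I would first use Lemma~\ref{lm:NFpreservBaseChange} to see that $m'$ is again a genuine $(k+1)$ normal form. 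Then Lemma~\ref{lm:okpe=ok} gives $\okpe(m')=\ok(m)$, so the images lie in the same well-ordered set and the argument applies uniformly across bases.

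The only delicate point is that $\ok$ and the base change a priori land in $\T$ rather than $\OT$, and $\T$ is not well founded. I therefore expect the main obstacle to be confirming that every intermediate term, including contexts and the terms $\al'$, is genuinely in $\OT$. For $\al'$ this is the lemma on $G_0\al<\be\Rightarrow G_0\al'<\be'$ combined with monotonicity (Lemma~\ref{lm:MonBaseChange}). For $\ok$ it is the NF preservation lemma together with the $G_1$ check. Once these memberships are secured, the corollary is immediate from the well-ordering of $(\OT,<)$.
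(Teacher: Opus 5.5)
Your proposal is correct and follows essentially the paper's route: the NF preservation lemma, together with the $G_1$ check coming from the lemma $G_0(\alpha)<\beta \Rightarrow G_1(o_k(\alpha))<o_k(\beta)$, places every $o_k(m)$ in $\mathsf{OT}$, and the corollary is then immediate from the Buchholz--Sch\"utte well-ordering of $\mathsf{OT}$. The additional pull-back via Lemma~\ref{lm:OkMon}, Lemma~\ref{lm:lex} and the base-change lemmas is harmless but not needed for this corollary, since membership in $\mathsf{OT}$ already suffices (any subset of a well-ordered set is well founded); monotonicity only becomes essential in the subsequent termination theorem.
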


Now we can prove that the Goodstein process actually terminates.
\begin{theorem}
$PRA+ PRWO(\psi_0\psi_1(\varepsilon_{\Om_2+1})) \vdash \forall \ell \exists k G_k(\ell)=0$.
\end{theorem}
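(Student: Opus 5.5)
The plan is the standard ordinal-descent argument. To each stage of the process we attach the ordinal $o_{k+2}(G_k(\ell))$, computed from the $(k+2)$-normal form of $G_k(\ell)$; the base at stage $k$ is $k+2$. We show that this sequence strictly decreases as long as $G_k(\ell)>0$. By the NF preservation lemma, each such ordinal lies in $\OT$ below $\psi_0\psi_1(\varepsilon_{\Om_2+1})$. Since all of these notions are primitive recursive, the map $k\mapsto o_{k+2}(G_k(\ell))$ is a primitive recursive function. The principle $PRWO(\psi_0\psi_1(\varepsilon_{\Om_2+1}))$ says that no primitive recursive sequence in this segment is infinitely descending. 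Hence some $k$ must have $G_k(\ell)=0$, and the whole argument runs inside $\PRA$.

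The key step is the one-step descent. Fix $k$ and put $m=G_k(\ell)>0$, written in $(k+2)$-normal form, so $G_{k+1}(\ell)=m[k+2\leftarrow k+3]-1$.
\begin{enumerate}
\item By Lemma~\ref{lm:NFpreservBaseChange}, the base-changed number $m'$ is in $(k+3)$-normal form.
\item By Lemma~\ref{lm:okpe=ok}, $o_{k+3}(m')=o_{k+2}(m)$, so base change leaves the assigned ordinal unchanged.
\item Since $m'-1<m'$, monotonicity of the ordinal assignment (Lemma~\ref{lm:OkMon}) at base $k+3$ gives $o_{k+3}(m'-1)<o_{k+3}(m')$.
\end{enumerate}
Combining these, $o_{k+3}(G_{k+1}(\ell))<o_{k+2}(G_k(\ell))$. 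For step 3 we need $m'-1$ to be represented by its own $(k+3)$-normal form. This holds because normal forms are unique (Lemma~\ref{lm:HardyNF}), and Lemma~\ref{lm:NForms} describes the predecessor's normal form. The degenerate cases $m<k+2$ and $m=k+2$ follow directly from the definitions: $o(m)=m$ for small $m$, and $o(k+2)=\om$ drops to a natural number.

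Formalization is mostly bookkeeping. Normal forms, base change and $o_k$ are defined by recursions whose values are bounded by term lengths. By the finiteness lemma, only finitely many $\al$ satisfy $H_\al(k)\le m$, so the normal form of $m$ can be found by bounded search. This makes everything primitive recursive, with termination-free definitions. The lemmas above are proved by inductions on natural numbers and on term complexity $N$, all available in $\PRA$.

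The main obstacle is the claim that $m'-1$ is the $(k+3)$-normal form predecessor of $m'$ and that monotonicity applies to it. Here the fundamental-sequence clause $H_\la(k)=H_{\la\{k\}}(k)$ means the normal form of $m'-1$ can differ structurally from that of $m'$, not just by decrementing $q$. I would avoid tracing this structure. Lemma~\ref{lm:OkMon} compares any two numbers written in normal form at the same base, so only uniqueness and existence of the normal form of $m'-1$ (Lemma~\ref{lm:HardyNF}) are needed, together with $m'-1<m'$.
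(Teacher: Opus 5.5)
Your one-step descent is the paper's argument. The paper's proof likewise combines $o_{k+1}(m\kp)=o_k(m)$ (Lemma~\ref{lm:okpe=ok}) with monotonicity of $o_{k+1}$ applied to $m\kp-1<m\kp$ (Lemma~\ref{lm:OkMon}), and then appeals to the impossibility of an infinite descent. Your indexing by $o_{k+2}$ at stage $k$ is the consistent form of the paper's $O_k(\ell)=o_k(G_k(\ell))$, and you handle the cases $m\le k+2$ correctly. The paper does not say how any of this is carried out in $\PRA+\mathrm{PRWO}(\Lambda)$, with $\Lambda=\psi_0\psi_1(\varepsilon_{\Omega_2+1})$. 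The justification you add for that step is wrong.

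\textbf{The gap.} Base change is not primitive recursive, and its values are not bounded by term lengths. Computing $m\kp=H_{\alpha'}(k+1)\cdot p+q'$ means evaluating a Hardy function at a new argument.
\begin{itemize}
\item For example, $H_{\omega^\omega}(k)$ (with $\omega^\omega=\psi_0\psi_0 0$) is in $k$-normal form and is sent to $H_{\omega^\omega}(k+1)$.
\item If $(m,k)\mapsto m\kp$ were primitive recursive, then so would be $k\mapsto H_{\omega^\omega}(k)=H_{\omega^{1+c}}(k)$ with $c\ge k$. That function is of Ackermann type.
\item Hence $(k,\ell)\mapsto G_k(\ell)$ is not primitive recursive, and $\mathrm{PRWO}(\Lambda)$ cannot be applied to $k\mapsto o_{k+2}(G_k(\ell))$ as you propose.
\end{itemize}

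Passing to notations instead of numbers does not rescue this. For $m=_kH_\alpha(k)$, the Bachmann property gives $m-1=_kH_\delta(k)\cdot(k-1)+(H_\delta(k)-1)$. Here $\delta+1$ is the first successor on the path $\alpha,\alpha\{k\},\alpha\{k\}\{k\},\dots$. For $\alpha=\omega^\omega$, this $\delta$ has at least $H_{\omega^\omega\{k-1\}}(k)$ summands. So even the map between consecutive normal-form terms is not primitive recursive.

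\textbf{A possible repair.} To obtain the theorem as stated you need an extra argument. One option:
\begin{itemize}
\item Use $\mathrm{PRWO}(\Lambda)$ also for the auxiliary evaluations of $H_\beta$, with $\beta$ below the Bachmann--Howard ordinal, giving each such computation a decreasing ordinal measure.
\item Interleave these computations with the Goodstein steps, so that a single primitive recursive sequence descends below $\Lambda$. It could be measured, e.g.\ by $\omega^{o_{k+2}(G_k(\ell))}$ plus a smaller measure of the pending computation.
\end{itemize}
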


\begin{proof}
Let $O_k(\ell) := \ok(G_{k}(\ell))$.
If $G_k(\ell) >0$ then 
\begin{align*}
    O_{k+1}(\ell) &= \okpe(G_{k+1}(\ell)) \\
    &= \okpe(G_k(\ell)[k+2 \leftarrow k+3]-1 ) \\
    &< \okpe (G_k(\ell)[k+2\leftarrow k+3]) \\
    &= \ok(G_k(\ell)) = O_k(\ell).
\end{align*}

Since $(O_k(l))_{k<\om}$ cannot be an infinite decreasing sequence of ordinals, there must be some $k$ with $O_k(l)=0$, which yields $G_k(l)=0$.     
\end{proof}

\section{Independence of the Goodstein process}\label{sec: Indep}
In this section we compare the length of our Goodstein sequences with the fast‑growing hierarchy at $|\ID_2|$. First we establish the following technical Lemma \ref{lm:LemmaInd} that will be crucial for proving our main independence result Theorem 60). Independence will be established combining the ordinal assignment from 4.3 with the general hierarchy comparison (Theorem 16, Proposition 17, Theorem 8).

\begin{lemma}\label{lm:LemmaInd} Let $m':= m\kp$. Given $k,m<\om$ with $k\geq 2$,
\[\okpe(m'-1) \geq \ok(m)[k].\]
\end{lemma}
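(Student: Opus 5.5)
Because $\okpe(m')=\ok(m)$ (Lemma~\ref{lm:okpe=ok}), the claim says: subtracting one after the base change lowers the assigned ordinal by no more than one step of the fundamental sequence at $k$. I would argue by induction on $m$, splitting according to the $k$ normal form $m=_k H_\al(k)\cdot p+q$ and simply computing $m'-1$ in $k+1$ normal form.

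\emph{Small and additive cases.} If $0<m<k$, then $m'-1=m-1=\ok(m)[k]$. If $m=k$, then $m'-1=k$, which is below $k+1$, so $\okpe(k)=k=\om[k]$. If $q>0$, then $m'-1=H_{\al'}(k+1)\cdot p+(q'-1)$, and this is still in $k+1$ normal form by Lemma~\ref{lm:NFpreservBaseChange} together with Lemma~\ref{lm:NForms}. The fundamental sequence acts on the last summand, so the induction hypothesis for $q$ gives the claim. If $q=0$ and $p>1$, I would peel off one copy of $H_{\al'}(k+1)$ and reduce to the case $p=1$, since $(\psi_0(\ok\al)\cdot p)[k]=\psi_0(\ok\al)\cdot(p-1)+\psi_0(\ok\al)[k]$.

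\emph{Main case $m=_k H_\al(k)$, so $\ok(m)=\psi_0(\ok\al)$.} I would split on $\al$.

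\begin{itemize}
\item \textbf{$\al=\al_0+1$.} Here $H_{\al'}(k+1)-1=H_{\al_0'}(k+1)\cdot k+(H_{\al_0'}(k+1)-1)$. Recursing on the remainder, the $k+1$ normal form contains at least $k$ copies of $\psi_0(\okpe\al_0')=\psi_0(\ok\al_0)$. So $\okpe(m'-1)\ge\psi_0(\ok\al_0)\cdot k=\psi_0(\ok\al_0+1)[k]$, because $tp(\ok\al_0+1)=1$.
\item \textbf{$\al$ a limit.} Here $H_{\al'}(k+1)=H_{\al'\{k+1\}}(k+1)$. By Claims~\ref{claim1} and~\ref{claim2}, together with the Bachmann property (Theorem~\ref{thm:bachmannProp}) and Lemma~\ref{lm:Hprop}, the number $m'-1$ has leading ordinal at least $\al'\{k\}=(\al\{k\})'$, up to reordering. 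Applying the induction hypothesis to $H_{\al\{k\}}(k)$, which is in normal form by Lemma~\ref{lm:NForms}, reduces everything to comparing $\psi_0(\ok(\al\{k\}))$ with $\psi_0(\ok\al)[k]$.
\item \textbf{$\al=0$.} This is the bottom case, checked directly: $H_0(k+1)-1=k$ and $\ok(k)=\om$.
\end{itemize}

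The comparison in the limit case is where I expect the main obstacle. Now $\ok\al$ is built from $\psi_1,\psi_2$, and its type may be $\om$ or $\Om$. If $tp(\ok\al)=\om$, then $\psi_0(\ok\al)[k]=\psi_0((\ok\al)[k])$. I would then prove a subsidiary lemma, by induction on $\al$ using contexts, stating $\ok(\al)[k]\le\ok(\al\{k\})$; it should follow from monotonicity of $\ok$ (Lemma~\ref{lm:OkMon}). If $tp(\ok\al)=\Om$, which happens when the placeholder sits under a $\psi_0$ turned into $\psi_1$, then $\psi_0(\ok\al)[k]=\psi_0((\ok\al)[z_k])$. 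Here I would show inductively that $z_j\le\okpe$ of the corresponding inner value $H_{\al'\{j\}}(k+1)$, i.e.\ that the collapsing iterates $z_j$ are dominated by $\ok(H_{\al\{j-1\}}(k))$. This matching is delicate, and I would handle it with Lemma~\ref{lm:Context2} and Lemma~\ref{lm:OrdMeasure2}. The same bookkeeping pattern as Claim~\ref{claim2} should make the $z_j$ and the Hardy values line up step by step.
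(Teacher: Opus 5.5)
Your overall skeleton agrees with the paper: induction on $m$, the cases $m\le k$, $q>0$ and $p>1$, and matching the iterates $z_j$ with Hardy values when $tp(\ok\al)=\Om$, as in Claims~\ref{claim0} and~\ref{ClaimIndOm}. However, your treatment of $m=_kH_\al(k)$ has a genuine gap. It ignores that natural-number coefficients inside $\al$ are themselves written hereditarily in $k$ normal form. So a coefficient $l\ge k$ has $l'\neq l$, and $\ok(l)$ is a limit (e.g.\ $k'=k+1$ and $\ok(k)=\om$).

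\emph{Successor case.} For $\al=\xi+l$ with $l\ge k$, neither $\al'=\al_0'+1$ nor $\ok\al=\ok\al_0+1$ holds. Take $k=2$ and $m=8=_2H_2(2)$. Then $\ok(m)=\psi_0(\om)$, so $\ok(m)[2]=\psi_0(2)$. Your bound only gives $\psi_0(1)\cdot 2<\psi_0(2)$; in fact $\al'=3$ and $m'=81$.

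\emph{Limit case.} This fails for the same reason. Claims~\ref{claim1} and~\ref{claim2} give $\al'\{k\}=(\al\{k\})'$ only for the shapes $\la^-\scbr{\om}$ and $\la^-\scbr{\psi_0(\mu\scbr{\Om})}$. Your subsidiary inequality $\ok(\al)[k]\le\ok(\al\{k\})$ is false. For $\al=\psi_0(k)$ we have $\ok(\al)=\psi_1(\om)$, so $\ok(\al)[k]=\psi_1(k)$. But $\al\{k\}=\psi_0(k-1)\cdot N$ with $N=H_{\al\{k-1\}}(k)$, hence $\ok(\al\{k\})=\psi_1(k-1)\cdot N<\psi_1(k-1)\cdot\om=\psi_1(k)$. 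Transporting $\al'\{k\}$ back to base $k$ discards exactly what the base change gained, and monotonicity of $\ok$ cannot recover it.

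\emph{The missing idea.} Stay in base $k+1$, descend at the coefficient itself, and apply the induction hypothesis to that coefficient; this is legitimate because $l<m$. Suppose $\al=\xi+l$ or $\al=\xi+\mu^-\scbr{\psi_i(\ga+l)}$ with $l\ge k$. Bound $m'-1$ below by $H_{\xi'+l'-1}(k+1)$, respectively by $H_{\xi'+(\mu^-)'\scbr{\psi_i(\ga'+l'-1)}}(k+1)$, which is in $k+1$ normal form. Then use $\okpe(l'-1)\ge\ok(l)[k]$. In both examples above this is just $\okpe(k)=k=\om[k]$. This is why the paper separates the cases $0<l<k$ and $l\ge k$, both for successor $\al$ and for a successor sitting under $\psi_i$.

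\emph{A smaller slip.} $H_{\al\{k\}}(k)=H_\al(k)=m$, so no induction hypothesis applies to it. Also, Lemma~\ref{lm:NForms} only covers $b<k$. What the argument really needs is that $H_{\al'\{k\}}(k+1)$ is in $k+1$ normal form, and that lemma does give this in base $k+1$.
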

\begin{proof}
We prove the claim by induction on $m$. 
The cases $m<k$ and $m=k$ are immediate from the definition of $o_k$. For $m=H_\alpha(k)\cdot p+q$ in $k$–normal form we split into $q>0$, $q=0<p$, and $q=0=p=1$, and in each case apply Lemmas \ref{lm:HardyNF}-\ref{lm:OrdMeasure} and \ref{lm:okpe=ok}-\ref{lm:OkMon} to track how base–change interacts with the Hardy hierarchy and with fundamental sequences.
\\
If  $m<k$ then $\okpe(m'-1) = \okpe(m-1)=m-1= \ok(m)[k]$.
\\
If $m=k$ then $m= H_0(k)$ and 
\begin{align*} 
    \okpe(m'-1)& =\okpe(H_{0} (k+1) -1)\\
    &= \okpe(k ) =k \\
    &= \psi_0(0)[k]=\ok(m)[k].
\end{align*}

Let $m=_k H_\al(k)\cdot p +q$.
\begin{Cases}
\item If $q>0$, then by Lemma \ref{lm:NForms}, $H_\al(k) \cdot p+ q-1$ is in $k$ normal form. Thus we can apply the ordinal assignment. By induction hypothesis $ \okpe(q'-1) \geq \ok(q)[k]$ and by Lemma \ref{lm:okpe=ok}, $\okpe(\al') = \ok(\al)$.
\begin{align*} 
        \okpe(m'-1)& =\okpe(H_{\al'} (k+1) \cdot p + q'-1)\\
        & =\psi_0(\okpe(\al'))\cdot p + \okpe(q'-1)\\
        &\geq \psi_0(\ok(\al))\cdot p + \ok(q)[k]=\ok(m)[k].
\end{align*}

\item If $q=0$ and $p>1$ then by induction hypothesis in step 3 to 4, as well as by Lemma \ref{lm:okpe=ok}, $\okpe(\al')=\ok(\al)$, we have the following,
    \begin{align*} 
            \okpe(m'-1)& =\okpe(H_{\al'}(k+1) \cdot p - 1)\\
            &= \okpe(H_{\al'}(k+1) \cdot (p-1) + H_{\al'}(k+1)-1)\\
            & =\psi_0(\okpe(\al')) \cdot (p-1) + \okpe(H_{\al'}(k+1)-1)\\
            &\geq_{ih}  \psi_0(\ok(\al))\cdot (p-1)  + \ok(H_\al (k))[k]\\
            &= \ok(H_\al(k)\cdot (p-1) + H_\al (k))[k] \\
            &=\ok(H_\al(k)\cdot p)[k].
        \end{align*}      
\item If $q=0$ and $p=1$ then $m=_k H_\al(k)$ where $\al = \psi_0\al_0 +... +\psi_0\al_n+l$, for simplicity we will write, $\al = \xi + l$.
There are several sub-cases to consider depending on the type of $\al$:
\begin{Cases}
        
        \item ($\al$ is a successor, $\al = \xi + l, 0<l<k$). Then $\ok(\al)$ is a successor; $\ok(\al)= \ok(\xi)+l $. Since $H_{\xi'+l-1}(k+1) \cdot k $ is in $k+1$ normal form by Lemma \ref{lm:NForms}, we can apply the ordinal assignment in steps 3 to 4.\\ Additionally by Lemma \ref{lm:okpe=ok}, $\okpe(\xi')=\ok(\xi)$.
        \begin{align*} 
            \okpe(m'-1)& =\okpe(H_{\xi'+l} (k+1) -1)\\
            & = \okpe(H_{\xi'+l-1} (k+1) \cdot (k+1) -1)\\
            & \geq \okpe(H_{\xi'+l-1}  (k+1)\cdot k )\\
            & = \psi_0({\okpe(\xi') +l-1}) \cdot k \\
            &= \psi_0({\ok(\xi)} + l-1) \cdot k\\
            &= \psi_0(\ok(\xi)+ l)[k] =\ok(m)[k]\\
        \end{align*}
        
        \item ($\al$ is a successor, $\al=\xi+l, l\geq k$). Then $l$ is written in $k$ normal form. By induction hypothesis, since $l<m$ then $\okpe(l'-1)\geq \ok(l)[k]$. Also, in $\OT$, $\psi_0(\okpe(l'-1)) \geq \psi_0(\ok(l)[k])$.\\
        Since the cofinality of $\ok(l) =\om$, we can apply the fundamental sequence to $\ok(l)$ in the last step. 
        By Lemma \ref{lm:okpe=ok}, $\okpe(\xi')=\ok(\xi)$.
        \begin{align*} 
            \okpe(m'-1)& =\okpe(H_{\xi'+l'} (k+1) -1)\\
            & \geq \okpe(H_{\xi'+l'-1}  (k+1) )\\
            & = \psi_0({\okpe(\xi')} + \okpe(l'-1)) \\
            &\geq \psi_0({\ok(\xi)}+ \ok(l)[k])\\
            &=\psi_0({\ok(\xi)}+ \ok(l))[k]=\ok(m)[k]\\
        \end{align*}

        \item ($\al$ is a limit of countable cofinality). Hence $\al = \xi + \psi_0(\al_n)$ where $\al_n$ can be $0$, a successor, of countable cofinality or of uncountable cofinality. We will differentiate on the type of $\al_n$ and $\ok(\al_n)$ in the following sub-cases:\medskip
        
        \begin{Cases}
            \item ($\al_n = 0$). Thus $\al = \xi + \om$ is of countable cofinality and $\ok(\al)=\ok(\xi)+\Om$ is of uncountable cofinality.
            Since $H_{\al\{d\}}(k)$ for $d<k$ is in $k$-normal form by Lemma \ref{lm:NForms} then we can apply the ordinal assignment.
            By Lemma \ref{lm:okpe=ok}, $\okpe(\xi')=\ok(\xi)$.
            \begin{align*} 
                \okpe(m'-1) & =\okpe(H_{\xi'+\om}(k+1) -1)\\
                & = \okpe(H_{(\xi'+\om)\{k+1\}}(k+1) -1)\\
                & \geq  \okpe(H_{(\xi'+\om)\{k\}}(k+1))\\
                & \geq_{(Claim \ref{claim0})} \psi_0( \ok(\xi)+ \Om )[k]\\
                &=\ok(H_{\xi+\om}(k)[k].\\
            \end{align*}
            We prove the following Claim \ref{claim0} to explain steps 3 to 4.
            First, we will define the fundamental sequences for our case.\\
            
            \begin{definition}Fundamental sequences for $\psi_0(\ok(\xi) + \Om)[t] = \psi_0(\ok(\xi) +\Om[z_t])$: \\
                $z_0 := 0$\\
                $z_1 := \psi_0( \ok(\xi)+ \Om)[0] = \psi_0( \ok(\xi)+ \Om[z_0]) = \psi_0( \ok(\xi) +z_0)=\psi_0(\ok(\xi)) $\\
                $z_{t+1} := \psi_0(\ok(\xi) + \Om)[t] = \psi_0(\ok(\xi) + \Om[z_t])=
                \psi_0( \ok(\xi)+ \Om[\psi_0(\ok(\xi) +  \Om[z_{t-1}])])
                = \psi_0( \ok(\xi)+ \psi_0(\ok(\xi) +  z_{t-1}))$.\\
            \end{definition}
            \begin{claim}\label{claim0}
                \[ \okpe(H_{(\xi'+\om) \{t\}}(k+1))\geq \psi_0( \ok(\xi)+ \Om)[t].\]
            \end{claim}
            \begin{proof} We prove the claim by induction on $t$.\\
                We use the fact that $ H_{(\xi'+\om)\{t\}}(k+1)$ is in $k+1$ normal form for $t<k+1$ by Lemma \ref{lm:NForms}.\\ 
                For $t=0$, by Lemma \ref{lm:okpe=ok}, $\okpe(\xi')= \ok(\xi)$, and in $\OT$ we have that $ \psi_0(\okpe(\xi')) =  \psi_0(\ok(\xi))$.
                \begin{align*}
                    \okpe(H_{(\xi'+\om)\{0\}} (k+1)) &=\okpe(H_{(\xi'+\om)[0]}(k+1)) \\
                    &=\okpe(H_{\xi'+\om[0]}(k+1))\\
                    &= \okpe(H_{\xi'}(k+1))\\
                    &= \psi_0(\okpe(\xi')) = \psi_0( \ok(\xi)) \\
                    &= \psi_0( \ok(\xi)+ \Om[z_0]) = \psi_0( \ok(\xi)+ \Om)[0].\\
                \end{align*} 
                
                Since $t=0$ takes the argument to $0$, we will also consider at $t=1$.
                Again using  Lemma \ref{lm:okpe=ok}, $ \psi_0(\okpe(\xi') + \psi_0(\okpe(\xi'))) =  \psi_0(\ok(\xi) + \psi_0(\ok(\xi)))$.
                \begin{align*}
                    \okpe(H_{(\xi'+\om)\{1\}} (k+1)) &=\okpe(H_{\xi'+\om[H_{(\xi' + \om)\{0\}}(k+1)]}(k+1)) \\
                    &= \okpe(H_{\xi'+H_{\xi'}(k+1)}(k+1))\\
                    &= \psi_0(\okpe(\xi') + \psi_0(\okpe(\xi')))\\
                    &= \psi_0( \ok(\xi)+ \psi_0(\ok(\xi))) \\
                    &= \psi_0( \ok(\xi)+\Om[\psi_0(\ok(\xi) +\Om[z_0]))]) \\
                    &= \psi_0( \ok(\xi)+ \Om[z_1]) = \psi_0( \ok(\xi)+ \Om)[1].\\
                \end{align*}
                Induction step for $t+1$;
                \begin{align*}
                    \okpe(H_{(\xi'+\om)\{t+1\}} (k+1)) &=\okpe(H_{\xi'+\om[H_{(\xi'+\om)\{t\}}(k+1)]}(k+1)) \\
                    &= \okpe(H_{\xi'+H_{(\xi'+\om)\{t\}}(k+1)}(k+1)) \\
                    &=\psi_0(\okpe(\xi')+ \okpe(H_{(\xi'+\om)\{t\}}(k+1)))\\
                    &\geq \psi_0( \ok(\xi)+\psi_0(\ok(\xi)+ \Om)[t]) \\
                    &= \psi_0( \ok(\xi)+\Om[\psi_0(\ok(\xi)+ \Om[z_t])]) \\
                    &= \psi_0( \ok(\xi)+ \Om[z_{t+1}] ) = \psi_0( \ok(\xi)+ \Om)[t+1].\\
                \end{align*}
            \end{proof}
            
            { \em The following cases, $\al=\xi+\psi_0(\ga+l)$, $\al = \xi+\psi_0(\de+\psi_0(\ga+l))$, $\al= \xi+\psi_0(\de + \psi_1(\ga+l))$ and so on, have the same procedure, hence we will consider them together in the following case, and use a context to write $\al$.} \\
            
            \item \label{CaseSucc}($\al_n = \ga+l$ or $\la^-\scbr{\psi_i(\ga+l)}$, for $i\in \{0,1\}$, and $0<l<k$ and  $\ga\in Lim$ or $0$).\smallskip
            \\
            Then $\al= \xi+ \la^-\scbr{\psi_i(\ga+l)}$ and $\ok(\al) = \ok(\xi) + \ok(\la^-)\scbr{\psi_{i+1}(\ok(\ga)+l)}$ are both of countable cofinality.
            \\
            \begin{example}
                For example; \\
                For $\al=\xi+\psi_0(\ga+l)$, the context is 
                $\la^-\scbr{\cdot}=\scbr{\cdot}$ and $\al=\xi +\la^-\scbr{\psi_0(\ga+l)}$.
                \\
                For $\al = \xi+\psi_0(\de+\psi_0(\ga+l))$, the context is $\la^-\scbr{\cdot} = \psi_0(\de + \scbr{\cdot})$, then $\al= \xi +\la^-\scbr{\psi_0(\ga+l)}$.
                \\
                If $\al = \xi + \psi_0(\de + \psi_1(\ga+l))$, the context is $\la^-\scbr{\cdot} = \psi_0(\de + \scbr{\cdot})$, then
                $\al= \xi+ \la^-\scbr{\psi_1(\ga+l)}$.
                \medskip
                \\
                The following example illustrates the ordinal assigment:\\
                $\ok(H_{\psi_0\psi_0\psi_01})[k]= \psi_0(\psi_1\psi_1\psi_11)[k] = \psi_0(\Om^3\cdot \om)[k]= \psi_0(\Om^3 \cdot k)$.
                \\
                Consider another example to ordinal assignment as in the following: \\
                $\ok(H_{\psi_0(\psi_1\Om + \psi_1(\om+1))})[k] = \psi_0(\psi_1(\psi_2\Om_2+ \psi_2(\Om+1)))[k]\\=
                \psi_0(\psi_1(\psi_2\Om_2 + \psi_2\Om\cdot k))$. 
            \end{example}
            
            By Lemma \ref{lm:NForms} we have that
            $H_{\xi'+(\la^-)'\scbr{\psi_i(\xi'+l-1)\cdot k}}(k+1)$ is in $k+1$ normal form, hence in steps 3 to 4 we can apply the ordinal assignment.
            \begin{align*} 
                \okpe(m'-1) & =\okpe(H_{\xi'+(\la^-)'\scbr{\psi_i(\ga'+l)}}(k+1) -1)\\
                & = \okpe(H_{(\xi'+(\la^-)'\scbr{\psi_i(\ga'+l)})\{k+1\}}(k+1) -1)\\
                & \geq  \okpe(H_{\xi'+(\la^-)'\scbr{\psi_i(\ga'+l-1)\cdot k}}(k+1))\\
                & =  \psi_0(\okpe(\xi')+ \okpe((\la^-)')\scbr{\psi_{i+1}(\okpe(\ga') + l-1)\cdot k})\\
                & = \psi_0( \ok(\xi)+ \ok(\la^-)\scbr{\psi_{i+1}(\ok(\ga) +l-1) \cdot k})\\
                & = \psi_0( \ok(\xi)+ \ok(\la^-)\scbr{\psi_{i+1}(\ok(\ga) +l)} )[k]\\
                &=\ok(H_{\xi+\la^-\scbr{\psi_i(\ga+l)}}(k))[k].\\
            \end{align*}

            \item \label{CaseNotSucc} $(\al_n=\ga+l$ or $\mu^-\scbr{\psi_i(\ga+l)}$ for $i \in \{0,1\}$,  $l\geq k$ and $\ga\in Lim$ or $0$).\\
            As above we will cover several cases with the same procedure and we will use a context $\mu^-\scbr{\cdot}$ to write $\al$. Since $l\geq k$, then also $l$ is written in $k$  normal form. Then $\al = \xi + \mu^-\scbr{\psi_i(\ga+l)}$ and $\ok(\al)= \ok(\xi)+ \ok(\mu^-)\scbr{\psi_{i+1}(\ok(\ga)+\ok(l))}$.
            \\
            \begin{example}
                Here we cover ordinals such as $\al = \xi + \psi_0(\ga+l)$ or $\al =\xi+ \psi_0(\de+\psi_0(\ga+l))$ or $\al = \xi+ \psi_0(\de + \psi_1(\ga+l))$. 
                \\
                \\
                As for the ordinal assignment, consider an example for $k=3$ and $\al =\psi_0\psi_0\psi_03$: \\
                $\ok(H_{\psi_0\psi_0\psi_03})[k]= \psi_0(\psi_1\psi_1\psi_1\om)[k] = \psi_0(\Om^3\cdot \om^\om)[k]= \psi_0(\Om^3\cdot \om^{k}).$
                \\
                Or consider $\al = \psi_0(\psi_1(\om + 3 ))$, such that\\ 
                $\ok(H_{\psi_0(\psi_1(\om + 3 ))})[k] = \psi_0(\psi_1( \psi_2(\Om + \om)))[k]  = \psi_0( \psi_1(\psi_2(\Om + \om[k]))).$
                \\
            \end{example}
            In step 3, by Lemma \ref{lm:NForms} we can apply the ordinal assignment, 
            \\as $ H_{\xi'+(\mu^-)'\scbr{\psi_i(\ga'+l'-1)}}(k+1)$ is in $k+1$ normal form. \\
            In steps 4 to 5, by induction hypothesis $\okpe(l-1)\geq \ok(l)[k]$. Additionally
            $\psi_{i+1}(\okpe(l-1)) \geq \psi_{i+1}(\okpe(l)[k])$ in $\OT$.\\
            Since the cofinality of $\ok(l)=\om$ we can apply the fundamental sequence to $\ok(l)$.
            
            \begin{align*} 
                \okpe(m'-1) & =\okpe(H_{\xi'+(\mu^-)'\scbr{\psi_i(\ga'+l')}}(k+1) - 1)\\
                & = \okpe(H_{\xi'+(\mu^-)'\scbr{\psi_i(\ga'+l')}\{k+1\}}(k+1)-1)\\
                & \geq \okpe(H_{\xi'+(\mu^-)'\scbr{\psi_i(\ga'+l'-1)}}(k+1))\\
                & = \psi_0(\okpe(\xi')+ \okpe((\mu^-)')\scbr{\psi_{i+1}(\okpe(\ga')+\okpe(l'-1))}))\\
                & \geq \psi_0( \ok(\xi)+ \ok(\mu^-)\scbr{\psi_{i+1}(\ok(\ga)+\ok(l)[k])})\\
                & = \psi_0(\ok(\xi)+ \ok(\mu^-)\scbr{\psi_{i+1}(\ok(\ga)+\ok(l))})[k]\\
                &=\ok(H_{\xi+ (\mu^-)\scbr{\psi_i(\ga+l)}}(k))[k].\\
            \end{align*}

            \item ($\al_n=\tau^-\scbr{\om}$).\label{CaseCountCof}
            Then $\al= \xi + \psi_0(\tau^-\scbr{\om})$ is of countable cofinality and
            $\ok(\al)= \ok(\xi) + \psi_1(\ok(\tau^-)\scbr{\Om})$ is of uncountable cofinality.\medskip
            \\
            Here we  cover cases such as  $\al = \xi + \psi_0(\de + \om)$ or $\al = \xi + \psi_0(\de+ \psi_0(\ga +\om))$  or $\al = \xi + \psi_0(\de+ \psi_1(\ga +\om))$. \\	    
            Recall the fundamental sequences for $\al = \psi_0(\al_0)$ where $tp(\al)=\om$ and $\al_0 :=\mu^-\scbr{\Om}$ for a $\psi_0-$free context $\mu^-$.
            $\psi_0(\al_0)[x] = \psi_0(\al_0[z_x])$ where $z_0=0$ and $z_{x+1}= \psi_0(\al_0[z_x])$.\medskip
            
            \begin{example}
                The following examples clarify the fundamental sequences that arise in this setting:\\
                For simplicity, let $\al = \psi_0(\om)$, then $\ok(H_{\psi_0(\om)}(k))[x]= \psi_0(\psi_1(\Om))[x]= \psi_0(\psi_1(\Om)[z_x])$ since $\psi_1(\Om)$ is of cofinality $\Om$. Then $\psi_0(\psi_1(\Om)[z_x])= \psi_0(\psi_1(\Om[z_x]))$. \\
                The following example illustrates further:\\
                Let $\al = \psi_0(\psi_1(\om))$ then $\ok(H_{\psi_0(\psi_1(\om))}(k))[x] = \psi_0(\psi_1(\om^{\Om_2+\Om}))[x]= \psi_0(\psi_1(\om^{\Om_2+\Om})[z_x])$ since cofinality of $\psi_1(\om^{\Om_2+\Om})$ is $\Om$.\\
                Then
                $ \psi_0(\psi_1(\om^{\Om_2+\Om})[z_x])= \psi_0(\psi_1(\om^{\Om_2+\Om}[z_x]))= \psi_0(\psi_1(\om^{\Om_2+\Om[z_x]}))$ since the cofinality of $\om^{\Om_2+\Om}$ is $\Om$.\\
            \end{example}
            
            Since $H_{\al\{d\}}(k)$ for $d<k$ is in $k+1$-normal form by Lemma \ref{lm:NForms} then we can apply the ordinal assignment.
            By Lemma \ref{lm:okpe=ok}, $\okpe(\xi')=\ok(\xi)$.
            
            \begin{align*} 
                \okpe(m'-1) & =\okpe(H_{\xi'+\psi_0((\tau^-)'\scbr{\om})}(k+1) -1)\\
                & = \okpe(H_{(\xi'+\psi_0((\tau^-)'\scbr{\om}))\{k+1\}}(k+1) -1)\\
                & \geq  \okpe(H_{(\xi'+\psi_0((\tau^-)'\scbr{\om}))\{k\}}(k+1))\\
                & \geq_{\text{Claim \ref{ClaimIndOm}}} \psi_0( \ok(\xi)+ \psi_1(\ok(\tau^-)\scbr{\Om})) [k]\\		
                &=\ok(H_{\xi+\psi_0(\tau^-\scbr{\om})}(k)[k].\\
            \end{align*}
            We prove Claim \ref{ClaimIndOm} to explain steps 4 to 5.
            First, we give the following definition.
            \begin{definition} Definition of fundamental sequences for 
                $\psi_0(\ok(\xi)+ \psi_1(\ok(\tau^-)\scbr{\Om}))[t]$;\\
                $z_0 =0$,\\
                $z_1 =	\psi_0( \ok(\xi)+ \psi_1(\ok(\tau^-)\scbr{\Om}))[0]                =\psi_0( \ok(\xi)+ \psi_1(\ok(\tau^-)\scbr{\Om})[z_0])
                =\psi_0( \ok(\xi) + \psi_1(\ok(\tau^-\scbr{\Om[z_0]}))$, 
                \\
                $z_{t+1} = \psi_0( \ok(\xi)+ \psi_1(\ok(\tau^-)\scbr{\Om}))[t]
                = \psi_0( \ok(\xi)+ \psi_1(\ok(\tau^-)\scbr{\Om})[z_t])\\
                =\psi_0( \ok(\xi)+ \psi_1(\ok(\tau^-)\scbr{\Om[z_t]}))
                = \psi_0( \ok(\xi)+\psi_1(\ok(\tau^-) \scbr{ \psi_0( \ok(\xi)+ \psi_1(\ok(\tau^-)\scbr{\Om[z_{t-1}]}) )}). $\medskip
            \end{definition}
            
            \begin{claim}\label{ClaimIndOm}
                \[\okpe(H_{(\xi'+\psi_0((\tau^-)'\scbr{\om}))\{t\}}(k+1))  \geq \psi_0( \ok(\xi)+ \psi_1(\ok(\tau^-)\scbr{\Om}))[t].\]
            \end{claim}
            \begin{proof}
                By induction on $t$. $H_{\xi'+\psi_0((\tau^-)'\scbr{\om})[x]}(k+1)$ is in $k+1$ normal form for $x<k+1$, hence we can apply the ordinal assignment. For $t=0$ we obtain
                \begin{align*}
                    \okpe(H_{(\xi'+\psi_0((\tau^-)'\scbr{\om}))\{0\}}(k+1)) &= \okpe(H_{\xi'+\psi_0((\tau^-)'\scbr{\om})[0]}(k+1)) \\
                    &= \okpe(H_{\xi'+\psi_0((\tau^-)'\scbr{0})}(k+1)) \\
                    &= \psi_0(\okpe(\xi')+\psi_1(\okpe((\tau^-)')\scbr{0}))\\
                    &= \psi_0( \ok(\xi)+ \psi_1(\ok(\tau^-)\scbr{0 }))\\
                    &= \psi_0( \ok(\xi)+ \psi_1(\ok(\tau^-)\scbr{\Om}))[0].\\
                \end{align*}    
                For $t=1,$
                \begin{align*}
                    \okpe(H_{(\xi'+\psi_0((\tau^-)'\scbr{\om}))\{1\}}(k+1)) 
                    &= \okpe(H_{\xi'+\psi_0((\tau^-)'\scbr{\om}) [H_{(\xi' + \psi_0((\tau^-)'\scbr{\om}))\{0\}}(k+1)])}(k+1)) \\
                    &=\okpe(H_{\xi'+\psi_0((\tau^-)'\scbr{H_{\xi' + \psi_0((\tau^-)'\scbr{0})}(k+1)})}(k+1)) \\
                    &= \psi_0(\okpe(\xi')+\psi_1(\okpe((\tau^-)')\scbr{\psi_0(\okpe(\xi')+\psi_1(\okpe((\tau^-)'\scbr{0}))}) )\\
                    &= \psi_0( \ok(\xi)+ \psi_1(\ok(\tau^-)\scbr{\psi_0(\ok(\xi)+ \psi_1(\ok(\tau^-\scbr{0}))) }))\\
                    &= \psi_0( \ok(\xi)+ \psi_1(\ok(\tau^-)\scbr{z_1}))\\
                    &= \psi_0( \ok(\xi)+ \psi_1(\ok(\tau^-)\scbr{\Om}))[1].
                \end{align*}
                Induction step for $t+1$ by the induction hypothesis;
                \begin{align*}
                    \okpe(H_{(\xi'+\psi_0((\tau^-)'\scbr{\om}))\{t+1\}}(k+1)) &=\okpe(H_{\xi'+\psi_0((\tau^-)'\scbr{\om})	[H_{(\xi' + \psi_0(\tau'\scbr{\om}))\{t\}}(k+1)]}(k+1)) \\
                    &= \okpe(H_{\xi'+\psi_0((\tau^-)'\scbr{ H_{(\xi'+\psi_0((\tau^-)'\scbr{\om}))\{t\}}(k+1)})}(k+1)) \\
                    &=\psi_0(\okpe(\xi')+ \psi_1(\okpe((\tau^-)') \\
                    &	\scbr{	\okpe( H_{(\xi'+\psi_0(\tau'\scbr{\om}))\{t\}})}))\\
                    &\geq \psi_0( ok(\xi)+\psi_1(\ok(\tau^-) \scbr{ \psi_0( \ok(\xi)+ \psi_1(\ok(\tau)\scbr{z_t}) )})) \\
                    &=\psi_0( \ok(\xi)+ \psi_1(\ok(\tau^-)\scbr{z_{t+1}}))\\
                    &=\psi_0( \ok(\xi)+ \psi_1(\ok(\tau^-)\scbr{\Om}))[t+1].
                \end{align*}
            \end{proof}

            \item ($\al_n = \mu^-\scbr{\Om}$) is of uncountable cofinality of the form $\psi_1(\ga_0)$ and $\ga_0$ is either 0 or of type $\Om$. The case where $\ga_0$ is a successor is covered in  \ref{CaseSucc} and \ref{CaseNotSucc}, and the case where $\ga_0$  is of countable cofinality is covered above in \ref{CaseCountCof}.
            We use the context to write $\al= \xi +\psi_0(\mu^-\scbr{\Om})$  of countable cofinality, then  $\ok(\al)= \ok(\xi) + \psi_1(\ok(\mu^-)\scbr{\Om_2})$ is of countable cofinality.
            We cover cases such as $\al = \xi + \psi_0(\Om)$ or $\al = \xi +\psi_0(\psi_1(\Om))$.\smallskip
            \\
            Recall the fundamental sequences for $\al = \psi_0(\al_0)$ where $tp(\al)=\om$ and $\al_0 :=\mu^-\scbr{\Om}$ for a $\psi_0-$free context $\mu^-$.
            $\psi_0(\al_0)[x] = \psi_0(\al_0[z_x])$ where $z_0=0$ and $z_{x+1}= \psi_0(\al_0[z_x])$.\medskip
            
            \begin{example}
            We now present an example to clarify the fundamental sequences involved in this case:\\
                Let $\al = \psi_0(\Om)$ then $\ok(H_\al(k))[x]= \psi_0(\psi_1(\Om_2))[x]= \psi_0(\psi_1(\Om_2)[x])$
                since $tp(\psi_1(\Om_2))=\om$.
                Instead, $\psi_0(\Om)[x]=\psi_0(\Om[z_x])$, where $z_0=0$ and $z_{x+1}=\psi_0(\psi_1(\Om_2)[z_x])$.
                This can be written fully as $\psi_0(\Om)[x] = \underbrace{\psi_0(\psi_0(\ldots\psi_00\ldots))}_{k \text{ times}}$.
                Whereas, $H_{\al\{x\}}(k)= H_{\psi_0(\Om)\{x\}}(k)\geq H_{\psi_0(\Om)[x]}(k).$\smallskip\\
                The case described below represents a generalized instance of the preceding analysis.\medskip
            \end{example}
            We use the fact that $H_{\al'\{d\}}(k+1)$ is $k+1$ in normal form for $d<k+1$.
            As well as Lemma \ref{lm:okpe=ok} for $\okpe(\xi') = \ok(\xi)$.
            
            \begin{align*} 
                \okpe(m'-1) &= \okpe(H_{\xi'+\psi_0((\mu^-)'\scbr{\Om})}(k+1) -1)\\
                & = \okpe(H_{(\xi'+\psi_0((\mu^-)'\scbr{\Om}))\{k+1\}}(k+1) -1)\\
                & \geq  \okpe(H_{(\xi'+\psi_0((\mu^-)'\scbr{\Om}))\{k\}}(k+1))\\
                & \geq  \okpe(H_{(\xi'+\psi_0((\mu^-)'\scbr{\Om}))[k]}(k+1))\\
                &= \okpe(H_{(\xi'+\underbrace{\psi_0(
                (\mu^-)'[\psi_0((\mu^-)'[\ldots [\psi_0( (\mu^-)'  \scbr{0})] \ldots ])]
                ))}_{k \text{ times}}
                }(k+1))\\
                &= \psi_0(\okpe(\xi') + \underbrace{
                \psi_1(\okpe(\mu^-)'[\psi_1(\okpe(\mu^-)'[\ldots [\psi_1( \okpe(\mu^-)'  \scbr{0})] \ldots ])])}_{k \text{ times}})\\
                &= \psi_0(\ok(\xi) + \underbrace{
                \psi_1(\ok(\mu^-)[\psi_1(\ok(\mu^-)[\ldots [\psi_1( \ok(\mu^-) \scbr{0})] \ldots ])])}_{k \text{ times}})\\
                &= \psi_0( \ok(\xi)+ \psi_1(\ok(\mu^-)\scbr{\Om_2})[{k}] )\\
                &=\ok(H_{\xi+\psi_0(\mu^-\scbr{\Om})}(k))[k]\\
                &= \ok(m)[k].
            \end{align*}			
        \end{Cases}
    \end{Cases}
\end{Cases}
\end{proof}

\begin{theorem}[Independence]
$\ID_2 \not\vdash \forall l \exists k G_k(l)=0$
\end{theorem}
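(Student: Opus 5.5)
We would derive the independence from Lemma~\ref{lm:LemmaInd} together with Proposition~\ref{prop:Majorize} and the characterization of $|\ID_2|$ through provably total functions. The plan is to take a specific family of starting values $\ell$ whose assigned ordinals approximate $\Lambda=\psi_0\psi_1(\varepsilon_{\Om_2+1})$. We then show that the length of the Goodstein process started at $\ell$ dominates $F_{\Lambda}$, or at least a function that is not provably total in $\ID_2$. Since termination of $G$ provably implies totality of the function $\ell\mapsto\min\{k : G_k(\ell)=0\}$, the result follows.

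\textbf{Step 1: the ordinal sequence.} Fix $\ell$ and put $\xi_k := o_{k+2}(G_k(\ell))$, so that the base at stage $k$ is $k+2$. From Lemma~\ref{lm:LemmaInd}, applied with base $k+2$ and $m=G_k(\ell)$, together with Lemma~\ref{lm:okpe=ok}, we get $\xi_{k+1} = o_{k+3}(m'-1)\geq \xi_k[k+2]$. The termination theorem gives $\xi_{k+1}<\xi_k$. This is not literally the hypothesis $\xi_n[n+1]\le\xi_{n+1}\le\xi_n$ of Proposition~\ref{prop:Majorize}, because the index is shifted by one. We therefore reindex, or use the monotonicity $\al[k]\le_{k+1}\al[k+1]$ of the orderings $\le_k$, which the proof of Proposition~\ref{prop:Majorize} already relies on, to absorb the shift. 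We then conclude $\xi_k\ge \xi_0[0][1]\cdots[k+c]$ for a fixed constant $c$. All these ordinals lie in $\OT$ below $\Lambda$ by the NF-preservation lemma, so the Bachmann property of Theorem~\ref{thm:bachmannProp} applies. This needs the Bachmann property for the full system $\OT\cap\Om$ rather than only for $\OT_0$, and we would note that Weiermann's argument adapts.

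\textbf{Step 2: choice of starting values.} The corollary to Proposition~\ref{prop:Majorize} then shows that the number of steps until $G_k(\ell)=0$ is at least the length of the descent from $o_2(\ell)$ along $[\cdot]$. That length is essentially $F_{o_2(\ell)}$, or the function $H$ of Section~\ref{subsec:ProofTheo}, evaluated at a small argument. We need $\ell_n$ with $o_2(\ell_n)\ge \Lambda[n]$. We would try $\ell_n = H_{\al_n}(2)$, where $\al_n\in\OT_0\cap\Om$ is chosen so that $\psi_0(o_2(\al_n))$ is cofinal in $\Lambda$. A natural candidate is $\al_n=\psi_0(\psi_1(\beta_n))$ with $\beta_n$ a tower $\Om^{\Om^{\cdots}}$ built from $\psi_1$ and additions. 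Its image under the assignment is $\psi_0(\psi_1(\psi_2(\cdots)))$, which climbs towards $\psi_0\psi_1(\varepsilon_{\Om_2+1})$. By the normal-form lemma, such an $\ell_n$ is in $2$-normal form, because $H_{\al_n}(2)$ with the maximal $\al_n$ is its own representation. We would verify that $o_2(\ell_n)\ge\Lambda[n]$ holds up to the $\le_n$ ordering, by checking the fundamental sequences of $\varepsilon_{\Om_2+1}$.

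\textbf{Step 3: conclusion.} Combining the two steps, the function $n\mapsto \min\{k: G_k(\ell_n)=0\}$ majorizes $F_\Lambda(n)$ up to a primitive recursive shift. The map $n\mapsto\ell_n$ is primitive recursive. If $\ID_2$ proved $\forall \ell\exists k\, G_k(\ell)=0$, this function would be provably total in $\ID_2$, contradicting the equivalence theorem for $\mathbb T=\ID_2$ via item~(4). The main obstacle is Step~2: producing explicit $\ell_n$ whose assigned ordinals genuinely reach $\Lambda[n]$, and checking that these lie in normal form. The index bookkeeping in Step~1 is routine by comparison.
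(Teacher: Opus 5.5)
This is essentially the paper's proof, including its choice of starting values. Lemma~\ref{lm:LemmaInd} together with strict descent puts $o_{k+2}(G_k(\ell))$ under Proposition~\ref{prop:Majorize}, and $H_{\al}(2)$ with $\al$ a $\psi_0$ of an $\Om$-tower is assigned $\psi_0\psi_1$ of an $\Om_2$-tower approximating $|\ID_2|$; non-provability of the totality of $F_{|\ID_2|}$ in $\ID_2$ then concludes, and your remark that the Bachmann property is needed beyond $\OT_0$ concerns a point the paper also passes over. The index shift in Step~1 is no obstacle, since $\xi_k[k+1]\le\xi_k[k+2]\le\xi_{k+1}$ by monotonicity of fundamental sequences in the argument, which is exactly how the paper obtains $O_k(\ell)[k+1]\le O_{k+1}(\ell)$.
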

\begin{proof}
Let $\Om^1_1:=0$ and $\Om_1^{n+1} =\Om_1^{\Om^n_1}$, and similarly $\Om^2_1:=0$ and $\Om_2^{n+1} =\Om_2^{\Om^n_1}$.\smallskip
\\
Define $\ell_0=0$ and $\ell_{n+1} =H_{\ell_n}(2)$ and for each $k$ put
$$O_k(\ell_n) := o_{k+2}(G_{k}(\ell)).$$
Then $O_0 (\ell_{n+1}) = \psi_0(\psi_1\Om_2^n)$.\smallskip
\\
Moreover, for all $k$ and $\ell$ Lemma \ref{lm:LemmaInd} and Lemma \ref{lm:OkMon} yield 
$$O_k(H_{\ell_{n+1}} (2))[k+1] \leq O_{k+1}(H_{\ell_{n+1}} (2) ) < O_k (H_{\ell_{n+1}} (2)  ),$$
so the sequence $(O_k (H_{\ell_{n+1}}))_{k\in \Nat}$ is obtained by iterated fundamental sequence descent below $\psi_0\psi_1\varepsilon_{\Om_2+1} = |\ID_2|$.\smallskip
\\
By Proposition \ref{prop:Majorize}, the least $k$ such that $G_k(\ell_{n+1})=0$ is therefore at least $F_{\psi_0\psi_1\varepsilon_{\Om_2+1}}(n)$.\smallskip
\\
Since $|{\ID}_2| = \psi_0\psi_1\varepsilon_{\Om_2+1}$,~~
$F_{|\ID_2|}$is not provably total in ${\ID}_2$, so $|\ID_2|$
cannnot prove that this $k$ exists for all $n$, and hence
$${\ID}_2\not \vdash \forall \ell \exists k \ G_k(H_\ell (2)))=0.$$

\end{proof}


\section{Conclusions and Future Work}
We introduced a novel Goodstein process based on a Hardy hierarchy defined over an ordinal notation system constructed via a two-step collapsing procedure. By avoiding the simultaneous definition typical of standard approaches, this method reveals the inner workings of ordinal assignments and provides a transparent framework for analyzing proof-theoretic strength. We demonstrated that the termination of this process is independent of $\ID_2$, thereby establishing a new independence result at the second proof-theoretic threshold.

The results presented here suggest a natural progression to generalize our approach to stronger theories, specifically $ \ID_n, \ID_\om$, with the ultimate aim of reaching $\ACA_0 +(\Pi_1^1-TR)$.
For $\ID_n$, we anticipate that the two-step collapsing procedure can be generalized by iterating the collapse to depth $n$. Addressing the stronger system $\ID_\omega$, however, will require a transition back to the simultaneous definition frameworks by Buchholz and Sch\"utte \cite{BuchholzSchutte}. Developing these notation systems and their corresponding fundamental sequences remains the next step in characterizing the proof-theoretic strength of these theories through the lens of Goodstein-type principles.

\bibliographystyle{amsplain}
\bibliography{main-ref}
\end{document}

%% file: mycommands.tex
\newcommand{\begincases}{\begin{enumerate}[label*={\sc Case \arabic*},wide, labelwidth=!, labelindent=0pt]}

\newcommand{\begincasesa}{\begin{enumerate}[label={\sc Case ({\rm \roman*})},wide, labelwidth=!, labelindent=0pt]}
\newcommand{\beginsubcases}{\begin{enumerate}[label*= {\rm .\arabic*},wide, labelwidth=!, labelindent=0pt]}

\newlist{Cases}{enumerate}{9}
\setlist[Cases,1]{label={\sc Case {\rm \arabic*}},wide, labelwidth=!, labelindent=0pt}
\setlist[Cases,2]{label*= {\rm .\arabic*},wide, labelwidth=!, labelindent=0pt}
\setlist[Cases,3]{label*= {\rm .\arabic*},wide, labelwidth=!, labelindent=0pt}
\setlist[Cases,4]{label*= {\rm .\arabic*},wide, labelwidth=!, labelindent=0pt}
\setlist[Cases,5]{label*= {\rm .\arabic*},wide, labelwidth=!, labelindent=0pt}
\setlist[Cases,6]{label*= {\rm .\arabic*},wide, labelwidth=!, labelindent=0pt}
\setlist[Cases,7]{label*= {\rm .\arabic*},wide, labelwidth=!, labelindent=0pt}
\setlist[Cases,8]{label*= {\rm .\arabic*},wide, labelwidth=!, labelindent=0pt}
\setlist[Cases,9]{label*= {\rm .\arabic*},wide, labelwidth=!, labelindent=0pt}

\newlist{CasesA}{enumerate}{1}
\setlist[CasesA,1]{label={\sc Case {\rm (\alph*)}},wide, labelwidth=!, labelindent=0pt}

\def\scbrl{[\hspace{-.14em}[}
\def\scbrr{]\hspace{-.14em}]}
\def\scbr#1{\scbrl#1\scbrr}

\newcommand\T{\mathsf{T}}
\newcommand\OT{\mathsf{OT}}
\newcommand{\ID}{\mathsf{ID}}
\newcommand{\PA}{\mathsf{PA}}
\newcommand{\PRA}{\mathsf{PRA}}
\newcommand{\ATR}{\mathsf{ATR}}
\newcommand{\ACA}{\mathsf{ACA}}

\newcommand{\lex}{\mathrm{lex}}

\newcommand{\kp}{[k\leftarrow k+1]}

\renewcommand{\phi}{{\overline{\varphi}}}

\newcommand{\Om}{{\Omega}}

\newcommand{\okpe}{o_{k+1}}

\newcommand{\ok}{o_{k}}

\newcommand{\al}{{\alpha}}

\newcommand{\be}{{\beta}}

\newcommand{\ga}{\gamma}

\newcommand{\de}{\delta}

\newcommand{\la}{\lambda}

\newcommand{\om}{\omega}

\def\scbrl{[\hspace{-.14em}[}
\def\scbrr{]\hspace{-.14em}]}
\def\scbr#1{\scbrl#1\scbrr}

\newcommand\Nat{\mathbb{N}}